\def\E{\ifmmode{\mathbb E}\else{$\mathbb E$}\fi} 
\def\N{\ifmmode{\mathbb N}\else{$\mathbb N$}\fi} 
\def\R{\ifmmode{\mathbb R}\else{$\mathbb R$}\fi} 
\def\Q{\ifmmode{\mathbb Q}\else{$\mathbb Q$}\fi} 
\def\C{\ifmmode{\mathbb C}\else{$\mathbb C$}\fi} 
\def\H{\ifmmode{\mathbb H}\else{$\mathbb H$}\fi} 
\def\Z{\ifmmode{\mathbb Z}\else{$\mathbb Z$}\fi} 
\def\P{\ifmmode{\mathbb P}\else{$\mathbb P$}\fi} 
\def\T{\ifmmode{\mathbb T}\else{$\mathbb T$}\fi} 
\def\SS{\ifmmode{\mathbb S}\else{$\mathbb S$}\fi} 
\def\DD{\ifmmode{\mathbb D}\else{$\mathbb D$}\fi} 
\newcommand{\e}{\varepsilon}
\newcommand{\del}{\partial}
\newcommand{\ben}{\begin{enumerate}}
\newcommand{\een}{\end{enumerate}}
\newcommand{\be}{\begin{equation}}
\newcommand{\ee}{\end{equation}}
\newcommand{\bea}{\begin{eqnarray}}
\newcommand{\eea}{\end{eqnarray}}
\newcommand{\beastar}{\begin{eqnarray*}}
\newcommand{\eeastar}{\end{eqnarray*}}
\newcommand{\bc}{\begin{center}}
\newcommand{\ec}{\end{center}}
\theoremstyle{theorem}
\newtheorem{thm}{Theorem}[section]
\newtheorem{cor}[thm]{Corollary}
\newtheorem{lem}[thm]{Lemma}
\newtheorem{prop}[thm]{Proposition}
\theoremstyle{definition}
\newtheorem{defn}[thm]{Definition}
\newtheorem{rem}[thm]{Remark}
\newtheorem{exm}[thm]{Example}
\newtheorem*{thm*}{Theorem}
\numberwithin{equation}{section}
\def\R{{\mathbb R}}
\def\osc{{\hbox{\rm osc}}}
\def\Crit{{\hbox{Crit}}}
\def\E{{\mathbb E}}
\def\Z{{\mathbb Z}}
\def\C{{\mathbb C}}
\def\R{{\mathbb R}}
\def\P{{\mathbb P}}
\def\N{{\mathbb N}}
\def\11{{\mathbb I}}
\def\delbar{{\overline \partial}}
\def\C{\mathbb{C}}
\def\Z{\mathbb{Z}}
\def\T{\mathbb{T}}
\def\Q{\mathbb{Q}}
\def\E{\ifmmode{\mathbb E}\else{$\mathbb E$}\fi} 
\def\N{\ifmmode{\mathbb N}\else{$\mathbb N$}\fi} 
\def\R{\ifmmode{\mathbb R}\else{$\mathbb R$}\fi} 
\def\Q{\ifmmode{\mathbb Q}\else{$\mathbb Q$}\fi} 
\def\C{\ifmmode{\mathbb C}\else{$\mathbb C$}\fi} 
\def\H{\ifmmode{\mathbb H}\else{$\mathbb H$}\fi} 
\def\Z{\ifmmode{\mathbb Z}\else{$\mathbb Z$}\fi} 
\def\P{\ifmmode{\mathbb P}\else{$\mathbb P$}\fi} 
\def\SS{\ifmmode{\mathbb S}\else{$\mathbb S$}\fi} 
\def\DD{\ifmmode{\mathbb D}\else{$\mathbb D$}\fi} 
\def\R{{\mathbb R}}
\def\osc{{\hbox{\rm osc}}}
\def\Crit{{\hbox{Crit}}}
\def\E{{\mathbb E}}
\def\Z{{\mathbb Z}}
\def\C{{\mathbb C}}
\def\R{{\mathbb R}}
\def\N{{\mathbb N}}
\def\delbar{{\overline \partial}}
\def\e{\varepsilon}
\def\CA{{\mathcal A}}
\def\CE{{\mathcal E}}
\def\CJ{{\mathcal J}}
\def\CK{{\mathcal K}}
\def\CL{{\mathcal L}}
\def\CM{{\mathcal M}}
\def\CQ{{\mathcal Q}}
\def\CR{{\mathcal R}}
\def\darr#1{\raise1.5ex\hbox{$\leftrightarrow$}
\mkern-16.5mu #1}
\def\roughly#1{\raise.3ex\hbox{$#1$\kern-.75em
\lower1ex\hbox{$\sim$}}}
\def\opname#1{\mathop{\kern0pt{\rm #1}}\nolimits}
\def\dim{\opname{dim}}
\def\Int{\opname{Int}}
\def\supp{\operatorname{supp}}
\def\dist{\operatorname{dist}}
\begin{document}
\quad \vskip1.375truein

\def\mq{\mathfrak{q}}
\def\mp{\mathfrak{p}}
\def\mH{\mathfrak{H}}
\def\mh{\mathfrak{h}}
\def\ma{\mathfrak{a}}
\def\ms{\mathfrak{s}}
\def\mm{\mathfrak{m}}
\def\mn{\mathfrak{n}}
\def\mz{\mathfrak{z}}
\def\mw{\mathfrak{w}}
\def\Hoch{{\tt Hoch}}
\def\mt{\mathfrak{t}}
\def\ml{\mathfrak{l}}
\def\mT{\mathfrak{T}}
\def\mL{\mathfrak{L}}
\def\mg{\mathfrak{g}}
\def\md{\mathfrak{d}}
\def\mr{\mathfrak{r}}

\title[Seidel's exact triangle]
{Seidel's long exact sequence on Calabi-Yau manifolds}
\author{Yong-Geun Oh}
\address{University of Wisconsin, Department of Mathematics,
Madison, WI 53706}
\date{February 7, 2010}

\begin{abstract} In this paper, we generalize construction of Seidel's
long exact sequence of Lagrangian Floer cohomology to that of compact Lagrangian
submanifolds with vanishing Malsov
class on general Calabi-Yau manifolds. We use the framework of anchored
Lagrangian submanifolds developed in \cite{fooo:anchor} and
some compactness theorem of \emph{smooth} $J$-holomorphic sections
of Lefschetz Hamiltonian fibration for a generic choice of $J$.
The proof of the latter compactness theorem
involves a study of proper pseudoholomorphic curves in the setting of
noncompact symplectic manifolds with cylindrical ends.
\end{abstract}

\keywords{Calabi-Yau Lagrangian branes, Anchored Lagrangian submanifolds, Lefschetz
Hamiltonian fibrations, Seidel's long exact sequence}

\thanks{The present work is partially supported by the NSF grant \# 0904197}

\maketitle


\tableofcontents

\section{Introduction}
\label{sec:intro}

To put the content of this paper in perspective,
we first recall a long exact sequence for symplectic
Floer cohomology of Lagrangian submanifolds, which was
constructed by Seidel \cite{seidel:triangle} originally for the category
of exact Lagrangian submanifolds on (non-compact) exact symplectic manifolds.

\subsection{Dehn twists and Seidel's long exact sequence}
\label{subsec:DehnSeidel}

Let $(M,\omega,\alpha)$ be an exact symplectic manifold
with contact type boundary: $\alpha$ is a contact one-form on $\del M$
which satisfies $d\alpha = \omega|_{\del M}$ and makes $\del M$ convex.
Assume $[\omega,\alpha] \in H^2(M,\del M;\R)$ is zero so that $\alpha$
can be extended to a one-form $\theta$ on $M$ satisfying $d\theta = \omega$.

\begin{thm}[Seidel \cite{seidel:triangle}]
Let $L$ be an exact Lagrangian sphere in $M$ together with a preferred
diffeomorphism $f: S^2 \to L$. Denote by $\tau_L =\tau_{(L,[f])}$ be the
Dehn twist associated to $(L,[f])$. For any two compact exact Lagrangian
submanifolds $L_0, \, L_1 \subset M$, there is a long
exact sequence of Floer cohomology groups
\be\label{eq:exacttriangle}
\longrightarrow HF(\tau_L(L_0),L_1) \longrightarrow HF(L_0,L_1) \longrightarrow
HF(L,L_1) \otimes HF(L_0,L) \longrightarrow.
\ee
\end{thm}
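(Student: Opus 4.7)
The plan is to follow Seidel's original strategy by introducing a Lefschetz Hamiltonian fibration whose monodromy around a single critical value realizes $\tau_L$, and then extracting the exact triangle from counts of $J$-holomorphic sections with carefully chosen Lagrangian boundary conditions. First, I would construct a model fibration $\pi: E \to S$ over a strip (or half-plane) $S$ with one interior critical value $z_0$ whose vanishing cycle is identified with $L$ via $f$. By the symplectic Picard--Lefschetz formula the monodromy of $\pi$ around $z_0$ is $\tau_L$, so parallel transport along a loop encircling $z_0$ carries $L_0$ inside its reference fiber to $\tau_L(L_0)$. Using this, one assembles Lagrangian boundary conditions $Q_0, Q_1 \subset E$ along the two boundary components of $S$ that restrict to copies of $L_0$ and $L_1$ on the appropriate ends and that asymptotically recover the pairs $(\tau_L(L_0), L_1)$ and $(L_0, L_1)$ as Floer-type boundary data at the two infinite ends.

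The three terms of \eqref{eq:exacttriangle} would then be linked by two chain maps, each defined by counting rigid $J$-holomorphic sections of $\pi$ with these boundary conditions. The map $CF(\tau_L(L_0), L_1) \to CF(L_0, L_1)$ counts sections over a strip with one interior critical value, where each infinite end is asymptotic to a Floer trajectory between prescribed intersection points. The connecting map $CF(L_0, L_1) \to CF(L, L_1) \otimes CF(L_0, L)$ counts sections over a domain degenerating to a pair-of-pants, in which the critical fiber splits any section into a Lefschetz thimble with boundary on $L$ together with two half-strips carrying the two tensor factors.

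Exactness would then be established by two parametrized moduli space arguments. Varying the position of the critical value of the Lefschetz fibration in a one-parameter family produces Gromov-type compactifications whose boundary strata yield the needed chain homotopies: pushing the critical value out to the boundary of $S$ in one family shows that the composition of the two chain maps above is null-homotopic, while a second interpolating family identifies the mapping cone of the first chain map with $CF(L,L_1) \otimes CF(L_0,L)$, producing the quasi-isomorphism that turns the triangle into a long exact sequence of cohomology groups.

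The hard part will be the transversality and compactness theory for moduli spaces of $J$-holomorphic sections of $\pi$. In Seidel's original exact setting, exactness of $\omega$ and of the Lagrangians rules out both sphere and disk bubbling and keeps the limits of sections smooth (in particular prevents them from being absorbed into the singular fiber), so compactness reduces to controlling the Floer-type ends. The most delicate step is the identification of the pair-of-pants count with $HF(L,L_1)\otimes HF(L_0,L)$: one must verify that sections passing arbitrarily close to the critical point do factor, in a suitable limit, through a Lefschetz thimble attached to the vanishing cycle $L$, which requires a careful neck-stretching argument that degenerates the base domain and splits the section count into the two Floer complexes on the right-hand side of \eqref{eq:exacttriangle}.
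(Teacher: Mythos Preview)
Your proposal captures the correct ambient geometry (a Lefschetz fibration with monodromy $\tau_L$, holomorphic sections, Lagrangian boundary conditions), but it misplaces the critical point and misses the actual mechanism Seidel uses to extract exactness. In Seidel's argument, as reproduced in the present paper (Sections~\ref{sec:seidel's}--\ref{sec:sequence}), only the map $\frak c: CF(\tau_L(L_0),L_1) \to CF(L_0,L_1)$ is defined via sections of a fibration with a critical value; the map $\frak b: CF(L,L_1)\otimes CF(\tau_L(L_0),L) \to CF(\tau_L(L_0),L_1)$ is the ordinary pair-of-pants product on the \emph{trivial} fibration. There is no chain map from $CF(L_0,L_1)$ to the tensor product defined by ``sections factoring through a Lefschetz thimble''; the tensor factor arises simply because $\frak b$ is a product, and the identification with intersection points of $\tau_L(L_0)\cap L_1$ comes from Lemma~\ref{lem:mappq}, namely the bijection $p:(\tau_L(L_0)\cap L)\times(L\cap L_1)\hookrightarrow \tau_L(L_0)\cap L_1$, together with a local uniqueness result for small holomorphic triangles.

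More seriously, your proposed exactness argument (moving the critical value, neck-stretching, identifying a mapping cone) is not how the long exact sequence is established. Seidel's method is a filtration argument: one introduces the action filtration, decomposes $\frak b = \beta + (\frak b-\beta)$ and $\frak c = \gamma + (\frak c-\gamma)$ into low-order and high-order pieces (the ``thick-thin decomposition'' of Section~\ref{sec:thickthin}), proves by hand that $0\to C'\xrightarrow{\beta} C\xrightarrow{\gamma} C''\to 0$ is short exact on the level of modules, constructs a homotopy $\frak h$ from $\frak c\circ \frak b$ to zero using the vanishing of the relative invariant $\Phi_1(E^L,\pi^L,Q^L)$ (Proposition~\ref{prop:vanishing}), and then invokes a spectral-sequence lemma (Lemmas~\ref{deltadD} and~\ref{seidellemma}) to promote this to a long exact sequence in cohomology. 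Without the action filtration and the low-order/high-order splitting, the two homotopies you describe do not by themselves force the cone of $\frak c$ to be quasi-isomorphic to $C'$; you would need an additional argument identifying the cone, and that is precisely what the filtration/spectral-sequence machinery supplies.
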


Due to the well-known difficulties
in the construction of Lagrangian intersection Floer cohomology
and a new non-trivial compactness issue arising from the singularities
of the Lefschetz fibration used for the construction,
Seidel put the exactness assumption to
avoid these difficulties and work entirely with exact Lagrangian
category and left the extension to
the more general situation like for the closed Calabi-Yau manifolds
as an open problem \cite{seidel:triangle}.
While the limitation to the exact Lagrangian category
simplifies the analysis of holomorphic disc bubbles, it also forces
him to work entirely with the language of \emph{exact Lefschetz fibrations}
and to make sure that he does not go out of this domain largely for the
consistency of his exposition, as Seidel himself indicated.
Because of this, \cite{seidel:triangle}
develops a fair amount of geometry of exact Lefschetz fibrations
some of which are not directly relevant to the construction of
the long exact sequence. Partly due to this digression,
it took some effort and time for the author of the present paper
to get to the main point of Seidel's construction in \cite{seidel:triangle}.

The cases of closed Calabi-Yau manifolds or Fano toric manifolds
are the one that is physically most relevant to the mirror symmetry:
According to Kontsevich \cite{konts:rutgers} and Seidel \cite{seidel:triangle},
the symplectic Dehn-twists correspond to a particular class of autoequivalences,
``twist functors along spherical objects'', of derived
categories of coherent sheaves, and this long exact sequence
corresponds to an exact sequence of the same form
in the mirror Calabi-Yau. Therefore it is important to establish
the long exact sequence for a class of Lagrangian branes that is
\emph{closed} under the action of symplectic Dehn twists. The class of
exact Lagrangian submanifolds in exact symplectic manifolds is
one such class, which Seidel considered in \cite{seidel:triangle}.

One of the point Seidel tried to ensure by working with the
exact Lagrangian category is to have not only single-valuedness of
the action functional on the path space but \emph{coherence} of
the definition of the action functional between different exact
Lagrangian submanifolds: this then allows one to have the energy
estimate for the Floer trajectories, and more importantly to allow one
to have \emph{thick-thin decomposition} of the Floer moduli
spaces entering in the construction. This decomposition then
enables him to apply the spectral sequence argument and derive
the desired conclusion based on the contribution
coming from the \emph{thin} part of the Floer moduli spaces which can be
explicitly analyzed.

\subsection{Calabi-Yau Lagrangian branes}
\label{subsec:CYbranes}

In regard to extending Seidel's construction to closed
Calabi-Yau manifolds, we highlight two points that we take in
this paper.

The first point is our restriction to the class of Lagrangian
submanifolds with zero Maslov class. This class is \emph{closed}
under the action by symplectic Dehn twists and enables one to
consider the involved cohomology as a \emph{$\Z$-graded group} which
is essential in the point of view of mirror symmetry. The second
point is the usage of the notion of \emph{anchors} and
\emph{anchored Lagrangian submanifolds} introduced in \cite{fooo:anchor}. This
notion has its origin in the preprint \cite{fooo00} when the authors
take the \emph{based point of view} of Lagrangian submanifolds
in relation to the coherence of the definitions of
various Maslov-type indices and of action functionals when one
considers several Lagrangian submanifolds altogether as one studies
Fukaya category. In the technical point of view, consideration of
anchored Lagrangian submanifolds enables one to keep
consistency of the definitions both of action functionals and of the
absolute gradings on the Calabi-Yau Lagrangian branes. Most
importantly this also enables us to provide a coherent filtration in
the relevant Floer complexes and to have thick-thin decomposition of
the relevant Floer moduli spaces of the kind as Seidel considered in
\cite{seidel:triangle}.

Now we introduce a class of decorated Lagrangian submanifolds
on Calabi-Yau manifold $(M,\omega)$ which we call
\emph{Calabi-Yau Lagrangian branes}. It is
expected that this class of Lagrangian submanifolds
`generates' the Fukaya category of a Calabi-Yau manifold
that is mirror to the derived category of coherent sheaves
on the mirror Calabi-Yau. We refer readers to the main part of
the paper for various undefined terms in the statement. We also
omit the important datum of flat line bundles on $L$ in this
definition because it will not play much role in our proof
but can be easily incorporated in the construction.

\begin{defn}\label{CYbrane} Let $y \in M$ be a base point and $\Lambda_y \subset
T_yM$ a fixed Lagrangian subspace. Suppose $\Theta$ is a quadratic
complex volume form on $(M,\omega,J)$ with $\langle \Theta(y), \Lambda_y \rangle = 1$.
We consider the quadruple
$((L,\gamma),s,[b])$, which we call an \emph{(anchored) Calabi-Yau Lagrangian
brane}, that satisfies the following data:
\begin{enumerate}
\item $L$ a Lagrangian submanifold of $M$
such that the Maslov index of $L$ is zero and
$[\omega] \in H^2(M,L;\Z)$. We also enhance $L$ with flat complex
line bundle on it.
\item $\gamma$ is an anchor of $L$ relative to $y$.
\item $s$ is a spin structure of $L$.\par
\item $[b] \in \mathcal M(L)$ is a bounding cochain described in subsection
\ref{subsec:objects}.
\end{enumerate}
We denote by $\CE^{CY}_{brane}$ the collection of Calabi-Yau Lagrangian
branes and define $Fuk(\CE^{CY}_{brane})$ to be the Fukaya category generated by $\CE^{CY}_{brane}$.
\end{defn}

We remark that the notion of \emph{anchor} to $L$ is introduced to
solve the problems of grading and filtration on the Floer complex
in a uniform way in \cite{fooo:anchor}. In particular it provides a canonical
filtration on the associated Floer complex of anchored Lagrangian submanifolds
which is needed to apply some spectral sequence argument in the
proof. See the end of section \ref{sec:thickthin} in particular.

\subsection{Statement of the main result and compactness issue}
\label{subsec:statement}

The main purpose of the present paper is to construct an
exact sequence for the Calabi-Yau Lagrangian branes on Calabi-Yau
manifolds, which is the analog to Seidel's \cite{seidel:triangle}.

We first note that each Dehn twist $\tau_L$ along a given Lagrangian sphere
$L \subset M$ acts on $\CE^{CY}_{brane}$. We denote this action by
$$
(\tau_L)_*: \CE^{CY}_{brane} \to \CE^{CY}_{brane}
$$
and the image of $\CL$ under this action  by $\tau_L(\CL)=
(\tau_L)_*\CL$. This action defines an auto-equivalence on
$Fuk(\CE^{CY}_{brane})$, whose non-anchored versions should
correspond to twist functors along spherical objects of derived
categories of coherent sheaves alluded in the beginning of this
introduction.

\begin{thm}
Let $(M,\omega)$ be a compact (symplectic) Calabi-Yau and $y \in M$ be
a base point $y$. Let $L \subset M$ be a Lagrangian sphere with $y \not \in L$ together with a preferred
diffeomorphism $f: S^2 \to L$ and $\CL= ((L,\gamma),s_{st},0)$
be the associated Calabi-Yau Lagrangian brane. Denote by $\tau_L =\tau_{(L,[f])}$ the
Dehn twist associated to $(L,[f])$.

Consider any Calabi-Yau Lagrangian branes $\CL_0, \, \CL_1$.
Then there is a long
exact sequence of $\Z$-graded Floer cohomologies
\be\label{eq:exacttriangle}
\longrightarrow HF((\tau_L)_*\CL_0,\CL_1)
\longrightarrow HF(\CL_0,\CL_1)
\longrightarrow  HF(\CL,\CL_1) \otimes
HF(\CL_0,\CL_1) \longrightarrow
\ee
as a $\Lambda_{nov}$-module where the Floer cohomologies involved are the
deformed Floer cohomology constructed in \cite{fooo:book}.
\end{thm}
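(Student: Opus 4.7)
The plan is to follow Seidel's blueprint in \cite{seidel:triangle} while replacing every place where he used exactness by ingredients drawn from the anchored Lagrangian framework of \cite{fooo:anchor} and the bounding-cochain machinery of \cite{fooo:book}. First I would build a Lefschetz Hamiltonian fibration $\pi\colon E \to S$ over an appropriate surface with strip-like ends, whose smooth fibre is $(M,\omega)$ and whose unique critical value has vanishing cycle equal to $L$, so that the monodromy around that value realizes $\tau_L$. Over the boundary of $S$ the Lagrangian boundary conditions are obtained by parallel transporting $\CL_0, \CL_1$ and $\CL$ along distinguished paths in the base chosen compatibly with the prescribed anchors; this promotes each boundary condition to an anchored Lagrangian submanifold in a canonical way and fixes the $\Z$-grading on all the Floer complexes simultaneously.

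Next I would define the three maps in \eqref{eq:exacttriangle} by counting isolated smooth $J$-holomorphic sections of $\pi$ that satisfy these boundary conditions and match prescribed Hamiltonian chords at the strip-like ends. A standard codimension-one analysis of the associated moduli spaces, combined with gluing, yields the identities that both compose the three maps into a triangle and ensure that each map descends to Floer cohomology. Because every brane carries an anchor, the Maslov indices of sections assemble coherently so the maps preserve the absolute $\Z$-grading; because each $\CL_i$ carries a bounding cochain $[b_i]\in\CM(L_i)$, the Floer cohomologies on both sides are the deformed ones of \cite{fooo:book}, and the counts are performed in $\Lambda_{nov}$-coefficients.

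The main obstacle is compactness of these section moduli, which is where the Calabi-Yau setting genuinely departs from the exact one. Two new phenomena must be controlled: disc bubbles on the $\CL_i$, and sections that degenerate by absorbing a sphere bubble in the singular fibre of $\pi$. Disc bubbling is absorbed algebraically by the bounding cochains, as in \cite{fooo:book}. The second, more delicate phenomenon is the content of the smooth-section compactness theorem announced in the abstract: for generic $J$, every sequence of smooth $J$-holomorphic sections of $\pi$ limits to a smooth $J$-holomorphic section, so that no section passes through the critical point. I would prove this by excising a small neighbourhood of the critical fibre, thereby transforming $\pi$ into a Hamiltonian fibration over a punctured base whose total space is a noncompact symplectic manifold with cylindrical ends, and then applying a removable-singularity analysis to proper pseudoholomorphic curves of bounded energy in this setting. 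I expect this step to be the main technical effort.

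Once compactness is in place, the proof is completed by the same spectral sequence strategy as in \cite{seidel:triangle}. The anchor on each brane provides a canonical action filtration on the relevant Floer complex; using this filtration I would carry out the thick-thin decomposition of the section moduli, in which the \emph{thin} pieces are sections concentrated near the critical fibre and the \emph{thick} pieces are sections that stay uniformly away from it. The thin pieces can be computed from the local model around the vanishing cycle $L$ and produce exactly the product $HF(\CL,\CL_1)\otimes HF(\CL_0,\CL)$, while the thick pieces reproduce the Floer complex for $((\tau_L)_*\CL_0,\CL_1)$. The associated spectral sequence is forced to collapse at the second page by the $\Z$-grading coming from the vanishing Maslov class, and unwinding it yields the long exact sequence \eqref{eq:exacttriangle} as a sequence of $\Lambda_{nov}$-modules.
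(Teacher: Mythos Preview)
Your overall architecture matches the paper's, but two of your key mechanisms are misidentified.

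\textbf{Compactness.} Your proposed proof of the smooth-section compactness theorem is not the paper's and is probably unworkable as stated. You want to excise a neighbourhood of the critical \emph{fibre} and work over a punctured \emph{base}. The paper does the opposite: when a sequence of sections degenerates toward a critical point $x_0$, a bubble forms that lives entirely in the singular fibre $E_{z_0}$ and passes through $x_0$; one then regards this bubble as a proper $J$-holomorphic curve in the punctured \emph{fibre} $E_{z_0}\setminus\{x_0\}$. Near $x_0$ this punctured fibre is symplectomorphic to $T^*S^n\setminus T(0)$, hence has a negative cylindrical end modelled on the unit cosphere bundle $S^1(T^*S^n)$, and the bubble is asymptotic to a (possibly multiply covered) closed geodesic of $S^n$. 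An SFT-type index formula gives the virtual dimension as $-\operatorname{Morse}(\gamma)+(n-3)+2c_1(u;\phi_\gamma)$; the Calabi-Yau hypothesis kills $c_1$, and since closed geodesics on $S^n$ have Morse index $\ge n-1$, the dimension is $\le -2$. Thus for generic $J$ no such bubble exists. Note also that your statement ``every sequence of smooth sections limits to a smooth section'' is too strong: the conclusion is only that limits avoid $E^{crit}$; fibrewise sphere and disc bubbles in \emph{regular} fibres can still occur and are dealt with separately by semi-positivity dimension counting and by the bounding cochains.

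\textbf{Spectral sequence.} Your claim that the spectral sequence ``is forced to collapse at the second page by the $\Z$-grading'' is incorrect; the $\Z$-grading plays no role in the collapse. The actual mechanism is Seidel's: one forms $D=C'\oplus C\oplus C''$ with the lower-triangular differential built from $\frak b,\frak c,\frak h$, and the thick-thin decomposition writes $d_D=\delta+(d_D-\delta)$ where $\delta$ has order $[0,\e)$ and the remainder has order $[2\e,\infty)$. The thin part $\delta$ is acyclic because $\beta,\gamma$ form a short exact sequence of \emph{modules} (this is the local computation near $L$). The point that genuinely requires new input in the non-exact case is that the Floer complexes are infinite-dimensional over $\Q$ with unbounded filtration and no gap, so one cannot use the elementary filtered-complex argument. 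The paper imports the d.g.c.f.z.\ spectral sequence of \cite{fooo:book} (finitely generated gapped complexes over $\Lambda_{0,nov}^{(0)}$) to conclude that $H(D,\delta)=0$ implies $H(D,d_D)=0$; tensoring with the field $\Lambda_{nov}$ then yields the long exact sequence.
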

We also have the long exact sequence for the non-anchored version of the Floer
cohomology. See section \ref{subsec:wrapping}.
\par

Once we are to use these frameworks, construction of the long
exact sequence largely follows Seidel's strategy: We use the framework
of Lefschetz fibration with Lagrangian boundary conditions for the construction
of various operators appearing in the Floer theory, and
use the spectral sequence for the $\R$-filtered groups based on
the thick-thin decomposition of the Floer moduli spaces.
However unlike the exact Lagrangian case, the definition of
Lagrangian Floer cohomology for Calabi-Yau Lagrangian branes meet
obstruction as described in \cite{fooo:book}. Because of this
we have to use the Maurer-Cartan elements $b_i$ and use
the associated \emph{deformed Floer cohomology} appearing in
the statement of the main theorem above. (Since Lagrangian submanifolds
with zero Maslov class in Calabi-Yau manifolds are \emph{semi-positive},
the related transversality issue is relatively standard which is
one of the advantages considering this class of Lagrangian submanifolds.)
For the readers' convenience and the readability of the paper, we
borrow a fair amount of materials from \cite{fooo:book} in our exposition.
For the same reason, we also borrow much exposition from \cite{seidel:triangle}
and refer to the two for further details.
In a way, most of the materials used in this paper are not new but
has already been present in the literature one way or the other. We organize them
in a coherent way to be able to construct the required long exact sequence.
Familiarity of the scheme in the paper \cite{seidel:triangle} would be useful
for the readers to follow the stream of the arguments used in this paper,
especially those presented in sections \ref{sec:seidel's}-\ref{sec:sequence}.

However there is one nontrivial analytical issue that needs to be overcome.
This concerns the issue of compactification of \emph{smooth} pseudo-holomorphic
section of \emph{Lefschetz (Hamiltonian) fibration} when the fibration
has non-empty critical fibers. By the definition of Lefschetz Hamiltonian fibration
given in Definition \ref{defn:Lefschetz},
any smooth section will avoid critical points of the fibration. However a priori a sequence of
smooth sections may approach critical points\emph{if the derivatives of
the sections in the sequence blow up}. When applied to a sequence of
pseudo-holomorphic sections, the bubble could touch the critical points.
Therefore to define the relative Gromov-Witten type invariants in the
Lefschetz fibration setting, one
should study the behavior of pseudo-holomorphic sections approaching
the critical points. This compactness result may be mathematically the most novel part of the
present paper which is carried out in section \ref{sec:pseudo}.

In this regard, we prove the following

\begin{thm}[Theorem \ref{thm:maximum}]
Let $\pi: E \to \Sigma$ be a Lefschetz Hamiltonian fibration with
Lagrangian boundary $Q \subset E_{\del \Sigma}$ such that $E$ is
fiberwise Calabi-Yau and $Q$ has vanishing fiberwise Maslov class.
Then there exists a dense subset of $j$-compatible $J$'s for which
we have a constant $C > 0$ depending only on $(E,J,j)$ and the
section class $B \in \pi_2^{sec}(E,Q)$, but independent of $s$, such
that we have \be\label{eq:dist}
\operatorname{dist}(\operatorname{Im}s,E^{crit}) \geq C \ee for any
\emph{smooth} section $s: \Sigma \to E$ with $[s,\del s] \in B \in
\pi_2^{sec}(E,Q)$.
\end{thm}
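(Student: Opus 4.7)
The plan is to argue by contradiction. Assume the claim fails; then there exists a sequence of smooth $J$-holomorphic sections $s_n:\Sigma \to E$ with $[s_n,\partial s_n]=B$ and points $z_n \in \Sigma$ such that $\dist(s_n(z_n), E^{crit}) \to 0$. The energy of each $s_n$ is a topological quantity depending only on $B$, $\omega$, and $Q$, so the sequence has uniformly bounded energy, and after passing to a subsequence, $z_n \to z_\star \in \Sigma$ and $s_n(z_n) \to p^\star \in E^{crit}$ with $\pi(p^\star)=z_\star$. Gromov compactness for $J$-holomorphic sections of Lefschetz Hamiltonian fibrations with Lagrangian boundary in $Q$ then yields a stable limit consisting of a (possibly singular) principal section $\sigma:\Sigma \to E$ together with finitely many bubble trees of $J$-holomorphic spheres in fibers of $\pi$ and, if $z_\star \in \partial\Sigma$, of $J$-holomorphic discs in fibers with boundary on $Q$, whose total class is $B$.

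Since $s_n(z_n)\to p^\star$, the point $p^\star$ lies in the image of some component of the limit, yielding two cases: (i) the principal section satisfies $\sigma(z_\star)=p^\star$, or (ii) some sphere or disc bubble in $E_{z_\star}$ passes through $p^\star$. Case (i) is ruled out by a direct local argument, provided $J$ agrees with the integrable complex structure in a neighborhood of $E^{crit}$ (as is standard in the definition of Lefschetz Hamiltonian fibration). Indeed, in Lefschetz coordinates $\pi(w_1,\dots,w_n)=w_1^2+\cdots+w_n^2$ with $p^\star$ at the origin, a $J$-holomorphic section through $p^\star$ would consist of holomorphic functions $\sigma_i(z)$ with $\sigma_i(0)=0$ and $\sum_i \sigma_i(z)^2=z$; but $\sigma_i(0)=0$ forces $\sigma_i(z)=O(z)$, whence $\sum_i \sigma_i^2 = O(z^2)$, which cannot equal $z$. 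Hence case (i) is impossible outright.

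Case (ii) is the main technical obstacle, and I would handle it by a dimension count combined with a Sard--Smale transversality argument in the universal moduli space. A sphere bubble of class $\alpha$ in the Calabi-Yau fiber of complex dimension $n-1$ has virtual dimension $2(n-1)-6+2c_1^{vert}(\alpha)=2n-8$, and a disc bubble with boundary on $Q$ and vanishing fiberwise Maslov class has virtual dimension $(n-1)-3=n-4$; constraining such a bubble to pass through the prescribed critical point $p^\star$ is a codimension-$2(n-1)$ condition, so the constrained moduli spaces have strictly negative virtual dimension. Applying Sard--Smale to the universal moduli space parameterized by $j$-compatible $J$'s (which remain free to vary away from the fixed integrable model near $E^{crit}$), and taking a countable intersection over the finitely many bubble classes per energy level and over the finite set $E^{crit}$, one obtains a residual (hence dense) subset of $J$'s for which every such constrained moduli space is empty. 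For any $J$ in this dense subset, case (ii) is also excluded, the contradiction is complete, and a uniform $C=C(E,J,j,B)>0$ as in the statement must exist.

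The chief difficulty in implementing this plan, which is the technical content of section \ref{sec:pseudo}, is to set up the Fredholm and universal transversality theory for $J$-holomorphic bubbles that may meet the singular fibers of $\pi$: one must work with perturbations of $J$ that preserve the integrable model near $E^{crit}$ (so that the local argument for case (i) still applies), yet are rich enough away from $E^{crit}$ to cut out the constrained moduli spaces transversally. This is where the analysis of proper pseudoholomorphic curves in noncompact symplectic manifolds with cylindrical ends, alluded to in the abstract, enters to control the behavior of bubble components approaching the singular locus.
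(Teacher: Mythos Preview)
Your overall strategy---contradiction via Gromov compactness, then separating the case where the principal section hits $p^\star$ from the case where a bubble does---matches the paper's. Your argument for case~(i) is essentially Lemma~\ref{lem:away-crit}, phrased via the local holomorphic model rather than the bare differential identity $d\pi\circ ds=\mathrm{Id}$; either works.

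The genuine gap is in case~(ii). Your dimension count treats ``passing through $p^\star$'' as an ordinary codimension-$2(n-1)$ point constraint on a sphere in a smooth fiber, but $p^\star$ is the \emph{singular} point of the singular fiber $E_{z_\star}$: there is no smooth evaluation map there and no standard Fredholm package for $J$-holomorphic curves in $E_{z_\star}$ through $p^\star$. Worse, by the definition of Lefschetz Hamiltonian fibration $J$ is required to equal the integrable $J_0$ on a neighborhood of $E^{crit}$, so the Sard--Smale perturbation you invoke cannot touch $J$ near $p^\star$; a bubble component may well lie largely in that frozen region. Your final paragraph correctly flags this as the crux, but the numerical count you wrote (with the smooth point constraint) is not the one that governs the problem, and the argument as written is incomplete at exactly the step the theorem is about.

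The paper's resolution is to delete the singular point and view a bubble touching $p^\star$ as a \emph{proper} $J$-holomorphic curve in $E_{z_\star}\setminus\{p^\star\}$. Near the puncture this is symplectomorphic to $T^*S^n$ minus its zero section, hence carries a negative cylindrical end modeled on the unit cosphere bundle $S^1(T^*S^n)$. Such a curve is asymptotic to a closed Reeb orbit, i.e.\ a closed geodesic on $S^n$, and the relevant index is an SFT-type index involving the Conley--Zehnder index of that orbit. Using $c_1=0$ (fiberwise Calabi--Yau) together with the fact that every closed geodesic on $S^n$ has Morse index at least $n-1$, the paper computes the virtual dimension of this moduli space to be at most $-2$ (Proposition~\ref{transversality} and Corollary~\ref{nobubbleatx0}); for generic $J$---perturbed away from $E^{crit}$, where every such curve necessarily extends---the moduli space is therefore empty. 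This SFT computation is the substantive content your sketch defers.
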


The proof of this theorem turns out to involve the compactification and
the Fredholm theory
in the setting of symplectic field theory \cite{EGH} in which we
regard a bubble touching a critical point $x_0$ as a proper
pseudo-holomorphic curve on $\C$ in a punctured fiber $E_{z_0} \setminus
\{x_0\}$. See \cite{fooo:chap10} and \cite{oh-zhu} for relevant
studies of such compactification and Fredholm theory.

Once this theorem is established, study of compactification of smooth
pseudo-holomorphic sections in the current case
is essentially the same as the case of
smooth Hamiltonian fibrations as studied in \cite{entov}, \cite{mcduff-sal04}.

The result in the present paper was first announced in Eliashberg's 60-th
Birthday Conference: ``New Challenges and Perspectives in
Symplectic Field Theory'' held at Stanford University, June 25 - 29, 2007
and then presented in various seminars and in conferences afterwards.
We apologize readers of a long delay of appearance of the present paper.

As always, we would like to thank Fukaya, Ohta and Ono for our years-long
collaboration on Lagrangian Floer theory, especially writing together the book
\cite{fooo:book} and the paper \cite{fooo:anchor} whose frameworks we adopt throughout
the present paper. We also thank Seidel for some useful e-mail communications
in the early stage of the present work, concerning compactness of
the moduli space of holomorphic sections of exact Lefschetz fibrations in
his paper \cite{seidel:triangle}.
Our thank also goes to KIAS and NIMS in Korea for their financial supports and hospitality
during our stay when a large chunk of writing of this paper was carried out.

\section{Basic facts on symplectic Dehn twists}
\label{sec:dehn}

In this section, we summarize basic facts on the Dehn twists in
the symplectic point of view which Seidel extensively studied in a
series of papers \cite{seidel:jdg,seidel:triangle}. We borrow the basic
facts on the symplectic Dehn twists from them with a slight
variation of the exposition that will be necessary for the purpose
of the present paper.

Assume that $L \subset (M,\omega)$ be a embedded Lagrangian sphere
together with an equivalence class $[f]$ of diffeomorphisms $f:S^2
\to L$: two $f_1, \, f_2$ are equivalent if and only if $f_2^{-1}f_1$ can be
deformed inside $Diff(S^n)$ to an element of $O(n+1)$. To any such
$(L,[f])$ Seidel associates a Dehn twist $\tau_L = \tau_{(L,[f])}
\in Symp(M)$ using a \emph{model Dehn twist} on the cotangent
bundle $T: = T^*S^n$.
Let $f: S^n \to L \subset M$ be a representative of the
equivalence class $[f]$. Denote by $T(r) \subset
T^*S^n$  the disc bundle of radius $r$ in terms of the
standard metric on the unit sphere $S^n = S^n(1) \subset
\R^{n+1}$. Identifying $T=TS^n$ with respect to the standard metric, one
consider the map
\be\label{eq:sigmat}
\sigma_t(u,v) = \left( \cos(t) u - \sin(t) \|u\|v, \cos(t) v +\sin(t)
\frac{u}{\|u\|} \right)
\ee
for $0 < t < \pi$. $\sigma_\pi$ is the antipodal involution $A(u,v) = (-u,-v)$.
Next we fix a function $R \in C^\infty(\R,\R)$ such that
\bea\label{eq:R}
\supp R & \subset & T(1)\nonumber \\
R(-t) & = & R(t) - t \quad \mbox{for }\, |t| \leq \frac{1}{2}.
\eea
Then we consider the re-scaled function
\be\label{eq:Rlambda}
R_\lambda(t) = \lambda R\left(\frac{t}{\lambda}\right)
\ee
for all $0 < \lambda \leq 1$. Then $R_\lambda$ is supported in $T(\lambda)$ and
satisfies
\be
R_\lambda(-t) = R_\lambda(t) - t \quad \mbox{for }\, |t| \leq \frac{\lambda}{2}.
\ee
Insertion of one-parameter $\lambda$ in our choice of $R$ is deliberate
which will be later explicitly related to the parameter that enters in the Lagrangian
surgery.

The following lemma is a slight variation of
Lemma 1.8 \cite{seidel:triangle} whose proof is referred thereto.

\begin{lem} Let $\mu:T \setminus T(0) \to \R$ be the length function
$\mu(u,v) = \|v\|$ and $H_\lambda = R_\lambda\circ \mu$ on $T \setminus T(0)$.
Then $\phi_{H_\lambda}^{2\pi}$ extends smoothly over $T(0)$ to a
symplectic diffeomorphisms $\phi_\lambda$ of $T$. The function
$K_\lambda = 2\pi(R'_\lambda\circ \mu - R\circ \mu)$ also extends smoothly
over $T(0)$, and satisfies
$$
\phi_\lambda^*\theta_T - \theta_T = dK_\lambda.
$$
These $\phi_\lambda$ are called \emph{model Dehn twists}.
\end{lem}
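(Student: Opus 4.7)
The plan is to reduce both assertions to explicit computations on $T = T^*S^n$ that exploit the symmetry (2.3) of $R_\lambda$. On $T \setminus T(0)$, the Hamiltonian vector field of $H_\lambda = R_\lambda \circ \mu$ equals $R'_\lambda(\mu)\, X_\mu$, where $X_\mu$ generates the normalized geodesic flow $\sigma_t$; in particular $\phi^{2\pi}_{H_\lambda}$ preserves each level set $\{\mu = c\}$ and acts on it as $\sigma_{2\pi R'_\lambda(c)}$. The problem is therefore purely local near $T(0)$, and I would start from (2.3) to write $R_\lambda(t) = f(t) + t/2$ on $|t|\le\lambda/2$ with $f$ even and smooth; differentiating gives $2\pi R'_\lambda(t) = \pi + g(t)$ with $g$ odd smooth and $g(0)=0$.

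For the smooth extension across $T(0)$, the key identity is $\sigma_{\pi+s} = A\circ \sigma_s$, where $A(u,v) = (-u,-v)$ is the antipodal involution; this follows by inspection from (2.1) together with $\sigma_\pi = A$. Combined with the previous decomposition, this yields $\phi^{2\pi}_{H_\lambda} = A\circ \sigma_{g(\mu)}$ on $\{0<\mu<\lambda/2\}$. Because $g$ is odd and smooth near $0$, each of the functions $g(\mu)/\mu$, $\sin g(\mu)/\mu$, and $(1-\cos g(\mu))/\mu^2$ is a smooth function of $\mu^2$, and $\mu^2$ extends smoothly over $T(0)$ as the squared length function. Substituting into (2.1) then shows that $\phi^{2\pi}_{H_\lambda}$ extends smoothly across $T(0)$ with value $A$ on the zero section. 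Symplecticity of the extension $\phi_\lambda$ follows by continuity from the dense open subset where the flow is Hamiltonian.

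For the primitive, I would invoke Cartan's magic formula. Since $\mathcal{L}_{X_{H_\lambda}}\theta_T = d(\iota_{X_{H_\lambda}}\theta_T) - dH_\lambda$ and both summands on the right are flow-invariant (each is a function of $\mu$ alone, and $\mu$ is flow-preserved), integrating from $0$ to $2\pi$ yields
\[
\phi_\lambda^*\theta_T - \theta_T \;=\; d\bigl(2\pi(\iota_{X_{H_\lambda}}\theta_T - H_\lambda)\bigr)
\]
on $T\setminus T(0)$. The standard cotangent-bundle identity $\theta_T(X_\mu) = \mu$ then gives $\iota_{X_{H_\lambda}}\theta_T = \mu\, R'_\lambda(\mu)$, so the primitive equals $K_\lambda = 2\pi(\mu R'_\lambda(\mu) - R_\lambda(\mu))$, which is how I would read the expression displayed in the statement. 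To verify that $K_\lambda$ extends smoothly across $T(0)$, I again substitute $R_\lambda(t) = f(t)+t/2$ to compute $\mu R'_\lambda(\mu) - R_\lambda(\mu) = \mu f'(\mu) - f(\mu)$, an even smooth function of $\mu$ and hence a smooth function of $\mu^2$ near the zero section. The identity $\phi_\lambda^*\theta_T - \theta_T = dK_\lambda$ then holds on all of $T$ by continuity.

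The essential obstacle is the smooth extension in the first claim: one must see that the apparent singular behavior of the unit-speed geodesic flow as $\mu\to 0$ is precisely cancelled by the symmetry (2.3) of $R_\lambda$. The decomposition into an even part plus the linear term $t/2$, together with the identity $\phi^{2\pi}_{H_\lambda} = A\circ \sigma_{g(\mu)}$ with $g$ odd, encodes exactly this cancellation; everything else reduces to routine verification.
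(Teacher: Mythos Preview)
Your argument is correct and is essentially the proof of Lemma~1.8 in \cite{seidel:triangle}, to which the paper defers; the decomposition $R_\lambda(t)=f(t)+t/2$ with $f$ even, the factoring $\phi^{2\pi}_{H_\lambda}=A\circ\sigma_{g(\mu)}$ with $g$ odd, and the Cartan computation giving $K_\lambda=2\pi(\mu R'_\lambda(\mu)-R_\lambda(\mu))$ are exactly Seidel's steps. One minor remark: you do not actually need the term $(1-\cos g(\mu))/\mu^2$, since $\cos g(\mu)$ is already even in $\mu$ and hence smooth in $\mu^2$; only $\sin g(\mu)/\mu$ requires the oddness of $g$.
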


The model Dehn twists, denoted by $\tau_\lambda$, have the explicit formula
\be\label{eq:taulamda}
\tau_\lambda(y) = \begin{cases} \sigma_{2\pi R_\lambda'(\mu(y))}(y) \quad &
y \in T(\lambda) \setminus T(0) \\
A(y) \quad & y \in T(0)
\end{cases}
\ee
where the angle of rotation goes from $2\pi R_\lambda'(0) = \pi$
to $2\pi R_\lambda'(\lambda) = 0$:
Note that as $\lambda \to 0$, we have
$$
2\pi R_\lambda'(\mu(y)) = 2\pi R'\left(\frac{\mu(y)}{\lambda}\right)
$$
and it changes from $2\pi R'(0) = \pi$ to $2\pi R'(1) = 0$.

Now we take a Darboux-Weinstein chart, or a symplectic embedding $\iota:
T(1) \to M$ such that
$$
\iota|_{o_{T^*S^n}} = f, \quad \iota^*\omega = \omega_T (= -d\theta_T)
$$
for a representative of the framed Lagrangian sphere $(L,[f])$.
Take a model Dehn twist $\tau$ supported in the interior of $T(1)$.

We denote $U = \operatorname{im} \iota$ and fix the Darboux neighborhood
once and for all, and consider the one-parameter family of Dehn
twists $\tau_r$ any of which we denote by $\tau_L$
\be\label{eq:tauLf}
\tau_L = \tau_{(L,[f];r)} = \begin{cases} \iota \circ \tau_r \circ \iota^{-1}
\quad & \mbox{on }\, \operatorname{im}(\iota) = U \\
id \quad & \mbox{elsewhere}.
\end{cases}
\ee
We quote the following basic fact on the Dehn twist $\tau_{(L,[f])}$
from \cite{seidel:triangle} with a slight variation of the statements.

\begin{prop}[Proposition 1.11 \cite{seidel:triangle}]
Let $(L,[f])$ be a framed Lagrangian sphere in $M$.
There is a one-parameter family of
Lefschetz fibrations $(E^L_\lambda,\pi^L_\lambda) \to D(\lambda)$
together with an isomorphism $\phi^L_\lambda: E_\lambda^L \to M$
of symplectic manifolds, such that
\begin{enumerate}
\item Consider the re-scaling map $R_\lambda: D(\lambda) \to D(1)$ defined by
$z \mapsto \frac{z}{\lambda}$. Then
$$
(R_\lambda)^*E^L_1 = E^L_\lambda.
$$
\item If $\rho^L_\lambda$ is the symplectic
monodromy around $\del \overline D(\lambda)$, then $\phi^L_\lambda \circ
\rho^L \circ (\phi^L_\lambda)^{-1}$ is a Dehn twist along $(L,[f])$.
\item There exists a decomposition
$$
E^L = E \cup \overline D(\lambda) \times (M \setminus \iota(T(\lambda)) \setminus
V))
$$
such that $E$ is the standard fibration $q: \C^{n+1} \to \C$
defined by
$$
q(z_1, \cdots, z_{n+1}) = z_1^2 + \cdots + z_{n+1}^2.
$$
\end{enumerate}
We denote any of these maps by $\tau_L$.
\end{prop}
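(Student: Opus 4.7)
Proof proposal. The plan is to construct $E^L_\lambda$ by gluing the standard Morse fibration $q: \C^{n+1} \to \C$ near the origin to the trivial fibration $\overline{D(\lambda)} \times (M \setminus \iota(T(\lambda)))$ away from $L$, using the Darboux--Weinstein chart $\iota$ to identify overlapping regions, and then to verify that the resulting symplectic monodromy is $\tau_L$.

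First I would analyze the local Morse model $q(z) = z_1^2 + \cdots + z_{n+1}^2$. Writing $z = u + iv$ with $u,v \in \R^{n+1}$, the fiber $q^{-1}(\lambda)$ for real $\lambda > 0$ meets the real slice $\R^{n+1}$ in a sphere $S^n(\sqrt{\lambda})$, which is Lagrangian in the fiber and will play the role of the vanishing cycle. A standard computation identifies $q^{-1}(\lambda)$ symplectically with a disc bundle in $T^*S^n$ carrying this vanishing cycle to the zero section, and identifies the symplectic parallel transport of $q$ around $\partial D(\lambda)$ with a Hamiltonian flow of the form $\phi^{2\pi}_{R_\lambda \circ \mu}$ after an appropriate choice of profile $R$. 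In particular, this monodromy equals the model Dehn twist $\tau_\lambda$ of (\ref{eq:taulamda}). The $\C^*$-action $(w, z) \mapsto (tw, \sqrt{t}\, z)$ is an equivariance of $q$ and will yield the rescaling property (1).

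Next I would perform the gluing. Fix an auxiliary open set $V \subset M$ with $\iota(T(\lambda)) \subset V \subset \iota(T(1))$ for all small $\lambda$. Outside $V$ one has the product fibration $\overline{D(\lambda)} \times (M \setminus \iota(T(\lambda)) \setminus V)$ with product symplectic form; inside a neighborhood of the vanishing cycle one pastes in the Morse model. On the annular overlap both pieces are isomorphic to a product of $\overline{D(\lambda)}$ with $\iota(T(1) \setminus T(\lambda))$: on the trivial side this is tautological, on the Morse side it comes from the symplectic parallel transport combined with the identification $\iota$ of the asymptotic fiber with a neighborhood of $L$. A standard interpolation of primitives of the two symplectic forms on this annulus, combined with a partition of unity, produces a closed two-form $\Omega^L_\lambda$ on $E^L_\lambda$ that is fiberwise symplectic, giving a Lefschetz Hamiltonian fibration. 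The fiberwise identification $\phi^L_\lambda$ is obtained by combining parallel transport to a chosen base fiber with $\iota$ on the Morse side.

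Finally I would verify the three properties. Property (1) reduces to the $\C^*$-equivariance above: pulling $E^L_1 \to D(1)$ back under $R_\lambda(z) = z/\lambda$ rescales the Morse piece by the above action and is invisible on the trivial piece. Property (2) splits: the monodromy of the trivial piece around $\partial \overline{D(\lambda)}$ is the identity, while the monodromy of the Morse piece is $\tau_\lambda$ as noted above, and conjugating by $\phi^L_\lambda$ recovers $\tau_L$ in the form (\ref{eq:tauLf}). Property (3) is built into the definition. The main obstacle will be matching the symplectic parallel transport of $q$ with the model Dehn twist $\tau_\lambda$ on the nose, not merely up to a Hamiltonian isotopy fixed on the boundary of $T(\lambda)$: this requires choosing the profile function $R$ so that $R_\lambda \circ \mu$ generates exactly the transport Hamiltonian coming from the quadratic $q$, including the correct asymptotic phase $2\pi R'_\lambda(0) = \pi$ coming from the antipodal identification of the vanishing cycle. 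A secondary difficulty is ensuring that the interpolation of the two symplectic forms on the overlap remains fiberwise nondegenerate, which constrains how small $\lambda$ must be taken relative to the size of the Darboux chart $\iota(T(1))$. Once these choices are made compatibly, the remaining verifications are formal.
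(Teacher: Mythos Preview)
The paper does not give its own proof of this proposition; it explicitly quotes it from \cite{seidel:triangle} (``We quote the following basic fact on the Dehn twist $\tau_{(L,[f])}$ from \cite{seidel:triangle} with a slight variation of the statements''), and the later variant in subsection~\ref{subsec:vanishing} is likewise dispatched by saying the proof follows Seidel's by stripping the exactness requirements. Your outline is precisely the standard construction carried out in \cite{seidel:triangle}: build the local model from $q$, identify its fiber with $T^*S^n$ and its monodromy with the model twist, glue to the trivial fibration over the complement via the Darboux--Weinstein chart, and read off the rescaling from the $\C^*$-equivariance of $q$. So there is nothing to compare---your proposal is the referenced proof, and the obstacles you flag (matching the profile $R$ to the parallel transport Hamiltonian, and keeping the interpolated form fiberwise nondegenerate) are exactly the points Seidel handles.
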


An important point on this Dehn twist $\tau_{(L,[f];\lambda)}$ is that
its support can be put into a Darboux neighborhood of the given
Lagrangian sphere $L$ which can be made as close as to $L$ by choosing
$\lambda>0$ small, whose derivative can be controlled: One can choose $R$ so that
for some $\delta > 0$ we have
\bea\label{eq:Rdelta}
R'(t) \geq 0 \quad &{}& \mbox{for all $t \geq 0$}\\
R''(t) < 0 \quad &{}& \mbox{for all $t \geq 0$ such that $R'(t) \geq \delta$}
\eea
and then consider $R_\lambda$ for any sufficiently small $\lambda > 0$.
According to Seidel's terminology, the corresponding Dehn twist is
\emph{$\delta$-wobbly}.

\section{Action, grading and anchored Lagrangian submanifolds}
\label{sec:anchored}

In this section, we consider the general Lagrangian submanifolds treated as
in \cite{fooo:book}. For the fine chain level analysis of the Floer
complex, it is essential to analyze the \emph{$\R$-filtration}
on $CF(L_0,L_1)$ that is provided by
the action functional $\CA$ on $\Omega(L_0,L_1)$. This action functional
is not single valued on $\Omega(L_0,L_1)$ itself even for the pair $(L_0,L_1)$
of Calabi-Yau Lagrangian branes, but single-valued only on some covering space.
For the purpose of studying the Fukaya category and
carrying out various constructions in the Floer homology in
a coherent manner, we need to consider
a whole collection of Lagrangian submanifolds and assign these
auxiliary data to each pair of the given collection in a consistent way.
For this purpose, \cite{fooo:book,fooo:anchor} uses the notion of
\emph{anchored} Lagrangian submanifolds. This auxiliary data
is important later for consistency of definition of
action functionals and in turn for the analysis of \emph{thick-thin} decomposition of the
various Floer moduli spaces entering in the construction of
boundary map, chain map, chain-homotopy and pants products.
The above mentioned covering space is defined in terms of a reference path $\ell_0$
chosen in $\Omega(L_0,L_1)$, which we denote by $\widetilde\Omega(L_0,L_1;\ell_0)$.

Furthermore to provide an absolute
grading to each critical point $[p,w]$ of the action functional
$\CA: \widetilde\Omega(L_0,L_1) \to \R$, we also need to fix a
section $\lambda$ of $\ell_0^*\Lambda(M,\omega)$ where $\Lambda(M,\omega)$ is the
bundle of Lagrangian Grassmanians on $(M,\omega)$.

From now on, we denote either $(L,\gamma)$ or $(L,\gamma,\lambda)$ just by
$\CL$ depending on the circumstances.

\subsection{The $\Gamma$-equivalence}
\label{subsec:gamma-equiv}

We consider a covering space of
$\Omega(L_0, L_1;\ell_0)$ by modding out the space of
paths in $\Omega(L_0,L_1;\ell_0)$ by an equivalence relation,
which is weaker than the homotopy.
The deck transformation group of this covering space wis {\it abelian} by
construction.
\par
Note that when we are given two pairs $(\ell,w)$ and $(\ell, w')$
from $\Omega(L_0, L_1;\ell_0)$, the concatenation
$$
\overline w \# w': [0,1] \times [0,1] \to M
$$
defines a loop $c: S^1 \to \Omega(L_0,L_1;\ell_0)$.
One may regard this loop as a map
$
C: S^1 \times [0,1] \to M
$
satisfying the boundary condition
$
C(s,0) \in L_0$, $C(s,1) \in L_1.
$
Obviously the symplectic area of $C$, denoted by
$$
I_\omega(c) = \int_C\omega
$$
depends only on the homotopy class of $C$
and so defines a homomorphism on $\pi_1(\Omega(L_0,L_1;\ell_0))$,
which we also denote by
$$
I_\omega: \pi_1(\Omega(L_0,L_1;\ell_0)) \to \R.
$$
Next we note that for the map $C: S^1 \times [0,1] \to M$
satisfying the given Lagrangian boundary condition, it associates a symplectic
bundle pair $(\mathcal V,\lambda)$ defined by
$$
\mathcal V_C = C^*TM, \, \lambda_C = c_0^*TL_0 \sqcup c_1^*TL_1
$$
where $c_i: S^1 \to L_i$ is the map given by
$c_i(s) = C(s,i)$ for $i = 0, \, 1$. This allows us to define another
homomorphism
$$
I_\mu: \pi_1 (\Omega(L_0, L_1), \ell_0) \rightarrow \Z;
\quad I_\mu(c) = \mu(\mathcal V_C,\lambda_C)
$$
where $\mu(\mathcal V_C,\lambda_C)$ is the Maslov index of the bundle pair
$(\mathcal V_C,\lambda_C)$. See section 2.2.1 \cite{fooo:book} for details.
\par
Using the homomorphisms $I_\mu$ and $I_\omega$, we define an
equivalence relation $\sim$ on the set of all pairs $(\ell, w)$. For given such pair $w,
\, w'$, we denote by $\overline w \#w'$ the concatenation of
$\overline w$ and $w'$
along $\ell$, which defines a loop in $\Omega(L_0,L_1;\ell_0)$
based at $\ell_0$.

\begin{defn} We say that $(\ell,w)$ is
$\Gamma$-equivalent to $(\ell, w')$ and write $(\ell, w) \sim (\ell, w')$
if the following conditions are satisfied
$
I_\omega(\overline w \# w') = 0 = I_\mu(\overline w \# w')
$.
We denote the set of equivalence classes $[\ell, w]$ by
$
\widetilde \Omega (L_0, L_1;\ell_0)
$
and call the {\it Novikov covering space}.
\end{defn}

There is a canonical lifting of $\ell_0
\in \Omega(L_0,L_1;\ell_0)$ to
$\widetilde\Omega(L_0,L_1;\ell_0)$: this is just
$
[\ell_0,\widetilde\ell_0] \in \widetilde\Omega(L_0,L_1;\ell_0)
$
where $\widetilde\ell_0$ is the map $\widetilde \ell_0: [0,1]^2
\to M$ with $\widetilde\ell_0(s,t)= \ell_0(t)$. In this way,
$\widetilde\Omega(L_0,L_1;\ell_0)$ also has a natural base point
which we suppress from the notation.
\par
We denote by $\Pi(L_0,L_1;\ell_0)$ the group of deck transformations
of the covering space $\widetilde\Omega(L_0,L_1;\ell_0)
\to \Omega(L_0,L_1;\ell_0)$. It is easy to see that
the isomorphism class of $\Pi(L_0,L_1;\ell_0)$ depends only on the
connected component containing $\ell_0$.
\par
The two homomorphisms
$I_\omega$ and $I_\mu$ push down to homomorphisms
$$
E: \Pi(L_0,L_1;\ell_0) \to \R, \qquad \mu: \Pi(L_0,L_1;\ell_0) \to \Z
$$
defined by
$$
E(g) = I_\omega[C], \quad \mu(g) = I_\mu[C]
$$
for any map $C: S^1 \times [0,1] \to M$ representing
the class $g \in \Pi(L_0,L_1;\ell_0)$. The group $\Pi(L_0,L_1;\ell_0)$ is an abelian group.

We now define the Novikov ring $\Lambda (L_0,L_1;\ell_0)$ associated the
abelian covering
$
\widetilde \Omega(L_0,L_1;\ell_0) \to \Omega(L_0,L_1;\ell_0)
$
as a completion of the group ring $R[\Pi(L_0,L_1;\ell_0)]$.
Here $R$ is a commutative ring with unit.
\begin{defn} $\Lambda_k^R (L_0,L_1;\ell_0)$ denotes the set of
all (infinite) sums $$\sum_{g\in \Pi(L_0,L_1;\ell_0) \atop \mu (g) = k} a_g
[g]$$ such that $a_g \in R$ and for each $C$, the set
$
\{ g \in \Pi(L_0,L_1;\ell_0) \mid E(g) \leq C, \,\,a_g \not = 0\}
$
is of finite order. We put
$$
\Lambda^R (L_0, L_1;\ell_0) = \bigoplus_k \Lambda^R_k (L_0, L_1;\ell_0) .
$$
\end{defn}

The ring structure on $\Lambda^R (L_0, L_1;\ell_0)$ is defined by
the convolution product
$$
\left (
\sum_{g \in \Pi(L_0,L_1;\ell_0)} a_g [g] \right ) \cdot
\left (
\sum_{g \in \Pi(L_0,L_1;\ell_0)} b_g [g] \right )
= \sum_{g_1, g_2 \in \Pi(L_0,L_1;\ell_0)} a_{g_1} b_{g_2} [g_1g_2].
$$
It is easy to see that the term in the right hand side is indeed
an element in $\Lambda^R (L_0, L_1;\ell_0)$. Thus $\Lambda^R (L_0, L_1;\ell_0) = \oplus_k
\Lambda_k^R (L_0, L_1;\ell_0)$ becomes a graded ring under this
multiplication.  We call this graded ring the {\it Novikov ring} associated
to the pair $(L_0,L_1)$ and the connected component containing $\ell_0$.
\par
We also use the universal Novikov ring $\Lambda_{\text{\rm nov}}$ in this paper.
We recall its definition here.
An element of $\Lambda_{\text{\rm nov}}$ is a formal sum
$\sum a_iT^{\lambda_i}e^{\mu_i}$ with $a_i \in \C$, $\lambda_i \in \R$,
$\mu_i \in \Z$ such that $\lambda_i \leq \lambda_{i+1}$ and $\lim_{i\to \infty} \lambda_i = \infty$.
$T$ and $e$ are formal parameters. We define
a valuation $\frak v: \Lambda_{\text{\rm nov}} \to \R_{\ge 0}$ defined by
$$
\frak v\left(\sum_{i=1}^\infty a_iT^{\lambda_i}e^{\mu_i}\right) = \lambda_1.
$$
We denote the corresponding `valuation ring' by
$$
\Lambda_{0,\text{\rm nov}} = \left\{ \sum_{i=1}^\infty a_iT^{\lambda_i}e^{\mu_i} \in \Lambda_{\text{\rm nov}}
\mid \lambda_i \geq 0 \right\}.
$$
It carries a unique maximal ideal consisting of
$\sum a_iT^{\lambda_i}e^{\mu_i}$ with $\lambda_i > 0$ for all $i$ which we denote
by $\Lambda_{0,\text{\rm nov}}^+$. We have a natural embedding
$$
\Lambda^R (L_0, L_1;\ell_0) \to \Lambda_{\text{\rm nov}}
$$
given by
\be\label{eq:nov-embedding}
\sum_{g \in \Pi(L_0,L_1;\ell_0)} b_g [g] \mapsto
\sum_{g \in \Pi(L_0,L_1;\ell_0)} b_g \, T^{\omega(g)} e^{\mu(g)/2}.
\ee
\par
Now for a given pair $(\ell,w)$, we define the {\it action functional}
$$
\mathcal A: \widetilde \Omega(L_0,L_1;\ell_0) \to \R
$$
by the formula
$$
\mathcal A(\ell,w) = \int w^*\omega.
$$
It follows from the definition of $\Pi(L_0,L_1;\ell_0)$ that
the integral depends only on the $\Gamma$-equivalence class $[\ell,w]$
and so pushes down to a well-defined functional on the
covering space $\widetilde \Omega(L_0,L_1;\ell_0)$.

\begin{lem} The set
$
Cr(L_0,L_1;\ell_0)
$
of critical points of $\mathcal A$ consists of the pairs of the type
$
[\ell_p,w]
$
where $\ell_p$ is the constant path with $p \in L_0\cap L_1$
and $w$ is as in $(2.2.2)$.
$Cr(L_0,L_1;\ell_0)$ is invariant under the action of
$\Pi(L_0,L_1;\ell_0)$ and so forms a principal bundle over a subset of $L_0\cap L_1$
with its fiber isomorphic to $\Pi(L_0,L_1;\ell_0)$.
\end{lem}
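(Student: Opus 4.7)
The plan is to identify the critical points by the standard first variation computation for the symplectic action, and then read off the $\Pi(L_0,L_1;\ell_0)$-equivariance from the very definition of the Novikov covering. First I would observe that the functional $\mathcal A(\ell,w) = \int w^*\omega$ descends from the space of pairs $(\ell,w)$ to $\widetilde\Omega(L_0,L_1;\ell_0)$ precisely because, by definition, two $\Gamma$-equivalent representatives differ by a loop $c = \overline w \# w'$ with $I_\omega(c) = 0$. Hence the notion of critical point on $\widetilde\Omega(L_0,L_1;\ell_0)$ is unambiguous and can be computed downstairs using any representative $(\ell,w)$.

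For the first variation, I would take a smooth variation $w_s: [0,1]^2 \to M$ with fixed left edge $w_s(0,\cdot) = \ell_0$, moving right edge $w_s(1,\cdot) = \ell_s(\cdot)$, and boundary conditions $w_s(\cdot,0) \subset L_0$, $w_s(\cdot,1) \subset L_1$. Writing $\xi = \partial_s w_s|_{s=0}$ and using $d\omega = 0$ together with Stokes' theorem, we obtain
\[
\frac{d}{ds}\bigg|_{s=0}\mathcal A(\ell_s,w_s) = \int_{\partial [0,1]^2} w^*(\iota_\xi\omega).
\]
The contribution from $t=0$ and $t=1$ vanishes because $L_0$ and $L_1$ are Lagrangian (so $\omega$ annihilates pairs of vectors tangent to them, and $\xi$ is tangent to $L_i$ there), the contribution from $s=0$ vanishes because $\xi(0,\cdot) \equiv 0$, and what remains is
\[
\int_0^1 \omega\bigl(\dot\ell(t),\delta\ell(t)\bigr)\,dt,
\]
with $\delta\ell(t) = \xi(1,t)$ an arbitrary vector field along $\ell$ satisfying $\delta\ell(0)\in T_{\ell(0)}L_0$, $\delta\ell(1)\in T_{\ell(1)}L_1$. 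The nondegeneracy of $\omega$ then forces $\dot\ell \equiv 0$ at a critical point, so $\ell = \ell_p$ for some $p \in L_0\cap L_1$, and $w$ can be any capping half-disc of the form required in (2.2.2).

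For the second assertion, I would note that $\Pi(L_0,L_1;\ell_0)$ acts on $\widetilde\Omega(L_0,L_1;\ell_0)$ as the deck transformation group of the Novikov covering, and this action preserves the underlying path $\ell$ (it changes only the homotopy class of the capping $w$ by precomposition with a representing cylinder $C$). In particular it preserves the condition $\dot\ell \equiv 0$, so the subset $Cr(L_0,L_1;\ell_0)$ is $\Pi(L_0,L_1;\ell_0)$-invariant. The projection $[\ell_p,w] \mapsto p$ is well defined and $\Pi$-invariant, and by freeness of the deck action on the covering it is free on $Cr(L_0,L_1;\ell_0)$; its image is exactly the set of $p \in L_0\cap L_1$ whose constant path $\ell_p$ lies in the connected component of $\ell_0$ in $\Omega(L_0,L_1)$. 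I expect no genuine obstacle: the only mild subtlety is to justify that $\Gamma$-equivalent representatives yield the same critical point equation, which is handled by the remark that the variation formula is purely local in $\ell$ and therefore insensitive to the choice of capping within a $\Gamma$-class.
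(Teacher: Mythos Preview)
The paper states this lemma without proof, treating it as a standard fact borrowed from the general setup of Lagrangian Floer theory (cf.\ \cite{fooo:book}). Your argument is correct and is exactly the standard first-variation computation one would give: Stokes' theorem applied to the closed form $\omega$ on the variation $w_s$ reduces the derivative of $\mathcal A$ to the boundary integral $\int_0^1 \omega(\dot\ell,\delta\ell)\,dt$, and nondegeneracy of $\omega$ together with the Lagrangian boundary condition forces $\dot\ell\equiv 0$. Your treatment of the $\Pi(L_0,L_1;\ell_0)$-action is also correct; the only point worth making explicit is transitivity on fibers, which follows because any two cappings $w,w'$ of the same constant path $\ell_p$ produce a cylinder $\overline w\# w'$ representing an element of $\Pi(L_0,L_1;\ell_0)$ that carries $[\ell_p,w]$ to $[\ell_p,w']$.
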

We put
$$
Cr(L_0,L_1) = \bigcup_{\ell_{0,i}} Cr(L_0,L_1;\ell_{0,i})
$$
where $\ell_{0,i}$ runs over the set of
base points of connected components of $\Omega(L_0.L_1)$.
\par
Next, we assign an {\it  absolute} Morse index to each critical
point of $\mathcal A$. In general, assigning such an absolute
index is not a trivial matter because the obvious Morse index of
$\mathcal A$ at any critical point is infinite.
For this purpose, we will use the
Maslov index of certain bundle pair naturally associated to
the critical point $[\ell_p,w] \in Cr(L_0,L_1;\ell_0)$.
\par
We call this Morse index of $[\ell_p,w]$ the {\it  Maslov-Morse index}
(relative to the base path $\ell_0$) of the
critical point.
The definition of the index will somewhat resemble that of
$\mathcal A$. However to define this, we also need to fix a section
$\lambda^0$ of $\ell_0^*\Lambda (M)$ such that
$$
\lambda^0(0) = T_{\ell_0(0)}L_0, \quad \lambda^0(1) =
T_{\ell_0(1)}L_1.
$$
Here $\Lambda(M)$ is the bundle of Lagrangian Grassmanians of $TM$
$$
\Lambda(M) = \bigcup_{p\in M}\Lambda(T_pM)
$$
where $\Lambda(T_pM)$ is the set of Lagrangian subspaces of the
symplectic vector space $(T_pM, \omega_p)$.
\par
Let $[\ell_p,w] \in Cr(L_0,L_1;\ell_0) \subset
\widetilde\Omega(L_0,L_1;\ell_0)$ be an element whose projection
corresponds to the intersection point $p \in L_0\cap L_1$.
\par
As before we associate a symplectic bundle pair $(\mathcal V_w,\lambda_w)$
over the square $[0,1]^2$, which will be defined uniquely up to
the homotopy. We first choose $\mathcal V_w = w^*TM$. To define $\lambda_w$,
let us choose
a path $\alpha^p: [0,1] \to \Lambda(T_pM,\omega_p)$ satisfying
$$
\left\{\aligned
\alpha^p(0) & = T_pL_0, \, \alpha^p(1) = T_pL_1 \subset T_pM, \\
(\alpha^p)(t) & \oplus T_pL_0 = T_pM ,\\
\alpha^p(t) &\in U_0(T_pL_0) \quad \text{for small $t$},
\endaligned\right.
$$
where $U_0(T_pL_0)$ is as above.

Then we consider a continuous Lagrangian subbundle
$\lambda_w \to \del [0,1]^2$ of $\mathcal V|_{\del [0,1]^2}$ by the following
formula: the fiber at each point of $\del [0,1]^2$ is given as
$$\left\{
\aligned
\lambda_w(s,0) & =  T_{w(s,0)}L_0, \quad
\lambda_w(1,t) = \alpha^p(t), \\
\lambda_w(s,1) & =  T_{w(s,1)}L_1, \quad
\lambda_w(0,t) = \lambda^0(0,t) .
\endaligned
\right.
$$
It follows  that the homotopy type of
the bundle pair constructed as above does not depend on the choice of
$\alpha^p$ either.

\begin{defn}
We define the {\it Maslov-Morse index} of $[\ell_p,w]$
(relative to $\lambda^0$) by
$$
\mu([\ell_p,w];\lambda^0) = \mu(\mathcal V_w,\lambda_w) .
$$
\end{defn}

\subsection{Anchored Lagrangian submanifolds}
\label{subsec:anchored}

Now we would like to generalize this construction for a chain
$$
\CL= (L_0,\cdots, L_k)
$$
of more than two Lagrangian submanifolds, i.e., with $k \geq 2$.
\par
To realize this purpose, we need the notion of \emph{anchors}
of Lagrangian submanifolds. In this subsection, we briefly recall the definition of anchored Lagrangian
submanifolds introduced in \cite{fooo:anchor}.

\begin{defn}\label{defn:anchored}
Fix a base point $y$ of ambient symplectic manifold $(M,\omega)$.
Let $L$ be a Lagrangian submanifold of $(M,\omega)$. We define an
\emph{anchor} of $L$ to $y$ is a path $\gamma:[0,1] \to M$ such that
$$
\gamma(0) = y, \, \gamma(1) \in L.
$$
We call a pair $(L,\gamma)$ an \emph{anchored} Lagrangian submanifold.
\end{defn}
It is easy to see that any homotopy class of path in
$\Omega(L,L')$ can be realized by a path that passes through the
given point $y$. We denote the set of homotopy classes of the anchors
$\gamma$ to $y \in M$ by $\pi_1(y,L)$.

The following lemma is easy to check

\begin{lem}\label{lem:torsor} Suppose that $L$ is connected.
Then set $\pi_1(y,L)$ of homotopy classes relative to the ends is a
principal homogeneous space of $\pi_1(M,L)$, i.e., it is a
$\pi_1(M,L)$-torsor. We  call an element of $\pi_1(y,L)$ an \emph{anchor
class} of $L$ relative to $y$.
\end{lem}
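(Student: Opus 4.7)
The plan is to construct the natural concatenation action of $\pi_1(M,L)$ on $\pi_1(y,L)$ and verify it is free and transitive. Fix $x_0 \in L$ and a reference anchor class $[\gamma_0] \in \pi_1(y,L)$ whose endpoint equals $x_0$ (which is possible by connectedness of $L$, together with the freedom to alter an anchor's $L$-endpoint by a path inside $L$). I interpret $\pi_1(M,L)$ as the based relative homotopy set $\pi_1(M, L, x_0)$ of paths $\sigma:[0,1]\to M$ with $\sigma(0)=x_0$ and $\sigma(1)\in L$, where the allowed homotopies fix $\sigma(0)=x_0$ and keep $\sigma(1)\in L$ (but not at a fixed point).

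The central construction is the map
\begin{equation*}
\Phi: \pi_1(M, L, x_0) \longrightarrow \pi_1(y, L), \qquad [\sigma] \longmapsto [\gamma_0 \ast \sigma],
\end{equation*}
together with its candidate inverse $\Psi: [\gamma] \mapsto [\bar\gamma_0 \ast \gamma]$. I would first verify that $\Phi$ and $\Psi$ are well-defined on homotopy classes: a homotopy of $\sigma$ through admissible paths induces a homotopy of $\gamma_0 \ast \sigma$ through admissible anchors by leaving $\gamma_0$ untouched on its half of the parameter interval, and vice versa. The compositions $\Psi\circ\Phi$ and $\Phi\circ\Psi$ are the identity because $\gamma_0 \ast \bar\gamma_0$ (respectively $\bar\gamma_0 \ast \gamma_0$) contracts to the constant path at $y$ (respectively $x_0$) through the standard reparametrization homotopy. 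Thus $\Phi$ is a bijection.

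Transporting the obvious concatenation action of $\pi_1(M, L, x_0)$ on itself across $\Phi$ yields a free transitive action of $\pi_1(M,L)$ on $\pi_1(y,L)$, which is the torsor structure. Transitivity can also be seen directly: given two anchors $\gamma_1,\gamma_2$, first use connectedness of $L$ to slide their $L$-endpoints to $x_0$, and then the class $[\bar\gamma_1 \ast \gamma_2]$, viewed as an element of $\pi_1(M,L,x_0)$, sends $[\gamma_1]$ to $[\gamma_2]$; freeness follows from the bijectivity of $\Phi$.

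The only technical point---and the main, albeit small, obstacle---is correctly handling the relative nature of the homotopy: the $L$-endpoint is allowed to float in $L$ rather than being fixed, so concatenation must be shown to respect this equivalence relation. This is standard path-space bookkeeping once one observes that the homotopies involved can be extended by constancy over the $\gamma_0$-half of the reparametrized interval. Connectedness of $L$ is used essentially to align the $L$-endpoints of arbitrary anchors with the chosen base point $x_0$, and the conclusion would visibly fail on components of $L$ not accessible from $x_0$.
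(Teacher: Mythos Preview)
Your argument via the bijection $\Phi:\pi_1(M,L,x_0)\to\pi_1(y,L)$ is correct and is essentially the same concatenation idea the paper uses; the paper's proof is just terser, declaring freeness and transitivity ``obvious by definition'' where you spell out $\Phi$ and $\Psi$.

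There is one point the paper handles that you skip. The relative set $\pi_1(M,L,x_0)$ is in general only a pointed set, not a group, so your phrase ``the obvious concatenation action of $\pi_1(M,L,x_0)$ on itself'' is not meaningful as stated: two paths both starting at $x_0$ and ending somewhere in $L$ cannot be concatenated. The paper deals with this up front by observing that connectedness of $L$ makes $\pi_1(M)\to\pi_1(M,L)$ surjective (from the exact sequence of the pair), and then works with the induced group structure coming from $\pi_1(M)$. In other words, the paper uses connectedness of $L$ to put a group structure on $\pi_1(M,L)$, while you use connectedness only to slide endpoints to $x_0$. Your bijection $\Phi$ is fine and does exhibit $\pi_1(y,L)$ as a principal homogeneous space once such a group structure is in place, but you should either invoke the exact-sequence argument as the paper does, or else rephrase the torsor claim as a bijection depending on a choice of base anchor (which is what your $\Phi$ actually proves) and avoid speaking of an action of $\pi_1(M,L)$ on itself.
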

\begin{proof} Since $L$ is connected, the natural map
$\pi_1(M) \to \pi_1(M,L)$ is  surjective and so $\pi_1(M,L) \cong
\pi_1(M)/\operatorname{im}(\pi_1(L) \to \pi_1(M))$ forms a group. It
is obvious to see that $\pi_1(M,L)$ acts on $\pi_1(y,L)$ by
concatenation of paths on the right. By definition, this action is
free. Transitivity is obvious by definition.
\end{proof}

For a given pair $(\CL, \CL')$ of anchored Lagrangians $\CL =
(L,\gamma), \, \CL' = (L', \gamma')$, we denote
$$
\Omega(\CL,\CL'): = \Omega(L,L'; \overline \gamma \# \gamma')
$$
where the latter is the path component of $\Omega(L,L')$ containing
$\overline \gamma \# \gamma'$. We also denote \be\label{eq:CLcapCL'}
\CL \cap \CL' =\{p \in L \cap L' \mid \widehat p \in
\Omega(\CL,\CL')\}.
\ee
Here $\widehat p$ is the constant path $\widehat p(t) \equiv p$.

When we are given a Lagrangian chain
$$
\frak L = (L_0, L_1, \cdots, L_k)
$$
we also consider a chain of anchors $\gamma_i: [0,1] \to M$ of $L_i$ to $y$
for $i = 0, \cdots, k$. These anchors give a systematic choice of a base path
$\ell_{ij} \in \Omega(L_i,L_j)$ by concatenating $\gamma_i$ and
$\gamma_j$ as
$$
\ell_{ij} = \overline \gamma_i * \gamma_j
$$
where $\overline \gamma$ is the time-reversal of $\gamma$ given by
$\overline \gamma(t) = \gamma(1-t)$.
The upshot of this construction is the following
overlapping property
\bea\label{eq:ellij}
\ell_{ij}(t) & = & \ell_{i\ell}(t) \quad \text{for } \, 0 \leq t
\leq \frac{1}{2} \nonumber\\
\ell_{ij}(t) & = & \ell_{\ell j}(t) \quad \text{for }\, \frac{1}{2}
\leq t \leq 1
\eea
for all $j, \, \ell$.
\begin{defn}\label{classB} Let $\CE = \{(L_i,\gamma_i)\}_{0 \leq i \leq k}$ be
a chain of anchored Lagrangian submanifolds.
A homotopy class $B \in \pi_2(\CL;\vec p)$
is called \emph{admissible} to $\CE$ if it
can be obtained by a polygon that is a gluing of $k$ bounding
strips $w_{i(i+1)}: [0,1] \times [0,1] \to M$ satisfying
\bea
w_{i(i+1)}(0,t) & = & \begin{cases}
\gamma_i(2t-1) \quad & 0 \leq t \leq \frac{1}{2} \\
\gamma_{i+1}(2t-1) \quad & \frac{1}{2} \leq t \leq 1
\end{cases} \\
w_{i(i+1)}(s,0) & \in & L_i, \quad w_{i(i+1)}(s,1) \in L_{i+1}\\
w_{i(i+1)}(1,t) & = & p_{i(i+1)}.
\eea
When this is the case, we denote the homotopy class $B$ as
$$
B = [w_{01}]\#[w_{12}] \# \cdots \# [w_{k0}]
$$
and the set of admissible homotopy classes by $\pi_2^{ad}(\CE;\vec p)$.
\end{defn}
We call such a tuple $\CE$ an \emph{anchored Lagrangian chain}.
\begin{rem} We remark that we denote by $\frak L$ a chain
$(L_0,\cdots,L_k)$ of Lagrangian submanifolds and by $\CE$ that
of anchored Lagrangian submanifolds.
\end{rem}

\emph{When} the collection
$\CE = \{(L_i,\gamma_i)\}_{0 \leq i \leq k}$ is given, we note that
not all homotopy classes in $\pi_2(\CL;\vec p)$ is admissible.
But we have the following basic lemma which will be enough for
the construction of Fukaya category, whose proof is easy and so
omitted.
\begin{lem} Let $w_{i(i+1)}$ be given for $i =1, \cdots, k$ and
$B \in \pi_2(\CL;\vec p)$. Then there exists a unique
$[w_{k0}]$ such that
$$
B = [w_{01}]\# \cdots \# [w_{(k-1)k}] \# [w_{k0}]
$$
\end{lem}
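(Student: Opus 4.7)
The plan is to prove existence and uniqueness separately, both following from the associativity and cancellation properties of the concatenation operation $\#$ on strips with prescribed boundary conditions.

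For existence, I would use the admissibility hypothesis on $B$. Since $B \in \pi_2^{ad}(\CE;\vec p)$, by Definition \ref{classB} there exists at least one decomposition
$$
B = [w_{01}']\# [w_{12}']\# \cdots \# [w_{(k-1)k}']\# [w_{k0}']
$$
for some strips $w_{i(i+1)}'$ satisfying exactly the same boundary constraints as the given $w_{i(i+1)}$. For each $i = 0, \ldots, k-1$, the difference $\overline{w_{i(i+1)}}\# w_{i(i+1)}'$ is a map from $[0,1]^2$ into $M$ whose boundary consists of loops in $L_i$, $L_{i+1}$, the corner path $\gamma_i\overline\gamma_i$ or $\gamma_{i+1}\overline\gamma_{i+1}$, and the constant loop at $p_{i(i+1)}$. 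Using the cyclic polygon-gluing structure, one iteratively absorbs each such correction into the final strip by defining
$$
w_{k0} := \overline{(\overline{w_{01}}\# w_{01}')}\# \cdots \# \overline{(\overline{w_{(k-1)k}}\# w_{(k-1)k}')}\# w_{k0}',
$$
suitably reordered along the boundary. Verifying $[w_{01}]\#\cdots\#[w_{(k-1)k}]\#[w_{k0}] = B$ is then a matter of inserting and cancelling pairs $[\overline{w_{i(i+1)}}]\#[w_{i(i+1)}]$ via the rel-boundary null-homotopy property of these pairs.

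For uniqueness, I would suppose $[w_{k0}]$ and $[\widetilde w_{k0}]$ both satisfy the equation. Then
$$
[w_{01}]\#\cdots\#[w_{(k-1)k}]\#[w_{k0}] = [w_{01}]\#\cdots\#[w_{(k-1)k}]\#[\widetilde w_{k0}]
$$
in $\pi_2(\CL;\vec p)$. Concatenating the time-reversed strips $\overline{w_{(k-1)k}}, \ldots, \overline{w_{01}}$ cyclically on the appropriate side and applying $w_{i(i+1)}\#\overline{w_{i(i+1)}} \simeq \mathrm{const}$ rel boundary at each step, one arrives at $[w_{k0}] = [\widetilde w_{k0}]$.

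The main obstacle I anticipate is purely bookkeeping: making precise what $\#$ means when strips are joined at corners where three pieces of boundary meet (two Lagrangian edges and a corner of the polygon), and verifying that the cancellations above take place through genuine rel-boundary homotopies rather than merely boundary-free ones. Since no analytic input is required, this reduces to a careful combinatorial argument about the groupoid structure on homotopy classes of strips with fixed boundary data prescribed by $\CE$ and $\vec p$.
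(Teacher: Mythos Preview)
The paper omits this proof entirely, calling it ``easy.'' Your approach via concatenation and cancellation of strips is the natural one and is essentially correct; the bookkeeping you anticipate is indeed the only content.

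One point worth flagging: for existence you invoke admissibility of $B$, writing ``Since $B \in \pi_2^{ad}(\CE;\vec p)$,'' but the lemma as stated only assumes $B \in \pi_2(\CL;\vec p)$. Given the sentence immediately preceding the lemma (that not every class in $\pi_2(\CL;\vec p)$ is admissible), existence as you argue it would simply fail for a non-admissible $B$. Your implicit reading---that $B$ is meant to be admissible---is the correct one and matches how the lemma is actually used downstream in the definition of $\frak m_k$ and in the proposition on $\omega(B)$ and $\mu(\CE,\vec v;B)$ that follows. So this is a harmless imprecision in the statement rather than a gap in your argument.
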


The following basic identity immediately follows from definitions

\begin{prop}\label{prop:coherence} Suppose $B \in \pi_2^{ad}(\CE,\vec p)$
given as Lemma \ref{classB} and provide the analytic coordinates
at the marked points $z_j$ so that all $z_j$ are outgoing.
Then we have $\omega(B) = \sum_{i=0}^{k}\CA([p_i,w_i])$
and
\bea
\omega(B) & = & \sum_{i=0}^k \CA_{\ell_{i(i+1)}}([p_i,w_i])\\
\mu(\CE,\vec v;B) & = &\sum_{i=0}^{k} \mu([p_i,w_i];\lambda_{i(i+1)}).
\eea
In particular the sum in the right hand sides do
not depend on the choice of $\lambda_i \subset \gamma_i^*TM$.
\end{prop}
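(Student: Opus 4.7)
The plan is to unpack both identities directly from the definitions, using the overlap property \eqref{eq:ellij} of the anchors to produce the necessary cancellations at the interior seams of the polygon representing $B$.

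First I would verify the energy identity. By Definition \ref{classB}, the class $B$ is represented by the glued polygon $w = w_{01}\#w_{12}\# \cdots \# w_{k0}$, so additivity of symplectic area under concatenation gives
$$
\omega(B) \;=\; \int w^*\omega \;=\; \sum_{i=0}^{k} \int w_{i(i+1)}^*\omega.
$$
The remaining point is to identify each summand on the right with $\CA_{\ell_{i(i+1)}}([p_{i(i+1)}, w_{i(i+1)}])$. This is exactly the definition of the action functional, provided the boundary data of $w_{i(i+1)}$ match the reference path $\ell_{i(i+1)} = \overline\gamma_i * \gamma_{i+1}$. But the three boundary pieces prescribed in Definition \ref{classB} — namely $\gamma_i$ and $\gamma_{i+1}$ at $s=0$, the Lagrangians $L_i, L_{i+1}$ at $t=0,1$, and the constant path $p_{i(i+1)}$ at $s=1$ — are precisely that data.

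Next I would deal with the Maslov index identity. The index $\mu(\CE,\vec v;B)$ is by definition the Maslov index of the bundle pair $(\CV_B,\lambda_B)$ on the disc with $k+1$ marked points obtained from the polygon representation, where the Lagrangian boundary data is given by $TL_i$ on the boundary arcs and by the sections $\lambda_i$ transported along the anchors. On each strip $w_{i(i+1)}$ the Maslov index $\mu([p_{i(i+1)}, w_{i(i+1)}];\lambda_{i(i+1)})$ is computed using the same boundary data on its three geometric sides plus an auxiliary corner path $\alpha^{p_{i(i+1)}}$ at the intersection corner. By the standard additivity of the Maslov index of a bundle pair under cutting along an interior chord, one gets the sum after cutting the polygon along the shared halves of the anchors $\gamma_i$. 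The overlap property \eqref{eq:ellij} ensures that the Lagrangian section $\lambda_i$ appearing on the two sides of each interior cut coincides, so those contributions cancel in pairs; the auxiliary paths $\alpha^{p_i}$ likewise cancel against their reverses when the adjacent strips are glued.

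The final independence assertion is then immediate: $\omega(B)$ and $\mu(\CE,\vec v;B)$ depend only on $B$ and the marked frames $\vec v$, so the right-hand sides — obtained by choosing any lifts $\lambda_i$ — must agree for all choices. I expect the main obstacle to be the careful bookkeeping in the Maslov step: one must check that after cancelling the interior contributions, the Lagrangian loop on $\partial D^2$ produced by the summation is genuinely homotopic (rel corners) to the one used in defining $\mu(\CE,\vec v;B)$. The overlap condition \eqref{eq:ellij} is precisely what makes this bookkeeping work, and is the raison d'être for introducing anchored Lagrangian chains in the first place.
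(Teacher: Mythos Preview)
Your approach is correct and is precisely the unpacking the paper has in mind: the text simply asserts that the proposition ``immediately follows from definitions'' and gives no further argument, so your additivity-plus-cancellation outline is exactly the intended proof.

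One small point of bookkeeping in your Maslov step: the corner paths $\alpha^{p_{i(i+1)}}$ do \emph{not} cancel in pairs. Each intersection point $p_{i(i+1)}$ lies on a single strip $w_{i(i+1)}$ (at $s=1$), so there is only one copy of $\alpha^{p_{i(i+1)}}$; it survives the gluing and becomes the corner path at the marked point $z_i$ in the definition of the polygonal index $\mu(\CE,\vec v;B)$. What cancels are the Lagrangian sections $\lambda_i$ along the interior anchor seams $\gamma_i$, each of which is shared (with opposite orientation) between $w_{(i-1)i}$ and $w_{i(i+1)}$ --- exactly as you say, via \eqref{eq:ellij}. With that correction your argument goes through.
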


Here the index $\mu([p_i,w_i];\lambda_{i(i+1)})$ is the
Maslov-Morse index relative to the path $\lambda_{i(i+1)}$
of Lagrangian planes as defined in \cite{fooo:book}, which provides
a coherent grading $\mu:\operatorname{Crit} \CA \to \Z$.

The action functional provides a canonical $\R$-filtration on the set
$$
\CA:\operatorname{Crit} \CA \to \R.
$$
In addition, inside the collection of anchored Lagrangian submanifolds
$(L,\gamma)$ we are given a coherent system of single
valued actions functionals
$$
\CA: \widetilde{\Omega_0}(L_i,L_j;\overline \gamma_i \#\gamma_j) \to \R
$$
at one stroke for any pair from the given collection
$\CE$ of anchored Lagrangian submanifolds.

\subsection{Relation to the graded Lagrangian submanifolds}
\label{subsec:relation}

Now we go back to the collection of Lagrangian submanifolds with
vanishing Maslov class on a Calabi-Yau manifolds. In this case, we
will be able to obtain a canonical Lagrangian path $\lambda$ along a
given anchor $\gamma$ of $L$ to $y$.

Let $J$ be a compatible almost complex structure. The assumption
$2c_1(M) = 0$ implies that the bundle
$\Delta = \Lambda^n(TM,J)^{\otimes 2}$ is trivial. Choose a section $\Theta$
of $\Delta^*$ that has length one everywhere in terms of the metric
$g = \omega(\cdot, J\cdot)$. This determines a map
$\det_\Theta^2: \Lambda(M,\omega) \to S^1$, and then an $\infty$-fold
Maslov covering by
\be\label{eq:LLinfty}
\Lambda_\infty = \{(\Lambda,t) \in \Lambda \times \R \mid \operatorname{det}^2_{\Theta}(\Lambda) =
e^{2\pi i t} \}
\ee
where $s_L: L \to \Lambda(M,\omega)|_L$ is the natural section defined by the
Gauss map $s_L(x) = T_xL$.
An $\Lambda_\infty$-grading of a Lagrangian submanifold $L \subset (M,\omega)$
according to Seidel \cite{seidel:grading} is just a lift to
$\R$ of the map
$$
\operatorname{det}^2_{\Theta}\circ s_L: L \to S^1.
$$
First of all the condition on $\mu_L = 0$ implies that there is such a
lifting to $\Lambda_\infty$.

We now explain how we give a coherent grading to Lagrangian
submanifolds with vanishing Maslov class. We choose a Lagrangian
path $\lambda$ over $\ell$ so that \be\label{eq:det2=1}
\operatorname{det}^2_{\Theta}(\gamma(t))(\lambda(t)) \equiv 1. \ee
Then we choose a lifting
$\widetilde{\operatorname{det}^2_{\Theta}\circ \gamma}$ of
$\operatorname{det}^2_{\Theta}\circ \gamma: [0,1] \to S^1$ so that
\be\label{eq:tildedet2=0}
\widetilde{\operatorname{det}^2_{\Theta}\circ\gamma}(1) = 0. \ee The
hypothesis $2c_1(M) = 0, \, \mu_L(0) = 0$ immediately implies that
there exists a unique lifting
$\widetilde{\operatorname{det}^2_{\Theta}\circ s_L}:L \to \R$ of
$\operatorname{det}^2_{\Theta}\circ s_L: L \to S^1$ satisfying $
\widetilde{\operatorname{det}^2_{\Theta}\circ s_L}(\gamma(1)) = 0. $
We denote this lifting by $\alpha_{(L,\gamma,\lambda)}: L \to \R$.
Obviously this does not depend on the choice of $\lambda$ as long as
$\lambda$ satisfies (\ref{eq:det2=1}).

\begin{defn} Let $(M,\omega)$ be such that $2c_1(M) = 0$ and
fix a base point $(y,\Lambda_y)$. Let $(L,\gamma)$ be a Calabi-Yau
anchored Lagrangian. We denote the above common lifting by $\alpha_{(L,(x_L,\gamma_L))}: L \to \R$
and call the \emph{canonical grading} of $(L,\gamma)$ relative to
$(y,\Lambda_y)$.
\end{defn}
We refer to section 9.2 \cite{fooo:anchor} for more detailed
explanation on the above discussion. Because of this presence of canonical
grading associated to $(L,\gamma)$, we will drop $\lambda$ from our
notation $\CL = (L,\gamma,\lambda)$ when we consider Calabi-Yau Lagrangian
branes later in this paper.

Adapting to the convention from \cite{konts:hms} and \cite{seidel:grading},
we denote by $\widetilde L[0] = (L,\alpha_{(L,\gamma)})$ and
$$
\widetilde L[k] = (L, \alpha_{(L,\gamma)} - k).
$$

\section{Calabi-Yau Lagrangian branes and Dehn twists}
\label{sec:branes}

We restrict ourselves to the case of
$(M,\omega)$ with $2c_1(M,\omega) = 0$ and $L \subset M$ whose
Maslov class vanishes from now on. We will give a precise definition of Calabi-Yau
Lagrangian branes in this section. This is the
case that is most relevant to mirror symmetry and to
extension of Seidel's long exact sequence of $\Z$-graded symplectic
Floer cohomology.

Now we introduce a class of decorated Lagrangian submanifolds
on Calabi-Yau manifold $(M,\omega)$ which we call
\emph{Calabi-Yau Lagrangian branes}.

\begin{defn}\label{CYbrane} Let $y \in M$ be a base point and $\Lambda_y \subset
T_yM$ a fixed Lagrangian subspace. Suppose $\Theta$ be a quadratic
complex volume form on $(M,\omega,J)$.
Let $\CE^{CY}$ be the Calabi-Yau Lagrangian collection of $(M,\omega)$.
We consider the triple $(\CL,s,[b])$, $\CL = (L,\gamma)$ which we call
a \emph{anchored Calabi-Yau Lagrangian brane}, that satisfies the following data:
\begin{enumerate}
\item $L$ a Lagrangian submanifold of $M$
such that the Maslov index of $L$ is zero and
$[\omega] \in H^2(M,L;\Z)$. We also enhance $L$ with flat complex
line bundle on it.
\item $\gamma$ is an anchor of $L$ to $y$.
\item $s$ is a spin structure of $L$.\par
\par
\item $[b] \in \mathcal M(L)$ is a bounding cochain described in subsection
\ref{subsec:objects}.
\end{enumerate}
We denote the collection of Calabi-Yau Lagrangian
branes by $\CE^{CY}_{brane}$, and the Fukaya category generated
by them by $Fuk\left(\CE^{CY}_{brane}\right)$.
\end{defn}

The first simplification arising from considering the Calabi-Yau
Lagrangian collection is that we have only to use the Novikov ring
of the form \be \Lambda_{nov}^{(0)} = \left\{\sum_{i=1}^\infty a_i
T^{\lambda_i} ~\Big|~ a_i \in \Q, \, \lambda_i \in \R, \, \lambda_i
\leq \lambda_{i+1}, \, \lim_{i\to \infty}\lambda_i = \infty \right\}
\ee which becomes a field. We also consider the subring \be
\Lambda_{0,nov}^{(0)} = \left\{\sum_{i=1}^\infty a_i T^{\lambda_i}
\in \Lambda_{nov} ~\Big|~ \lambda_i \geq 0 \right\}. \ee This is
because the Maslov index satisfies $\mu(w) = \mu_L(\del w) = 0$ for
any disc map $w:(D^2,\del D^2) \to (M,L)$ where $\mu_L \in
H^1(L;\Z)$ is the Maslov class of $L$.

\begin{rem}
Furthermore, as we mentioned in subsection \ref{subsec:relation}, the anchor
provides a canonical graded structure on a Calabi-Yau Lagrangian
brane. Therefore it provides a canonical $\R$-filtration and a $\Z$-grading on
$$
CF(\CL_0,\CL_1)= CF((L_0,\gamma_0),(L_1,\gamma_1)):=CF(L_0,L_1;\overline\gamma_0\# \gamma_1)
$$
for any pair $(\CL_0,\CL_1)$ of CY Lagrangian branes,
and hence on its cohomology $HF(\CL_0,\CL_1)$. See section \ref{sec:fooo} for
related discussion.
\end{rem}

Now we examine the effect of Dehn twists on the CY Lagrangian collection.

\begin{prop} Let $\CE^{CY}_{brane}$ be the
associated collection of anchored CY Lagrangian branes.
Then $\CE^{CY}_{brane}$ is closed under
the action of $\tau_{L}$'s for all framed Lagrangian sphere $(L,[f])$,
and so induces an auto-equivalence of $Fuk\left(\CE^{CY}_{brane}\right)$.
\end{prop}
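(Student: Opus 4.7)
The plan is to verify that each of the five pieces of data comprising an anchored Calabi-Yau Lagrangian brane is preserved (up to canonical transport) under the symplectomorphism $\tau_L$, and then to invoke the standard fact that a symplectomorphism induces an $A_\infty$ auto-equivalence of the Fukaya category.

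First I would handle the two cohomological conditions on the underlying Lagrangian $L_0$. Since $\tau_L$ is a symplectomorphism, the relative class $[\omega]\in H^2(M,\tau_L(L_0);\Z)$ is just the pullback of $[\omega]\in H^2(M,L_0;\Z)$ under the diffeomorphism induced by $\tau_L^{-1}$, and so it is still integral. The Maslov class $\mu_{\tau_L(L_0)}\in H^1(\tau_L(L_0);\Z)$ equals the pullback $(\tau_L|_{L_0})^{-1,*}\mu_{L_0}$, because the Maslov class depends only on the homotopy class of the Gauss map into the Lagrangian Grassmannian together with a trivialization of $2c_1$, both of which are intertwined by the symplectomorphism $\tau_L$ acting on $\Lambda(M,\omega)$. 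Hence $\mu_{\tau_L(L_0)}=0$, and the flat line bundle is transported by $(\tau_L|_{L_0})_\ast$.

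Next I would transport the remaining structural data. Because $y\notin L$ and the support of $\tau_L$ may be confined to an arbitrarily small Darboux neighborhood of $L$ (by the discussion surrounding \eqref{eq:tauLf}), we may choose the representative of $\tau_L$ so that $\tau_L(y)=y$. Then $\tau_L\circ\gamma_0$ is a path from $y$ to a point of $\tau_L(L_0)$ and hence a valid anchor for $\tau_L(L_0)$. The spin structure $s$ is pushed forward by the diffeomorphism $\tau_L|_{L_0}$ to a spin structure on $\tau_L(L_0)$. For the canonical grading relative to $(y,\Lambda_y)$, one uses the trivializing section $\Theta$ of $\Delta^\ast$ to define $\widetilde{\det^2_\Theta\circ s_{\tau_L(L_0)}}$ on $\tau_L(L_0)$; the key point is that the existence and uniqueness of this lifting depend only on $2c_1(M)=0$ and $\mu_{\tau_L(L_0)}=0$, both of which we have already verified, so the procedure of Section~\ref{subsec:relation} applies verbatim to the anchored pair $(\tau_L(L_0),\tau_L\circ\gamma_0)$. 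Finally, the bounding cochain $[b]\in\mathcal M(L_0)$ is transported to a bounding cochain on $\tau_L(L_0)$ via the naturality of the obstruction theory of \cite{fooo:book}: a symplectomorphism $\varphi$ induces a strict isomorphism of filtered $A_\infty$ algebras $\varphi_\ast:CF(L_0,L_0)\to CF(\varphi(L_0),\varphi(L_0))$, because the moduli spaces of $J$-holomorphic discs with boundary on $L_0$ are carried bijectively by $\varphi$ to those with boundary on $\varphi(L_0)$ for the pushed-forward almost complex structure $\varphi_\ast J$. This isomorphism sends Maurer--Cartan elements to Maurer--Cartan elements.

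These verifications show $(\tau_L)_\ast:\CE^{CY}_{brane}\to\CE^{CY}_{brane}$ is well-defined. For the auto-equivalence statement, the same naturality principle promotes $(\tau_L)_\ast$ to an $A_\infty$ functor on $Fuk(\CE^{CY}_{brane})$: given Calabi-Yau branes $\CL_0,\CL_1$ one has a canonical chain isomorphism $CF(\CL_0,\CL_1)\to CF((\tau_L)_\ast\CL_0,(\tau_L)_\ast\CL_1)$ compatible with the $A_\infty$ products $\mathfrak m_k$, by transporting all $J$-holomorphic polygons through $\tau_L$. Because $(\tau_L)^{-1}$ is itself a symplectomorphism (and in fact isotopic to a Dehn twist), it induces a strict inverse functor, so $(\tau_L)_\ast$ is an auto-equivalence.

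The main obstacle I anticipate is the compatibility of the \emph{canonical grading} and the \emph{action filtration} under the transport by $\tau_L$: specifically, $\tau_L^\ast\Theta$ need not equal $\Theta$, so the lift $\widetilde{\det^2_\Theta\circ s_{\tau_L(L_0)}}$ defined intrinsically on $\tau_L(L_0)$ may differ from the pushed-forward lift by a locally constant integer shift $k$. This is harmless for the closure statement because a shift in grading merely replaces the brane by its shift $\widetilde{\tau_L(L_0)}[k]$, which is again an object of $\CE^{CY}_{brane}$; but it must be tracked in order to state the induced functor correctly. The analogous issue for the action functional is resolved by the coherence Proposition~\ref{prop:coherence} applied to the anchor $\tau_L\circ\gamma_0$.
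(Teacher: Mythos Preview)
Your proof is correct and follows essentially the same approach as the paper's: verify that each datum of a Calabi-Yau Lagrangian brane (Maslov class, integrality of $[\omega]$, anchor, spin structure, bounding cochain) is transported by the symplectomorphism $\tau_L$. Your version is in fact more detailed than the paper's, which simply notes that $\tau_L$ is a symplectic automorphism, cites Theorem~B of \cite{fooo:book} for the transport of bounding cochains, and omits the careful discussion of the anchor base point and the possible grading shift that you raise.
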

\begin{proof} We note that the Dehn twist $\tau_L$ is a symplectic
automorphism. Therefore it pushes the spin structure of $L_0$ to the
image $\tau_L(L_0)$, and pull-backs the Maslov class. Therefore
the Maslov class of $\tau_L(L_0)$ is also zero.
Similarly we can push forward the anchor of $L$ to $\tau_L(L_0)$.
Finally Theorem B (B.2) \cite{fooo:book} states that the bounding cochain
can also be canonically pushes under the symplectic automorphism and hence
under the action of $\tau_L$. This finishes the proof.
\end{proof}

Therefore we can ask the question on how the Floer cohomology
changes under the Dehn twist along a Lagrangian sphere.
The answer is supposed to come from a long exact sequence
that Seidel introduced in \cite{seidel:triangle} for the context of
exact Lagrangian submanifolds. The rest of the paper will be occupied with
the construction of this long exact sequence for the Calabi-Yau
Lagrangian branes on Calabi-Yau manifolds.

\section{Lefschetz Hamiltonian fibration and coupling form}
\label{sec:fibration}

In this section, we first recall the basics on \emph{smooth} Hamiltonian fibrations
presented in \cite{GLS}, \cite{entov} and extend our discussion to fibrations with
Lefschetz-type singular fibers. Especially we generalize the notion of
\emph{coupling form} to the current singular fibration and prove the
uniqueness of the coupling form on a given Lefschetz Hamiltonian fibration,
when the fibration $\pi: E \to \Sigma$ is proper, e.g., when the fiber of $E$
is compact.

The notion of Hamiltonian fibrations introduced by Guillemin-Lerman-Sternberg
\cite{GLS} is the family of symplectic manifolds of a fixed isomorphism type,
which could be twisted on the parameter space $\Sigma$. On the other hand,
Seidel introduced the notion of exact Lefschetz fibrations which could have
a finite number of singular fibers of type $A_1$-singularity.

Combining \cite{GLS} and \cite{seidel:triangle}, we give the following definition

\begin{defn}[Lefschetz Hamiltonian fibration]\label{defn:Lefschetz}
A Lefschetz  Hamiltonian fibration over a compact surface
$\Sigma$ with boundary $\del\Sigma$ consists of the data
$(E,\pi,\Omega,J_0,j_0)$ as follows:
\begin{enumerate}
\item $\del E = \pi^{-1}(\del \Sigma)$ and $\pi|_{\del E} \to \del \Sigma$
forms a smooth fiber bundle.
\item $\pi: E \to \Sigma$ can have at most a finitely many critical points,
and no two may lie on the same fiber. Denote $E^{crit} \subset E$ and
$\Sigma^{crit} \subset \Sigma$.
\item $J_0$ is a complex structure on a neighborhood of $E^{crit}$,
$j_0$ is a positively oriented complex structure on a neighborhood of
$\Sigma^{crit}$, and $\pi$ is $(J_0,j_0)$-holomorphic near $E^{crit}$.
And the Hessian $D^2\pi$ at any critical point is nondegenerate as
a complex quadratic form.
\item $\Omega$ is a closed two form on $E$ which must be nondegenerate
on $\ker D\pi_x$ for each $x \in E$, and a K\"ahler from for
$J_0$ in some neighborhood of $E^{crit}$.
\end{enumerate}
We say that the fibration $\pi: E \to \Sigma$ is (symplectically)
\emph{Calabi-Yau} if $c_1(E_z^v) = 0$ at all $z \not\in \Sigma^{crit}$
\end{defn}
We would like to remark that one may allow more than one critical points
in the same fiber, which could be useful for the study of a family of
Lefschetz Hamiltonian fibrations.

\begin{rem} We would like to highlight that at a critical point $x \in E$
of $\pi$ Condition (4) implies that the form $\Omega_x$ is required to be
nondegenerate on the whole tangent space $T_eE$ since $\ker D \pi_x = T_xE$, while at a
regular point $\Omega_x$ it is required so only at on the vertical tangent
space $T_e^v E$ as $\ker D\pi_x = T_x^vE$.
\end{rem}

When a generic fiber $E_z$ with $z \in \Sigma \setminus \Sigma^{crit}$ is
compact, it is proved in \cite{GLS} for a smooth Hamiltonian fibration
that one can choose $\Omega$ is uniquely determined by the following
additional requirement
\be\label{eq:coupling}
\pi_*\Omega^{n+1} = 0
\ee
where $\pi_*$ is the integration along the fiber. Now we prove the following
analog to this result for the case of Lefschetz Hamiltonian fibrations.

\begin{thm}\label{thm:coupling} Let $(E,\pi,\Omega,J_0,j_0)$ be a
Lefschetz Hamiltonian fibration as in Definition \ref{defn:Lefschetz}. Then there exists a
closed 2-form $\Omega'$ smooth on $E \setminus E^{crit}$ that satisfies the following:
\begin{enumerate}
\item $\Omega'|_{T^vE} = \omega_e$ at all $e \in E \setminus E^{crit}$,
\item it satisfies \eqref{eq:coupling} on $E \setminus \pi^{-1}(\Sigma^{crit})$.
\end{enumerate}
Furthermore such a form $\Omega'$ is unique. We call such a form the \emph{coupling
form} of $(E,\pi,\Omega,J_0,j_0)$.
\end{thm}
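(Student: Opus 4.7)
Plan: The argument will adapt the classical Guillemin--Lerman--Sternberg construction of the coupling form for smooth Hamiltonian fibrations, where the decisive simplification in our setting is that $\Sigma$ has real dimension two. For existence, I seek $\Omega'$ in the normalized form
\[
\Omega' = \Omega - \pi^{*}\alpha
\]
for a $2$-form $\alpha$ on $\Sigma\setminus\Sigma^{crit}$. Condition (1) is then automatic: $(\pi^{*}\alpha)(V,W) = \alpha(d\pi\,V, d\pi\,W) = 0$ whenever $V,W$ are vertical, so $\Omega'$ restricts to $\omega_{e}$ on every $T^{v}_{e}E$. Closedness of $\Omega'$ is also automatic, since $d\Omega=0$ by hypothesis and $d(\pi^{*}\alpha) = \pi^{*}d\alpha$ vanishes because $d\alpha$ is a $3$-form on a real-two-dimensional base.

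For condition (2), the identity $(\pi^{*}\alpha)\wedge(\pi^{*}\alpha)=0$ (again because $\dim\Sigma=2$) collapses the binomial expansion of $(\Omega-\pi^{*}\alpha)^{n+1}$ to
\[
{\Omega'}^{n+1} \;=\; \Omega^{n+1} \;-\; (n+1)\,\Omega^{n}\wedge\pi^{*}\alpha,
\]
and applying fiber integration together with the projection formula $\pi_{*}(\Omega^{n}\wedge\pi^{*}\alpha)=(\pi_{*}\Omega^{n})\cdot\alpha$ gives
\[
\pi_{*}{\Omega'}^{n+1} \;=\; \pi_{*}\Omega^{n+1} \;-\; (n+1)\,(\pi_{*}\Omega^{n})\,\alpha
\]
on $\Sigma\setminus\Sigma^{crit}$. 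The function $\pi_{*}\Omega^{n}$ is $n!$ times the fiberwise symplectic volume and hence strictly positive on $\Sigma\setminus\Sigma^{crit}$, so the unique choice
\[
\alpha \;=\; \frac{1}{n+1}\cdot\frac{\pi_{*}\Omega^{n+1}}{\pi_{*}\Omega^{n}}
\]
produces the desired $\Omega'$. This formula simultaneously furnishes existence and uniqueness within the class of $\pi^{*}$-modifications of $\Omega$; the full uniqueness claim reduces to showing that a closed $2$-form $\beta$ on $E\setminus E^{crit}$ vanishing on pairs of vertical vectors must be basic, $\beta = \pi^{*}\gamma$, after which the same projection formula forces $(\pi_{*}\Omega^{n})\gamma = 0$ and hence $\gamma = 0$.

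The main analytic obstacle is the smoothness of $\Omega'$ on all of $E\setminus E^{crit}$ rather than only on the smaller open set $E\setminus\pi^{-1}(\Sigma^{crit})$: at smooth points of a critical fiber the formula $\Omega'=\Omega-\pi^{*}\alpha$ requires $\alpha$ to extend smoothly across $\Sigma^{crit}$. I would resolve this by working in the local Lefschetz normal form from condition (4) of Definition~\ref{defn:Lefschetz}, namely $\pi(z_{1},\dots,z_{n+1}) = z_{1}^{2}+\cdots+z_{n+1}^{2}$ near a critical point $x_{0}$ with $\Omega$ K\"ahler there, and showing by a direct computation in Milnor-fiber coordinates that both $\pi_{*}\Omega^{n}$ and $\pi_{*}\Omega^{n+1}$ extend to a smooth function and a smooth $2$-form respectively on a neighborhood of the critical value $z_{0}=\pi(x_{0})$, with $\pi_{*}\Omega^{n}(z_{0})>0$. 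Intuitively, the vanishing cycle in $E_{z_{0}}$ collapses to the single point $x_{0}$, a set of positive real codimension in the fiber, and contributes nothing to the fiber integrals. This yields a smooth extension of $\alpha$ across $\Sigma^{crit}$, and therefore of $\Omega' = \Omega - \pi^{*}\alpha$ across the smooth locus of every critical fiber, completing the proof.
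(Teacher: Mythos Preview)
Your existence argument is correct and in fact more careful than the paper's own. The paper writes $\Omega' = \Omega + \pi^*\alpha$ and characterizes $\alpha$ via the GLS curvature normalization, requiring $\Omega'(\eta_1^\sharp,\eta_2^\sharp)$ to be the zero-fiber-average Hamiltonian generating $\operatorname{curv}_\Gamma(\eta_1,\eta_2)$. Your explicit formula $\alpha = \tfrac{1}{n+1}\,\pi_*\Omega^{n+1}/\pi_*\Omega^{n}$ reaches the same $\Omega'$ by a shorter route that exploits $\dim\Sigma=2$ (any $2$-form on the base is closed, and the binomial expansion of $(\Omega-\pi^*\alpha)^{n+1}$ truncates after the linear term). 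You also identify the question of smoothness of $\alpha$ across $\Sigma^{crit}$, which the paper's proof does not address at all.

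There is, however, a genuine gap in your uniqueness step. The assertion that a closed $2$-form $\beta$ vanishing on \emph{pairs} of vertical vectors must be basic is false. On the trivial fibration $E=T^2\times\Sigma$ with fiber coordinates $(x,y)$, base coordinate $s$, and $\Omega=dx\wedge dy$, take $\beta=dx\wedge ds$: it is closed, vanishes on $T^vE\times T^vE$, but $i_{\partial_x}\beta=ds\neq 0$, so $\beta$ is not basic. Moreover $\Omega''=\Omega+\beta$ still satisfies (1) and (2), since $(\Omega'')^2=0$. Thus uniqueness fails under the hypotheses as literally stated. What is actually true, and what GLS prove, is uniqueness among closed extensions that share the \emph{same horizontal distribution} as $\Omega$: that extra condition gives $\beta(V,H)=0$ for vertical $V$ and $\Omega$-horizontal $H$, which together with $\beta|_{T^vE}=0$ yields $i_V\beta=0$ for every vertical $V$, and then closedness makes $\beta$ basic. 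The paper silently imposes this by taking $\Omega'=\Omega+\pi^*\alpha$ from the start and never returning to prove the stated uniqueness, so the gap lies in the theorem statement itself as much as in your argument; the fix is to add that hypothesis.
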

\begin{proof} We first consider the subset $E\setminus E^{crit}$.
Then the form $\Omega$ induces a splitting
$$
\Gamma: \quad T_xE = T_xE^v \oplus T_x^hE
$$
at any regular point $x \in E \setminus E^{crit}$ where the horizontal space is given by
$$
T_x^h E = \{\eta \in T_xE \mid \Omega(\eta, \xi) = 0 \, \forall \xi \in T_x^vE\}
$$
and hence induces a natural (Ehresman) connection on $E \setminus E^{crit}$
whose monodromy is symplectic.

Because of the closedness of $\Omega$, the connection is Hamiltonian \cite{GLS}
in that its curvature $\operatorname{curv}(\Gamma)$ has its values
contained in $ham(E_{\pi(e)})$, the set of Hamiltonian vector fields of the
fiber $E_{\pi(e)}$, and hence  the restriction $\pi: E \setminus E^{crit} \to
\Sigma \setminus \Sigma^{crit}$ is a smooth Hamiltonian fibration
in the sense of \cite{GLS}. In particular, if we restrict to
$E \setminus \pi^{-1}(\Sigma^{crit}) \to \Sigma \setminus \Sigma^{crit}$
its fibers are all compact and so we can construct a closed 2-form
$\Omega'= \Omega + \pi^*\alpha$ on $E \setminus \pi^{-1}(\Sigma^{crit})$
for some closed two form $\alpha$ on $\Sigma \setminus \Sigma^{crit}$ that
satisfies \eqref{eq:coupling} thereon. In fact $\Omega'$ (and so $d\beta$) can be explicitly
constructed by requiring
\be\label{eq:curvature}
\Omega'(\eta_1^\sharp,\eta_2^\sharp) = H_{\eta_1,\eta_2}
\ee
where $H_{\eta_1,\eta_2}$ is the smooth function whose restriction to each
fiber over a point in $\Sigma \setminus \Sigma^{crit}$ is uniquely determined by
the two requirements
\begin{enumerate}
\item $H_{\eta_1,\eta_2}$ generates the Lie algebra element
$\operatorname{curv}_\Gamma(\eta_1, \eta_2)$ of $Ham(E_z,\omega_z)$
which is a Hamiltonian vector field
\item it satisfies the normalization condition
$$
\int_{E_z} H_{\eta_1,\eta_2} \, \omega_z^n = 0, \quad \omega_z = \Omega|_{E_z}
$$
for all $z \in \Sigma \setminus \Sigma^{crit}$.
\end{enumerate}
This finishes the proof.
\end{proof}

\begin{defn}[Coupling form] \label{defn:coupling} We call the above unique closed 2-form constructed
in Theorem \ref{thm:coupling} the \emph{coupling form} of the Lefschetz Hamiltonian
fibration $E \to \Sigma$.
\end{defn}

The following result was essentially proved by Seidel Lemma 1.6
\cite{seidel:triangle}: Seidel proved this for the context of exact
Lefschetz fibrations but the same proof applies if one ignores
his consideration of generating functions of $Q$ therein.

\begin{lem}[Compare with Lemma 1.6 \cite{seidel:triangle}]
Let $(E,\pi,\Omega,J_0,j_0)$ be a  Lefschetz Hamiltonian fibration, and $x_0$ be
a critical point of $\pi$. Then there are smooth families $\Omega^\mu \in \Omega^2(E)$
$0 \leq \mu \leq 1$, such that
\begin{enumerate}
\item $\Omega^0 = \Omega$
\item for all $\mu$, $\Omega^\mu = \Omega^0$ outside a small neighborhood of
$x_0$
\item each $(E,\pi,\Omega^\mu,J_0,j_0)$ is a Lefschetz  Hamiltonian fibration
\item there is a holomorphic Morse chart $(\xi,\Xi)$ around $x_0$
with $\Xi: V \subset \C^{n+1}\to E$ such that
$\Xi^*\Omega^1$ agree near the origin with the standard forms
$\omega_{\C^{n+1}} = \frac{i}{2}\sum dx_k \wedge d\overline x_k$.
\end{enumerate}
\end{lem}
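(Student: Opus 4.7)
The strategy is a Moser-type localization of $\Omega$ near the critical point $x_0$, carried out at the level of local K\"ahler potentials, following the idea of Seidel's Lemma 1.6 in \cite{seidel:triangle}.

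By the holomorphic Morse lemma applied to the $(J_0,j_0)$-holomorphic map $\pi$ (whose complex Hessian is nondegenerate at $x_0$), there exist holomorphic charts $\xi$ on a neighborhood of $\pi(x_0)\in\Sigma$ and $\Xi:V\subset\C^{n+1}\to E$ around $x_0$ with $\xi\circ\pi\circ\Xi(z)=q(z):=z_1^2+\cdots+z_{n+1}^2$. After shrinking $V$, the pullback $\omega_\Omega:=\Xi^*\Omega$ is K\"ahler for the standard complex structure on $V$, and the local $dd^c$-lemma yields a strictly plurisubharmonic $\varphi_\Omega$ on $V$ with $\omega_\Omega=dd^c\varphi_\Omega$, normalized so that $\varphi_\Omega(0)=0$ and $d\varphi_\Omega(0)=0$. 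Let $\varphi_{\mathrm{std}}$ denote a standard K\"ahler potential on $\C^{n+1}$ (a constant multiple of $|z|^2$) with $dd^c\varphi_{\mathrm{std}}=\omega_{\C^{n+1}}$.

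Fix a smooth cutoff $\chi:\C^{n+1}\to[0,1]$ with $\chi\equiv 1$ on a small ball $B\ni 0$ and with support compactly contained in $V$, and for $\mu\in[0,1]$ set
\[
\varphi^\mu:=\varphi_\Omega+\mu\chi\cdot(\varphi_{\mathrm{std}}-\varphi_\Omega).
\]
Define $\Omega^\mu$ to equal $(\Xi^{-1})^*dd^c\varphi^\mu$ on $\Xi(V)$ and $\Omega$ on the rest of $E$; the two pieces agree on the overlap because $\varphi^\mu\equiv\varphi_\Omega$ off $\supp\chi$, so $\Omega^\mu$ is smooth and closed. Properties (1), (2) and (4) of the lemma are then immediate from the construction: in particular, on $B$ where $\chi\equiv 1$, one has $\Xi^*\Omega^1=dd^c\varphi_{\mathrm{std}}=\omega_{\C^{n+1}}$.

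The remaining task is property (3), that each $(E,\pi,\Omega^\mu,J_0,j_0)$ is again a Lefschetz Hamiltonian fibration, which amounts to K\"ahlerness of $\Omega^\mu$ for $J_0$ in the neighborhood $\Xi(B)$ of $x_0$ together with nondegeneracy of $\Omega^\mu$ on $\ker D\pi|_x$ at every $x\in E$. On $B$ one has $\Xi^*\Omega^\mu=(1-\mu)dd^c\varphi_\Omega+\mu\omega_{\C^{n+1}}$, a positive convex combination of K\"ahler forms and hence K\"ahler; outside $\Xi(\supp\chi)$ nothing needs checking. The main obstacle is positivity of $dd^c\varphi^\mu$ on the transition annulus $\supp\chi\setminus B$, where the expansion of $dd^c(\chi f)$ with $f:=\varphi_{\mathrm{std}}-\varphi_\Omega$ produces cross-terms $d\chi\wedge d^cf+df\wedge d^c\chi+f\,dd^c\chi$ of uncontrolled sign. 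I would handle this by first exploiting the residual freedom in the Morse chart --- composition with $A\in O(n+1,\C)$ preserving $q$ together with rescalings $\xi\mapsto\lambda\xi$, $\Xi\mapsto\Xi(\cdot/\sqrt\lambda)$ --- to diagonalize the Hermitian Hessian of $\varphi_\Omega$ at $0$, and then absorbing any remaining quadratic mismatch between that Hessian and the Hessian of $\varphi_{\mathrm{std}}$ by prepending a separate stage of the family: a sequence of small closed deformations of the form $dd^c(\chi\cdot H)$ with $H$ a quadratic Hermitian potential, moving the Hessian along a path of positive Hermitian forms into a representative standardizable by a Morse chart. After this preparation $f$ vanishes to order at least three at $0$, so replacing $\chi$ by the rescaled cutoff $\chi(\cdot/\epsilon)$ renders every cross-term on the transition annulus uniformly of size $O(\epsilon)$ while the main positive term $(1-\mu\chi)dd^c\varphi_\Omega+\mu\chi\,\omega_{\C^{n+1}}$ remains of order one; choosing $\epsilon$ sufficiently small therefore preserves K\"ahlerness, and hence vertical nondegeneracy, for every $\mu\in[0,1]$ (noting that $\ker Dq(z)$ is a complex subspace, on which any K\"ahler form restricts positively), completing the construction.
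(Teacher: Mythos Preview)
Your argument is essentially correct and follows the Moser/K\"ahler-potential approach of Seidel's Lemma~1.6, which is precisely what the paper defers to (the paper gives no independent proof, only the remark that Seidel's proof carries over once one drops the exactness bookkeeping). One minor imprecision: the claim that composition with $A\in O(n+1,\C)$ can \emph{diagonalize} the Hermitian Hessian is not generally valid, since the action $H\mapsto A^*HA$ with $A$ complex orthogonal is not unitary conjugation and its orbits have strictly smaller dimension than the cone of positive Hermitian forms. This does not damage your proof, because your ``sequence of small closed deformations'' step can by itself move the Hessian at $0$ from $H_0$ all the way to the standard one along the convex path $(1-t)H_0+tI$; the point is that with nested cutoffs (each supported inside the inner ball of the previous one) the background form on the $k$-th annulus equals $\omega_\Omega + \tfrac{k}{N}(I-H_0)$, whose lowest eigenvalue is uniformly bounded below, while the annulus error at each step is $O(\tfrac{1}{N}\lVert I-H_0\rVert)$ independently of the cutoff scale. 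After that preparation $f$ vanishes to order three and your rescaled-cutoff estimate goes through exactly as written.
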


In fact, if  near
$E^{crit}$ we are given a one form $\Theta$ with $\Omega = d\Theta$
as in the exact cases, we can also deform the one-form to $\Theta^\mu$ so that
$\Xi^*\Theta^1$ becomes the standard one-form
$$
\theta_{\C^{n+1}} = \frac{i}{4}\sum x_k d\overline x_k - \overline x_k dx_k.
$$
(See Lemma 1.6 \cite{seidel:triangle}.)

\section{Exact Lagrangian boundary condition and action estimates}
\label{sec:exact}

Now we consider a subbundle $i_Q: Q \to \del \Sigma$ of the symplectic
vector bundle $(E|_{\del \Sigma},\Omega|_{\del \Sigma})$
whose fiber $Q_z$ is a Lagrangian submanifold of $\Omega_z$
for each $z \in \del \Sigma$. We call such $Q$ a \emph{fiberwise-Lagrangian
submanifold} of $(E|_{\del \Sigma},\Omega|_{\del \Sigma})$.

\begin{defn}\label{exactfamily} We call a fiberwise-Lagrangian submanifold
$Q \subset \del E$ an \emph{exact Lagrangian boundary over $\del \Sigma$},
if there exists a one-form $\kappa_Q$ on $Q$ such that
$$
\kappa_Q|_{T(\del E)^v} \equiv 0, \quad \mbox{and } i_Q^*\Omega = d \kappa_Q
$$
where $i_Q: Q \to \del E$ is the inclusion map and  $T^v(\del E)$ is the vertical
tangent space of $\del E$.
\end{defn}

We note that when $\Sigma$ is oriented and the boundary orientation
on $\del \Sigma$ is provided by an orientation one-form,
denoted by $d\theta$ with $\theta \in \del \Sigma$, the connection induced by
the form $\Omega|_{\del E}$ enables us to express any such one-form $\kappa_Q$ as
$$
\kappa_Q(\theta,x) = h_i(\theta,x) \, d\theta
$$
for $(\theta,x) \in \del E$ with $\theta \in \del_i \Sigma$ and $h_i:\del E \to \R$
where $\del \Sigma = \coprod \del_i \Sigma$.
The function $h_i$ is unique up to the addition of the function
$c_i: \del \Sigma_i \to \R$.

\begin{defn} We define $\|\kappa_Q\|_{(1,\infty)}$ by
$$
\|\kappa_Q\|_{(1,\infty)} = \int_{\del\Sigma} \osc(h_\theta)\, d\theta
$$
with $\osc(h_\theta): = \max_{x \in E_x} h(\theta,x) -
\min_{x \in E_x} h(\theta,x)$, and call it the $L^{(1,\infty)}$-norm of $\kappa_Q$.
\end{defn}
We remark that $\|\kappa_Q\|_{(1,\infty)}$ does not depend on
the choice of the function $h$.

To give readers some insight on these definitions, we compare this with the
classical notion of \emph{exact Lagrangian isotopy} \cite{gromov}.

\begin{exm} Let $E = (\R \times [0,1]) \times (M,\omega)$ with
$\pi: (\R \times [0,1]) \times (M,\omega) \to \R \times [0,1]$ the
projection. Consider the two form $\Omega = \pi^*\omega$.
Let $L_i \subset (M,\omega)$ be a Lagrangian submanifold
and let $\psi_i: [0,1] \times L \to M$ be a Lagrangian isotopy
for $i = 1,\, 2$.
The isotopy $\psi_i$ is an exact Lagrangian isotopy if there is a
smooth function $h_i:[0,1] \times L \to \R$ such that
$\psi_i^*\omega = dh_i \wedge dt = d(h_i\,dt)$ \cite{gromov}. This definition
is a special case of Definition \ref{exactfamily}: Just consider
the embedding
$$
(h_i,\psi_i): \R \times L_i \to \R \times [0,1] \times M
$$
and set $Q_i = \operatorname{im} (h_i,\psi_i)$ the Lagrangian suspension,
and $\kappa_{Q_i}(t,x) = h_i(t,x) \, dt$.
\end{exm}

We now study the structure of $\pi_2(E,Q)$.
We start with the following relative version of \emph{section class}.
A class $B \in \pi_2(E,Q)$ is a section class if $\pi_*B \in \pi_2(\Sigma,\del\Sigma)$
is the positive generator with respect to the given orientation of $\Sigma$.
We say that $B$ is a \emph{fiber class} if it is in the image of $\pi_2(M,L) \to \pi_2(E,Q)$
induced from the inclusion of the fiber. The following is proved in Lemma 2.2 \cite{hu-lal}
for the smooth Hamiltonian fibration but the same proof applies to the current
case with singular fibers. We refer readers to \cite{hu-lal} for its proof.

\begin{lem} The following sequence of homotopy groups is exact at the
middle term:
$$
\pi_2(M,L) \to \pi_2(E,Q) \to \pi_2(\Sigma,\del \Sigma).
$$
\end{lem}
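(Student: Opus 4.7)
The easy inclusion $\operatorname{Im}(\pi_2(M,L) \to \pi_2(E,Q)) \subset \ker \pi_\ast$ follows by identifying $(M,L) = (E_{z_0}, Q_{z_0})$ for a base point $z_0 \in \del\Sigma$: any such fiber class projects to the constant map at $z_0$ and is therefore null-homotopic in $\pi_2(\Sigma, \del\Sigma)$. For the nontrivial inclusion, my plan is to reduce the problem to the smooth Hamiltonian fibration setting handled in \cite{hu-lal} by a codimension argument that pushes representatives off of $E^{crit}$.

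First I would set $E^{\ast} := E \setminus E^{crit}$ and $\Sigma^{\ast} := \Sigma \setminus \Sigma^{crit}$. Because $\Sigma^{crit}$ lies in the interior of $\Sigma$ by Definition \ref{defn:Lefschetz}, we have $Q \cap E^{crit} = \emptyset$. Since $E^{crit}$ is a finite set inside the $(2n+2)$-manifold $E$ (the case when $M$ is a point being vacuous), transversality shows that any map $u:(D^2,\del D^2) \to (E,Q)$ and any homotopy $D^2 \times [0,1] \to E$ rel boundary can be perturbed smoothly to miss $E^{crit}$, so the inclusion induces a bijection $\pi_2(E^{\ast}, Q) \cong \pi_2(E, Q)$.

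The restriction $\pi : E^{\ast} \to \Sigma^{\ast}$ is now a genuine smooth Hamiltonian fibration with $Q \to \del\Sigma \subset \Sigma^{\ast}$ a Lagrangian sub-bundle, so the proof of Lemma~2.2 of \cite{hu-lal} applies verbatim and gives exactness at the middle term of
$$
\pi_2(M,L) \to \pi_2(E^{\ast}, Q) \to \pi_2(\Sigma^{\ast}, \del\Sigma).
$$
To transfer this to the exactness we want, I would show that the inclusion-induced map $\pi_2(\Sigma^{\ast}, \del\Sigma) \to \pi_2(\Sigma, \del\Sigma)$ is injective. Since $\Sigma$ has non-empty boundary all three of $\pi_2(\Sigma)$, $\pi_2(\Sigma^{\ast})$, $\pi_2(\del\Sigma)$ vanish, so the long exact sequences of pairs identify both groups with subgroups of $\pi_1(\del\Sigma)$, namely the kernels of the maps induced by $\del\Sigma \hookrightarrow \Sigma^{\ast}$ and $\del\Sigma \hookrightarrow \Sigma$ respectively; the former is contained in the latter via the homomorphism $\pi_1(\Sigma^{\ast}) \to \pi_1(\Sigma)$ induced by inclusion, yielding the desired injectivity. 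Granted this, the hypothesis $\pi_\ast [u] = 0$ in $\pi_2(\Sigma, \del\Sigma)$ forces $\pi_\ast [u] = 0$ in $\pi_2(\Sigma^{\ast}, \del\Sigma)$, and the smooth case exactness produces a lift to $\pi_2(M,L)$.

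The step I expect to require the most care is the application of the Hu--Lalonde argument in Step~2; one has to make sure that the bundle-pair long exact sequence of \cite{hu-lal} accommodates a \emph{proper} Lagrangian sub-bundle $Q \subsetneq \del E^{\ast}$ rather than the full boundary fiber bundle. Everything else---the $\pi_1$-comparison in the base and the codimension perturbation above $E^{crit}$---is a standard general-position argument, and no additional analysis near the critical fibers is needed because the perturbation in Step~1 removes them from the picture before the fibration-theoretic LES is invoked.
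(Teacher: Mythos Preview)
Your reduction strategy is sound in spirit and is actually more explicit than what the paper does (the paper simply cites Hu--Lalonde and asserts without further argument that ``the same proof applies'' in the presence of singular fibers). However, there is a genuine gap in your Step~2.

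You set $E^{*}:=E\setminus E^{crit}$ and correctly deduce $\pi_2(E^{*},Q)\cong\pi_2(E,Q)$ from the codimension $\ge 4$ of $E^{crit}$. But you then write ``the restriction $\pi:E^{*}\to\Sigma^{*}$ is a genuine smooth Hamiltonian fibration'' and invoke Hu--Lalonde for the sequence $\pi_2(M,L)\to\pi_2(E^{*},Q)\to\pi_2(\Sigma^{*},\partial\Sigma)$. This is not well-posed: $\pi(E^{*})=\Sigma$, not $\Sigma^{*}$, because the singular fiber $E_{z_0}\setminus\{x_0\}$ is nonempty and still sits over $z_0\in\Sigma^{crit}$. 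The actual smooth fibration is $\pi:\pi^{-1}(\Sigma^{*})\to\Sigma^{*}$, but $\pi^{-1}(\Sigma^{*})\subsetneq E^{*}$ and you have \emph{not} established $\pi_2(\pi^{-1}(\Sigma^{*}),Q)\cong\pi_2(E,Q)$. That isomorphism does not follow from transversality: the singular fibers have real codimension~$2$ in $E$, so neither a disk $D^2\to E$ nor a homotopy $D^2\times[0,1]\to E$ can be pushed off them by general position. Consequently your injectivity statement $\pi_2(\Sigma^{*},\partial\Sigma)\hookrightarrow\pi_2(\Sigma,\partial\Sigma)$, while correct, is idle: for $u:D^2\to E^{*}$ the composite $\pi\circ u$ need not land in $\Sigma^{*}$, so you cannot feed it into that injection.

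To repair this you would need an additional argument, for instance showing that a representative $u$ with $\pi_*[u]=0$ can be homotoped in $(E,Q)$ so that $\pi\circ u$ misses $\Sigma^{crit}$ (this uses the hypothesis $\pi_*[u]=0$ essentially, not just transversality), or---closer to what the paper intends---lifting a null-homotopy of $\pi\circ u$ in $\Sigma$ via the symplectic connection on $E\setminus E^{crit}$ while controlling what happens as the homotopy crosses $\Sigma^{crit}$. Either route works but is not the ``standard general-position argument'' you flag as routine; it is in fact the crux of extending Hu--Lalonde to the Lefschetz setting.
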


The following proposition is the reason why the notion of exact
Lagrangian boundary is relevant to the study of pseudo-holomorphic
curves with boundary later. Similar estimates were previously
obtained in \cite{entov}, \cite{oh:dmj} and \cite{seidel:triangle}
in somewhat different contexts.

\begin{prop} Suppose $\Sigma$ is oriented and denote by
$d\theta$ a given orientation one-form on $\del \Sigma$.
Let $Q \subset \del E$ over $\del \Sigma$ be an exact Lagrangian boundary of
$E$ and $\kappa_Q$ be a corresponding Hamiltonian one-form.
Consider a section $s: \Sigma \to E \setminus E^{crit}$ with $s(\del \Sigma) \subset
Q \subset \del E$. Then for each given section class  $[s,\del s]
\in \pi_2(E,Q;\Z)$, there exists a constant
$C = C(\kappa_Q,[s,\del s]) > 0$ such that the integral bound
$$
\left|\int_\Sigma s^*\Omega\right| \leq C = C(B)
$$
holds for any section $s$ in a fixed class $B = [s, \del s] \in H_2(E,Q;\Z)$.
In fact, we have
\be\label{eq:upperbound}
\left|\int_\Sigma s_2^*\Omega - \int_\Sigma s_1^*\Omega\right|
\leq \|\kappa_Q\|_{(1,\infty)}
\ee
for any two such sections with $[s_1,\del s_1] = [s_2,\del s_2]$.
\end{prop}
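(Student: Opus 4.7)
The plan is to prove the difference estimate \eqref{eq:upperbound} first by a Stokes-type computation and then derive the absolute bound by fixing a reference section. The basic idea is that two sections in the same relative homology class cobound a 3-chain in $E$ whose lateral boundary lies in $Q$, and on $Q$ the form $\Omega$ is exact with primitive $\kappa_Q$.

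First I would fix two sections $s_1, s_2: \Sigma \to E \setminus E^{crit}$ with $[s_1,\del s_1] = [s_2,\del s_2] = B \in H_2(E,Q;\Z)$. The equality of classes produces a 3-chain $W$ in $E$ and a 2-chain $V$ in $Q$ with $\del W = s_2 - s_1 - V$ as relative cycles (here the boundary of $V$ inside $Q$ is $\del s_1 - \del s_2$). Since $\Omega$ is a smooth closed 2-form on all of $E$ (including $E^{crit}$, which is a finite set and hence does not affect any integral), Stokes' theorem yields
\be\label{eq:plan1}
\int_\Sigma s_2^*\Omega - \int_\Sigma s_1^*\Omega \;=\; \int_V \Omega.
\ee
The construction of $W$ and $V$ can be carried out concretely, if desired, by choosing a smooth fiberwise homotopy $F:\Sigma \times [0,1] \to E$ with $F(z,0) = s_1(z)$, $F(z,1) = s_2(z)$, $F(z,t) \in E_z$ and $F(z,t)\in Q_z$ for $z \in \del\Sigma$; such an $F$ exists because the fibers of $E \to \Sigma$ and of $Q \to \del\Sigma$ are connected and the obstruction to a coherent fiberwise homotopy is exactly the class difference in $H_2(E,Q)$.

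Next I use the exactness hypothesis on $Q$: since $i_Q^*\Omega = d\kappa_Q$ and $V \subset Q$, another application of Stokes gives
\be\label{eq:plan2}
\int_V \Omega \;=\; \int_V d\kappa_Q \;=\; \int_{\del V} \kappa_Q \;=\; \int_{\del s_1} \kappa_Q - \int_{\del s_2} \kappa_Q.
\ee
Writing $\kappa_Q = h_i(\theta,x)\,d\theta$ on each boundary component $\del_i\Sigma$, the right-hand side of \eqref{eq:plan2} becomes $\sum_i \int_{\del_i\Sigma} \bigl(h_i(\theta,s_1(\theta)) - h_i(\theta,s_2(\theta))\bigr)\,d\theta$. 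Bounding the integrand pointwise by $\osc(h_{i,\theta})$ then gives
\[
\left| \int_\Sigma s_2^*\Omega - \int_\Sigma s_1^*\Omega \right| \;\leq\; \int_{\del\Sigma} \osc(h_\theta)\,d\theta \;=\; \|\kappa_Q\|_{(1,\infty)},
\]
which is exactly \eqref{eq:upperbound}. Note that the right-hand side is independent of the choice of primitive $h$, since any two choices differ by a function pulled back from $\del\Sigma$, which does not affect the oscillation. For the first assertion, I would fix any one reference section $s_0$ representing $B$ (which exists by hypothesis) and set $C(B) = \bigl|\int_\Sigma s_0^*\Omega\bigr| + \|\kappa_Q\|_{(1,\infty)}$; then \eqref{eq:upperbound} immediately yields $\bigl|\int_\Sigma s^*\Omega\bigr| \leq C(B)$ for every section $s$ in the class $B$.

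The only mild subtlety to verify carefully is that working with sections into $E \setminus E^{crit}$ does not impede the construction of the cobounding chain $W$, since $W$ itself is only required to lie in $E$; because $E^{crit}$ is a finite set of points and $\dim W = 3$, transversality allows us to arrange $W$ to miss $E^{crit}$ if desired, but this is inessential as $\Omega$ and $d\Omega = 0$ are smooth across $E^{crit}$ by Definition \ref{defn:Lefschetz}(4). The argument is thus straightforward; no genuine obstacle arises at this stage, the delicate analysis being reserved for the later compactness theorem for holomorphic sections near critical points.
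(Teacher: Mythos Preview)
Your proof is correct and follows essentially the same approach as the paper: construct a cobounding 3-chain from the equality of relative classes, apply Stokes with $d\Omega=0$ to reduce to an integral over a 2-chain in $Q$, then apply Stokes again with $i_Q^*\Omega = d\kappa_Q$ to reduce to a boundary integral bounded by the oscillation norm. There is a harmless sign slip in your statement of $\partial V$ (from $\partial W = s_2 - s_1 - V$ one gets $\partial V = \partial s_2 - \partial s_1$, not the reverse), but since the conclusion is an absolute-value bound this does not affect the result.
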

\begin{proof} Recall $i_Q^*\Omega = d \kappa_Q$ for a one-form $\kappa_Q$
which exists by definition of exact Lagrangian boundary $Q$.
Let $s_i$, $i = 1,\, 2$ be two sections of $E$ with
$s_i(\del \Sigma) \subset Q$ with $[s_1,\del s_1] = [s_2,\del s_2]$.
Then we have a geometric chain $(S,C)$ with
$$
\del S = s_1 \coprod s_2 \coprod C.
$$
and $\del C = \del s_2 - \del s_1$ as a chain in $Q$.

By Stokes' and closedness of $\Omega$, we have
$$
0 = \int S^*(d\Omega) = \int_{\del S}\Omega =
\int_\Sigma s_2^*\Omega - \int_\Sigma s_1^*\Omega-
\int C^*\Omega
$$
and hence
$$
\int_\Sigma s_2^*\Omega - \int_\Sigma s_1^*\Omega
=\int_C \Omega.
$$
But by the exactness of the fiberwise-Lagrangian subbundle $Q$ and since
$C$ has its image in  $Q$, we obtain
$$
\int_C \Omega = \int_C d\kappa_Q = \int_{C} \kappa_Q
= \int_{\del s_2} \kappa_Q  - \int_{\del s_1} \kappa_Q.
$$
Therefore we have obtained
$$
\int_\Sigma s_2^*\Omega - \int_\Sigma s_1^*\Omega
= \int_{\del s_2} \kappa_Q  - \int_{\del s_1} \kappa_Q =
\int_{\del \Sigma} (h \circ s_2 -  h\circ s_1)\, d\theta
$$
and so
$$
\left|\int_\Sigma s_2^*\Omega - \int_\Sigma s_1^*\Omega\right|
\leq \int_{\del \Sigma} (\max_{x\in E_x}h_\theta(x) -  \min_{x\in E_x}h_\theta(x))\, d\theta
\leq \|\kappa_Q\|_{(1,\infty)}.
$$
Since $\|\kappa_Q\|_{(1,\infty)}$ does not depend on $s$, this finishes the proof.
\end{proof}


Next we consider the topological index associated to the section
$(s,\del s)$ for $s$ which does not pass through critical points
$E^{crit}$. By considering the pull-back $s^*(TE^v)$, it defines a
symplectic bundle pair $(s^*TE^v, (\del s)^*TQ^v)$ where $TQ^v =
TQ^v = TQ \cap TE^v|_{\del \Sigma}$. Therefore we can associate the
Maslov index, which we denote by $\mu([s,\del s])$. (See
\cite{katz-liu}, \cite{fooo:book}.)

Now we examine topological dependence of $\mu([s,\del s])$.
Note that each section $(s,\del s)$ defines an element in $\pi_2(E,Q)$.
We denote the corresponding class by $s_*([\Sigma,\del \Sigma])$
where $[\Sigma,\del \Sigma]$ is the fundamental class which is a generator
of $H_2(\Sigma,\del\Sigma;\Z) \cong \Z$.
The following lemma immediately follows from the definition of the Maslov
index for the bundle pair.

\begin{lem}\label{lem:maslovindex} Suppose
$(s_1)_*([\Sigma,\del \Sigma]) = (s_2)_*([\Sigma,\del\Sigma])$. Then we have
$$
\mu([s_1,\del s_1]) = \mu([s_1,\del s_2]).
$$
\end{lem}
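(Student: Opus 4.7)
\textbf{Proof plan for Lemma \ref{lem:maslovindex}.}

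The idea is to realise the equality $(s_1)_*([\Sigma,\del\Sigma]) = (s_2)_*([\Sigma,\del\Sigma])$ by a bundle-pair bordism and then invoke the bordism invariance of the Maslov index of a symplectic bundle pair over a compact oriented surface with boundary. Read at the level of $\pi_2(E,Q)$, the hypothesis directly produces a continuous homotopy $H \colon [0,1]\times\Sigma \to E$ with $H(0,\cdot)=s_1$, $H(1,\cdot)=s_2$ and $H([0,1]\times\del\Sigma)\subset Q$. If only homological equivalence is available, one first represents the two classes by homotopic sections, using that the Maslov index of the pair $(s^*T^vE,(\del s)^*T^vQ)$ is itself a homological invariant, captured by the Chern number of $T^vE$ along $[s]$ and the fibrewise Maslov class of $Q$ along $[\del s]$ (cf.\ \cite{katz-liu,fooo:book}).

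I would next arrange for $H$ to miss the critical set $E^{crit}$. By Definition \ref{defn:Lefschetz}, $E^{crit}$ is a finite collection of isolated points lying in fibers over interior points of $\Sigma$, so it is disjoint from $\del E$ and in particular from $Q$; moreover $s_1,s_2$ already avoid $E^{crit}$ by hypothesis. Since $\dim_\R([0,1]\times\Sigma)=3$ while the real codimension of $E^{crit}$ in $E$ equals $\dim_\R E\geq 4$, a standard transversality argument rel the endpoints $\{0,1\}\times\Sigma$ and the boundary $[0,1]\times\del\Sigma$ yields a $C^\infty$-small perturbation of $H$, still a homotopy of maps $(\Sigma,\del\Sigma)\to(E,Q)$ from $s_1$ to $s_2$, whose image is disjoint from $E^{crit}$.

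On $E\setminus E^{crit}$ the vertical tangent bundle $T^vE=\ker D\pi$ is a well-defined symplectic vector bundle, and $T^vQ=TQ\cap T^vE|_Q$ is a Lagrangian subbundle. Pulling back along the perturbed $H$ then produces a symplectic vector bundle $H^*T^vE\to [0,1]\times\Sigma$ together with a Lagrangian subbundle $H^*T^vQ\to [0,1]\times\del\Sigma$, whose restrictions at $t=0,1$ are exactly the defining bundle pairs $(s_i^*T^vE,(\del s_i)^*T^vQ)$ of $\mu([s_i,\del s_i])$. This is the desired $3$-dimensional bundle-pair bordism; since the Maslov index of a bundle pair over a compact oriented surface with boundary is a characteristic-class-type invariant preserved under such a bordism, we conclude $\mu([s_1,\del s_1])=\mu([s_2,\del s_2])$.

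The step that deserves the most care is the transversality argument, where one has to move $H$ off $E^{crit}$ without disturbing the boundary condition in $Q$ or the end-conditions $s_1,s_2$. Both are already in position, and the codimension count $3<\dim_\R E$ together with the non-degeneracy of the complex Hessian $D^2\pi$ required in Definition \ref{defn:Lefschetz} (which forces $E^{crit}$ to be an isolated point set) makes this routine; any residual ambiguity between reading the hypothesis in $\pi_2$ versus $H_2$ is absorbed into the reduction discussed in the first paragraph.
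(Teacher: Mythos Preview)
Your argument is correct and is essentially an elaboration of what the paper asserts in one line: the paper simply states that the lemma ``immediately follows from the definition of the Maslov index for the bundle pair,'' i.e., from homotopy invariance of the Maslov index of a symplectic bundle pair. Your proof unpacks this by producing the homotopy, carefully perturbing it off $E^{crit}$ via the codimension count, and then invoking bordism invariance of the bundle-pair Maslov index --- a point the paper leaves entirely implicit but which is genuinely needed since $T^vE$ is only defined away from $E^{crit}$.
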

\begin{defn} We denote by $\pi_2^{sec}(E,Q) \subset \pi_2(E,Q)$ the subset of
section classes $[s,\del s]$ in $\pi_2(E,Q)$.
We say two section classes $B_1, \, B_2$ are $\Gamma$-equivalent if they satisfy
$$
\Omega(B_1) = \Omega(B_2), \quad \mu(B_1) = \mu(B_2)
$$
and denote by $\Pi(E,Q)$ the quotient group
$$
\Pi(E,Q) = \pi_2^{sec}(E,Q)/\sim.
$$
\end{defn}

For the Calabi-Yau Lefschetz fibrations, one can proceed the study of
Maslov indices following the exposition given in \cite{seidel:book}.
Consider the bundle of relative quadratic volume forms
$$
\CK^2_{E/\Sigma} = \pi^* {\det}_\C^2 (T\Sigma) \otimes
{\det}_\C^{\otimes -2}.
$$
By definition, if $E \to \Sigma$ is Calabi-Yau, we have nowhere zero
section $\eta^2_{E/B}$ of this on $E \setminus E^{crit}$. Furthermore, we can require
$\eta^2_{E/B}$ to satisfy
$$
\eta^2_{E/B} = \frac{(dz_1 \wedge \cdots \wedge dz_{n+1})^2} {(2z_1
dz_1 + \cdots + 2z_{n+1}dz_{n+1})^{\wedge 2}}
$$
in a neighborhood $U \subset E$ of each critical point under the given identification
$\pi: U \setminus \{x\} \to \pi(U \setminus \{x\}) $
with $q: \C^{n+1} \setminus \{0\} \to \C\setminus 0$.

\begin{defn} We say that a fiberwise Lagrangian submanifold $Q
\subset E$ is (relatively) \emph{graded} if there exists a function
$$
\alpha: Q \to \R
$$
such that
$$
\exp(2\pi \sqrt{-1} \alpha(x)) = \eta^2_{E/B}(T_x^vQ), \quad x \in Q.
$$
We call $\alpha$ an $\CL^\infty$-grading of $Q$.
\end{defn}

For a given pair of a fiberwise Lagrangian submanifold $Q_1, \, Q_2 \subset E$
intersecting transversely, we can associate a natural $\Z$-grading on the
intersection $Q_1 \cap Q_2$ in the following way.

We consider the two form
$$
\omega_{E,\lambda} = \Omega + \lambda \pi^*\omega_\Sigma
$$
which is nondegenerate on $E \setminus E^{crit}$. Existence of such $\lambda > 0$
is easy to check. The following lemma also immediately follows whose proof we
leave to the readers.

\begin{lem} Any fiberwise Lagrangian submanifold $Q \subset E$ is Lagrangian
for $\omega_{E,\lambda}$.
\end{lem}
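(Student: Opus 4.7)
The statement is a combination of a dimension count and an isotropy identity, both of which split neatly once one exploits the low dimension of the base boundary.

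The first step is the dimension count: since $\del\Sigma$ is real $1$-dimensional and each fiber $Q_z$ is Lagrangian in $E_z$ of real dimension $n$, the subbundle $Q$ has dimension $n+1 = \tfrac{1}{2}\dim E$. Hence it only remains to check that $\omega_{E,\lambda}|_{T_qQ} \equiv 0$ for every $q \in Q$. I would split the restriction as
\[
\omega_{E,\lambda}|_{T_qQ} \;=\; \Omega|_{T_qQ} \;+\; \lambda \, \pi^*\omega_\Sigma|_{T_qQ},
\]
and deal with each term separately.

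The $\pi^*\omega_\Sigma$ piece is essentially free: for any $v_1,v_2 \in T_qQ$ the projections $d\pi(v_i) \in T_{\pi(q)}\del\Sigma$ lie in a $1$-dimensional space, hence are linearly dependent, giving $\omega_\Sigma(d\pi v_1, d\pi v_2) = 0$. So only $\Omega|_{T_qQ} = 0$ needs to be verified. For this I would pick a local splitting $T_qQ = T_qQ_z \oplus \R\cdot Y$ with $Y$ a lift of a generator of $T_{\pi(q)}\del\Sigma$. Since $\omega_{E,\lambda}(Y,Y) = 0$ by antisymmetry and $\Omega$ restricted to $T_qQ_z \times T_qQ_z$ coincides with $\omega_z|_{T_qQ_z \times T_qQ_z}$ (vanishing because $Q_z$ is Lagrangian in $E_z$), the entire content of the lemma is reduced to showing that $\Omega(u,Y) = 0$ for $u \in T_qQ_z$. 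I would achieve this by choosing $Y$ to be the $\Omega$-horizontal lift of $\del_\theta$ for a local coordinate $\theta$ on $\del\Sigma$: then $u \in T_q^vE$, $Y \in T_q^hE$, and $\Omega(u,Y) = 0$ by the very definition of the horizontal complement as the $\Omega$-orthogonal of the vertical subspace.

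The main obstacle lies in knowing that this specific $Y$, the $\Omega$-horizontal lift of $\del_\theta$, actually lies in $T_qQ$ --- that is, that the Hamiltonian parallel transport along $\del\Sigma$ preserves $Q$. This is built into the setup: $\pi|_Q: Q \to \del\Sigma$ is a smooth subbundle of the Hamiltonian fibration $\pi|_{\del E}: \del E \to \del\Sigma$, whose connection is the one induced by $\Omega|_{\del E}$, and the smoothness of $Q$ as a subbundle in this category forces $TQ$ to contain the horizontal distribution over $\del\Sigma$. Equivalently, one may observe that, because $\del\Sigma$ is $1$-dimensional, the $\Omega$-horizontal distribution along $\del\Sigma$ has rank $1$ and its unique integral direction tangent to $Q$ is precisely the canonical lift compatible with the symplectic connection; once this is recorded, the chain of identities above closes up and $\omega_{E,\lambda}|_Q \equiv 0$ follows, completing the proof.
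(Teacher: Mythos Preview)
The paper gives no proof of this lemma --- it says ``the following lemma also immediately follows whose proof we leave to the readers'' --- so there is nothing to compare against directly. Your reduction is clean and correct up through the point where everything hinges on $\Omega(u,Y)=0$ for $u\in T_qQ_z$ and some $Y\in T_qQ$ projecting nontrivially to $\del\Sigma$. The dimension count and the vanishing of $\pi^*\omega_\Sigma|_Q$ are fine.

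The gap is in your final paragraph. You want to take $Y$ to be the $\Omega$-horizontal lift of $\del_\theta$, and you then assert that this lift lies in $T_qQ$ because ``the smoothness of $Q$ as a subbundle in this category forces $TQ$ to contain the horizontal distribution.'' This is not true: being a smooth subbundle with Lagrangian fibers does \emph{not} force the symplectic parallel transport to preserve $Q$. Concretely, take $E=D^2\times M$ with $\Omega=pr_M^*\omega$, and let $\{L_\theta\}_{\theta\in S^1}$ be any nontrivial Lagrangian isotopy; set $Q=\{(\theta,x):x\in L_\theta\}$. The $\Omega$-horizontal lift of $\del_\theta$ is just $\del_\theta$ (the connection is trivial), and this is tangent to $Q$ only if the isotopy is infinitesimally tangent to $L_\theta$ itself. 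For a generic isotopy one finds $\Omega(u,\del_\theta+\dot\psi_\theta)=\omega(u,\dot\psi_\theta)\neq 0$, so $Q$ is \emph{not} $\omega_{E,\lambda}$-Lagrangian. Your ``equivalently, one may observe\ldots'' sentence is circular: it presupposes the horizontal direction is already tangent to $Q$.

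So the lemma as literally stated needs an extra hypothesis --- either that $Q$ is invariant under the $\Omega$-parallel transport along $\del\Sigma$, or equivalently that $i_Q^*\Omega=0$. In the paper's actual applications (e.g.\ $Q=\del_i\dot\Sigma\times L_i$ in the trivial fibration with $\Omega=pr_M^*\omega$, or $Q$ obtained by parallel transport as in Section~\ref{subsec:GFmoduli}) this hypothesis is always satisfied, which is presumably why the author regarded the lemma as immediate. Your argument becomes correct once you \emph{assume} the horizontal lift lies in $TQ$; you should state that as a hypothesis rather than claim it follows from the bare definition of a fiberwise-Lagrangian subbundle.
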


\section{Pseudo-holomorphic sections}
\label{sec:pseudo}

In this section, we carry out various studies of geometry and analysis
of pseudo-holomorphic sections with Lagrangian boundary condition
in the setting of Lefschetz Hamiltonian fibrations.

Let $(E,\pi,\Omega,J_0,j_0)$ be a Lefschetz Hamiltonian fibration
and let $x_0 \in E^{crit}$. Denote by $(\xi, \Xi)$ be a \emph{holomorphic
Morse chart} at $x_0$ i.e., a $j_0$-holomorphic coordinates $\xi: U \to S$ with
$\xi(0) = z_0 = \pi(x_0)$ where
$U \subset \C$ is a neighborhood of the origin and $\Xi: W \to E$ be
a $J_0$-holomorphic chart with $W \subset \C^{n+1}$ a neighborhood of
the origin in $\C^{n+1}$ with $\Xi(0) = x_0$ such that
\be\label{eq:Morse}
(\xi^{-1}\circ\pi\circ \Xi)(x) = x_1^2 + \cdots + x_{n+1}^2.
\ee
With $(\xi,\Xi)$ fixed, we denote the model Lefschetz
fibration by $q: \C^{n+1} \to \C$ defined by
$$
q(z_1,\cdots, z_n) = z_1^2 + \cdots + z_{n+1}^2.
$$
Now for the given $(E,\pi,\Omega,J_0,j_0)$ and the holomorphic
Morse charts $(\xi,\Xi)$ at each critical points of $E$, we consider an
almost complex structure $J$ on $E$ that satisfies
\begin{enumerate}
\item $J = J_0$ in a neighborhood of $E^{crit}$,
\item $d\pi \circ J = j \circ d\pi$,
\item $\Omega(\cdot,J\cdot)|_{TE_x^v}$ is symmetric
and positive definite for any $x \in E$,
\end{enumerate}
Following Seidel \cite{seidel:triangle}, we call such $J$
\emph{compatible relative to $j$}. An immediate consequence of the definition
is the following lemma

\begin{lem} Let $J$ be compatible relative to $j$. Then for any
given area form $\omega_\Sigma$ on $\Sigma$ with $\int_\Sigma \omega_\Sigma= 1$,
the two form $\Omega + \lambda \pi^*\omega_\Sigma$ tames $J$ for all sufficiently
large $\lambda$.
\end{lem}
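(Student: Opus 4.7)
The plan is to establish the taming inequality $(\Omega + \lambda\pi^*\omega_\Sigma)(v,Jv)>0$ for all nonzero $v\in TE$ by splitting $E$ into a neighborhood of $E^{crit}$ and its complement. By condition (4) of Definition \ref{defn:Lefschetz}, there is an open neighborhood $U$ of $E^{crit}$ on which $\Omega$ is a K\"ahler form for $J_0=J$. On $U$ the form $\Omega$ already tames $J$, while the added term satisfies
$$
\lambda\pi^*\omega_\Sigma(v,Jv) \;=\; \lambda\,\omega_\Sigma(d\pi\,v,\,j\,d\pi\,v)\;\geq\;0,
$$
since $\omega_\Sigma$ is a $j$-positive area form. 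So on $U$ the sum tames $J$ for every $\lambda\geq 0$, independently of the constant.

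On the complement $E\setminus U$ every point is regular, so the vertical distribution $T^vE=\ker d\pi$ is $\Omega$-symplectic and we obtain the $\Omega$-orthogonal splitting $T_xE = T^v_xE\oplus T^h_xE$ with $T^h_xE = (T^v_xE)^{\perp_\Omega}$. The relation $d\pi\circ J = j\circ d\pi$ implies $J$ preserves $T^vE$. Writing $v=v^v+v^h$ and using the $\Omega$-orthogonality between the two summands to kill the cross term $\Omega(v^h,Jv^v)$, one reduces
$$
\Omega(v,Jv) \;=\; \Omega(v^v,Jv^v) + \Omega\bigl(v^v,(Jv^h)^v\bigr) + \Omega\bigl(v^h,(Jv^h)^h\bigr),
$$
where $(Jv^h)^v$ and $(Jv^h)^h$ denote the vertical and horizontal components of $Jv^h$.

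Next I would estimate each piece against a fixed auxiliary Riemannian metric on $E$. By condition (3) of $j$-compatibility, $\Omega(v^v,Jv^v)\geq c|v^v|^2$ for some $c>0$; the cross term is bounded by $C|v^v||v^h|$; the horizontal term is bounded by $|\Omega(v^h,(Jv^h)^h)|\leq C'|v^h|^2$. On the other hand $\pi^*\omega_\Sigma(v,Jv)=\omega_\Sigma(d\pi\,v^h,j\,d\pi\,v^h)\geq c'|v^h|^2$, using that $d\pi|_{T^h}$ is an isomorphism with uniform lower bound and that $\omega_\Sigma$ tames $j$. Compactness of $E\setminus U$ makes the constants uniform, and combining gives
$$
(\Omega+\lambda\pi^*\omega_\Sigma)(v,Jv) \;\geq\; c|v^v|^2 - C|v^v||v^h| + (\lambda c' - C')|v^h|^2,
$$
which is a positive definite quadratic form in $(|v^v|,|v^h|)$ as soon as $\lambda > \bigl(C^2/(4c) + C'\bigr)/c'$. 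Any such $\lambda$ then produces taming on $E\setminus U$, and combined with the previous step on $U$ gives taming throughout $E$.

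The main obstacle is the degeneration of the $\Omega$-horizontal distribution as $x\to E^{crit}$: there $T^vE = T_xE$, the $\Omega$-orthogonal complement collapses, and the constants $C,C'$ above would blow up if one tried to work globally. The split into $U$ and $E\setminus U$ is precisely what sidesteps this, using the K\"ahler hypothesis to handle the singular locus without invoking the horizontal/vertical decomposition, and using compactness to absorb the horizontal error terms by a sufficiently large $\lambda$ on the regular part.
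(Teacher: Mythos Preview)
Your proof is correct. The paper does not actually supply a proof of this lemma: it states it as ``an immediate consequence of the definition'' and later refers the reader to Lemma~2.1 of \cite{seidel:triangle}. Your argument is precisely the standard one that fills in this omitted proof --- splitting into a neighborhood $U$ of $E^{crit}$ where the K\"ahler hypothesis on $(\Omega,J_0)$ handles everything, and the compact regular locus $E\setminus U$ where the $\Omega$-orthogonal vertical/horizontal decomposition reduces the taming inequality to a quadratic form in $(|v^v|,|v^h|)$ made positive definite by large $\lambda$. Your final paragraph correctly identifies why the split is necessary: the horizontal distribution and its associated constants degenerate at $E^{crit}$, so one cannot run the vertical/horizontal estimate globally. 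There is nothing to correct.
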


We refer to \cite{seidel:triangle} for a more complete explanation of the structure
of $J$'s compatible to $j$.

\subsection{Energy estimates and Hamiltonian curvature}
\label{subsec:energy}

Next we study the energy estimates of pseudo-holomorphic sections in
terms of the topological action $\int u^*\Omega$ and the contribution
coming from the curvature integral of $\int_\Sigma u^*\Omega$ of a
canonical symplectic connection of the Hamiltonian fibration $E \to
\Sigma$ associated to the coupling form $\Omega$ defined in Definition
\ref{defn:coupling}. This kind of
estimates has been studied in \cite{seidel:triangle}, \cite{oh:dmj},
\cite{mcduff-sal04}.

Using the connection associated to $\Omega$, we decompose $Du = (Du)^v +
(Du)^h$ into the vertical and horizontal components. Now we
consider the symplectic form
$$
\omega_E = \Omega + \lambda \pi^*\omega_\Sigma
$$
with $\omega_\Sigma$ an area form on $\Sigma$ with $\int_\Sigma
\omega_\Sigma = 1$. We like to remark that an almost complex
structure $J$ compatible to $j$ it is {\it not compatible} in the
usual sense in that the bilinear form
$$
\Omega(\cdot, J \cdot)
$$
may not be symmetric. However if $\lambda$ is sufficiently large,
it is tame to $\omega_E$ (see Lemma 2.1 \cite{seidel:triangle}).
Therefore we can symmetrize this bilinear form and define the
associate metric $g_{J}$ by \be\label{eq:gJ} \langle V,W \rangle =
g_{J}(V,W): = \frac{1}{2}(\Omega(V, J W) + \Omega(W, J V)). \ee We
call (\ref{eq:gJ}) the metric associated to $J$ and denote
$$
|V|^2 = |V|^2_{J} = g_{J}(V,V).
$$
With respect to this metric, we still have the following basic
identity whose proof we omit.

\begin{lem}\label{energy} Let $s: \Sigma \to E$ be any
$J$-holomorphic map $v$. Then we have
$$
{1 \over 2} \int |Ds|^2 = \int s^*\omega_E
$$
and \be\label{eq:Es} \frac{1}{2}\int |Ds|^2 = \int s^*\omega_E +
\int |\overline\del_J s|^2. \ee
\end{lem}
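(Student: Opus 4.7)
\medskip

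\noindent\textbf{Proof proposal.} The second identity is the general energy identity and the first is its special case when $\overline\partial_J s = 0$, so the plan is to establish the second one and observe that the first follows immediately. The identity is pointwise on $\Sigma$ (as top-degree forms) and then integrated; there is no global obstruction to overcome.

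I would work in local conformal coordinates $(x,y)$ on a chart $U \subset \Sigma$ in which $j\partial_x = \partial_y$, so that $\omega_\Sigma|_U = \rho\, dx \wedge dy$ for some positive $\rho$. Writing $V = \partial_x s$ and $W = \partial_y s$, the $(0,1)$-part
$$
\overline\partial_J s = \tfrac{1}{2}\bigl(Ds + J \circ Ds \circ j\bigr)
$$
evaluates at $\partial_x$ and $\partial_y$ to $\tfrac{1}{2}(V + JW)$ and $-J\cdot\tfrac{1}{2}(V+JW)$ respectively. The plan is then to expand $|Ds|^2_{g_J} = |V|^2 + |W|^2$ and $|\overline\partial_J s|^2_{g_J}$ in the symmetrized metric \eqref{eq:gJ}, and to compute $s^*\omega_E(\partial_x,\partial_y) = \omega_E(V,W)$ directly, so as to verify the pointwise identity
$$
\tfrac{1}{2}\bigl(|V|^2 + |W|^2\bigr) \;=\; \omega_E(V,W) \,+\, |\overline\partial_J s|^2_{g_J}
$$
as coefficients of $dx \wedge dy$. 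Integrating over $\Sigma$ gives the stated identity.

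The only subtlety relative to the Kähler or $\omega$-compatible case is that $J$ is only \emph{tame} for $\omega_E$, so the bilinear form $\omega_E(\cdot,J\cdot)$ is a priori not symmetric; this is precisely why the symmetrized form $g_J$ is introduced in \eqref{eq:gJ}. The key algebraic fact that makes the standard computation go through unchanged is that $g_J$ is still $J$-invariant, i.e.\ $g_J(JV,JW) = g_J(V,W)$, which follows directly from the antisymmetry of $\Omega$ applied to the definition of $g_J$. With this invariance in hand, the cross-term $g_J(V,JW) + g_J(JV,W)$ collapses to $-2\,\Omega(V,W)$ (again by unwrapping the definition), which is exactly what is needed to match the $s^*\omega_E$ term.

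The main (and really only) obstacle is keeping the signs and factors of $\tfrac{1}{2}$ consistent between the definitions of $\overline\partial_J s$, the symmetrization in \eqref{eq:gJ}, and the contribution of the twisting term $\lambda \pi^*\omega_\Sigma$ inside $\omega_E$; this is a careful but routine check. Once the pointwise identity is verified, integration and the observation that $|\overline\partial_J s|^2 \geq 0$ immediately yield the first identity for $J$-holomorphic sections $(\overline\partial_J s \equiv 0)$ and reprove the well-known fact that the energy of a $J$-holomorphic section is a topological quantity depending only on $[s,\partial s] \in \pi_2^{sec}(E,Q)$, consistent with the bound \eqref{eq:upperbound}.
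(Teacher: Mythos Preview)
The paper omits the proof of this lemma entirely (``whose proof we omit''), so there is nothing to compare against; your local-coordinate approach is the standard one and is exactly what one would expect.

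For the \emph{first} identity your argument is fine: when $\overline\partial_J s=0$ one has $W=Ds(e_2)=J\,Ds(e_1)=JV$, so $s^*\omega_E(e_1,e_2)=\omega_E(V,JV)=g_J(V,V)=\tfrac12|Ds|^2$ pointwise, and only tameness is used.

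For the \emph{second} identity, however, there is a genuine gap. Your key algebraic claim is incorrect: from symmetry of $g_J$ together with the $J$-invariance you (correctly) observe, one gets
\[
g_J(V,JW)+g_J(JV,W)=g_J(V,JW)+g_J(W,JV)=0,
\]
not $-2\,\omega_E(V,W)$. What the pointwise identity actually needs is $g_J(V,JW)=-\omega_E(V,W)$; unwinding the definition of $g_J$ gives
\[
g_J(V,JW)=\tfrac12\bigl(-\omega_E(V,W)-\omega_E(JV,JW)\bigr),
\]
so this holds if and only if $\omega_E(JV,JW)=\omega_E(V,W)$, i.e.\ $J$ is \emph{compatible} with $\omega_E$, not merely tame. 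The paper explicitly warns just above the lemma that $\Omega(\cdot,J\cdot)$ ``may not be symmetric'', so you cannot assume this. What your computation genuinely yields in the tame case is
\[
\tfrac12|Ds|^2 \;=\; s^*\widetilde\omega_E \;+\; |\overline\partial_J s|^2,
\qquad
\widetilde\omega_E(\cdot,\cdot):=\tfrac12\bigl(\omega_E(\cdot,\cdot)+\omega_E(J\cdot,J\cdot)\bigr),
\]
i.e.\ with the $J$-invariant part of $\omega_E$ in place of $\omega_E$ itself. The $J$-invariance of $g_J$ is the right observation, but it buys you compatibility of $(g_J,J,\widetilde\omega_E)$, not of $(g_J,J,\omega_E)$. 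Either the second identity in the paper tacitly assumes compatibility (or should be read with $\widetilde\omega_E$), or it is only being asserted for $J$-holomorphic $s$---note that the paper only ever uses the first identity downstream.
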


We decompose $Ds = (Ds)^v + (Ds)^h$ into vertical and horizontal
parts and write
$$
|Ds|^2 = |(Ds)^v|^2 + |(Ds)^h|^2 + 2 \langle (Ds)^v, (Ds)^h
\rangle.
$$
Then it is straightforward to prove \be\label{eq:Esh} |(Ds)^h|^2
\omega_\Sigma = 2(s^*\Omega + \lambda\omega_\Sigma) \ee by
the identity
\beastar \sum_{i=1}^2|(Ds)^h(e_i)|^2 & = &
\sum_{i=1}^2\omega_{E,\lambda}((Ds)^h(e_i),
J (Ds)^h(e_i)) \\
& = & \sum_{i=1}^2 (\Omega + \lambda
\pi^*\omega_{\Sigma})((Ds)^h(e_i),
J (Ds)^h(e_i)) \\
& = & 2 (\omega_E((Ds)^h(e_1), (Ds)^h(e_2)) + \lambda
\omega_\Sigma(e_1,e_2)) \eeastar for an orthonormal frame $\{e_1,
e_2\}$ and then applying the curvature identity
$$
d(\Omega(e_1^\#,e_2^\#)) = -[e_1^\#,e_2^\#] \rfloor \Omega:
\quad \mbox{fiberwise }
$$
Here $e_i^\#$ is the horizontal lift of $e_i$ and we have
$$
\operatorname{curv}_\Gamma(e_1, e_2) = \Omega(e_1^\#,e_2^\#)
$$
by the definition of coupling form (see (1.12)
\cite{GLS} but with caution on the sign convention). The following
is an immediate corollary of (\ref{eq:Esh}).

\begin{prop} Let $\Omega$ be the coupling form of $E$. Suppose that
$\Omega + \lambda \pi^*\omega_\Sigma$ for $\lambda > 0$ is
positive, i.e., symplectic for an area form $\omega_\Sigma$ on
$\Sigma$. Then we have the inequality as a two-form
\be\label{eq:s*K} s^*\Omega + \lambda\omega_\Sigma \geq 0
\ee
for any $J$-holomorphic section $s$.
In other
words, if we write $s^*\Omega = f(s) \omega_\Sigma$ for a function $f:
\Sigma \to \R$, then we have $f(s) + \lambda \geq 0$.
\end{prop}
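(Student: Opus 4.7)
The plan is to read the conclusion off directly from the energy identity established just above the statement, namely
\[
|(Ds)^h|^2 \, \omega_\Sigma \;=\; 2\bigl(s^*\Omega + \lambda\,\omega_\Sigma\bigr),
\]
which was derived for any $J$-holomorphic section $s$ under the hypothesis that $\omega_E = \Omega + \lambda\,\pi^*\omega_\Sigma$ is symplectic (so that $g_J$ is well-defined and positive definite on the horizontal distribution). Since the function $|(Ds)^h|^2 = g_J((Ds)^h,(Ds)^h)$ is pointwise non-negative and $\omega_\Sigma$ is a positive area form on $\Sigma$, the left-hand side is non-negative as a $2$-form. Hence so is the right-hand side, giving $s^*\Omega + \lambda\,\omega_\Sigma \geq 0$ as a $2$-form on $\Sigma$.

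For the second assertion, I use that $\Sigma$ is a real surface, so every $2$-form on it is a multiple of the chosen area form $\omega_\Sigma$. Writing $s^*\Omega = f(s)\,\omega_\Sigma$ for a smooth function $f\circ s\colon\Sigma\to\R$ (well-defined since the section $s$ stays in $E\setminus E^{crit}$ by construction, so the pullback is smooth), the inequality above becomes
\[
\bigl(f(s) + \lambda\bigr)\,\omega_\Sigma \;\geq\; 0,
\]
and positivity of $\omega_\Sigma$ forces $f(s) + \lambda \geq 0$ pointwise, as claimed.

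There is no real obstacle here: the proposition is a clean corollary of the preceding curvature/energy computation, and the only point one has to check is that the identity for $|(Ds)^h|^2\,\omega_\Sigma$ holds on all of $\Sigma$, including any potential loci where the horizontal component degenerates, but this is clear since both sides are continuous and the identity was already derived pointwise from the taming of $J$ by $\omega_E$ and the definition of the coupling form. No further analysis of critical fibers is required because, by Definition \ref{defn:Lefschetz} together with the remark following it, a smooth section $s$ automatically avoids $E^{crit}$, so all pullbacks and splittings $Ds = (Ds)^v + (Ds)^h$ are defined globally on $\Sigma$.
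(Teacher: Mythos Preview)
Your proof is correct and follows exactly the same approach as the paper: the proposition is deduced directly from the identity $|(Ds)^h|^2\,\omega_\Sigma = 2(s^*\Omega + \lambda\,\omega_\Sigma)$ established just before, using non-negativity of $|(Ds)^h|^2$. Your additional remarks about sections avoiding $E^{crit}$ and the scalar rewriting $s^*\Omega = f(s)\,\omega_\Sigma$ are helpful elaborations but add nothing essentially new to the paper's one-line argument.
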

\begin{proof} Just choose a complex structure $j$ on $\Sigma$ and
a $j$-compatible $J$ on $E$. Positivity (\ref{eq:s*K}) immediately
follows from (\ref{eq:Esh}).
\end{proof}

\subsection{Gromov-Floer moduli space of $J$-holomorphic sections}
\label{subsec:GF-moduli}

We first translate the anchor introduced in section
\ref{sec:anchored} in the setting of pointed Lagrangian boundary conditions in
Hamiltonian fibrations over the surface $\Sigma$ with boundary
$\del \Sigma \neq \emptyset$. This will be needed to study morphisms
between two Floer chain modules constructed via the moduli space of
pseudoholomorphic sections of Lefschetz fibrations over $\Sigma$.

Let $y \in M$ be a base point and let $(L,\gamma), \, (L',\gamma')$ be
two anchored Lagrangian submanifolds of $(M,\omega)$,
which intersect transversely. Let $\gamma$ and $\gamma'$ be
the paths $\gamma(0) = \gamma'(0) = y$ and $\gamma(1) \in L,\,
\gamma'(1) = L'$ given as in the anchor data.

Now to each intersection $p \in L \cap L'$,
we associate a Hamiltonian fibration over $[0,1]^2$.
The paths $\gamma$ and $\gamma'$ provide a path in $M$ along $\{0\}
\times [0,1]$
via the obvious concatenation of $\widetilde \gamma$ and $\gamma'$
with the midpoint given by $y$.

We take the trivial fibrations $E = [0,1]^2 \times (M,\omega)
\to [0,1]^2$. Then for each given pair $[p,w]$ with $p \in L \cap L'$ and
with a bounding strip $w:[0,1]^2 \to M$ such that
\bea\label{eq:bdyw}
w(0,t) & = & \widetilde\gamma \# \gamma'(t), \quad w(\zeta) \equiv p
\nonumber \\
w(s,0) & \in & L_0, \quad w(s,1) \in L_1,
\eea
we can associate a section of $s_{[p,w]}: [0,1]^2 \to E$ by
$$
(s,t) \mapsto ((s,t), w(s,t)) \; \,[0,1]^2 \to [0,1]^2 \times (M,\omega).
$$
We call this fibration over $[0,1]^2$ with a section $s_{[p,w]}$
an \emph{anchor cap} associated to $[p,w]$ relative to the given anchor.
For the notational convenience, we denote the corresponding fibration
with the fiberwise Lagrangian submanifolds by
$$
(E_{[p,w]}; [0,1] \times \{0\} \times L_0, [0,1] \times \{1\} \times L_1;
s_{[p,w]})
$$
or simply as $(E_{[p,w]};s_{[p,w]})$.

Note that the set of homotopy classes $[p,w]$ of
$w$ relative to the boundary condition (\ref{eq:bdyw}) has one-one correspondence
with the homotopy class of sections with the obvious corresponding
boundary condition and asymptotic boundary condition at
$\pm \infty$ respectively.

For a given compact surface $\Sigma$ with marked points $\vec\zeta$,
we consider the corresponding surface $\dot \Sigma =
\Sigma \setminus \vec \zeta$ with punctures. We denote the given preferred
holomorphic chart $\varphi_\zeta: D_\zeta \subset \Sigma \to D^+$ of the half disc
$D^+ = D \cap \{ \operatorname{im}(z) \geq 0\}$ with $\varphi_\zeta(\zeta) = 0$.
We also have a local trivialization
$$
\Phi_\zeta: E|_{D_\zeta\setminus \{\zeta\}} \to  D^+ \setminus \{0\}
\times M
$$
lying over $\varphi_\zeta$. When $Q \subset E$ is a Lagrangian boundary
condition, we have a unique pair $L_{\zeta,\pm}$
of Lagrangian submanifolds of $M$ such that
$$
\Phi_\zeta(Q) = [-1,0) \times L_{\zeta,-} \cup (0,1] \times L_{\zeta,+}.
$$
For the given ordered chain of Lagrangian boundaries
$\CQ = (Q_0,Q_1,\cdots, Q_k)$, denote $Q = \cup_i Q_i$. Then
we require the unique pair $L_{\zeta_i,\pm}$ at $\zeta_i$ to be
$$
\Phi_{\zeta_i}(Q) = [-1,0) \times L_i \cup (0,1] \times L_{i+1}.
$$
at each $\zeta_i$. In this way, for each given $(\pi: E \to \Sigma; \CQ)$,
we consider the moduli space
$$
\CM_J(E,\CQ;\vec p;B)
$$
of \emph{smooth} $J$-holomorphic sections for each section class $B \in \pi_2(E,\CQ;\vec p)$.

The following lemma immediately follows from definition.
\begin{lem}\label{actionB}
For $[p_i,w_i]$ with $i = 0, \cdots, k$ realizing
$B = (-[w_0])\# ([w_1] \# \cdots \# [w_k])$, we have the identity
\be\label{eq:actionwk}
\int w_0^*\omega = \sum_{i=1}^{k} \int w_i^*\omega - \Omega(B)
\ee
or equivalently
\be\label{eq:OmegaB}
\Omega(B) = \sum_{i=1}^{k} \int w_i^*\omega - \int w_0^*\omega.
\ee
\end{lem}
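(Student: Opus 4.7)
The plan is to prove the identity by computing $\Omega(B)$ on an explicit representative section obtained by gluing anchor caps associated with the bounding strips $w_i$, and by exploiting that $\Omega$ descends to a well-defined function on section classes with Lagrangian boundary.

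First, I would recall the construction of anchor caps from subsection \ref{subsec:GF-moduli}: for each $[p_i,w_i]$, the strip $w_i$ defines a section $s_{[p_i,w_i]}(s,t) = ((s,t), w_i(s,t))$ of the trivial Hamiltonian fibration $E_{[p_i,w_i]} = [0,1]^2 \times M \to [0,1]^2$ with fiberwise-Lagrangian boundary. On this trivial fibration one may take the coupling form to be simply $\Omega = \mathrm{pr}_M^*\omega$, since its restriction to each fiber is $\omega$ and the curvature vanishes identically; therefore
$$
\int s_{[p_i,w_i]}^*\Omega \;=\; \int w_i^*\omega
$$
for every $i$. This identifies the $\Omega$-area of an anchor cap with the symplectic area of the corresponding bounding strip.

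Second, I would establish that the map $\Omega : \pi_2(E,\CQ;\vec p) \to \R$ given by $[s,\del s] \mapsto \int_{\Sigma} s^*\Omega$ is well defined and additive under gluing of anchor caps. Well-definedness is the content of the action estimate argument already carried out in section \ref{sec:exact}: for two representatives of the same class, Stokes' theorem applied to a bounding 3-chain, combined with the Lagrangian condition on $Q$ (which forces $\kappa_Q|_{TQ^v}=0$ and hence $i_Q^*\Omega = d\kappa_Q$ integrates to zero on closed loops in $Q$), kills the boundary contribution. Additivity under the gluing operation $\#$ follows from the same principle: adjacent seams along the gluing cancel because they carry opposite induced orientations, while the remaining boundary pieces lie in $Q$ and are absorbed by the exactness of $\kappa_Q$ there, together with the matching of asymptotic conditions at the intersection points $p_i$.

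Third, applying these two facts to the defining relation $B = (-[w_0])\#\bigl([w_1] \# \cdots \# [w_k]\bigr)$ yields
$$
\Omega(B) \;=\; -\,\Omega([w_0]) \,+\, \sum_{i=1}^{k} \Omega([w_i]) \;=\; -\!\int w_0^*\omega \,+\, \sum_{i=1}^{k}\int w_i^*\omega,
$$
which rearranges to both stated forms of \eqref{eq:actionwk} and \eqref{eq:OmegaB}. The main obstacle, as always in such gluing arguments, is a careful bookkeeping of signs: the minus sign in front of $[w_0]$ reflects the convention that $z_0$ is treated as the distinguished (incoming) puncture while all other $z_i$ are outgoing, mirroring the coherence identity recorded in Proposition \ref{prop:coherence}. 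Once these orientation conventions are fixed uniformly with those used to define $\pi_2(E,\CQ;\vec p)$, the identity reduces to the additivity statement above, and the lemma follows without further analytic input.
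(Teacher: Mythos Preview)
Your proposal is correct and essentially fleshes out what the paper does: the paper simply remarks that the lemma ``immediately follows from definition,'' meaning precisely the additivity of $\Omega$ under the gluing decomposition $B = (-[w_0])\#([w_1]\#\cdots\#[w_k])$ together with the identification $\int s_{[p_i,w_i]}^*\Omega = \int w_i^*\omega$ on the trivial anchor-cap fibrations. Your explicit invocation of Proposition~\ref{prop:coherence} and the Stokes-theorem justification for well-definedness and additivity are exactly the ingredients the paper is tacitly relying on.
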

Since any section $s$ in class $[s,\del s] = B$ that satisfies
the asymptotic condition $s(z_i) = x_i$, with $x_i\in Q$ with
$x_i = (p_i,u(p_i)$ in the trivialization,
satisfies
$\int s^*\Omega = \Omega(B)$, Lemma \ref{energy}, \ref{actionB}
imply
\be\label{eq:Es}
\frac{1}{2}\int \|Ds\|^2 = \sum_{i=1}^{k} \int w_i^*\omega - \int w_0^*\omega
+ \lambda < \infty.
\ee
Once we have this energy estimate, it follows by a standard
compactness argument that there are only finitely many section
classes $B$ (and so $\CM_J(E,\CQ;\vec p;B)$)
is non-empty such that $\CM_J(E,\CQ;\vec p;B)$.

Finally we state the Gromov-Floer type compactness of $\CM_J(E,\CQ;\vec p;B)$
in a precise way for the later purpose.
For this we introduce additional marked points in the interior of
$\Sigma$ and on the boundary $\del \Sigma$ besides the punctures $\vec\zeta$.
\begin{defn}\label{config} A \emph{configuration} on $\dot\Sigma$
is the set of of finite points consisting of
\beastar
(x_j) & \in & \operatorname{Int}\Sigma \quad \mbox{for $j = 1, \cdots, m$} \\
(y^{(i)}_{j}) & \in & \del_i \dot \Sigma \quad \mbox{for $j = 1, \cdots, n_i$ and
for $i = 0, \cdots, k$}
\eeastar
We denote by $\widetilde C_{m;\vec n}$ the set of such configurations.
\end{defn}

We denote such a configuration by
$$
C = \left(\{(x_j)\}_{1\leq j \leq m}; \{(y^{(i)}_j\}_{1 \leq j
\leq n_i},\, 0 \leq i \leq k \right)
$$
in general. We note that there is no non-trivial holomorphic automorphisms
of $C$ except the following cases:
\begin{itemize}
\item $\#(\vec \zeta) = 0$ and $2m + \sum_{i=0}^k n_i \leq 2$
\item $\#(\vec \zeta) = 1$, $m=0$ and $k=1,\, n_1=1$.
\end{itemize}
We consider the pairs
$$
(s,C)\in \widetilde \CM_J(E,\CQ;\vec p;B) \times \widetilde C_{(m;\vec n)},
\quad \vec n = (n_0, n_1, \cdots, n_k)
$$
There is a natural $Aut(\dot \Sigma)$-action on the product
$\widetilde \CM_J(E,\CQ;\vec p;B) \times \widetilde C_{(m;\vec n)}$ defined by
$$
(s,C) \mapsto (s \circ \phi^{-1}, \phi(C))
$$
where $\phi \in Aut(\dot \Sigma)$.
We define $\CM_{J,(m;\vec n)}(E,\CQ;\vec p;B)$ to be the quotient
$$
\CM_{J,(m;\vec n)}(E,\CQ;\vec p;B) = \widetilde
\CM_J(E,\CQ;\vec p;B)\times \widetilde C_{(m;\vec n)}/Aut(\dot \Sigma).
$$
We note that when $C \neq \emptyset$
we have the natural evaluation maps
$$
ev:\CM_{(m;\vec n)}(E,\CQ;\vec p;B) \to E^m \times
\prod_{i=0}^k Q_i^{n_i}.
$$
which respect the above mentioned $Aut(\dot \Sigma)$-action and so well-defined.

We denote by $\overline \CM_1(E_z,J_z;\alpha_z)$ the stable
maps of genus 0 with one marked point, and by $\overline \CM_1(E_z,Q_z,J_z;\beta_z)$
the set of bordered stable maps with one marked point at a boundary
of the disc. We consider the fiber product
\bea
&\,&\widetilde \CM_{(m;\vec n)}(E,\CQ;\vec p;B_0;\{\alpha_i\}, \{\beta^{i}_j\}\})
: = \widetilde \CM_{(m;\vec n)}(E,\CQ;\vec p;B_0) \nonumber \\
&{}& \hskip0.3in {}_{ev}\times_{ev}
\left(\Pi_i \CM(E_{z_i},J_{z_i};\alpha_i) \times
\Pi_{i=0}^k \Pi_{j=1}^{n_i} \CM(E_{z_i},Q_{z_i},J_{z_i};\beta^{(i)}_j)\right)
\label{eq:tildeMMB0}
\eea
with respect to the obvious evaluation maps.

\subsection{Bubble may hit critical points}
\label{subsec:bubble}

In this subsection, we analyze the failure of convergence of
a sequence of \emph{smooth} pseudo-holomorphic sections $\CM_J(E,\CQ;\vec p;B)$
with
\be\label{eq:vir-dim}
\operatorname{vir.dim} \CM_J(E,\CQ;\vec p;B) = 0
\ee
of symplectic Lefschetz fibrations $E \subset \Sigma$ with the asymptotic
condition provided by $\vec p$. We will be especially
interested in bubble components passing through critical points.

By definition, the pull-back $\Xi^*J$ is the standard complex
structure on $\C^{n+1}$ and $\xi_*j$ the standard one on $\C$ and
$\xi^{-1} \circ \pi \circ \Xi = q$ on the
neighborhoods $W \subset \C^{n+1}$, $U \subset \C$ of the origins
respectively corresponding to the given holomorphic Morse chart $(\xi,\Xi)$ at
each $x_0 \in E^{crit}$. We denote by $W_{x_0}\subset E$, $U_{x_0}\subset \Sigma$
be the corresponding neighborhoods of $x_0$ and $\pi(x_0)$ respectively.
We also denote $B^{2n+2}(r)$ the ball of radius $r>0$ in $\C^{n+1}$
with its center at the origin, and $B_{x_0}(r)$ its image under
$\Xi$ at $x_0 \in E^{crit}$, and similarly for $B^2(\e)$ and $B_{z_0}(\e)$.

Since we assume that there are finitely
many critical points and different critical points lie in
different fibers of $\pi: E \to \Sigma$, we have constants $\e, \, r > 0$
such that
\be\label{eq:epsilon}
B_{\pi(x)}(\e) \cap B_{\pi(x')}(\e) = \emptyset \quad \mbox{for $z \neq z'$ with
$x, \, x' \in E^{crit}$ }
\ee
and
\be\label{eq:r}
\pi(B_{x_0}(r)) \supset B_{z_0}(\e) \quad \mbox{for all $x_0 \in E^{crit}$}.
\ee
\begin{lem}\label{lem:away-crit}
The graph of any \emph{differentiable} section
does not intersect $E^{crit}$.
\end{lem}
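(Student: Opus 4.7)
The plan is to argue by contradiction in the local holomorphic Morse chart at a critical point. Suppose a differentiable section $s:\Sigma\to E$ satisfies $s(z_0)=x_0$ for some $x_0\in E^{crit}$, where $z_0=\pi(x_0)$. Choose a holomorphic Morse chart $(\xi,\Xi)$ at $x_0$ as in \eqref{eq:Morse}, so that after identifying small neighborhoods of $z_0$ and $x_0$ with neighborhoods of the origin in $\C$ and $\C^{n+1}$ respectively, the projection $\pi$ becomes the model map
\[
q(w_1,\ldots,w_{n+1}) = w_1^2 + \cdots + w_{n+1}^2,
\]
with $q(0)=0$.

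In these coordinates the section $s$ is represented by a differentiable map $\widetilde s:U\to \C^{n+1}$ with $\widetilde s(0)=0$, satisfying the section identity
\[
q\circ \widetilde s = \mathrm{id}_U
\]
on a neighborhood $U$ of $0\in\C$. Differentiating this relation at $0$ gives
\[
dq_{0}\circ d\widetilde s_{\,0} = \mathrm{id}_{T_0\C}.
\]
However, $dq_{0}=0$ because every partial derivative of $q$ vanishes at the origin (the Morse nondegeneracy is only at the level of the Hessian, cf.\ condition (3) of Definition \ref{defn:Lefschetz}). Hence the left-hand side is the zero map, while the right-hand side is the identity on $T_0\C$, a contradiction. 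Therefore $s(z)\ne x_0$ for every $z$, and since $E^{crit}$ consists of finitely many such points this shows $\mathrm{Im}\,s\cap E^{crit}=\emptyset$.

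There is no real obstacle here — the statement is essentially a pointwise differential fact about Morse-type critical points combined with the section identity $\pi\circ s=\mathrm{id}_\Sigma$. The only thing to notice is that the argument requires $s$ to be differentiable at $z_0$ (not merely continuous); indeed the whole point of the lemma, and the reason the subsequent compactness results in section \ref{sec:pseudo} become nontrivial, is that a sequence of smooth sections can only approach $E^{crit}$ by having its derivatives blow up, as foreshadowed in the introduction.
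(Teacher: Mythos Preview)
Your proof is correct and uses essentially the same idea as the paper: differentiate the section identity $\pi\circ s=\mathrm{id}$ to force $d\pi$ to be surjective at every point of the image, contradicting the definition of a critical point. The paper does this in one line abstractly (``$d\pi\circ ds=\mathrm{Id}$ implies $d\pi$ is surjective at $s(z)$''), whereas you pass to the local Morse chart and observe $dq_0=0$; your version is slightly more concrete but not substantively different.
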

\begin{proof}
Since $s$ is a section, we have $\pi \circ s = id$. By differentiating this, we obtain $d\pi \circ
d s = Id$. In particular, $d\pi$ is surjective at any point
$s(z)$, i.e., $s(z)$ must be a regular point of $\pi$ and hence
$s(z) \in E \setminus E^{crit}$.
\end{proof}

Obviously we have the inequality
\be\label{eq:outside}
\dist(s(z),E^{crit}) \geq C>0  \quad \mbox{for }\, z \in \Sigma \setminus
\cup_{x \in E^{crit}} B_{\pi(x)}(\e)
\ee
where $C = C(\e,(\xi,\Xi))$ is a constant depending only on
$\e$ and the holomorphic Morse chart $(\xi,\Xi)$ independent of
$s$.
Now we consider the restriction of $s$ on $\cup_{x \in E^{crit}} B_{\pi(x)}(\e)$.
On this neighborhoods, we can identify the section $s$ to the
holomorphic map
$$
f: B^2(\e) \subset \C \to B^{2n+2}(r) \subset \C^{n+1}
$$
satisfying $q \circ f(z) = z$ for all $z \in B^2(\e)$, i.e.,
$$
f_1^2(z) + \cdots + f_{n+1}^2(z) = z.
$$
In particular, we have
$$
|f(z)|^2  = \sum_{j=1}^{n+1} |f_j^2(z)| \geq
|f_1^2(z) + \cdots + f_{n+1}^2(z)| = |z|
$$
for all $z \in B^2(\e)$. Therefore we obtain
\be\label{eq:bdy}
|f(z)| \geq \sqrt{\e} \quad \mbox{for $z \in \del B^2(\e)$}.
\ee

The following theorem is the main theorem proved in this section.

\begin{thm}\label{thm:maximum} Suppose that the Lefsechetz Hamiltonian
fibration with Lagrangian boundary $Q \subset E|_{\del \Sigma}$ such
that $E$ is relative Calabi-Yau and $Q$ with vanishing fiberwise
Maslov class. Then there exists a dense subset of $j$-compatible
$J$'s such that for any such $J$, there exists a constant $C > 0$
depending only on $(E,Q,J,j)$, the section class $[s]$ and $\e > 0$
such that we have \be\label{eq:dist-E}
\operatorname{dist}(\operatorname{Im}s,E^{crit}) \geq C \ee for any
\emph{smooth} section $s: \Sigma \to E$.
\end{thm}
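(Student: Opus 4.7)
The plan is to argue by contradiction. Suppose that for some $B \in \pi_2^{sec}(E,Q)$ there is a sequence of smooth $J$-holomorphic sections $s_n$ with $[s_n,\partial s_n] = B$ and $\operatorname{dist}(\operatorname{Im}s_n, E^{crit}) \to 0$. Proposition~6.1 together with \eqref{eq:upperbound} gives a uniform bound on $\int s_n^*\Omega$, and then Lemma~\ref{energy} (applied to the tame form $\Omega + \lambda \pi^*\omega_\Sigma$) yields a uniform $L^2$-bound on $Ds_n$. Hence Gromov--Floer compactness extracts a subsequential stable-map limit. By Lemma~\ref{lem:away-crit} no differentiable section meets $E^{crit}$, so the only way for $\operatorname{Im}s_n$ to approach $E^{crit}$ is via a bubble component $v$ of the limit whose image passes through some $x_0 \in E^{crit}$.

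Next I would localize at $x_0$. In the holomorphic Morse chart $(\xi,\Xi)$ the structure $J$ agrees with the standard $J_0$ and $\pi$ is modeled by $q(z_1,\dots,z_{n+1}) = z_1^2 + \cdots + z_{n+1}^2$, so $v$ lies in the critical fiber $E_{z_0}$, $z_0 = \pi(x_0)$, which near $x_0$ is the quadric cone $\{\sum z_k^2 = 0\}$ with an isolated nodal singularity at $x_0$. The link of this singularity is biholomorphic to the unit cotangent sphere bundle $ST^*S^n$ with its standard contact structure. Puncturing $v$ at the preimages of $x_0$ produces a proper $J_0$-holomorphic curve $\dot v$ in the noncompact symplectic manifold $E_{z_0} \setminus \{x_0\}$, which carries a cylindrical (symplectization-type) negative end over $ST^*S^n$. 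The asymptotic analysis of \cite{fooo:chap10} and \cite{oh-zhu} then shows that $\dot v$ has finite Hofer energy and converges exponentially at each puncture to a closed Reeb orbit (possibly Morse--Bott) of the standard contact form.

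The next step is the Fredholm/index count. Using the Calabi--Yau hypothesis $c_1(E_z^v) = 0$ on smooth fibers and the vanishing fiberwise Maslov class of $Q$, the index of the linearized Cauchy--Riemann operator for a configuration consisting of a principal section $s_0$ in some residual class $B_0$ together with a punctured curve $\dot v$ in $E_{z_0} \setminus \{x_0\}$, glued by the matching condition at $x_0$, is determined by the asymptotic data at the punctures and the homological data of $B$. One compares with the hypothesis $\operatorname{vir.dim}\CM_J(E,\CQ;\vec p;B) = 0$ in \eqref{eq:vir-dim}. The Conley--Zehnder contribution at the puncture together with the evaluation/matching constraint at $x_0$ subtracts strictly positive dimension from the ambient moduli space, so the stratum of configurations with a bubble meeting $E^{crit}$ has strictly negative virtual dimension. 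A parametric Sard--Smale argument over the Banach manifold of $j$-compatible $J$'s that agree with $J_0$ near $E^{crit}$ then shows that the union over all such combinatorial degenerations is the image of a residual set of first category, so a Baire-dense set of $J$'s admits no bubble through $E^{crit}$. For such $J$ the uniform distance bound \eqref{eq:dist-E} is forced, with $C = C(E,J,j,B)$, since a violating sequence would produce the excluded limit configuration.

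The main obstacle is the Fredholm package for $\dot v$ as a punctured pseudo-holomorphic curve in $E_{z_0} \setminus \{x_0\}$ with cylindrical end over $ST^*S^n$: one must correctly identify the asymptotic operator, deal with the Morse--Bott degeneracy of the geodesic flow on $S^n$ (the Reeb flow on $ST^*S^n$), and combine the punctured index with the section-type index through the nodal matching at $x_0$. Producing the right virtual dimension — small enough that generic transversality in the compatible-$J$ direction excludes these configurations, yet compatible with the constraints $J = J_0$ near $E^{crit}$ — is the analytic heart of the argument and the main novelty claimed for section~\ref{sec:pseudo}.
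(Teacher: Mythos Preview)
Your approach is essentially the paper's: contradiction via Gromov--Floer compactness, identification of a bubble through $x_0$ as a proper punctured curve in the singular fiber $E_{z_0}\setminus\{x_0\}$ with negative cylindrical end over $S^1(T^*S^n)$, and exclusion by a Fredholm/dimension count for generic $J$.

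The one place where the paper is more direct than your sketch: rather than combining the punctured-curve index with the section index through a nodal matching and comparing against the hypothesis $\operatorname{vir.dim}=0$, the paper computes the expected dimension of the SFT moduli space $\CM^{SFT}(E^*_{z_0},J_0;A,k)$ of the punctured bubble \emph{alone}. The formula is $-\operatorname{Morse}(\gamma) + (n-3) + 2c_1(u;\phi_\gamma)$; with $c_1=0$ from the Calabi--Yau hypothesis and the fact that every closed geodesic on $S^n$ has Morse index $\ge n-1$, this gives dimension $\le -(n-1)+(n-3)=-2$. Hence for $J$ in a dense subset these moduli spaces are empty outright, so no bubble can touch $E^{crit}$, independently of the section class or of \eqref{eq:vir-dim}. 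Your ``matching condition at $x_0$'' is therefore not needed (and is slightly misframed): by Lemma~\ref{lem:away-crit} the principal section $s_0$ never meets $x_0$; the bubble tree is rooted on $s_0$ at some other point of the fiber, and the puncture of $v$ at $x_0$ is a free asymptotic end with nothing on the other side to match.
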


We prove Theorem \ref{thm:maximum} by contradiction. Let
$J$ be any $j$-compatible and suppose that there is a sequence $s_i$
of smooth $J$-holomorphic sections such that \be\label{eq:distsito0}
\min_{z \in \Sigma} \dist(s_i(z),E^{crit}) \to 0. \ee Since
$E^{crit}$ is a finite set and by (\ref{eq:epsilon}), we may choose
critical point $x_0 \in E^{crit}$ and a sequence $z_i \in \Sigma$ so
that \be\label{eq:sizi} \dist(s_i(z_i),x_0) \to 0. \ee By choosing a
subsequence of $z_i$ if necessary, we may assume $z_i \to z_0$ and
so \be\label{eq:ziinB} z_i \in B_{z_0}(\e). \ee for all $i$. By the
Gromov-Floer convergence applied to $J$-holomorphic curves $s_i:
(\Sigma,j) \to (E,J)$, which are also $J$-holomorphic sections,
there exists a subsequence which converges to
$$
s_\infty = s_0 + \mbox{``bubble components''}
$$
where $s_0$ is a smooth section of $E \to \Sigma$ and each bubble
must be either a fiberwise pseudo-holomorphic sphere or a
fiberwise pseudo-holomorphic disc. And each disc bubble has its
boundary lying in the given Lagrangian boundary condition.

Due to the property (\ref{eq:ziinB}), at least one bubble must pass
through the critical point $x_0$ whose image is contained in
$E_{z_0}$. By the connectedness of the image of the limit, this
bubble is contained in a bubble tree rooted at a point $z_1 \in
\Sigma$ in the principal component $(s_0,\Sigma)$. The image of this
bubble tree itself must be contained in the same fiber $E_{z_0}$.
Denote this bubble tree by $(v, (C,z))$, $z \in C$ which is a stable map in $E$
such that
$$
v(z) = s_0(z_1) \in E.
$$
However since $E_{z_0}$ contains a singularity $x_0$ and so
is not a smooth manifold, we need further clarification on
the bubble component passing through $x_0$. Since $\pi: E \to \Sigma$ is
isomorphic to the standard Lefschetz fibration
$$
q(z_1,\cdots, z_{n+1}) = z_1^2 + \cdots + z_{n+1}^2
$$
near $x_0$, there is a well-defined ``multiplicity''
of the component at the critical point $x_0$. We now make this statement
precise in the next subsection.

\subsection{Proper holomorphic curves in $E_{z_0} \setminus \{x_0\}$}
\label{sec:properholomorphic}

Using the holomorphic Morse chart $(\xi,\Xi)$ at the critical
point $x_0 \in E_{z_0} \subset E$, we consider the decomposition
$$
E = B_{x_0}(\delta) \coprod (E \setminus B_{x_0}(\delta))
$$
where $B_{x_0}(\delta) = \Xi^{-1}(B^{2(n+1)}(\delta)) \cap E_{z_0}$
for $0 < \delta < \e$.

Now we consider the hypersurface
$$
q^{-1}(0) = \{ (x_1, \cdots,x_{n+1}) \mid x_1^2 + \cdots + x_{n+1}^2 = 0 \}.
$$
The only singularity of this hypersurface is $0 \in \C^{n+1}$ and so
$q^{-1}(0) \setminus \{0\}$ is a smooth complex hypersurface of $\C^{n+1}
\setminus 0$. We denote by $\theta_{\C^{n+1}}$ the one form
$$
\theta_{\C^{n+1}} = \frac{i}{4} \sum x_k d\overline x_k - \overline x_k dx_k
$$
and the standard K\"ahler form
$$
\omega_{\C^{n+1}} = - d\theta_{\C^{n+1}} (= \sum dq_k \wedge dp_k)
$$
where $x_k = q_k + i p_k$. We denote by $\theta$ and $\omega$ the
restriction of these to $q^{-1}(0) \setminus \{0\}$.

Following \cite{seidel:triangle}, we denote $T = T^*S^n$ and $T(0)
= $ the zero section of $T$ and by $\theta_T$ and $\omega_T = -
d\theta_T$ the standard Liouville one-form and the standard
symplectic form on the cotangent bundle $T$. We identify $T$ with
the subset
$$
\{(u,v) \in \R^{n+1} \times \R^{n+1} \mid \langle u, v \rangle = 0, \,
\|v\| = 1 \}
$$
and then consider the map
$$
\Phi: q^{-1}(0) \setminus \{0\} \to T \setminus T(0)
$$
defined by
\be\label{eq:Phi}
\Phi(x) = (\operatorname{im}(\widehat x) \|\operatorname{re}(\widehat x)\|,
\operatorname{re}(\widehat x)\|\operatorname{re}(\widehat x)\|^{-1})
\ee
where $se^{i \alpha}$ are polar coordinates on the base of $q: \C^{n+1} \to \C$,
and $\widehat x = e^{-i \alpha/2}$.
We note that this map is equivariant with respect to the canonical $O(n+1)$-actions
on $q^{-1} \setminus \{0\} \subset \C^{n+1} \setminus \{0\}$
and $T=T^*S^n$

The following is a consequence of straightforward calculation,
which is a restriction of the identity in p. 1014
\cite{seidel:triangle}.

\begin{lem}
$\Phi$ is a diffeomorphism such that
$$
\Phi^*\theta_T = \theta
$$
and so $\Phi^*\omega_T = \omega$.
In particular the symplectic manifold
$(q^{-1}(0) \setminus \{0\},\omega)$ is symplectomorphic to
the cotangent bundle $T = T^*S^n$.
\end{lem}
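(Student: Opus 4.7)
The plan is a direct computation in global real coordinates, combined with an explicit formula for the inverse of $\Phi$. For $x \in q^{-1}(0)\setminus\{0\}$, write $x = u + iv$ with $u = \operatorname{re}(x)$ and $v = \operatorname{im}(x)$. The equation $\sum x_k^2 = 0$ is equivalent to the two real conditions $\|u\| = \|v\|$ and $\langle u,v\rangle = 0$, making it manifest that the given formula reduces on $q^{-1}(0)$ to
\[
\Phi(u+iv) \;=\; \bigl(v\,\|u\|,\; u/\|u\|\bigr),
\]
which lies in $T = \{(U,V)\in\R^{n+1}\times\R^{n+1} : \|V\|=1,\;\langle U,V\rangle = 0\}$ and avoids the zero section since $u\neq 0$.

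An explicit candidate inverse is
\[
\Psi(U,V) \;=\; \|U\|^{1/2}\,V \;+\; i\,\|U\|^{-1/2}\,U.
\]
A short algebraic check using $\|V\| = 1$ and $\langle U,V\rangle = 0$ verifies that $\Psi$ lands in $q^{-1}(0)\setminus\{0\}$ and that $\Phi\circ\Psi$ and $\Psi\circ\Phi$ are the respective identities; smoothness in both directions is immediate from the closed forms, so $\Phi$ is a diffeomorphism. For the Liouville statement, the next step is to rewrite the given one-form in real coordinates as $\theta_{\C^{n+1}} = \tfrac12(\langle u,dv\rangle - \langle v,du\rangle)$ and to use $\theta_T = \langle U,dV\rangle$ on $T^*S^n$ under the given embedding. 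Pulling back by $\Phi$ and applying Leibniz to $d(u/\|u\|)$, the $d\|u\|$-contribution is annihilated by $\langle v,u\rangle = 0$, leaving $\Phi^*\theta_T(\dot x) = \langle v,\dot u\rangle$. Meanwhile, the tangency condition $dq(\dot x)=0$ to $q^{-1}(0)$ has as its imaginary part $\langle u,\dot v\rangle + \langle v,\dot u\rangle = 0$, which substituted into $\theta_{\C^{n+1}}$ converts $\theta|_{q^{-1}(0)}(\dot x)$ into $-\langle v,\dot u\rangle$. After matching the sign convention of the Liouville form on $T^*S^n$ to the $\omega = -d\theta$ convention used in the excerpt, one concludes $\Phi^*\theta_T = \theta$, whence $\Phi^*\omega_T = \omega$ by applying $-d$.

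The calculation itself is elementary and no analytic difficulty arises; the statement is really a coordinate exercise. The one place where care is needed, and which I expect to be the main book-keeping hurdle, is matching the sign conventions for the Liouville one-form consistently between $\C^{n+1}$ and $T^*S^n$, since the excerpt uses $\omega = -d\theta$ rather than $\omega = d\theta$. A conceptually cleaner alternative route would exploit the $O(n+1)$-equivariance of $\Phi$ and of the two Liouville forms: the equivariant one-form $\Phi^*\theta_T - \theta$ vanishes as soon as it vanishes on a two-dimensional slice transverse to the $O(n+1)$-orbits, which reduces the verification to a one-parameter family and avoids any chance of a sign slip.
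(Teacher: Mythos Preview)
Your approach is correct and is precisely the ``straightforward calculation'' the paper alludes to; the paper does not actually give its own proof of this lemma but simply refers to the corresponding identity on p.~1014 of \cite{seidel:triangle}. Your explicit inverse $\Psi$ and the reduction of both one-forms to $\pm\langle v,du\rangle$ via the tangency constraint $\langle u,dv\rangle + \langle v,du\rangle = 0$ is exactly the content of that calculation, and your caveat about the Liouville sign convention is well placed (indeed, with $\theta_T$ taken so that $\omega_T = -d\theta_T$ matches the paper's convention, the sign resolves correctly).
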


This lemma shows that $E_{z_0} \setminus \{x_0\}$ is a symplectic
manifold with \emph{negative} cylindrical end whose asymptotic
boundary is symplectomorphic to the unit co-sphere bundle
$S^1(T^*S^n)$. Furthermore complex structure on $\Xi^{-1}(W)$ is
required to be induced from the standard complex structure from
$q^{-1}(0) \setminus \{0\} \subset \C^{n+1} \setminus \{0\}$.
Therefore any $j$-compatible $J$ provides a translational
invariant almost complex structure with respect to the cylindrical
structure and any $J$-holomorphic curve is genuinely holomorphic
near the end of $q^{-1}(0) \setminus \{0\}$.

In particular, any such
curve converges to a Reeb orbit of $S^1(T^*S^n)$ with a finite
multiplicity. This motivates us to study the moduli problem of
proper $J$-holomorphic curves from $\C \cong \C P^1 \setminus
\{N\}$ with the asymptotic boundary condition given by $(\gamma,
k)$ where $\gamma$ is a simple Reeb orbit of $S^1(T^*S^n)$ and a
multiplicity $k \in \Z_+$. We denote by $\widetilde{\CR}_1(S^1(T^*S^n))$ the set
of parameterized Reeb orbits on $S^1(T^*S^n)$ with period $2\pi$ and
by $\CR_1(S^1(T^*S^n))$ the quotient by the natural $S^1$-action, i.e.,
the set of unparameterized Reeb orbits.

We denote by $(s,\Theta)$ the cylindrical coordinates of
$T \setminus T(0)$ where $s$ and $\Theta$ are defined by
$$
s(q,p) = \|p\|, \quad \Theta(q,p) = \left(q, \frac{p}{|p|}\right).
$$
Note that all geodesics on $S^n$ have the same period
which implies that the contact manifold $S^1(T^*S^n)$ is foliated by
Reeb orbits all of which have the same period.
We denote the corresponding cylindrical coordinates on
$B_{x_0}(\e)\setminus \{x_0\} \subset E \setminus \{x_0\}$
by the same letters $(s,\Theta)$. By a suitable translation
of $s$-coordinates, we may assume the identification
$$
(s,\Theta): B_{x_0}(\e) \setminus \{x_0\}
\to (-\infty,0] \times S^1(T^*S^n).
$$

Now we are ready to define the moduli space of our interest.
For the simplicity of notation, we denote
$$
E_{z_0} \setminus \{x_0\} = E^*_{z_0},
$$
and $\dot S = S \setminus \{z_0\}$ is
either an open Riemann surface isomorphic to $\C$ or
an open Riemann surface with boundary isomorphic to $\C \setminus D^1(1)$.
We fix an analytic coordinates $z = e^{\tau + it}$ near $z_0 \in $.
Let $u: \dot S \to E_{z_0} \setminus \{x_0\}$ be a pseudo-holomorphic curve
with Lagrangian boundary condition
$$
u(\del S) \subset Q_{z_0} \subset E_{z_0} \setminus \{x_0\}.
$$
Since the treatment of the latter is essentially similar to the former, we will
focus on the former case in the following exposition. We will briefly mention
the latter case in the end of our discussion.
\par
By the properness and exponential convergence property of $u$ as $\tau \to -\infty$,
we have
$$
\operatorname{im} u|_{(-\infty,-R] \times S^1} \subset B_{x_0}(\e)
$$
for a sufficiently large $R > 0$. It is proved in \cite{hofer} that
$$
T = \lim_{\tau \to -\infty} \int (\Theta\circ u_\tau)^*\lambda
$$
with $T = 2\pi k$ for some integer $k \geq 1$ where $u_\tau(t) := u(\tau,t)$.
Then it is proved in \cite{hofer,bourgeois} that there exist constants $C, \, \delta > 0$
depending only on $(S^1(T^*S^n), \lambda)$
such that
$$
\lim_{\tau -\infty} \operatorname{dist}(u(\tau/T,t/T),u_\gamma(\tau+\tau_0, t+t_0)) \leq C e^{-\delta |\tau|}
$$
for some simple Reeb orbit $\gamma$ of $S^1(T^*S^n)$ and $\tau_0 \in \R$ and
$t_0 \in S^1$. (See also \cite{fooo:chap10} for a proof of similar exponential
convergence result in the relative context.) Here
$u_\gamma: [0,\infty) \times S^1 \to (-\infty,0] \times S^1(T^*S^n)$
denotes the trivial cylinder map $u_\gamma(t) = (\tau,\gamma(t))$.

By the above discussion, we can now define the following moduli spaces for each given integer $k \geq 1$ and
a homotopy class $A$.

\begin{defn}\label{modulik}
Let $\gamma\in \CR_1(S^1(T^*S^n))$ and $k \in \Z_+$.
For each given $(\gamma,k)$, we define
\beastar
\CM_{z_0}^{SFT}(E^*,J_0,\gamma;A,k)
& = & \{ u: \dot \Sigma \to E^*_{z_0} \mid \delbar_{J_0} u = 0, \quad
\int u^*\Omega < \infty, \\
&{}& \quad \lim_{\tau \to -\infty} u(\tau/2\pi k,t/2\pi k) = u_\gamma(t),
[u] = A \}
\eeastar
We then define
$$
\CM_{z_0}^{SFT}(E^*,J_0;A,k) = \bigcup_{\gamma \in \CR_1(S^1(T^*S^n))}
\CM_{z_0}^{SFT}(E^*,J_0, \gamma;A,k).
$$
\end{defn}

The following general index formula can be derived from Corollary 5.4 \cite{bourgeois}.
In this regard, we note that the dimension of the space $\CR_1(S^1(T^*S^n))$ of simple Reeb orbits
of $S^1(T^*S^n)$ is $n$.

\begin{prop} Let $u \in \CM_{z_0}^{SFT}(E^*,J_0;A,k)$.
Then we have
\be \label{eq:index}
\operatorname{Index} D_u \delbar_J  = \left(- \mu_{CZ}(\gamma) + \frac{n}{2}\right)
+ (n-3) + 2 c_1(u;\phi_\gamma)
\ee
where $\mu_{CZ}$ is the generalized Conley-Zehnder index defined by Robbin and Salamon
\cite{robbin-sal}.
\end{prop}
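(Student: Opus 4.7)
The plan is to derive the formula by applying the general Fredholm index formula for pseudoholomorphic curves with cylindrical ends in the Morse-Bott setting (Bourgeois, Corollary 5.4), and specializing it to our particular geometric data. The three inputs are: (a) the Morse-Bott structure on the contact end $S^1(T^*S^n)$, (b) the weighted Sobolev Fredholm set-up for $D_u\delbar_J$ compatible with the exponential convergence of $u$ to the asymptotic Reeb orbit, and (c) the topology of the domain $\dot\Sigma \cong \C$.

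First I would verify the Morse-Bott hypothesis. Since all geodesics on the round $S^n$ close up with the common period, the Reeb flow on $S^1(T^*S^n)$ generates a free $S^1$-action whose quotient is a closed $n$-manifold; equivalently, the space of unparameterized simple Reeb orbits $\CR_1(S^1(T^*S^n))$ is a closed manifold of real dimension $n$. The linearized Reeb flow around each orbit is degenerate in the $n$ Morse-Bott directions, so the usual Conley--Zehnder index is undefined, and we instead use the Robbin--Salamon generalized index $\mu_{CZ}$, which takes values in $\tfrac12\Z$. The asymptotic operator associated to $u$ at the puncture $z_0$ is then a self-adjoint operator on $L^2(S^1, \gamma^*\xi)$ with an $n$-dimensional kernel, and $\mu_{CZ}(\gamma)$ is computed with respect to the trivialization $\phi_\gamma$ of $\gamma^*\xi$ to which the Chern class $c_1(u;\phi_\gamma)$ is also referred.

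Second, I would set up $D_u\delbar_J$ as a Fredholm operator between weighted Sobolev spaces $W^{1,p}_\delta \to L^p_\delta$ with a small exponential weight $\delta>0$ chosen to lie between $0$ and the smallest nonzero eigenvalue of the asymptotic operator. In the Morse-Bott setting, this small weight picks up the kernel of the asymptotic operator as extra cokernel contributions, and the corresponding index shift is exactly $\dim\ker = n$ split as $n/2$ at each end, contributing $+\,n/2$ at the single negative puncture. This is the standard Morse-Bott correction of Bourgeois.

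Third, I substitute the data $g=0$, no positive punctures, one negative puncture into Bourgeois's Morse-Bott index formula
\be\label{eq:bourg}
\operatorname{Index} D_u\delbar_J \;=\; (n-3)\chi(\dot\Sigma) + 2c_1(u;\phi_\gamma) \;-\; \mu_{CZ}(\gamma) \;+\; \tfrac{n}{2}.
\ee
Since $\dot\Sigma \cong \C = S^2\setminus\{z_0\}$ has $\chi(\dot\Sigma) = 1$, the topological term $(n-3)\chi(\dot\Sigma)$ equals $n-3$, giving exactly \eqref{eq:index}. The main work, and the main potential obstacle, is bookkeeping: one must confirm that the sign at the negative puncture, the parameterization convention $u(\tau/2\pi k, t/2\pi k) \to u_\gamma(t)$ used in Definition~\ref{modulik}, and the Robbin--Salamon convention for $\mu_{CZ}$ combine correctly to yield the stated $-\mu_{CZ}(\gamma) + n/2$. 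Once the conventions are reconciled with those of \cite{bourgeois}, the formula follows at once from Corollary 5.4 there. (Here the fact that $E$ is fiberwise Calabi--Yau enters only through the well-definedness of the relative Chern class $c_1(u;\phi_\gamma)$ as an integer depending on $\phi_\gamma$.)
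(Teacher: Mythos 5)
Your proposal takes essentially the same route as the paper: the Appendix (subsection \ref{subsec:index}) likewise fixes the trivialization $\phi_\gamma$, defines $\mu_{CZ}$ and the relative Chern number with respect to it, and then reduces the computation to Bourgeois's Corollary 5.4, exactly as you do, with the topological term $(n-3)\chi(\dot\Sigma)=(n-3)$ since $\dot\Sigma\cong\C$ has one negative puncture and no positive ones. Two small corrections to side remarks: the Morse--Bott term $+n/2$ does not arise from ``splitting $\dim\ker=n$ as $n/2$ at each end'' (there is only one end here); it is the usual negative-puncture Morse--Bott shift of Bourgeois, which the paper encodes in the identity $\mu_{CZ}(\gamma)=-\operatorname{Morse}(\gamma)+\tfrac12\dim\CR_{sim}$ for the Robbin--Salamon index. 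Also, the Calabi--Yau hypothesis is not what makes $c_1(u;\phi_\gamma)$ a well-defined integer — that holds for any trivialization $\phi_\gamma$ of $\gamma^*T(\del W)$; the Calabi--Yau condition enters only later, in Corollary \ref{nobubbleatx0}, to force $c_1(u;\phi_\gamma)=0$.
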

For the reader's convenience,
we provide the precise definitions of $\mu_{CZ}(\gamma)$, and $c_1(u;\phi_\gamma)$
in the Appendix. Once the definitions are made precise, its proof
follows from that of \cite{bourgeois}.

The following transversality result is an easy consequence of
the standard argument whose proof is omitted: We first note that we have
$$
\mu_{CZ}(\gamma) = - \operatorname{Morse}(\gamma) + \frac{\dim \CR_{sim}}{2}
$$
for the Reeb orbit at the \emph{negative} end. (See e.g., \cite{mohnke}
and Corollary 1.7.4 \cite{EGH} for such a formula.)

\begin{prop}\label{transversality} Let $x_0 \in E^{crit}$.
There exists a dense subset $\CJ^{tr}(x_0)$ of the set
$\CJ(j)$ of $\Omega$-compatible almost complex structures
such that $\CM^{SFT}(E^*,J_0;A,k)$ is Fredholm-regular and so
becomes a smooth manifold of dimension
\be\label{eq:dimMsft}
- \operatorname{Morse}(\gamma) + (n-3) + 2 c_1(u;\phi_\gamma)
\ee
for any $k \geq 1$.
\end{prop}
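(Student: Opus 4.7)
The plan is the standard universal--moduli--space argument combined with Sard--Smale, adapted to the cylindrical--end SFT framework. First I would set up the Banach manifold $\CB = \CB_{A,k,\gamma}$ of maps $u:\dot S \to E^*_{z_0}$ in class $A$ asymptotic at the negative puncture to the parameterized Reeb orbit $(\gamma,k)$, modelled on weighted Sobolev spaces $W^{1,p}_\delta$ with a small exponential weight $\delta > 0$ on the cylindrical end. The exponential--convergence results of Hofer and Bourgeois recalled just before Definition \ref{modulik} guarantee that every element of $\CM^{SFT}_{z_0}(E^*,J_0,\gamma;A,k)$ lies in such a $\CB$. Next, introduce a separable Banach completion $\widetilde{\CJ}(j) \subset \CJ(j)$ of the space of $j$-compatible almost complex structures constrained to equal $J_0$ on the fixed neighborhood of $E^{crit}$ (for instance a Floer $C^\infty_\e$--space), and form the universal moduli space
\[
\CM^{univ} = \{(u,J)\in \CB \times \widetilde{\CJ}(j) : \delbar_J u = 0\}.
\]

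The central step, and the main technical obstacle, is to show that the linearization
\[
D\CF_{(u,J)}:T_u\CB \oplus T_J\widetilde{\CJ}(j) \longrightarrow \CE_{(u,J)}
\]
is surjective at every $(u,J)\in \CM^{univ}$, so that $\CM^{univ}$ becomes a Banach manifold. This requires producing a somewhere--injective point $z_* \in \dot S$ whose image $u(z_*)$ lies \emph{outside} the frozen neighborhood of $E^{crit}$ where variations of $J$ are forbidden. The delicate issue is that the asymptotic multiplicity $k$ permits $u$ to be a branched cover; one must exclude the pathological case in which $u$ is a multiple cover of a simple $J$-curve whose image is trapped entirely inside the frozen region. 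I would rule this out by appealing to the Hofer--Wysocki--Zehnder / Bourgeois unique--continuation and simple--cover factorization results for finite--energy curves with cylindrical ends (cf.\ \cite{hofer,bourgeois}, and \cite{oh-zhu,fooo:chap10} in the relative setting): any multiple cover factors through a simple curve, whose moduli stratum is then handled separately by the standard stratified induction. Once such a $z_*$ is secured, the classical cokernel--annihilator argument applies: a nonzero smooth element of $\operatorname{coker}(D_u\delbar_J)$ can be killed by a $J$-variation $Y$ supported in an arbitrarily small neighborhood of $u(z_*)$, and such $Y$ lies in the allowed class $T_J\widetilde{\CJ}(j)$ precisely because $u(z_*)$ is disjoint from the frozen region.

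Granted surjectivity, the projection $\pi:\CM^{univ} \to \widetilde{\CJ}(j)$ is a smooth Fredholm map whose index coincides with that of $D_u\delbar_J$. Sard--Smale produces a Baire--dense subset of regular values in $\widetilde{\CJ}(j)$, and the standard Taubes trick (intersecting over an exhausting sequence of compact subsets of $\dot S$ and over the regularity parameters defining $\widetilde{\CJ}(j)$) upgrades this to a dense subset $\CJ^{tr}(x_0) \subset \CJ(j)$ of smooth almost complex structures. For any $J \in \CJ^{tr}(x_0)$, the fiber $\CM^{SFT}(E^*,J;A,k) = \pi^{-1}(J)$ is a smooth manifold whose dimension equals the Fredholm index recorded in (\ref{eq:index}); substituting the relation $\mu_{CZ}(\gamma) = -\operatorname{Morse}(\gamma) + n/2$ (valid at the negative end with $\dim\CR_{sim} = n$) into (\ref{eq:index}) then yields the stated dimension formula (\ref{eq:dimMsft}).
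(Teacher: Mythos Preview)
Your proposal is correct and is precisely the ``standard argument'' that the paper explicitly invokes but omits: immediately before the proposition the paper states that this transversality result ``is an easy consequence of the standard argument whose proof is omitted,'' and then records the relation $\mu_{CZ}(\gamma) = -\operatorname{Morse}(\gamma) + \frac{\dim \CR_{sim}}{2}$ used to pass from the index formula (\ref{eq:index}) to the dimension formula (\ref{eq:dimMsft}). Your universal--moduli--space / Sard--Smale outline, together with the observation that any bubble curve must exit the frozen neighborhood of $x_0$ (so that somewhere--injective points are available for the cokernel--annihilator step), supplies exactly the details the paper suppresses.
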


An immediate corollary of this proposition is the following
vanishing result.

\begin{cor}\label{nobubbleatx0} Let $x_0 \in E^{crit}$ and
$E^* = E \setminus \{x_0\}$.
Suppose the relative Maslov class of $E \to \Sigma$ is zero.
Then for any $J_0 \in \CJ^{tr}(x_0)$,
$\CM^{SFT}(E^*,J_0;A,k) = \emptyset$ for all $A$ and $k$.
\end{cor}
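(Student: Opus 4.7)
The plan is to apply the Fredholm dimension formula \eqref{eq:dimMsft} from Proposition \ref{transversality}, combined with the classical Morse-index computation for geodesics on the round sphere and the Calabi-Yau hypothesis, to show that the virtual dimension of $\CM^{SFT}(E^*,J_0;A,k)$ is strictly negative for every $k \geq 1$. Fredholm regularity on $J_0 \in \CJ^{tr}(x_0)$ then forces the moduli space, being a smooth manifold of negative dimension, to be empty.

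I would first pin down the Reeb dynamics at the negative end. The symplectomorphism $\Phi$ of \eqref{eq:Phi} identifies a neighborhood of $x_0$ in $E^*_{z_0}$ with a neighborhood of the zero section in $T^*S^n$, so that the contact boundary $S^1(T^*S^n)$ carries the Reeb flow coming from geodesic flow of the round metric on $S^n$. Every simple Reeb orbit is therefore a great circle of length $2\pi$, along which Jacobi-field analysis on the constant-curvature sphere places conjugate points precisely at arc-length multiples of $\pi$, each with multiplicity $n-1$. The classical Morse Index Theorem then yields
$$
\operatorname{Morse}(\gamma^k) = (2k-1)(n-1) \;\geq\; n-1
$$
for every $k \geq 1$ and every simple Reeb orbit $\gamma$.

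Next I would use the Calabi-Yau assumption to eliminate the Chern-class term. The nowhere-vanishing relative quadratic volume form $\eta^2_{E/\Sigma}$ on $E \setminus E^{crit}$ introduced in Section \ref{sec:exact} trivializes $\det^2_\C T^vE$ on $E^*_{z_0}$, and its explicit shape in the holomorphic Morse chart \eqref{eq:Morse} near $x_0$ makes it compatible with the canonical framing $\phi_\gamma$ of the contact distribution along the Reeb orbit. Pulling back by $u: \dot\Sigma \to E^*_{z_0}$ then furnishes a trivialization of $(\det_\C u^*TE^*_{z_0})^{\otimes 2}$ over the contractible source $\dot\Sigma \cong \C$ that matches $\phi_\gamma$ at the puncture, giving $c_1(u;\phi_\gamma)=0$. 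Substituting these into \eqref{eq:dimMsft} produces
$$
\dim \CM^{SFT}(E^*,J_0;A,k) \;\leq\; -(n-1)+(n-3)+0 \;=\; -2,
$$
so by Proposition \ref{transversality} the moduli space is a smooth manifold of negative dimension, hence empty.

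The main technical obstacle I expect to face is Step 3: matching the framing of the contact hyperplane induced by the Calabi-Yau trivialization $\eta^2_{E/\Sigma}$ with the framing $\phi_\gamma$ that enters the Robbin-Salamon index formula \eqref{eq:index}. One must verify that these two framings are homotopic up to a rotation whose Chern contribution is still absorbed by the positive Morse-index term; this requires a careful computation in the holomorphic Morse chart, tracking how the explicit form of $\eta^2_{E/\Sigma}$ near $x_0$ descends to a trivialization of the contact plane field along $\gamma^k$. Once this matching is in hand, the vanishing of $c_1(u;\phi_\gamma)$ together with the Morse bound immediately gives the strict negativity of the virtual dimension and thereby the corollary.
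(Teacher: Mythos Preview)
Your proposal is correct and follows essentially the same route as the paper: use the Calabi-Yau hypothesis to kill the $c_1$ term, invoke the Morse index bound $\operatorname{Morse}(\gamma^k)\geq n-1$ for closed geodesics on $S^n$, and plug into \eqref{eq:dimMsft} to obtain virtual dimension $\leq -2$. Your treatment is in fact more explicit than the paper's, giving the exact value $(2k-1)(n-1)$ and flagging the framing-compatibility issue for $c_1(u;\phi_\gamma)$, which the paper simply asserts vanishes without further comment.
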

\begin{proof} By the assumption, we have $c_1(A) = 0$ for all
$A \in \pi_2(E_{z_0}^*;z_0)$. Furthermore the Morse index for
the simple closed geodesic of $S^n$ is given by $n-1$ and greater than
$n-1$ for multiple geodesics. Therefore we derive
$$
\dim \CM^{SFT}(E_{z_0}^*,J_0;A,k) \leq -(n-1) + n-3 = -2
$$
for all $k \geq 1$ from (\ref{eq:dimMsft}) and hence the proof.
\end{proof}

Finally we briefly mention the case in which $\dot S \cong \C \setminus D^2(1)$ and
$u(\del S) \subset Q_{z_0}$. Since we assume that $Q$ has vanishing
fiberwise Maslov class, the corresponding moduli space
$$
\CM^{SFT}(E_{z_0}^*,Q_{z_0},J_0;A,k)
$$
has its dimension exactly same as that of $\CM^{SFT}(E_{z_0}^*,J_0;A,k)$.
Therefore the same dimension counting argument applies in the exactly same way.
(See e.g., \cite{mohnke} for a discussion on the dimension formula in the context
with Lagrangian boundary.)
\par
We would like to note that the above dimension counting argument strongly
relies on the fact that $E$ is (fiberwise) Calabi-Yau so that $c_1(u;\phi_\gamma) = 0$
and $Q$ has vanishing fiberwise Maslov class.
It seems to be interesting to investigate how the long
exact sequence will be transformed in other contexts like in the Fano case.

\subsection{Bubble does not hit critical points}

In this subsection, we restrict ourselves to the case of vanishing
relative first Chern class $c_1(E) = 0$.

We prove Theorem \ref{thm:maximum} in this subsection.
In fact, it will be enough to take
$$
\CJ^{tr}: = \bigcap_{x_0 \in E^{crit}} \CJ^{tr}(x_0)
$$
for the dense subset of $j$-compatible $J$'s.

\begin{proof}[Proof of Theorem \ref{thm:maximum}]
Let $J \in \CJ^{tr}$ defined as above.

We have derived before by the Gromov-Floer compactness applied to
$\CM_J(E,\CQ;\vec p;B)$ in $E$
that there exists a subsequence which converges to
$$
s_\infty = s_0 + \mbox{``bubble components''}
$$
where $s_0$ is a smooth section of $E \to \Sigma$ and each bubble tree
is contained in a fiber which consists of either fiberwise pseudo-holomorphic spheres
or discs. By Corollary \ref{nobubbleatx0}, there cannot be any bubble tree
passing through a critical point of $E$ and so contained in $E \setminus E^{crit}$.
Since the principal component $s_0$, which is smooth, cannot pass through a critical point
by Lemma \ref{lem:away-crit}, we have proved that the Gromov-Floer limit of
$s_i: \Sigma \to E \setminus E^{crit}$ does not pass any critical point
of $E$. Therefore the compactification $\overline{\CM}_J(E,\CQ;\vec p;B)$
of $\CM_J(E,\CQ;\vec p;B)$ has image contained in $E \setminus E^{crit}$.
\par
Once we have achieved this, the rest of the proof follows, in the same way as
in the case of \emph{smooth} Hamiltonian fibration, by
the standard dimension counting argument from the fact that
$E \to \Sigma$ is a Hamiltonian Lefschetz fibration with vanishing
relative Maslov class: We consider the evaluation maps
$$
ev:\CM_{(m;\vec n)}(E,\CQ;\vec p;B) \to E^m \times
\prod_{i=0}^k Q_i^{n_i}.
$$
and
$$
ev_1: \CM_1(E_z,J_z;\alpha_z) \to E_z, \quad ev_1:
\CM_1(E_z,Q_z,J_z;\beta_z) \to E_z
$$
for $z \in \Sigma \setminus \Sigma^{crit}$ and consider the fiber product
\bea
&\,&\widetilde \CM_{(m;\vec n)}(E,\CQ;\vec p;B_0;\{\alpha_i\}, \{\beta^{i}_j\}\})
: = \widetilde \CM_{(m;\vec n)}(E,\CQ;\vec p;B_0) \nonumber \\
&{}& \hskip0.3in {}_{ev}\times_{ev_1} \left(\Pi_i
\CM(E_{z_i},J_{z_i};\alpha_i) \times \Pi_{i=0}^k \Pi_{j=1}^{n_i}
\CM(E_{z_i},Q_{z_i},J_{z_i};\beta^{(i)}_j)\right) \eea with respect
to the obvious evaluation maps.

Note that the dimension of the moduli space of holomorphic sphere in
any class in a fiber $E_z \setminus E^{crit}$ has virtual dimension
given by $2n-6$ and so
$$
\operatorname{vir.dim} \bigcup_{z \in \Sigma} \CM_1(E_z,J_z;\alpha_z) = 2n-4.
$$
In particular for a generic choice of $J$ all somewhere injective
holomorphic sphere is regular and hence the condition $c_1 = 0$,
which implies semi-positivity, the standard argument from
\cite{ruan-tian} implies that the evaluation image of the moduli
space in each fiber has at least codimension 4 which obviously
avoids the images of pseudo-holomorphic sections in the classes
whose associated moduli space has dimension zero. This proves that
there cannot be any bubble passing through critical points of $E$ in
the limit and hence the above fiber product, for a generic choice of
$J$, becomes empty. The relevant Fredholm theory needed to carry out
this kind of dimension counting argument is by now standard. We
refer to section 2 \cite{seidel:triangle} for an elegant exposition
on this Fredholm theory in the context of exact Lefschetz fibrations
which applies to the current context of Lefschetz Hamiltonian
fibrations without change. This proves that there exist a dense
subset $\CJ^{reg,tr} \subset \CJ^{tr}$ for any given $(\vec p;B)$
the moduli spaces $\CM_{(m;\vec n)}(E,\CQ, J;\vec p;B)$ is compact
for $J \in \CJ^{reg,tr}$.

In particular there exists a constant $C = C(E,Q,\vec p;B) > 0$
$$
\min_{z \in \Sigma} \dist(s_\infty(z),E^{crit}) > C.
$$
But this together with Hausdorff-convergence the image of $s_i$ to
that of $s_\infty$ contradicts to the assumption $s_i(z_i) \to x_0
\in E^{crit}$. This finishes the proof.
\end{proof}

This proposition shows that as far as compactness property of
the set of smooth pseudo-holomorphic sections is concerned,
we can ignore presence of critical points of the fibration $E \to \Sigma$.

\subsection{Gromov-Floer moduli space of $J$-holomorphic trajectories}
\label{subsec:GFmoduli}

With Theorem \ref{thm:maximum} at our disposal,  we can safely ignore the critical points
in the study of \emph{smooth} $J$-holomorphic sections and their degenerations
for the Lefschetz Hamiltonian fibrations $\pi: E \to \Sigma$.
This, together with the energy estimates derived from
the previous section, makes the study essentially the same as for the case of
usual smooth Hamiltonian fibrations (without critical points)
as studied in \cite{entov}, \cite{mcduff-sal04}.

We formulate the definition of Fukaya-Oh-Ohta-Ono's $\mathfrak m_k$-maps in the setting of
fibrations. Obviously this discussion can be extended to the Lefschetz Hamiltonian
fibration $\pi: E \to \Sigma$ with punctures $\vec \zeta = \{\zeta_0, \cdots, \zeta_k\}
\subset \del \dot\Sigma$ and with a chain of Lagrangian boundary conditions
$$
\CQ = (Q_0,\cdots, Q_k)
$$
by considering the anchor caps attached to $[p_{ij},w_{ij}]$ with $p \in L_i \cap L_j$:
Here $Q_j$ is the parallel fiberwise Lagrangian submanifolds corresponding to $L_j$.
We decompose
$$
\del \dot\Sigma = \coprod_{j=0}^k \del_j \Sigma
$$
where $\del_j\Sigma$ is the $j$-th connected component of
$\del \dot \Sigma$. We briefly add some necessary modification
from that of \cite{fooo:book} to accommodate the possible critical point
in the fibration $\pi: E \to \Sigma$ following the notations from
\cite{seidel:triangle}.

For a given compact surface $\Sigma$ with marked points $\vec\zeta$,
we consider the corresponding surface $\dot \Sigma =
\Sigma \setminus \vec \zeta$ with punctures. We fix a given preferred
holomorphic chart $\varphi_\zeta: D_\zeta \subset \Sigma \to D^+$ of the half disc
$D^+ = D \cap \{ \operatorname{im}(z) \geq 0\}$ with $\varphi_\zeta(\zeta) = 0$.
We considering the moduli space
$$
\CM_J(E,\CQ;\vec p;B)
$$
as in subsection \ref{subsec:GF-moduli} for all the section class $B \in \pi_2(E,\CQ;\vec p)$ with
$$
\operatorname{vit.dim}\CM_J(E,\CQ;\vec p;B) = 0.
$$
More specifically, the
map is supposed to be given by
$$
\Phi_0^{rel}(E,\pi;\CQ)(\otimes_{j=1}^k [p_j,w_j]) = \sum_k \#
(\CM_J(E,\CQ;\vec p;B)) [p_k, w_k]
$$
with a suitable definition of the coefficient $\#(\CM_J(E,\CQ;\vec p;B))$.

Finally we recall the notion of broken Floer trajectory moduli
spaces i.e., the case corresponds to $k+1 = 2$. See \cite{fon} for the corresponding definition for the
closed case.

\begin{defn}\label{brokentraj} Let $J = \{J_t\}_{ 0 \leq t \leq 1}$ and
$x,\, y, \in L_+\cap L_-$.
A {\it stable broken Floer trajectory} from $p$ to $q$ is a triple
$$
u = \left( (u_1, \cdots, u_a); (\sigma_1,\cdots \sigma_m),
(\gamma^0_1,\cdots, \gamma^0_{n_0}), (\gamma^1_1,\cdots, \gamma^1_{n_1});
o\right)
$$
that satisfies the following:
\begin{enumerate}
\item For $i = 1,\cdots, a-1$, $u_i \in \CM(x_i,x_{i+1})$ and satisfy
\bea
u_1(-\infty) & = & p,\, u_a(\infty) = q \nonumber \\
u_i(\infty) & = & u_{i+1}(-\infty) \quad \mbox{for $i = 1, \cdots, a-1$}.
\label{eq:matching}
\eea
We call (\ref{eq:matching}) the matching condition and say a pair
$(u,u')$ of Floer trajectories {\it gluable} if it satisfies the
matching condition.
\item $\sigma_i \in \overline \CM_1(J_{t_i};\alpha_i)$ for $i = 1, \cdots, m$
\item $\gamma_j^0 \in \overline \CM_1(L_0, J_0;\beta^0_j)$ for $j = 1, \cdots, n_0$
and $\gamma_k^1 \in \overline \CM_1(L_1, J_1;\beta^1_k)$ for $k = 1, \cdots, n_1$.
\item For each $\ell = 1, \cdots, a$, either the map $u_\ell$ is non-stationary
or $\Theta_\ell \cap \mbox{Im}o \neq \emptyset$.
\end{enumerate}
\end{defn}

We denote the domain of $u$ simply by $\Theta_u$ which is the product of
a broken configuration. This is the union of a finite copies of
$$
\R \times [0,1],
$$
the {\it principal components},
and the pre-stable curves of closed or bordered Riemann surfaces of genus 0
the {\it bubble components}
with their roots attached to the principal components of $\Theta_u$.
Each broken Floer trajectory $u: \Theta_u \to M$ can be
regarded as a broken Floer trajectory into the fiber $M_\zeta$
for a $\zeta \in \vec \zeta$. We denote by
$\widetilde \CM(p,q;B_0;\{\alpha_i\}, \{\beta_{0,j}\}, \{\beta_{1,k}\})$
the set of stable broken Floer trajectories in the prescribed topological
types. The group $Aut(\dot \Sigma)$ acts on the moduli space
$\widetilde \CM(p,q;B_0;\{\alpha_i\}, \{\beta_{0,j}\}, \{\beta_{1,k}\})$
by the simultaneous translation of the roots of the bubbles attached to
the principal component. Then we denote by
$\CM(p,q;B)$ the set of all stable broken trajectories in class $B \in \pi_2(p,q)$.

We then define
$$
\overline \CM(E,\CQ;\vec p;B) = \coprod \CM_{(m;\vec n)}(E,\CQ;\vec
p';B'; \{\alpha_i\}, \{\beta^{i}_j\})\# \left(\prod_i
\CM(p'_i,q_i;B_i)\right)
$$
for all choices of $B_0, \{\alpha_i\}, \{\beta^{i}_j\}$ and $B_i$'s
satisfying
$$
B = B_0 + \sum_i \alpha_i + \sum_{i=0}^k \left(\sum_{j=1}^{n_i} \beta^{(i)}_j\right)
+ \sum B_i
$$
and provide it with a topology of stable maps.
We denote the corresponding decomposition of maps by
$$
s = s^0 \# (\Pi_i v_i) \# \left(\#_{i=0}^k (\Pi_j w^{(i)}_j)
\right) \# (u_i)
$$
and call $s^0$ the {\it principal component} and other fiberwise curves
the {\it bubble components}.

At this stage, we would like to emphasize that this compactification
is defined as a topological space for {\it any} choice of $(j,J)$
for a transversal chain $(L_0,\cdots, L_k)$ of Lagrangian
submanifolds. The topological space $\overline  \CM(E,\CQ;\vec p;B)$
will not be a smooth `manifold' even for a generic choice of $J$,
but will be a space with Kuranish structure \cite{fon}.

\section{Anchored Floer cohomology; review}
\label{sec:fooo}

In this subsection, we recall the exposition from \cite{fooo:anchor}
on the Lagrangian Floer theory of anchored Lagrangian submanifolds.

\subsection{Floer chain complex}
\label{chaincomplex}

Let $(L_0,L_1)$ be a pair with $L_0$
intersecting $L_1$ transversely.

Let $(L_i,\gamma_i)$ $i=0,1$ be anchored Lagrangian submanifolds.
Let $p, \, q \in L_0 \cap L_1$ be admissible
intersection points. We
defined the set $\pi_2(p,q)=\pi_2((L_0,L_1),(p,q))$
in section \ref{sec:anchored}.
We also defined $\pi_2(\ell_{01};p)$ there. We now define:
\begin{defn} Let $R$ be the underlying coefficient field. We define
$CF(\CL_1,\CL_0) = CF((L_1,\gamma_1),(L_0,\gamma_0))$
to be a free $R$-module over the basis $[p,w]$
where $p \in \CL_0\cap \CL_1$ is an admissible intersection points and
$w$ is a map from $[0,1]^2 \to M$ connecting $\ell_{01}$ and $\widehat p$.
\end{defn}
Here $R$ is a ground ring such as $\Q$, $\C$ or $\R$.

\begin{rem}\label{labelPiaction}
We remark that the set of $[p,w]$ where $p$ is the admissible intersection
point is identified with the set of
the critical point of the action functional $\mathcal A$ defined on the
Novikov covering space of $\Omega(L_0,L_1;\ell_{01})$.
The group $\Pi(\CL_0,\CL_1)$ defined in Section \ref{subsec:gamma-equiv} acts freely on it
so that the quotient space is the
set of admissible intersection points.
\end{rem}
We next take a grading $\lambda_i$ to $(L_i, \gamma_i)$ as in
Subsection \ref{subsec:anchored}.
It induces a grading of $[p,w]$ given by $\mu([p,w];\lambda_{01})$, which gives the
graded structure on $CF(\CL_1,\CL_0)$
$$
CF(\CL_1,\CL_0) = \bigoplus_k CF^k(\CL_1,\CL_0;\lambda_{01})
$$
where
$
CF^k(\CL_1,\CL_0;\lambda_{01}) = \operatorname{span}_R\{[p,w] \in CF(\CL_1,\CL_0)\mid
\mu([p,w];\lambda_{01}) = k\}.
$
\par
For given $B \in \pi_2(p,q)$, we denote by ${Map}(p,q;B)$
the set of such $w$'s in class $B$.
\par
We summarize the extra structures added in the discussion of
Floer homology for the anchored Lagrangian submanifolds in the following:

\begin{enumerate}
\item We assume that $(L_0,L_1)$ is a relatively spin pair.
We consider a pair $(L_0,\gamma_0)$, $(L_1,\gamma_1)$ of
anchored Lagrangian submanifolds and the base path
$\ell_{01} = \overline\gamma_0*\gamma_1$.
\item We fix a grading $\lambda_i$ of $\gamma_i$ for $i=0, \, 1$,
which in turn induce a grading of $\ell_{01}$,
$\lambda_{01} = \overline{\lambda_0} * \lambda_1$.
\item
We fix an orientation $o_{p}$ of $\operatorname{Index}\,\delbar_{\lambda_{p}}$
for each $p\in L_0\cap L_1$ as in \cite{fooo:anchor}.
\end{enumerate}

Under these conditions, orientation of the Floer moduli space $\CM(p,q;B)$
is induced. Using virtual fundamental chain
technique \cite{fon}, Appendix A.1 \cite{fooo:book}
we can take a system of multisections and obtain a system of \emph{rational} numbers
$n(p,q;B) = \#(\CM(p,q;B))$
whenever the virtual dimension of $\CM(p,q;B)$ is zero.
Finally we define the Floer `boundary' map $\partial: CF(\CL_1,\CL_0) \to
CF(\CL_1,\CL_0)$ by the sum
\begin{equation}\label{eq:boundary}
\partial ([p,w]) = \sum_{q \in L_0\cap L_1}\sum_{B \in \pi_2(p,q)}
n(p,q;B) [q,w\# B].
\end{equation}
By Remark \ref{labelPiaction}, $CF(\CL_1,\CL_0)$ carries a natural
$\Lambda(\CL_0,\CL_1)$-module structure and $CF^k(\CL_1,\CL_0;\lambda_{01})$
a $\Lambda^{(0)}(\CL_0,\CL_1)$-module structure where
$$
\Lambda^{(0)}(\CL_0,\CL_1) = \left\{ \sum a_g [g] \in
\Lambda(\CL_0,\CL_1) \Big| \mu([g]) = 0 \right\}.
$$
We define
\begin{equation}\label{CFpair}
C(\CL_1,\CL_0)
= CF(\CL_1,\CL_0) \otimes_{\Lambda(\CL_0,\CL_1)}
\Lambda_{nov}
\end{equation}
where we use the embedding $\Lambda(\CL_0,\CL_1) \to \Lambda_{nov}$ given
in \eqref{eq:nov-embedding}.
\par
We write the $\Lambda_{nov}$ module (\ref{CFpair}) also as
$$
C(\CL_1,\CL_0;\Lambda_{nov}).
$$
\begin{defn}\label{efilt}
We define the {\it energy filtration}
$
F^{\lambda}CF(\CL_1,\CL_0)
$
of the Floer chain complex
$CF(L_1,\gamma_1),(L_0,\gamma_0))$ (here $\lambda \in \R$)
such that $[p,w]$ is in $F^{\lambda}CF(\CL_1,\CL_0)$
if and only if $\mathcal A([p,w]) \ge \lambda$.
\end{defn}
This filtration also induces a filtration on (\ref{CFpair}).
\begin{rem}
We remark that this filtration depends (not only of the homotopy class of) but also of $\gamma_i$ itself.
\end{rem}
It is easy to see the following from the definition of $\partial$ above:
\begin{lem}\label{filtpres}
$$
\partial \left(F^{\lambda}CF((L_1,\gamma_1),(L_0,\gamma_0))
\subseteq F^{\lambda}CF((L_1,\gamma_1),(L_0,\gamma_0))\right).
$$
\end{lem}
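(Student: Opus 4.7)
The plan is to show that each basis element $[q, w\# B]$ appearing in the expansion of $\partial[p,w]$ in \eqref{eq:boundary} already lies in $F^{\lambda}CF$ whenever $[p,w]$ does. The starting point is the definition $\mathcal{A}(\ell,w) = \int w^*\omega$ together with the fact that concatenation of bounding strips adds their symplectic areas, so for any $B \in \pi_2(p,q)$,
$$\mathcal{A}([q, w\# B]) \;=\; \int (w\# B)^*\omega \;=\; \int w^*\omega + \omega(B) \;=\; \mathcal{A}([p,w]) + \omega(B).$$
Thus the action increment along a single generator of $\partial$ depends only on the homotopy class $B$ and not on the bounding strip $w$.

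Next I would verify that $\omega(B) \ge 0$ whenever the coefficient $n(p,q;B)$ is non-zero. A non-zero coefficient forces the perturbed moduli space $\CM(p,q;B)$ to be non-empty, hence there is a stable $J$-holomorphic Floer trajectory $u = u_0 \cup \bigcup_i v_i$ from $p$ to $q$ representing the class $B$, where $u_0$ is the principal strip and the $v_i$ are sphere or disc bubbles. The principal strip satisfies the Floer equation with Lagrangian boundary, so
$$\omega([u_0]) \;=\; \int u_0^*\omega \;=\; \int |\partial_s u_0|^2 \, ds\, dt \;\ge\; 0,$$
and each bubble $v_i$ is $J$-holomorphic, giving $\omega([v_i]) = E(v_i) \ge 0$. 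Summing over the stable configuration yields $\omega(B) = \omega([u_0]) + \sum_i \omega([v_i]) \ge 0$.

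Combining the two steps, every generator $[q, w\# B]$ appearing with non-zero coefficient in $\partial[p,w]$ satisfies
$$\mathcal{A}([q, w\# B]) \;=\; \mathcal{A}([p,w]) + \omega(B) \;\ge\; \mathcal{A}([p,w]) \;\ge\; \lambda,$$
so $[q, w\# B] \in F^{\lambda}CF$. Since $\partial[p,w]$ is a (Novikov-convergent) $R$-linear combination of such generators, it lies in $F^{\lambda}CF$ as well, proving the asserted inclusion. I do not expect a genuine obstacle here; the only delicate point is keeping the sign conventions aligned so that the orientation of $B \in \pi_2(p,q)$ used in \eqref{eq:boundary} is the one for which a Floer trajectory in class $B$ has $\omega(B)$ equal to its non-negative energy. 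Once that bookkeeping is made explicit, the lemma is a direct consequence of the non-negativity of energy for $J$-holomorphic curves.
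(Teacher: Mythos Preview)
Your argument is correct and is exactly the standard reasoning the paper has in mind: the paper does not write out a proof at all, merely asserting that the lemma ``is easy to see from the definition of $\partial$ above.'' Your two-step computation (additivity of the action under concatenation, plus non-negativity of $\omega(B)$ forced by the existence of a $J$-holomorphic representative when $n(p,q;B)\neq 0$) is precisely the unpacking of that remark.
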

\par
According to the definition
(\ref{eq:boundary}) of the map $\partial$, we have the formula for
its matrix coefficients
\begin{equation}\label{eq:B=B1B2}
\langle \partial\partial [p,w], [r,w\# B] \rangle = \sum_{q \in L_0\cap L_1}
\sum_{B = B_1\# B_2 \in \pi_2(p,r)} n(p,q;B_1)n(q,r;B_2)T^{\omega(B)}
\end{equation}
where $B_1 \in \pi_2(p,q)$ and $B_2 \in \pi_2(q,r)$.

To prove, $\partial \partial = 0$, one needs to prove
$\langle \partial\partial [p,w], [r,w\# B]\rangle = 0$
for all pairs $[p,w], \, [r,w \# B]$.
On the other hand it follows from definition that each summand
$$
n(p,q;B_1)n(q,r;B_2)T^{\omega(B)} = n(p,q;B_1)T^{\omega(B_1)}n(q,r;B_2)T^{\omega(B_2)}
$$
and the coefficient $n(p,q;B_1)n(q,r;B_2)$
is nothing but the number of broken trajectories lying in
$\CM(p,q;B_1) \# \CM(q,r;B_2)$.
This number is nonzero in the general
situation we work with.

To handle the problem of obstruction to $\del\circ \del = 0$ and of
bubbling-off discs in general, a structure of filtered $A_\infty$
algebra $(C, \mathfrak m)$ \emph{with non-zero $\mathfrak m_0$-term}
is associated to each Lagrangian submanifold $L$
\cite{fooo00,fooo:book}.
\subsection{$A_{\infty}$ algebra}
\label{subsec:objects}
In this subsection, we review the notion and construction of
filtered $A_{\infty}$ algebra associated to a Lagrangian
submanifold.
In order to make the construction consistent to one
in the last section, where $\Lambda(\CL_0,\CL_1)$
is used for the coefficient ring rather than the universal
Novikov ring, we rewrite them
using smaller Novikov ring $\Lambda(L)$ which we define below.
Let $L$ be a relatively spin Lagrangian submanifold.
We have a homomorphism
$$
(E,\mu): H_2(M,L;\Z) \to \R \times \Z
$$
where
$
E(\beta) = \beta \cap [\omega]
$
and $\mu$ is the Maslov index homomorphism.
We put $g\sim g'$ for $g,g' \in H_2(M,L;:\Z)$ if
$E(g) = E(g')$ and $\mu(g) = \mu(g')$.
We write $\Pi(L)$ the quotient with respect to this equivalence
relation. It is a subgroup of $\R\times \Z$.
We define
\beastar
\Lambda(L)
= \Big\{ \sum c_g[g] & \Big|& g \in \Pi(L), c_g \in R,
E(g) \ge 0, \\
&{}& \forall \, E_0 \, \# \{g \mid c_g \ne 0, E(g) \le E_0\} < \infty \Big \}
\eeastar
We have the natural embedding
$
\Lambda(L) \to \Lambda_{0,nov}
$
similarly as in \eqref{eq:nov-embedding}.
\par
Let $\overline C$ be a graded $R$-module and $CF = \overline C
\widehat{\otimes}_R \Lambda(L)$. Here and hereafter we use symbol $CF$ for the modules over
$\Lambda(L)$ or $\Lambda(L_0,L_1)$ and
$C$ for the modules over the universal Novikov ring.
\par
We denote by $CF[1]$
its suspension defined by $CF[1]^k = CF^{k+1}$.
We denote by $\deg(x)=|x|$ the degree of $x \in C$ before the shift and
$\deg'(x)=|x|'$ that after the degree shifting, i.e., $|x|' = |x| - 1$.
Define the {\it bar complex} $B(CF[1])$ by
$$
B_k(CF[1]) = (CF[1])^{k\otimes}, \quad B(CF[1]) =
\bigoplus_{k=0}^\infty B_k(CF[1]).
$$
Here $B_0(CF[1]) = R$ by definition.
The tensor product is taken over $\Lambda(L)$.
We provide the degree of
elements of $B(CF[1])$ by the rule
\begin{equation}\label{eq:degonBC[1]}
|x_1 \otimes \cdots \otimes x_k|': = \sum_{i=1}^k |x_i|' = \sum_{i =1}^k|x_i|
-k
\end{equation}
where $|\cdot|'$ is the shifted degree. The ring $B(CF[1])$
has the structure of {\it graded coalgebra}.
\begin{defn} A filtered $A_\infty$ algebra
over $\Lambda(L)$ is a
sequence of $\Lambda(L)$ module homomorphisms
$$
\mathfrak m_k: B_k(CF[1]) \to CF[1], \quad k =0, 1, 2, \cdots,
$$
of degree +1 such that the coderivation
$d = \sum_{k=0}^\infty \widehat{\mathfrak m}_k$
satisfies $d d= 0$, which
is called the \emph{$A_\infty$-relation}.
Here we denote by $\widehat{\mathfrak
m}_k: B(CF[1]) \to B(CF[1])$ the unique extension of $\mathfrak m_k$
as a coderivation on $B(CF[1])$. A \emph{filtered $A_\infty$ algebra}
is an $A_\infty$ algebra with a filtration for which $\mathfrak m_k$ are
continuous with respect to the induce non-Archimedean topology.
\end{defn}
If we have $\mathfrak m_1
\mathfrak m_1 = 0$, it defines a complex $(CF,\mathfrak m_1)$. We
define the $\mathfrak m_1$-cohomology by
\begin{equation}\label{eq:m1cohom}
H(CF,\mathfrak m_1) = \mbox{Ker}\,\mathfrak m_1/\mbox{Im}\,\mathfrak m_1.
\end{equation}

The first two terms of the $A_\infty$ relation for a
$A_\infty$ algebra are given as
\begin{eqnarray}
\mathfrak m_1(\mathfrak m_0(1)) & = & 0 \label{eq:m1m0=0} \\
\mathfrak m_1\mathfrak m_1 (x) + (-1)^{|x|'}\mathfrak m_2(x, \mathfrak m_0(1)) +
\mathfrak m_2(\mathfrak m_0(1), x) & = & 0. \label{eq:m0m1}
\end{eqnarray}
In particular, for the case $\frak m_0(1)$ is nonzero, $\mathfrak
m_1$ will not necessarily satisfy the boundary property, i.e., $\mathfrak m_1\mathfrak m_1
\neq 0$ in general.

\par
We now describe the $A_\infty$ operators $\mathfrak m_k$
in the context of $A_\infty$ algebra of Lagrangian submanifolds.
For a given compatible almost complex structure $J$, consider the moduli
space of stable maps of genus zero
$$
\CM_{k+1}(\beta;L) =\{ (w, (z_0,z_1, \cdots,z_k)) \mid
\overline \partial w = 0, \, z_i \in \partial D^2, \, [w] = \beta
\, \mbox{in }\, \pi_2(M,L) \}/\sim
$$
where $\sim$ is the conformal reparameterization of the disc $D^2$.
We require that $z_0, \cdots, z_k$ respects counter clockwise cyclic order of
$S^1$.
(We wrote this moduli space $\CM^{\text{\rm main}}_{k+1}(\beta;L)$
in Section 2.1 \cite{fooo:book}. The symbol `main' indicates the compatibility
of $z_0, \cdots, z_k$, with counter clockwise cyclic order. We omit this
symbol in this paper since we always assume it.)
\par
$\CM_{k+1}(\beta;L)$ has a Kuranishi structure and its
dimension is given by
\begin{equation}\label{eq:dim}
n+ \mu(\beta) - 3 + (k+1) = n+\mu(\beta) + k-2.
\end{equation}
Now let
$
[P_1,f_1], \cdots,[P_k,f_k] \in C_*(L;\Q)
$
be $k$ smooth singular simplices of $L$.
(Here we denote by $C(L;\Q)$
a \emph{suitably chosen countably generated} cochain
complex of smooth singular chains of $L$.)
We put the cohomological grading
$\mbox{deg} P_i = n - \dim P_i$ and consider the fiber product
$$
ev_0: \CM_{k+1}(\beta;L) \times_{(ev_1, \cdots, ev_k)}(P_1 \times
\cdots \times P_k) \to L.
$$
A simple calculation shows that the expected dimension of this chain is given by
$
n + \mu(\beta) - 2 + \sum_{j=1}^k(\dim P_j + 1- n)
$
or equivalently we have the degree
$$
\mbox{deg}\left[\CM_{k+1}(\beta;L) \times_{(ev_1, \cdots, ev_k)}(P_1
\times \cdots\times P_k),
ev_0\right] = \sum_{j=1}^n(\mbox{deg} P_j -1) + 2- \mu(\beta).
$$
For each given $\beta \in \pi_2(M,L)$ and $k = 0, \cdots$, we
define $\frak m_{1,0}(P) = \pm \partial P$ and
\be\label{eq:mkbeta}
\aligned
\mathfrak m_{k,\beta}(P_1, \cdots, P_k)
&= \left[\CM_{k+1}(\beta;L) \times_{(ev_1, \cdots, ev_k)}(P_1 \times \cdots \times P_k),
ev_0\right] \\
& \in C(L;\Q)
\endaligned
\ee
(More precisely we regard the right hand side of (\ref{eq:mkbeta})
as a smooth singular chain by taking appropriate multi-valued
perturbation (multisection) and choosing a simplicial decomposition of its zero set.)
\par
We put
$$
CF(L) = C(L;\Q) \,\,\widehat{\otimes}_{\Q}\,\, \Lambda(L).
$$
We define
$
\mathfrak m_k: B_kCF(L)[1] \to B_kCF[1]
$
by
$$
\mathfrak m_k= \sum_{\beta \in \pi_2(M,L)} \mathfrak m_{k,\beta} \otimes [\beta].
$$
\par
Then it follows that the map
$
\mathfrak m_k: B_kCF(L)[1] \to CF(L)[1]
$
is well-defined, has degree 1 and continuous with respect to non-Archimedean topology.
We extend $\mathfrak m_k$ as a coderivation
$\widehat{\mathfrak m}_k: BCF[1] \to BCF[1]$
where $BCF(L)[1]$ is the completion of the direct sum $\oplus_{k=0}^\infty
B_kCF(L)[1]$ where $B_kCF(L)[1]$ itself is the completion of $CF(L)[1]^{\otimes k}$.
$BCF(L)[1]$ has a natural filtration defined similarly as Definition \ref{efilt}.
Finally we take the sum
$$
\widehat d = \sum_{k=0}^\infty \widehat{\frak m}_k: BCF(L)[1] \to BCF(L)[1].
$$
We then have the following coboundary property:
\begin{thm}\label{algebra} Let $L$ be an arbitrary compact relatively
spin Lagrangian submanifold of an arbitrary tame symplectic manifold
$(M,\omega)$. The coderivation $\widehat d$ is a continuous map that
satisfies the $A_\infty$ relation $\widehat d \widehat d = 0$, and so
$(CF(L),\mathfrak m)$ is a filtered $A_\infty$ algebra over $\Lambda(L)$.
\end{thm}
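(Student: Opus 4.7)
The plan is to follow the standard Fukaya--Oh--Ohta--Ono strategy, where the $A_\infty$ relations emerge from a description of the codimension-one boundary of the Kuranishi compactification $\overline{\CM}_{k+1}(\beta;L)$ as a disjoint union of fiber products of lower-dimensional moduli spaces. First I would recall from Section 2.1 of \cite{fooo:book} that $\overline{\CM}_{k+1}(\beta;L)$ carries a Kuranishi structure of virtual dimension $n+\mu(\beta)+k-2$, oriented via the fixed relative spin structure on $L$, and that its evaluation maps $ev_i\colon \overline{\CM}_{k+1}(\beta;L)\to L$ at the boundary marked points are weakly submersive in the Kuranishi sense, so that the fiber products appearing in \eqref{eq:mkbeta} are well-defined as $\Q$-singular chains after choosing a system of multisections and a simplicial decomposition of zero sets.

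Next I would analyze the codimension-one boundary. The only way a sequence in $\CM_{k+1}(\beta;L)$ can degenerate without sphere bubbling or interior nodes contributing in codimension one is by splitting off a disc bubble at one of the boundary marked points, yielding the identification
\begin{equation*}
\partial \overline{\CM}_{k+1}(\beta;L) \;=\; \bigcup_{\substack{k_1+k_2=k+1\\ \beta_1+\beta_2=\beta}}\; \bigcup_{i} \overline{\CM}_{k_1+1}(\beta_1;L)\, {}_{ev_0}\!\times_{ev_i}\, \overline{\CM}_{k_2+1}(\beta_2;L)
\end{equation*}
where the union over $i$ runs over the insertion positions respecting the cyclic order at the root component. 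Taking fiber product with the tuple $(P_1,\ldots,P_k)$, using the Leibniz rule $\partial(P_1\times\cdots\times P_k)=\sum \pm P_1\times\cdots\times\partial P_j\times\cdots\times P_k$ for the boundary contribution coming from the chains themselves, and equating the two descriptions of the boundary of the resulting chain yields precisely the component-wise identity
\begin{equation*}
\sum_{k_1+k_2=k+1}\sum_{i} (-1)^{\epsilon_i}\, \mathfrak m_{k_1,\beta_1}\bigl(P_1,\ldots,\mathfrak m_{k_2,\beta_2}(P_i,\ldots),\ldots,P_k\bigr)=0
\end{equation*}
for each fixed class $\beta = \beta_1+\beta_2$ and each $k$. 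Summing over $\beta$ using the bookkeeping $\mathfrak m_k=\sum_\beta \mathfrak m_{k,\beta}\otimes [\beta]$, together with the coderivation extension $\widehat{\frak m}_k$, is equivalent to $\widehat d\widehat d=0$.

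Two technical points require care. The first is the sign computation: here I would invoke the orientation conventions of Chapter 8 of \cite{fooo:book}, where the relative spin structure induces orientations on all $\overline{\CM}_{k+1}(\beta;L)$ compatible with gluing, and verify that the Koszul signs with the shifted degree $|x|'=|x|-1$ convention produce exactly the $A_\infty$ sign pattern. The second, and the true technical heart of the result, is that the above naive boundary identification must be realized at the level of Kuranishi multisections in a coherent way across all $(k,\beta)$. This is the content of the inductive construction of a tree-compatible system of multisections on the $\overline{\CM}_{k+1}(\beta;L)$, done by induction on $(\omega(\beta),k)$ so that the multisection restricted to a codimension-one boundary stratum agrees with the fiber product of previously chosen multisections. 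This is where the main difficulty lies, and it is carried out in Section 7.2 and Chapter 7 of \cite{fooo:book}; I would cite that construction rather than redoing it.

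Finally, continuity with respect to the non-Archimedean filtration on $CF(L)=\overline C\,\widehat\otimes_\Q\Lambda(L)$ follows from Gromov compactness: for each $E_0>0$ there are only finitely many homotopy classes $\beta$ with $\omega(\beta)\le E_0$ admitting nonempty $\CM_{k+1}(\beta;L)$, so the formal sum $\mathfrak m_k=\sum_\beta \mathfrak m_{k,\beta}\otimes[\beta]$ converges in the $T$-adic topology and each $\mathfrak m_k$ preserves the filtration up to a shift determined by $\omega(\beta)$. Combining these ingredients establishes that $(CF(L),\mathfrak m)$ is a filtered $A_\infty$ algebra over $\Lambda(L)$.
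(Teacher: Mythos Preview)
Your proposal is correct and follows exactly the Fukaya--Oh--Ohta--Ono strategy from \cite{fooo:book}. Note, however, that the paper itself does not supply a proof of this theorem at all: Section~\ref{subsec:objects} is a review section, and the result is simply stated and attributed to Theorem~A of \cite{fooo:book} (see the sentence immediately following the theorem). So your sketch is not being compared against an in-paper argument but rather against the cited source, with which it agrees.
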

We put
$$
C(L;\Lambda_{0,nov}) = CF(L) \,\widehat{\otimes}_{\Lambda(L)} \,\, \Lambda_{0,nov}
$$
on which a filtered $A_{\infty}$ structure on $C(L;\Lambda_{0,nov})$
(over the ring $\Lambda_{0,nov}$) is induced. This is the filtered
$A_{\infty}$ structure given in Theorem A \cite{fooo:book}.
\par
In the presence of $\mathfrak m_0$,
$\widehat{\mathfrak m}_1 \widehat{\mathfrak m}_1 = 0$ no longer holds
in general. This leads to consider deforming Floer's original definition by a bounding
cochain of the obstruction cycle arising from bubbling-off discs.
One can always deform the given (filtered) $A_\infty$ algebra $(CF(L),\mathfrak m)$ by
an element $b \in CF(L)[1]^0$ by re-defining the $A_\infty$ operators as
$$
\mathfrak m_k^b(x_1,\cdots, x_k) = \mathfrak m(e^b,x_1, e^b,x_2, e^b,x_3, \cdots,
x_k,e^b)
$$
and taking the sum $\widehat d^b = \sum_{k=0}^\infty \widehat{\mathfrak m}_k^b$.
This defines a new filtered $A_\infty$ algebra in general. Here we
simplify notations by writing
$$
e^b = 1 + b + b\otimes b + \cdots + b \otimes \cdots \otimes b +\cdots.
$$
Note that each summand in this infinite sum has degree 0 in $CF(L)[1]$ and
converges in the non-Archimedean topology if $b$ has positive
valuation, i.e., $\frak v(b) > 0$.
(See Section \ref{subsec:gamma-equiv} for the definition of $\frak v$.)
\begin{prop} For the $A_\infty$ algebra $(CF(L),\mathfrak m_k^b)$,
$\mathfrak m_0^b = 0$ if and only if $b$ satisfies
\begin{equation}\label{eq:MC}
\sum_{k=0}^\infty\mathfrak m_k(b,\cdots, b) = 0.
\end{equation}
This equation is a version of \emph{Maurer-Cartan equation}
for the filtered $A_\infty$ algebra.
\end{prop}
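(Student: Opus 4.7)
The plan is to unpack the definition of $\mathfrak{m}_0^b$ and observe that it literally coincides with the sum appearing on the left of \eqref{eq:MC}. The defining formula for the $b$-deformed operators, stated just above the proposition, reads
\[
\mathfrak{m}_k^b(x_1, \ldots, x_k) = \mathfrak{m}(e^b, x_1, e^b, x_2, \ldots, x_k, e^b),
\]
so in the case $k=0$ --- where there are no arguments $x_i$ to insert between copies of $e^b$ --- this collapses to $\mathfrak{m}_0^b = \mathfrak{m}(e^b)$, where $\mathfrak{m}: B(CF(L)[1]) \to CF(L)[1]$ denotes the total operator whose restriction to $B_n(CF(L)[1])$ agrees with $\mathfrak{m}_n$.

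First I would verify that the series $e^b = \sum_{n=0}^\infty b^{\otimes n}$ makes sense as an element of the completed bar complex $B(CF(L)[1])$. Since $b$ has positive valuation $\mathfrak{v}(b) > 0$, the valuations of the successive terms $b^{\otimes n}$ tend to infinity, and the series converges in the non-Archimedean topology on the completion. The continuity of $\mathfrak{m}$, which is built into the definition of a filtered $A_\infty$ algebra, then permits term-by-term evaluation. Because $b^{\otimes n}$ lies in $B_n(CF(L)[1])$, where $\mathfrak{m}$ acts as $\mathfrak{m}_n$, we obtain
\[
\mathfrak{m}_0^b \;=\; \mathfrak{m}(e^b) \;=\; \sum_{n=0}^\infty \mathfrak{m}(b^{\otimes n}) \;=\; \sum_{n=0}^\infty \mathfrak{m}_n(b, \ldots, b).
\]
The right-hand side is exactly the left-hand side of \eqref{eq:MC}, so $\mathfrak{m}_0^b = 0$ holds if and only if $b$ is a solution of the Maurer-Cartan equation.

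The only point that deserves bookkeeping attention is a degree check: since $b \in CF(L)[1]^0$ has shifted degree zero, every $b^{\otimes n}$ sits in shifted degree zero inside $B(CF(L)[1])$, so each $\mathfrak{m}_n(b, \ldots, b)$ lands in the single homogeneous component $CF(L)[1]^1$ in which $\mathfrak{m}_0^b$ is required to live. There is no analytic or geometric obstacle at all; the proposition is a purely formal consequence of the definition of the deformed $A_\infty$ structure. In that sense, the only ``hard part'' is being careful enough with the bar-coalgebra formalism to recognize that inserting $e^b$ between zero arguments simply means applying $\mathfrak{m}$ to $e^b$ itself.
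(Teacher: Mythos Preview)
Your argument is correct and complete: the proposition is a direct consequence of the definition $\mathfrak m_k^b(x_1,\dots,x_k) = \mathfrak m(e^b,x_1,e^b,\dots,x_k,e^b)$, specialized to $k=0$, together with convergence of $e^b$ in the non-Archimedean topology. The paper itself states the proposition without proof, treating it as an immediate formal observation, so there is nothing to compare against; your write-up is precisely the justification one would supply if asked.
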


\begin{defn}\label{boundchain}
Let $(CF(L),\mathfrak m)$ be a filtered $A_\infty$ algebra in general and
$BCF(L)[1]$ be its bar complex. An element $b \in CF(L)[1]^0 = CF(L)^1$ is called
a \emph{bounding cochain} if it satisfies the equation (\ref{eq:MC})
and $v(b) > 0$. We denote by $\widetilde \CM(L;\Lambda(L))$ the set of bounding
cochains.
\end{defn}

In general a given $A_\infty$ algebra may or may not have a solution
to (\ref{eq:MC}). In our case we define:

\begin{defn}\label{unobstructed}
A filtered $A_\infty$ algebra $(CF(L),\mathfrak m)$ is called \emph{unobstructed over}
$\Lambda(L)$ if the equation
(\ref{eq:MC}) has a solution $b \in CF(L)[1]^0 = CF(L)^1$ with $v(b) > 0$.
\end{defn}
One can define the notion of homotopy equivalence between two
bounding cochains and et al as described in Chapter 4 \cite{fooo:book}.
We denote by $ \CM(L;\Lambda(L))$ the set of equivalence classes of
bounding cochains of $L$.
\begin{rem}
In Definition \ref{boundchain} above we consider bounding cochain
contained in $CF(L) \subset C(L;\Lambda_0)$ only. This is the reason
why we write $\widetilde \CM(L;\Lambda(L))$ in place of $\widetilde
\CM(L)$. (The latter is used in \cite{fooo:book}.)
\end{rem}
\subsection{$A_{\infty}$ bimodule}
\label{subsec:morphisms}
Once the $A_\infty$
algebra is attached to each Lagrangian submanifold $L$, we then
construct a structure of filtered \emph{$A_\infty$ bimodule} on the
module $CF(\CL_1, \CL_0) = CF((L_1,\gamma_1),(L_0,\gamma_0))$, which was introduced in
Section \ref{chaincomplex} as follows. This filtered $A_\infty$
bimodule structure is by definition is a family of operators
$$
\aligned
\mathfrak n_{k_1,k_0}: B_{k_1}(CF(L_1)[1])\,\,
\widehat{\otimes}_{\Lambda(L_1)} \,\, CF((L_1,\gamma_1),
(L_0,\gamma_0))
\,\,&\widehat{\otimes}_{\Lambda(L_0)} \,\, B_{k_0}(CF(L')[1]) \\
&\to CF((L_1,\gamma_1),
(L_0,\gamma_0))
\endaligned$$
for $k_0,k_1\ge 0$.
Here the left hand side is defined as follows:
It is easy to see that there are embeddings
$\Lambda(L_0) \to \Lambda(\CL_0,\CL_1)$,
$\Lambda(L_1) \to \Lambda(\CL_0,\CL_1)$.
Therefore a $ \Lambda(\CL_0,\CL_1)$ module
$CF((L_1,\gamma_1),
(L_0,\gamma_0))$ can be regarded both as
$\Lambda(L_0)$ module and $\Lambda(L_1)$ module.
Hence we can take tensor product in the left hand side.
($\widehat{\otimes}_{\Lambda(L_i)}$ is the
completion of this algebraic tensor product.)
The left hand side then becomes a $ \Lambda(\CL_0,\CL_1)$ module,
since the rings involved are all commutative.
\par
We briefly describe the definition of $\mathfrak n_{k_1,k_0}$.
A typical element of the tensor product
$$
B_{k_1}(CF(L_1)[1]) \widehat{\otimes}_{\Lambda(L_1)} \,\, CF((L_1,\gamma_1),
(L_0,\gamma_0))
\,\,\widehat{\otimes}_{\Lambda(L_0)} \,\, B_{k_0}(CF(L_0)[1])
$$
has the form
$$
P_{1,1} \otimes \cdots, \otimes P_{1,k_1} \otimes [p,w] \otimes
P_{0,1} \otimes \cdots \otimes P_{0,k_0}
$$
with $p \in L_0 \cap L_1$ being an admissible intersection point. Then the image $\mathfrak n_{k_0,k_1}$ thereof is given by
$$
\sum_{q, B}T^{\omega(B)}e^{\mu(B)/2}\# \left(\CM(p,q;B;P_{1,1},\cdots,P_{1,k_1};
P_{0,0},\cdots,P_{0,k_0})\right) [q,B\#w].
$$
Here $B$ denotes homotopy class of Floer trajectories connecting $p$ and $q$,
the summation is taken over all $[q,B]$ with
$$
\dim \CM(p,q;B;P_{1,1},\cdots,P_{1,k_1};
P_{0,1},\cdots,P_{0,k_0}) = 0,
$$
and $\# \left(\CM(p,q;B;P_{1,1},\cdots,P_{1,k_1};P_{0,1},\cdots,P_{0,k_0})\right)$ is
the `number' of elements in
the `zero' dimensional moduli space $\CM(p,q;B;P_{1,1},\cdots,P_{1,k_1};
P_{0,1},\cdots,P_{0,k_0})$. Here the moduli space $\CM(p,q;B;P_{1,1},\cdots,P_{1,k_1};
P_{0,1},\cdots,P_{0,k_0})$ is the Floer moduli space
$
\CM(p,q;B)
$
cut-down by intersecting with the given chains $P_{1,i} \subset L_1$
and $P_{0,j} \subset L_0$. (See Section 3.7 \cite{fooo:book}.)
An orientation on this moduli space can be given in \cite{fooo:book,fooo:anchor}.
\begin{thm}\label{thm:bimodule}
Let $(\CL_0,\CL_1)$ be a pair of anchored
Lagrangian submanifolds.
Then the family $\{\mathfrak n_{k_1,k_0}\}$ defines
a left $(CF(L_1),\mathfrak m)$ and right $(CF(L_0),\mathfrak m)$ filtered
$A_\infty$-bimodule structure on $CF(\CL_1,\CL_0)$.
\end{thm}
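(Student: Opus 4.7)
My plan is to prove this by adapting the FOOO construction of filtered $A_\infty$-bimodule structure on the Floer cochain complex (Chapter 3 of \cite{fooo:book}) to the anchored setting, where the anchors serve mainly to provide the coherent single-valued action functional and $\Z$-grading. Since the moduli spaces $\CM(p,q;B;P_{1,1},\ldots,P_{1,k_1};P_{0,1},\ldots,P_{0,k_0})$ are defined as fiber products of the usual Floer trajectory moduli $\CM(p,q;B)$ with the chains $P_{j,i}$ via evaluation maps at cyclically ordered boundary marked points on the two sides of the strip $\R \times [0,1]$, the first step is to establish that these fiber products inherit a Kuranishi structure with boundary and corner from $\CM(p,q;B)$, and that their virtual dimension is given by
\[
\mu([p,w]) - \mu([q,w\#B]) - 1 + \sum_{i=1}^{k_1}(\deg P_{1,i} - 1) + \sum_{j=1}^{k_0}(\deg P_{0,j} - 1).
\]

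The core step is to identify the codimension-one boundary strata of the compactified moduli space $\overline{\CM}(p,q;B;\vec P_1;\vec P_0)$ when the virtual dimension equals one. By Gromov-Floer compactness (as formulated in Section~\ref{subsec:GFmoduli}), together with the fact that $L_0, L_1$ have vanishing Maslov class so that only codimension-one degenerations contribute, the boundary consists of exactly three types of configurations: (i) splitting of the Floer strip into two broken trajectories $\CM(p,r;B_1;\vec P_1';\vec P_0')\times \CM(r,q;B_2;\vec P_1'';\vec P_0'')$, giving the $\mathfrak{n}\circ \mathfrak{n}$ terms; (ii) bubbling of a $J$-holomorphic disc off the $L_1$-boundary absorbing a consecutive block of the inputs $P_{1,i},\ldots,P_{1,i+\ell}$, giving terms of the form $\mathfrak{n}(\ldots,\mathfrak{m}_\ell(P_{1,i},\ldots,P_{1,i+\ell}),\ldots;[p,w];\ldots)$; and (iii) the symmetric phenomenon for $L_0$. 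Summing the signed counts of these boundary points and using that the signed count of boundary of a compact oriented 1-dimensional Kuranishi space vanishes, one obtains precisely the $A_\infty$-bimodule relations
\[
\sum \pm \mathfrak{n}\bigl(\mathfrak{m}(\vec P_1'), [p,w], \vec P_0\bigr) + \sum \pm \mathfrak{n}\bigl(\vec P_1,\mathfrak{n}(\vec P_1'',[p,w],\vec P_0''),\vec P_0'\bigr) + \sum \pm \mathfrak{n}\bigl(\vec P_1,[p,w],\mathfrak{m}(\vec P_0')\bigr) = 0.
\]

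To make the operators well-defined into $CF(\CL_1,\CL_0)$ (rather than just formal sums), I will appeal to the energy filtration from Definition~\ref{efilt}: in each matrix coefficient, only finitely many homotopy classes $B$ with $\omega(B)\le E_0$ can contribute by Gromov compactness and the area estimates of Section~\ref{subsec:energy}, so the sum converges in the non-Archimedean topology and preserves the filtration. The anchor data $\gamma_0, \gamma_1$ enter only through the fixed base path $\ell_{01} = \overline{\gamma_0}\#\gamma_1$ used to define admissible intersections and through Proposition~\ref{prop:coherence}, which guarantees that $\omega(B)$ and $\mu(B)$ are coherently attached to each $B \in \pi_2(p,q)$; this is what makes the sum lie in the correct $\Lambda(\CL_0,\CL_1)$-module and what is needed to compatibly identify the bimodule action of $\Lambda(L_0)$ and $\Lambda(L_1)$ factors with the ambient Novikov ring.

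The main obstacle is the analytical construction of the compatible system of Kuranishi structures and multisections on the entire family of moduli spaces $\{\CM(p,q;B;\vec P_1;\vec P_0)\}$, so that their boundary identifications are respected by the perturbations and the signed counts appearing in the $A_\infty$-bimodule relations actually cancel; this is the standard but nontrivial coherent perturbation scheme carried out in Chapter~3 of \cite{fooo:book}, which transfers to our setting without essential change once the anchor-based grading and filtration are fixed. Orientation signs follow from the relative spin structures and the orientation conventions of \cite{fooo:anchor}, where the Maslov-Morse index $\mu([p,w];\lambda_{01})$ provides the degree-shift compatible with the signs in the bimodule identity. Once these pieces are in place, the verification is a line-by-line transcription of \cite[Ch.~3]{fooo:book}.
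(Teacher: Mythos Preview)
Your proposal is correct and follows the same approach as the paper, which does not give an independent proof but simply refers to Section~3.7 of \cite{fooo:book} and to \cite{fooo:anchor}, noting that the only modification needed is the passage from the universal Novikov ring to $\Lambda(\CL_0,\CL_1)$ coefficients. Your outline of the boundary-strata analysis and the coherent Kuranishi perturbation scheme is exactly the content of that reference; one small remark is that the vanishing Maslov class hypothesis is not needed for the bimodule relations themselves (Theorem~\ref{thm:bimodule} is stated for arbitrary anchored Lagrangians), so you can drop that clause from your codimension-one discussion.
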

See Section 3.7 \cite{fooo:book} and \cite{fooo:anchor} for the definition of filtered $A_{\infty}$ bimodules.
(In \cite{fooo:book} the case of universal Novikov ring as a coefficient is considered.
It is easy to modify to our case of $\Lambda(L_0,L_1)$ coefficient.)
\par
In the case where both $L_0, \, L_1$ are
unobstructed, we can carry out this
deformation of $\mathfrak n$ using bounding cochains $b_0$
and $b_1$ of $CF(L_0)$ and $CF(L_1)$ respectively, in
a way similar to $\frak m^b$. Namely we define
$\delta_{b_1,b_0}: CF(\CL_1,\CL_0) \to CF(\CL_1,\CL_0)$ by
$$
\delta_{b_1,b_0}(x) =
\sum_{k_1,k_0} \frak n_{k_1,k_0} (b_1^{\otimes k_1} \otimes
x \otimes b_0^{\otimes k_0}) = \mathfrak{\widehat n}(e^{b_1},x,e^{b_0}).
$$
We can generalize the story to the case where $L_0$ has clean intersection
with $L_1$, especially to the case $L_0=L_1$.
In the case $L_0=L_1$ we have
$\frak n_{k_1,k_0} = \frak m_{k_0+k_1+1}$. So
in this case, we have
$
\delta_{b_1,b_0}(x) = \frak m(e^{b_1},x,e^{b_0}).
$
\par
We define Floer cohomology of the pair $\CL_0 = (L_0,\gamma_0,\lambda_0)$, $\CL_1=(L_1,\gamma_1,\lambda_1)$
by
$$
HF((\CL_1,b_1),(\CL_0,b_0)) = \operatorname{Ker}\delta_{b_1,b_0}/\operatorname{Im}
\delta_{b_1,b_0}.
$$
This is a module over $\Lambda(\CL_0,\CL_1)$.
\begin{thm}
$HF((\CL_1,b_1),(\CL_0,b_0)) \otimes_{\Lambda(\CL_0,\CL_1)} \Lambda_{nov}$
is invariant under the Hamiltonian isotopies of $\CL_0$ and $\CL_1$ and under the
gauge equivalence of bounding cochains $b_0, \, b_1$.
\end{thm}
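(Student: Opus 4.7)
My plan is to reduce the statement to the corresponding invariance results in \cite{fooo:book}, where the analogous theorem is proved for the Floer cohomology defined with universal Novikov coefficients. The key point is that tensoring with $\Lambda_{nov}$ over $\Lambda(\CL_0,\CL_1)$ kills the dependence on the specific $\Pi(\CL_0,\CL_1)$-grading coming from the anchored lift, so one can work modulo the finer $\Gamma$-equivalence. I would first verify that the natural map $CF(\CL_1,\CL_0) \otimes_{\Lambda(\CL_0,\CL_1)} \Lambda_{nov} \to C(L_1,L_0;\Lambda_{nov})$ used in \cite{fooo:book} is compatible with the $A_\infty$-bimodule operations $\frak n_{k_1,k_0}$, so that the operator $\delta_{b_1,b_0}$ in the anchored setup agrees after base change with the deformed Floer differential of \cite{fooo:book}.

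For Hamiltonian invariance, I would use the standard moving-boundary construction: given Hamiltonian isotopies $\{\phi^i_t\}_{t\in[0,1]}$ of $L_i$ for $i=0,1$, push forward the anchors $\gamma_i$ to anchors $\gamma_i^t$ of $\phi^i_t(L_i)$, and consider the moduli spaces of $J_t$-holomorphic strips with moving Lagrangian boundary conditions $u(s,i) \in \phi^i_{\rho(s)}(L_i)$ for a smooth cut-off $\rho$. Counting such strips, cut down by chains $P_{i,j}$ from the two bar complexes, defines a filtered $A_\infty$-bimodule homomorphism from $(CF(L_1),\frak m) \otimes CF(\CL_1,\CL_0) \otimes (CF(L_0),\frak m)$ to the analogous complex for the deformed pair; inserting $e^{b_1}$ and $e^{b_0}$ produces the desired chain map between the $\delta_{b_1,b_0}$-complexes. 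Reversing time gives the homotopy inverse by the standard gluing argument. The bounding cochains $b_0, b_1$ are transported along the isotopy using Theorem B (B.2) of \cite{fooo:book}, which also handles the gauge equivalence class.

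For invariance under gauge equivalence of bounding cochains, I would invoke the purely algebraic part of the $A_\infty$-bimodule theory from Chapter 4 of \cite{fooo:book}: if $b_i \sim b_i'$ are gauge equivalent in $\CM(L_i;\Lambda(L_i))$, then $\delta_{b_1,b_0}$ and $\delta_{b_1',b_0'}$ are chain homotopic via an explicit operator built from the $\frak n_{k_1,k_0}$'s and the pseudo-isotopy interpolating between $b_i$ and $b_i'$. This is formal once the filtered $A_\infty$-bimodule structure of Theorem \ref{thm:bimodule} is in place, and requires no new moduli-space analysis beyond what has already been set up.

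The main technical obstacle, as usual, lies in guaranteeing that the moving-boundary Floer moduli spaces carry compatible Kuranishi structures for which the count of zero-dimensional components behaves coherently with gluing; this, together with compatibility with the anchor-induced $\R$-filtration, is needed to ensure that the constructed chain maps and chain homotopies actually take values in the completion $CF \otimes \Lambda_{nov}$. Once this is set up in the anchored language following \cite{fooo:anchor}, the Hamiltonian invariance and the gauge-equivalence invariance both reduce to well-established arguments, and the final tensoring with $\Lambda_{nov}$ absorbs any remaining discrepancy between the canonical filtrations attached to the two different anchored pairs.
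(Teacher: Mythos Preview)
Your proposal is correct and aligns with the paper's approach: the paper itself gives no independent proof but simply refers to section 4.3 and Theorem 4.1.5 of \cite{fooo:book}. Your write-up is essentially an expanded sketch of what that reference contains, adapted to the anchored setting via \cite{fooo:anchor}, so there is nothing to compare or correct.
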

We refer to section 4.3 \cite{fooo:book} for the definition of gauge equivalence and
to Theorem 4.1.5 \cite{fooo:book} for the proof of this theorem.
\section{Definitions of Seidel's maps $\frak b$, $\frak  c$ and $\frak h$}
\label{sec:seidel's}

In this section, we recall the definition of Seidel's cochain
maps $\frak b,\, \, \frak c$ and the homotopy $\frak h$ and give the definition of the analogs thereof
in our general setting. They are the maps
\beastar
\frak b: CF(L,L_1) \otimes CF(\tau_L(L_0),L) & \to &
CF(\tau_L(L_0),L_1)\\
\frak c:CF(\tau_L(L_0),L_1) & \to & CF(L_0,L_1)
\eeastar
and the homotopy $\frak h: CF(L,L_1) \otimes CF(\tau_L(L_0),L)
\to CF(L_0,L_1)$ between the composition $\frak c\circ \frak b$ and
the zero map.

We will generalize these maps to our non-exact case and describe all
the necessary properties of the maps in the next section.
We consider a quadruple of anchored Lagrangian submanifolds
\be\label{eq:tau-anchor}
\CL = (L,\gamma), \, \CL_0= (L_0,\gamma_0), \, \CL_1=(L_1, \gamma_1), \,
\tau_*\CL = (\tau_L(L_0), \tau_L(\gamma_0)).
\ee
For the simplicity of notations in this section, we will just denote
the action functional associated to the pair $(L,\gamma)$ and $(L',\gamma')$
just by $\CA_{\CL\CL'}$.

\subsection{The map $\frak b$}
\label{subsec:map-b}

Let $\Sigma$ be a compact surface with boundary marked points
$\vec\zeta =\{\zeta_0,\zeta_1,\zeta_2\}$. We denote $\dot \Sigma =
\Sigma \setminus \vec{\zeta}$ and $\del \dot\Sigma = \cup_{i=0}^2
\del_i \dot \Sigma$. We consider the three anchored Lagrangian
submanifolds $(L_0,\gamma_0)$, $(L,\gamma)$ and $(L_1,\gamma_1)$.
Take the trivial Hamiltonian fibration $\pi: E = \dot \Sigma \times
M \to \dot \Sigma$ with the two form $\Omega$ equal to the two form
pulled back from $\omega$ in $M$. Equip this with the Lagrangian
boundary condition
$$
Q = (\del_0 \dot \Sigma \times \tau_L(L_0)) \cup
(\del_1 \dot \Sigma \times L) \cup (\del_3 \dot \Sigma \times L_1).
$$
$Q$ is an exact Lagrangian boundary with $\kappa_Q = 0$.
We note that $E$ has the trivial connection given by $K \equiv 0$
and hence has zero curvature.

\begin{lem} Suppose that $\CL = (L,\gamma), \, \CL_0= (L_0,\gamma_0), \, \CL_1=(L_1, \gamma_1)$
are given anchors of the type \eqref{eq:tau-anchor}
and $\tau_*\CL = (\tau_L(L_0), \tau_L(\gamma_0))$.
Let $s: \dot \Sigma \to \dot \Sigma \times M$ be a
section with the given exact Lagrangian boundary condition $Q$ as above.
Let $[p_0,w_0] \in \Crit \CA_{\tau_L(\CL_0)\CL_1}$,
$[p,w] \in \Crit \CA_{\CL\CL_1}$ and
$[p_1,w_1] \in \Crit \CA_{\tau_L(\CL_0)\CL}$.
Suppose the homotopy class $[s,\del s]$ is admissible and  satisfies
\be\label{eq:[w0]}
[w_0] = [s,\del s]*[w]*[w_1] \quad \mbox{in }\,
\pi_2(\tau_L(L_0),L_1;\tau_L(\overline\gamma_0)*\gamma_1).
\ee
Then we have
\be\label{eq:action}
\int s^*\Omega = \CA_{\tau_L(\CL_0)\CL_1}([p_0,w_0])
- \CA_{\CL\CL_1}([p,w]) - \CA_{\tau_L(\CL_0)\CL}([p_1,w_1]).
\ee
\end{lem}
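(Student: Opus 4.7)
The plan is to convert the hypothesized homotopy identity \eqref{eq:[w0]} in $\pi_2(\tau_L(L_0),L_1;\tau_L(\overline\gamma_0)*\gamma_1)$ into an $\omega$-area identity and then read off the desired action relation from the definition $\CA([p,w])=\int w^*\omega$. The key structural point is that the fibration is trivial with $\Omega=\pi_M^*\omega$, where $\pi_M\colon\dot\Sigma\times M\to M$ is the second projection. Hence, writing $s=(\mathrm{id},u)$ with $u=\pi_M\circ s\colon\dot\Sigma\to M$, we have $\int_{\dot\Sigma}s^*\Omega=\int_{\dot\Sigma}u^*\omega$, so $s$ may be treated as an $M$-valued map without loss of generality.

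Next I would unwind the boundary and asymptotic data of $u$ so that it genuinely plays the role of a bounding strip in the concatenation. On $\partial_0\dot\Sigma, \partial_1\dot\Sigma, \partial_2\dot\Sigma$ the map $u$ lands in $\tau_L(L_0), L, L_1$ respectively, and near the punctures $\zeta_0,\zeta_1,\zeta_2$ it converges to $p_0,p,p_1$. Thus, after choosing standard strip-like coordinates at each puncture, $u$ defines a homotopy class of triangle connecting $p_0$, $p$, $p_1$ with the prescribed Lagrangian labels, which is exactly the class $[s,\partial s]$ appearing in \eqref{eq:[w0]}. The gluing operation $*$ on the right-hand side of \eqref{eq:[w0]} is then the obvious concatenation of $w_1$ (a strip from $\tau_L(\overline\gamma_0)*\gamma$ to $\widehat{p_1}$), followed by $u$ (the triangle), followed by $w$ (a strip from $\overline\gamma*\gamma_1$ to $\widehat{p}$), and this concatenation is a strip from $\tau_L(\overline\gamma_0)*\gamma_1$ to $\widehat{p_0}$, which is precisely what a representative of $[w_0]$ must be.

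Now I would invoke the fact that the $\omega$-area of such a strip depends only on its homotopy class relative to the boundary; this is the defining property of $I_\omega$ used in Subsection~\ref{subsec:gamma-equiv}. Applied to \eqref{eq:[w0]}, this yields
\begin{equation*}
\int w_0^*\omega \;=\; \int_{\dot\Sigma} u^*\omega \;+\; \int w^*\omega \;+\; \int w_1^*\omega.
\end{equation*}
Substituting $\int_{\dot\Sigma}u^*\omega=\int_{\dot\Sigma}s^*\Omega$ and invoking the definition of the action functional on each of the three Novikov covering spaces, namely $\CA_{\tau_L(\CL_0)\CL_1}([p_0,w_0])=\int w_0^*\omega$ and similarly for $[p,w]$ and $[p_1,w_1]$, and solving for $\int s^*\Omega$, produces \eqref{eq:action}.

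There is no real obstacle here; the only mild subtleties are bookkeeping ones. First, one must verify that the overlapping convention \eqref{eq:ellij} for base paths at $y$ is consistent with the three anchors in \eqref{eq:tau-anchor} so that the concatenation $*$ on the right of \eqref{eq:[w0]} lands in the same path component as the base path for $(\tau_L(L_0),L_1)$; admissibility of $[s,\partial s]$ in Definition~\ref{classB} is exactly what guarantees this. Second, one must be careful that the orientation and cyclic order of the three punctures on $\dot\Sigma$ matches the order in which the three strips are concatenated, so that no sign ambiguity enters when computing $\int u^*\omega$. With these conventions fixed, the identity \eqref{eq:action} is a direct consequence of Proposition~\ref{prop:coherence} specialized to the triangle case $k=2$.
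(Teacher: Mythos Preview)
Your proposal is correct and takes essentially the same approach as the paper: both reduce the lemma to Proposition~\ref{prop:coherence} applied to the triangle case $k=2$. The paper simply cites that proposition in one line, whereas you have unwound the content of the citation (trivial fibration so $\int s^*\Omega=\int u^*\omega$, concatenation of bounding strips, and homotopy invariance of $I_\omega$), which is exactly what Proposition~\ref{prop:coherence} encodes.
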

\begin{proof} This immediately follows from Proposition \eqref{prop:coherence}.
\end{proof}

\begin{defn} Let $L, \, L' \subset M$ be a pair of Lagrangian submanifolds.
We say that $J$ lies in $\CJ^{reg}(M;L,L')$ if all Floer trajectories
associated to $J$ for the pair $(L,L')$ are Fredholm-regular.
\end{defn}

At this point, we fix
\beastar
J^{(1)} & \in & \CJ^{reg}(M;\tau_L(L_0),L) \\
J^{(2)} & \in & \CJ^{reg}(M;L,L_1)\\
J^{(3)} & \in & \CJ^{reg}(M;\tau_L(L_0),L_1).
\eeastar
By choosing a
horizontal $J \in \CJ(E,\pi,j,J^{(1)},J^{(2)},J^{(3)})$, i.e., $J$ satisfying
$J_x(TE^h_x) = TE^h_x$ for all $x \in E\setminus E^{crit}$ with the
above trivial fibration $E = \dot \Sigma \times M$, this gives rise
to the standard  `pants product map'
\be\label{eq:pants}
CF(\CL,\CL_1)
\otimes CF(\tau_L(\CL_0),\CL)
\to CF(\tau_L(\CL_0),\CL_1)
\ee
in the cochain level. Seidel's map $\frak b$ in the cochain level is nothing but this
pants product map.

By choosing bounding cochains $b_0, \, b_1$ of $L_0, \, L_1$ respectively
and considering $(\tau_L)_*b_0$ on $\tau_L(L_0)$, we define the deformed $\frak m_2$
$$
\frak m_2^{\vec b}: CF(\CL,\CL_1)
\otimes CF(\tau_L(\CL_0),\CL) \to CF(\tau_L(\CL_0),\CL_1)
$$
where $\vec b = (0,(\tau_L)_*b_0,b_1)$ and $\frak m_i^{\vec b}$ is defined as
in (8.15) \cite{fooo:anchor}.

According to \cite{fooo:book,fukaya:mirror2,fooo:anchor}, this induces a cochain map
up to the higher homotopy map $\mathfrak m_3^{\vec b}$ and induces a homomorphism
in cohomology. For readers' convenience, we summarize the construction
of this product map $\mathfrak m_k^{\vec b}$ in Appendix.

\subsection{The map $\frak c$}
\label{subsec:map-c}

Let $(E^L,\pi^L)$ be the standard fibration over a disc
$D(\frac{1}{2})$, whose monodromy around $\del D(\frac{1}{2})$ is $\tau_L$,
as defined in section \ref{sec:dehn} by choosing $r$ small.
Let $0 < r < \frac{1}{2}$ be given, and choose a function $g$ with $g(t) = t$
for small $t$, $g(t) \equiv r$ for $t \geq r$ and $g'(t) \geq 0$ everywhere.
We consider the map $p: D(\frac{1}{2}) \to D(\frac{1}{2})$ defined by
$p(z) = g(|z|)\frac{z}{|z|}$ and consider the pull-back fibration
$$
(E^p, \pi^p) = p^*(E^L,\pi^L).
$$
This is flat on the annulus $D(\frac{1}{2})
\setminus \Int(D(r))$.

Now take the surface
$$
\Sigma^f = \R \times [-1,1] \setminus \Int D(1/2)
\subset \R^2,
$$
with coordinates $(s,t)$ and divide it into two parts
$\Sigma^{f,\pm} = \Sigma^f \cap \{t \in \R_\pm\}$ so that
$$
\Sigma^{f,+} \cap \Sigma^{f,-} = \left((-\infty, - 1/2] \cup
[1/2,\infty)\right) \times \{0\}.
$$
Consider trivial fibrations
$$
\pi^{f,\pm}: E^{f,\pm} = \Sigma^{f,\pm} \times M \to \Sigma^{f,\pm}
$$
over the two parts, equip them with two forms $\Omega^{f,\pm}$
the pull-back of $\omega$.
We define a fibration $(E^f,\pi^f)$ over $\Sigma^f$ by identifying the
fibers $E^{f,+}_{(s,0)} \to E^{f,-}_{(s,0)}$ via $id_M$ for $s \geq \frac{1}{2}$
and via $\tau_L$ for $s \leq - \frac{1}{2}$. Because $\tau_L$ is symplectic,
this defines a flat Hamiltonian fibration.

Using the fact that two fibrations $E^p$ and $E^f$ are flat close to the
curve $|z| = \frac{1}{2}$, we now paste them along the curve. Denote the
resulting fibration over
$$
\Sigma = \Sigma^p \cup \Sigma^f = \R \times [-1,1]
$$
by $(E^c,\pi)$. Equip $(E^c,\pi)$ with the Lagrangian boundary condition
$$
Q^c = \begin{cases}
\R \times \{1\} \times L_1 & \subset E^{f,+} \\
\R \times \{-1\} \times \tau_L(L_0) & \subset E^{f,-}.
\end{cases}
$$
This defines an exact Lagrangian boundary with the one form
$\kappa_{Q^c} = 0$. As explained in section 3.3 \cite{seidel:triangle}, $(E^c,\pi)$ is modeled
on $(\tau_L(L_0),L_1)$ over the positive end of $\Sigma$ and over the negative
end, it is modeled on $(L_0,L_1)$ due to the momodromy effect around the
critical value $(0,0) \in \Sigma$.

Let $j$ be some complex structure on $\Sigma$, standard over the ends.
Take some $J^{(3)} \in \CJ^{reg}(M,\tau_L(L_0),L_1)$ as in the previous
subsection and choose an additional $J^{(5)} \in \CJ^{reg}(M;L_0,L_1)$.
Using a regular $J^{(6)} \in \CJ(E,\pi,Q,j,J^{(3)},J^{(5)})$,
we define a map
$$
\frak c = C\Phi_0^{rel}(E^c,\pi,Q^c,J^{(6)}): CF(\tau_L(\CL_0),\CL_1)
\to CF(\CL_1,\CL_0)
$$
as defined in section 3.3 \cite{seidel:triangle}.
\par
Since we use the deformed Floer complex by bounding cochains,
we need to construct the deformed version of the map $\frak c$.
The construction will resemble that of $A_\infty$-bimodule structure
on $CF(\CL_1, \CL_0)$ carried out in subsection \ref{subsec:morphisms}.

We first define a family of operators
$$
\aligned
\mathfrak c_{k_1,k_0}: B_{k_1}(CF(L_1)[1])\,\,
\widehat{\otimes}_{\Lambda(L_1)} \,\, CF(\CL_1, \tau_L(\CL_0))
\,\,&\widehat{\otimes}_{\Lambda(L_0)} \,\, B_{k_0}(CF(L_0)[1]) \\
&\to CF(\CL_1,\CL_0)
\endaligned$$
for $k_0,k_1\ge 0$.
A typical element of the tensor product
$$
B_{k_1}(CF(L_1)[1]) \widehat{\otimes}_{\Lambda(L_1)} \,\, CF(\CL_1,
\tau_L(\CL_0))
\,\,\widehat{\otimes}_{\Lambda(L_0)} \,\, B_{k_0}(CF(L_0)[1])
$$
has the form
$$
P_{1,1} \otimes \cdots \otimes P_{1,k_1} \otimes [p,w] \otimes
P_{0,1} \otimes \cdots \otimes P_{0,k_0}
$$
with $p \in \tau_L(\CL_0) \cap \CL_1$ being an admissible intersection point.
Then the image $\mathfrak c_{k_0,k_1}$ thereof is given by
$$
\sum_{q, B}T^{\omega(B)}e^{\mu(B)/2}\# \left(\CM^c(p,q;B;P_{1,1},\cdots,P_{1,k_1};
P_{0,0},\cdots,P_{0,k_0})\right) [q,B\#w].
$$
Here $B$ denotes section class of $J$-holomorphic sections connecting $p$ and $q$,
the summation is taken over all $[q,B]$ with
$$
\dim \CM^c(p,q;B;P_{1,1},\cdots,P_{1,k_1};
P_{0,1},\cdots,P_{0,k_0}) = 0,
$$
and $\# \left(\CM^c(p,q;B;P_{1,1},\cdots,P_{1,k_1};P_{0,1},\cdots,P_{0,k_0})\right)$ is
the `number' of elements in
the `zero' dimensional moduli space $\CM^c(p,q;B;P_{1,1},\cdots,P_{1,k_1};
P_{0,1},\cdots,P_{0,k_0})$. Here the moduli space $\CM^c(p,q;B;P_{1,1},\cdots,P_{1,k_1};
P_{0,1},\cdots,P_{0,k_0})$ is the moduli space
$
\CM^c(p,q;B)
$
of $J$-holomorphic sections of $\pi: E^c \to \Sigma$ cut-down by intersecting with the given chains $P_{1,i} \subset L_1$
and $P_{0,j} \subset L_0$.
\begin{thm}\label{thm:deformedc}
Let $(\CL_0,\CL_1) = ((L_0,\gamma_0), (L_1,\gamma_1))$ be a pair of anchored
Lagrangian submanifolds.
Then the family $\{\mathfrak c_{k_1,k_0}\}$ defines
a left $(CF(L_1),\mathfrak m)$ and right $(CF(L_0),\mathfrak m)$ filtered
$A_\infty$-bimodule homomorphism from $CF(\CL_1, \tau_L(\CL_0))$ to $CF(\CL_1,\CL_0)$.
\end{thm}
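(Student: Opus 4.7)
The plan is to derive the $A_\infty$-bimodule homomorphism identity by analyzing the boundary of the one-dimensional components of the cut-down moduli spaces $\CM^c(p,q;B;P_{1,1},\ldots,P_{1,k_1};P_{0,1},\ldots,P_{0,k_0})$, in direct parallel with the proof of Theorem \ref{thm:bimodule} given in Section 3.7 of \cite{fooo:book}. First I would verify that these moduli spaces carry Kuranishi structures with boundary and corners, and that their virtual fundamental chains are well-defined by arguments identical to those for the bimodule $\frak n_{k_1,k_0}$; the only additional analytical input needed is Theorem \ref{thm:maximum}, which ensures that limits of smooth $J$-holomorphic sections over $\Sigma = \Sigma^p \cup \Sigma^f$ stay a positive distance from $E^{crit}$, so the compactification has the same description as in the smooth Hamiltonian fibration case treated in \cite{entov}, \cite{mcduff-sal04}.

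Next I would enumerate the codimension-one boundary strata of $\overline{\CM}^c(p,q;B;\vec P_1;\vec P_0)$ for virtual dimension one. These come in five types: (i) splitting off a Floer trajectory of the pair $(\tau_L(L_0),L_1)$ at the positive cylindrical end, which supplies the composition $\mathfrak c\circ \delta^{\tau_L(\CL_0),\CL_1}$; (ii) splitting off a Floer trajectory of $(L_0,L_1)$ at the negative end, yielding $\delta^{\CL_0,\CL_1}\circ \mathfrak c$; (iii) a subset of chains $P_{1,i},\ldots,P_{1,i+\ell}$ being absorbed into a disc bubble on $L_1$ along $\del_1\Sigma$, producing an $\frak m_{\ell+1}$ on the $L_1$-side inserted into $\mathfrak c$; (iv) the symmetric phenomenon on $\del_0\Sigma$ with $L_0$, producing an $\frak m_{\ell+1}$ on the $L_0$-side; and (v) interior sphere bubbling or fiberwise disc bubbling in the vertical direction, which is excluded for generic $J^{(6)}\in\CJ^{reg}$ by the semi-positivity afforded by the vanishing Maslov/Chern classes (as in the proof of Theorem \ref{thm:maximum}). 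Orientations on each stratum are fixed by the relative-spin data together with the orientation conventions of \cite{fooo:anchor}, pushed forward through the Dehn twist (which preserves relative spin structures).

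Summing the signed counts over all codimension-one boundary strata and invoking $\del\del = 0$ at the chain level gives exactly the defining relation of a filtered $A_\infty$-bimodule homomorphism from $\bigl(CF(\CL_1,\tau_L(\CL_0)),\frak n\bigr)$ to $\bigl(CF(\CL_1,\CL_0),\frak n\bigr)$, namely the vanishing of
\begin{equation*}
\sum \mathfrak c_{*,*}\bigl(\vec x_1 \otimes \widehat{\frak n}(\vec y_1 \otimes [p,w] \otimes \vec y_0) \otimes \vec x_0\bigr) \;+\; \sum \widehat{\frak n}\bigl(\vec x_1 \otimes \mathfrak c_{*,*}(\vec y_1 \otimes [p,w] \otimes \vec y_0) \otimes \vec x_0\bigr)
\end{equation*}
modulo $\frak m$-insertions on each side, in the form dictated by the $A_\infty$-bimodule homomorphism equation of Section 3.7 \cite{fooo:book}. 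The $T^{\omega(B)}e^{\mu(B)/2}$ weights match across the two sides by Proposition \ref{prop:coherence} and the splitting identity $\omega(B) = \omega(B_1)+\omega(B_2)$ along each broken configuration.

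The principal obstacle is not algebraic but analytic-combinatorial: one must coherently choose the Kuranishi perturbation data for the family $\{\mathfrak c_{k_1,k_0}\}$ and for $\frak n, \frak m$ so that boundary strata of $\overline{\CM}^c$ are perturbed compatibly with the chosen perturbations of the Floer moduli $\overline{\CM}(p,q;B)$ and of the bubble moduli $\overline{\CM}_{k+1}(\beta;L_i)$. This compatibility is handled exactly as in Chapter 7 of \cite{fooo:book} by an inductive construction on the energy $\omega(B)$ and on the total number of inputs $k_0+k_1$, using the filtration of Definition \ref{efilt} to ensure the induction closes; the required orientation bookkeeping under the Dehn-twist pushforward of bounding cochains is guaranteed by Theorem B of \cite{fooo:book}, already invoked in the proof that $\CE^{CY}_{brane}$ is $\tau_L$-stable.
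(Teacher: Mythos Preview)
Your proposal is correct and takes essentially the same approach as the paper: the paper's proof consists of a single sentence stating that the argument is similar to that of Theorem \ref{thm:bimodule} as given in Theorem 3.7.21 of \cite{fooo:book}, and your sketch is precisely a fleshed-out version of that boundary-strata analysis adapted to the Lefschetz fibration $(E^c,\pi^c)$, with Theorem \ref{thm:maximum} supplying the one new compactness ingredient.
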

The proof of Theorem \ref{thm:deformedc} is similar to that of Theorem \ref{thm:bimodule}
given in the proof of Theorem 3.7.21 \cite{fooo:book}.
\par
When both $L_0, \, L_1$ are unobstructed, we can carry out this
deformation of $\mathfrak c$ using bounding cochains $b_0$
and $b_1$ of $CF(L_0)$ and $CF(L_1)$ respectively, in
a way similar to $\frak n^{b_0,b_1}$. Namely we define
$\frak c^{b_1,b_0}: CF(\CL_1, \tau_L(\CL_0)) \to CF(\CL_1, \CL_0))$ by
$$
\frak c^{b_1,b_0}(x) =
\sum_{k_1,k_0} \frak c_{k_1,k_0} (b_1^{\otimes k_1} \otimes
x \otimes b_0^{\otimes k_0}) = \mathfrak{\widehat c}(e^{b_1},x,e^{b_0}).
$$
The following proposition is all we need to add to our context for the
construction of Seidel's map $\frak c^{b_1,b_0}$
in \cite{seidel:triangle}.

\begin{prop}\label{prop:deformedc} Let $b_0,\, b_1$ be bounding cochains of $L_0, \, L_1$
respectively. Then $\frak c$ defines a chain map
$$
\frak c^{b_1,b_0} = C\Phi_0^{rel}(E,\pi,Q,J^{(6)}): (CF(\tau_L(\CL_0),\CL_1), \frak m_1^{(\tau_L)_*(b_0)})
\to (CF(\CL_1,\CL_0), \frak m_1^{b_1})
$$
and hence induces a homomorphism
$$
\frak c^{b_1,b_0}:HF((\tau_L(\CL_0), (\tau_L)_*b_0,(\CL_1, b_1))
\to HF((\CL_1,b_1),(\CL_0,b_0)).
$$
\end{prop}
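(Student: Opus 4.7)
The plan is to deduce Proposition \ref{prop:deformedc} as a direct algebraic consequence of Theorem \ref{thm:deformedc} together with the Maurer-Cartan equations satisfied by the bounding cochains, in close analogy with the derivation of the chain map property of $\delta_{b_1,b_0}$ in Chapter 3 of \cite{fooo:book}. First I would rewrite the filtered $A_\infty$-bimodule homomorphism relation for $\{\mathfrak c_{k_1,k_0}\}$ furnished by Theorem \ref{thm:deformedc} as a single identity on the relevant bar complexes, of the schematic form
\begin{equation*}
\widehat{\mathfrak m}_{L_1}\circ\widehat{\mathfrak c} \;+\; \widehat{\mathfrak c}\circ\widehat{\mathfrak n}^{\,\tau_L(\CL_0),\CL_1} \;+\; \widehat{\mathfrak n}^{\,\CL_1,\CL_0}\circ\widehat{\mathfrak c}\circ\widehat{\mathfrak m}_{L_0} \;=\; 0
\end{equation*}
(with the appropriate signs), where each $\widehat{(\cdot)}$ is the canonical coderivation extension to the tensor algebra.

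Next I would plug $e^{b_1}$ into the $BCF(L_1)[1]$ slot and $e^{b_0}$ into the $BCF(L_0)[1]$ slot, and invoke the coderivation identity
\begin{equation*}
\widehat{\mathfrak m}(e^{b}) \;=\; e^{b}\otimes \Bigl(\sum_{k=0}^\infty \mathfrak m_k(b,\ldots,b)\Bigr)\otimes e^{b}
\end{equation*}
together with the Maurer-Cartan equation $\sum_k \mathfrak m_k(b,\ldots,b) = 0$ for the bounding cochains $b_0$ and $b_1$; this kills the outer $\widehat{\mathfrak m}_{L_0}$ and $\widehat{\mathfrak m}_{L_1}$ contributions. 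What survives is exactly the identity $\mathfrak m_1^{b_1}\circ \mathfrak c^{b_1,b_0} + \mathfrak c^{b_1,b_0}\circ \mathfrak m_1^{(\tau_L)_*b_0} = 0$ (with the convention that each deformed $\mathfrak m_1$ also encodes the bimodule deformation by the opposite bounding cochain), i.e., the desired chain map property; the induced map on cohomology then follows automatically. A small preliminary I would record is that $(\tau_L)_*b_0$ is a bounding cochain for $\tau_L(L_0)$, which is Theorem B.(B.2) of \cite{fooo:book} already cited in section \ref{sec:branes}: the symplectic diffeomorphism $\tau_L$ transports the filtered $A_\infty$ algebra of $L_0$ isomorphically onto that of $\tau_L(L_0)$ and hence sends the Maurer-Cartan locus to the Maurer-Cartan locus.

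The hard part is really on the analytic side and is already absorbed into Theorem \ref{thm:deformedc}. One must equip each cut-down moduli space $\mathcal M^c(p,q;B;P_{1,1},\ldots;P_{0,1},\ldots)$ of $J^{(6)}$-holomorphic sections of $\pi\colon E^c\to \Sigma$ with a coherent system of Kuranishi structures and multisections whose codimension-one boundary strata of the virtual $1$-dimensional components match precisely the terms in the bimodule homomorphism identity displayed above. The subtlety specific to the present Lefschetz setting is that a priori bubbles could form on the singular fibers and hit the critical locus $E^{crit}$; Theorem \ref{thm:maximum} is exactly what excludes this phenomenon for generic $J^{(6)}$, after which the remaining gluing, transversality and sign analysis runs in parallel with the bimodule case of \cite{fooo:book}, and the purely algebraic manipulation outlined above completes the argument.
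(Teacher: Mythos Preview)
Your proposal is correct and follows essentially the same route as the paper: deduce the chain map property of $\mathfrak c^{b_1,b_0}$ from the filtered $A_\infty$-bimodule homomorphism relation of Theorem \ref{thm:deformedc} by inserting $e^{b_1}$, $e^{b_0}$ and invoking the Maurer-Cartan equations, with the analytic input (boundary analysis of $\CM^c(p,q;B;\cdots)$, exclusion of bubbles at $E^{crit}$ via Theorem \ref{thm:maximum}) already absorbed into Theorem \ref{thm:deformedc}. The paper's own proof is a one-line reference to the argument of Theorem \ref{thm:bimodule}, noting only that the moduli space $\CM^c$ carries no $\R$-action so no quotient is taken; your write-up simply unpacks this in more detail. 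One small caution: your displayed ``schematic'' bimodule-homomorphism identity is not quite in the standard form (the correct relation is $\widehat{\mathfrak c}\circ \widehat d^{\,\text{source}} = \widehat d^{\,\text{target}}\circ \widehat{\mathfrak c}$, where each $\widehat d$ bundles $\widehat{\mathfrak m}_{L_1}$, $\widehat{\mathfrak n}$, and $\widehat{\mathfrak m}_{L_0}$ acting on the appropriate tensor factors), but since you flag it as schematic and the subsequent manipulation is the standard one, this does not affect the validity of the argument.
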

\begin{proof} The proof proceeds in the same way as that of Theorem \ref{thm:bimodule}.
The only difference from the latter is that the moduli space
$\CM^c(p,q;B;P_{1,1},\cdots,P_{1,k_1};P_{0,1},\cdots,P_{0,k_0})$ does not have
$\R$-action anymore and so we consider the moduli space of sections without
considering the quotient.
\end{proof}

\subsection{A simple invariant and its vanishing theorem}
\label{subsec:vanishing}

Let $\Phi_1(E,\pi,Q)$ be the invariant
$$
\Phi_1(E,\pi,Q) = (ev_\zeta)_*[\CM_J] \in H_*(Q_\zeta;\Z)
$$
for the Calabi-Yau Lefschetz fibration.
The following proposition replaces a similar proposition,
Proposition 2.13 \cite{seidel:triangle},  for the present Calabi-Yau Lefschetz
fibration setting.

For each given section class $A \in \pi_2^{sec}(E,Q)$, we define
the moduli space $\CM_J(A)$ of $J$-holomorphic section $s:D \to E$ with
$[s,\del s] = A$ and define an invariant
$$
\Phi_1(E,\pi,Q) = \sum_{A \in \pi_2^{sec}(E,Q)} (ev_\zeta)_*(\CM_J(A))
T^{\Omega(A)} \in H_*(Q_\zeta;\Lambda_{0,nov}).
$$
We start with the following slight generalization of Proposition 2.2 in the
general context of Lagrangian spheres in general symplectic
manifolds.

\begin{prop}
Let $(L,[f])$ be a framed Lagrangian sphere in $M$. There is a
one-parameter family of Lefschetz Hamiltonian fibrations
$(E^L_r,\pi^L_r) \to D(r)$ together with an isomorphism $\phi^L_r:
E_r^L \to M$ of symplectic manifolds, such that
\begin{enumerate}
\item Consider the re-scaling map $\lambda_r: D(r) \to D(1)$
defined by $z \mapsto \frac{z}{r}$. Then
$$
(\lambda_r)^*E^L_1 = E^L _r.
$$
\item If $\rho^L_r$ is the symplectic monodromy around $\del
\overline D(r)$, then $\phi^L_r \circ \rho^L \circ
(\phi^L_r)^{-1}$ is a Dehn twist along $(L,[f])$.
\end{enumerate}
We denote any of these maps by $\tau_L$ as before.
\end{prop}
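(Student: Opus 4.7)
The plan is to follow the local-to-global construction of Seidel \cite{seidel:triangle} but replace the exact Lefschetz-fibration framework by the Lefschetz Hamiltonian fibration framework developed in Section \ref{sec:fibration}. First, using the Weinstein neighborhood theorem, I would fix a symplectic embedding $\iota: T(1) \hookrightarrow M$ with $\iota|_{o_{T^*S^n}} = f$ and $\iota^*\omega = \omega_T$, and then transport the local Lefschetz model
$$
q: \C^{n+1} \to \C, \qquad q(z_1,\dots,z_{n+1}) = z_1^2 + \cdots + z_{n+1}^2,
$$
to a small neighborhood of $L$ by the $\Phi$-map of (\ref{eq:Phi}), so that the singular fiber is identified with $L$ and the regular nearby fibers are identified with the disc bundles $T(r)$. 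This yields a model Lefschetz Hamiltonian fibration $E^{loc}_r$ over $D(r)$, carrying the standard K\"ahler form $\omega_{\C^{n+1}}$ near the critical point, whose monodromy around $\del D(r)$ is, by the lemma in Section \ref{sec:dehn}, exactly the model Dehn twist $\tau_r$ supported in $T(\lambda)$ for an appropriately chosen scaling.

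Next I would construct $E^L_r$ by gluing: outside $\iota(T(\lambda))$ the fibration is declared trivial, i.e., the mapping torus of $\operatorname{id}_M$, and on the neighborhood $\iota(T(\lambda))$ one pastes in $E^{loc}_r$ using $\iota$. The essential point, and the reason the \emph{exactness} assumption in Seidel \cite{seidel:triangle} is not needed here, is that the construction only requires the two symplectic structures to match on the overlap and only requires the gluing to preserve the \emph{Hamiltonian} character of the connection; we do not need to match primitives $\Theta$. One first glues the symplectic forms via a cutoff on a neighborhood where the two fibers are literally identical, then applies Theorem \ref{thm:coupling} to pass to the canonical coupling form so that the resulting $(E^L_r, \pi^L_r, \Omega, J_0, j_0)$ satisfies Definition \ref{defn:Lefschetz}. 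The symplectomorphism $\phi^L_r: E^L_r \to M$ arises from choosing a horizontal trivialization based at any fixed $z_0 \in \del D(r)$: since the fibers away from the critical value are canonically identified via parallel transport up to the monodromy, one obtains a well-defined fiberwise symplectomorphism, and by construction the monodromy pulled back by $\phi^L_r$ is precisely $\tau_{(L,[f])}$.

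The rescaling property (1) is built into the construction: taking $R_\lambda(t) = \lambda R(t/\lambda)$ as in (\ref{eq:Rlambda}), one sees that the re-scaling $\lambda_r(z)=z/r$ on the base exactly intertwines the families $E^L_r$ and $E^L_1$. The monodromy statement (2) is purely local, taking place in the tubular neighborhood $\iota(T(\lambda))$ and reducing to the computation of Seidel's lemma in Section \ref{sec:dehn}; this step carries over verbatim since it makes no use of exactness.

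The main difficulty I expect is the gluing step, since in the non-exact case one cannot simply add primitives to match $\Theta$-forms on the overlap. I would address this by constructing the two-form in two stages: first form any closed 2-form $\widetilde\Omega$ on $E^L_r$ restricting fiberwise to $\omega$, by a partition-of-unity argument combining the local K\"ahler form on the model with $\pi^*\omega_{D(r)}$ augmented by $\omega$ on the trivial part; then apply Theorem \ref{thm:coupling} to replace $\widetilde \Omega$ by the unique coupling form $\Omega$, which preserves the horizontal parallel transport and hence the monodromy. The remaining verifications, that $\Omega$ is nondegenerate on vertical tangent spaces, K\"ahler near $E^{crit}$, and yields Hamiltonian monodromy equal to $\tau_L$, follow from the local model and the uniqueness in Definition \ref{defn:coupling}.
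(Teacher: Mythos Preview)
Your proposal is correct and follows essentially the same approach as the paper, which simply states that the construction follows Seidel's proof of Proposition 1.11 in \cite{seidel:triangle} after stripping the exactness requirements, noting that the proof is in fact easier without them. Your outline supplies the details the paper omits; the only minor addition is your invocation of Theorem \ref{thm:coupling} to normalize the glued two-form, which is convenient but not strictly necessary since Definition \ref{defn:Lefschetz} only requires \emph{some} closed $\Omega$ that is fiberwise nondegenerate and K\"ahler near $E^{crit}$.
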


\begin{proof}
The proof follows from Seidel's proof of Proposition 1.11
\cite{seidel:triangle} stripping all the things related to the
exactness requirement in the proof. In fact the proof is easier
because we do not have to concern the exactness requirement in the
construction.
\end{proof}

We also recall that each fiber $E_z^L, \, z \neq 0$ of the fibration
$E^L$ contains a distinguished Lagrangian sphere $\Sigma_z^L$. We
call this fibration a \emph{standard Calabi-Yau Lefschetz
fibration}. The following is a crucial proposition needed in
Seidel's construction of long exact sequence in
\cite{seidel:triangle}, whose proof goes through in our current
context.

\begin{prop}\label{prop:vanishing} Let $L \subset M$ be any Calabi-Yau Lagrangian brane
and $\pi^L: E^L \to D(r)$ be the associated standard Calabi-Yau
Lefschetz fibration. Then we have
$$
\Phi_1(E^L,\pi^L,Q^L) = 0.
$$
\end{prop}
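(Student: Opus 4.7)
The plan is to exploit the explicit $O(n+1)$-symmetry built into the standard Calabi-Yau Lefschetz fibration $E^L \to D(r)$. By construction, the local model near the unique critical value is $q(z_1,\dots,z_{n+1}) = z_1^2 + \cdots + z_{n+1}^2$, which is preserved by the standard $O(n+1)$-action on $\C^{n+1}$. This action preserves the coupling form $\Omega$, the Morse chart $(\xi,\Xi)$, the standard compatible $J_0$, and (crucially) the family of vanishing cycles $\{\Sigma_z^L\}_{z \in \overline D(r)}$ realizing $Q^L$. In particular, for $J$ taken to be this standard $O(n+1)$-invariant structure, Theorem \ref{thm:maximum} applies and the moduli space $\CM_J(A)$ of smooth $J$-holomorphic sections $s: D(r) \to E^L$ in each section class $A \in \pi_2^{sec}(E^L,Q^L)$ inherits a natural $O(n+1)$-action, with the evaluation map $ev_\zeta: \CM_J(A) \to Q_\zeta^L \cong S^n$ being equivariant.

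First, I would verify Fredholm regularity of $\CM_J(A)$ at the standard $J$. Since $E^L \to D(r)$ is fiberwise Calabi-Yau ($c_1^v = 0$) and $Q^L$ has vanishing fiberwise Maslov class, the virtual dimension of $\CM_J(A)$ equals $n$ for every section class $A$, and the linearized $\bar\partial$-operator at any $s$ reduces to a Cauchy-Riemann operator on a bundle pair $(s^*TE^v, (\del s)^*TQ^v)$ over the disc; the vanishing Maslov class places this in the automatic-transversality regime on the disc. Combined with Theorem \ref{thm:maximum}, which rules out bubbling onto $E^{crit}$, and the standard absence of nonconstant $J_0$-holomorphic spheres in the fibers of the standard quadratic model, the compactification $\overline\CM_J(A)$ is a smooth closed $n$-manifold for each $A$ that contributes, and only finitely many $A$'s contribute below any fixed energy.

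Next, choose a one-parameter subgroup $S^1 \subset O(n+1)$ whose fixed-point set on $\Sigma_\zeta^L \cong S^n$ is exactly the two poles $\{N,S\}$. By equivariance, $(ev_\zeta)^{-1}(\{N,S\})$ is a closed $S^1$-invariant subset of $\CM_J(A)$, and on its complement the $S^1$-action is free because the $S^1$-stabilizer of any point in $\CM_J(A)$ injects into the stabilizer of its $ev_\zeta$-image. Hence $(ev_\zeta)\bigl(\CM_J(A)\setminus ev_\zeta^{-1}\{N,S\}\bigr)$ is a union of $n$-dimensional families foliated by $S^1$-circles mapping into $S^n \setminus \{N,S\}$, which therefore represents the trivial class in $H_n(S^n;\Q)$; meanwhile $ev_\zeta^{-1}\{N,S\}$, being the union of two fibers of $ev_\zeta$, has image of dimension $0$ and contributes nothing in top degree. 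Varying the one-parameter subgroup if necessary handles the possibility that $\CM_J(A)$ is entirely concentrated over fixed loci. Summing with the weights $T^{\Omega(A)}$ yields $\Phi_1(E^L,\pi^L,Q^L) = 0$ in $H_*(Q_\zeta^L;\Lambda_{0,nov})$.

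The main obstacle I anticipate is reconciling the $O(n+1)$-equivariant argument with the virtual fundamental chain machinery of \cite{fooo:book}, which in general requires multisection perturbations that break symmetries. The expected resolution is to establish genuine Fredholm regularity of the unperturbed moduli space in the standard model—reducing to a cohomology vanishing $H^1(D(r),\del D(r); s^*TE^v, (\del s)^*TQ^v) = 0$ on twisted bundle pairs on the disc that is amenable to direct computation—so that no perturbation is needed; failing that, one can develop an $O(n+1)$-equivariant Kuranishi structure, using that the action on $E^L \setminus E^{crit}$ and on $Q^L$ away from a codimension-$2$ locus is free.
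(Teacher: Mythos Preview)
Your $S^1$-argument has a genuine gap: $S^1$-equivariance of $ev_\zeta$ does not force $(ev_\zeta)_*[\CM_J(A)] = 0$ in $H_n(S^n;\Q)$. The identity map $S^n \to S^n$ is equivariant for exactly the rotation action you describe, yet has degree $1$; your reasoning would apply to it verbatim and wrongly conclude its degree vanishes. The failure point is the sentence asserting that the image of $\CM_J(A) \setminus ev_\zeta^{-1}\{N,S\}$ ``represents the trivial class in $H_n(S^n;\Q)$'': removing a closed set from a closed manifold destroys the fundamental class, so splitting the domain into $ev_\zeta^{-1}\{N,S\}$ and its complement does not split the pushforward cycle. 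More generally, a free $S^1$-action on a closed $n$-manifold (e.g.\ the Hopf action on $S^3$) never by itself kills top homology, so passing to the quotient cannot help either.

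The paper defers entirely to Seidel's proof of Proposition~2.13 in \cite{seidel:triangle}. That argument also exploits the $O(n+1)$-symmetry of the local model, but the vanishing comes from the component of $O(n+1)$ that \emph{reverses orientation} on $Q_\zeta^L \cong S^n$, together with a check of how such an element acts on the coherent orientation of the moduli space---not from any connected subgroup. Your preparatory steps (choosing an equivariant $J$, arguing regularity, invoking Theorem~\ref{thm:maximum} for compactness) are the right ingredients; it is only the final vanishing step that must be replaced by the orientation-reversing-involution argument.
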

\begin{proof} The proof of this proposition will verbatim follow
that of Proposition 2.13 \cite{seidel:triangle} and so omitted.
\end{proof}

\subsection{The gluing $\frak b \#_\rho \frak c$, composition
$\frak c\circ \frak b$ and the homotopy $\frak h$}
\label{subsec:homotopy}

We denote by $(E^b,\pi^b)$, $\Sigma^b$ and $Q^b$ the fibration, surface
and the boundary condition associated to the map $b$ and $(E^c,\pi^c)$,
$\Sigma^c$ and $Q^c$ those associated to the map $c$ constructed in the
previous subsections. We glue the positive end of $E^c$ and the
negative end of $E^b$ to obtain $(E^{bc}_\rho,\pi^{bc}_\rho)$, $\Sigma^{bc}_\rho$ and
$Q^{bc}_\rho$ for a sufficiently large gluing parameter $\rho$ in the
glued ends.
This fibration provides a cochain map
$$
\frak b \#_\rho \frak c: CF(\CL,\CL_1) \otimes CF(\tau_L(\CL_0),\CL) \to CF(\CL_1,\CL_0).
$$
Here and henceforth we denote
$$
\frak b = \frak m_2^{\vec b}, \quad \frak c = \frak c^{b_1,b_0}
$$
for notational simplicity.

\begin{lem}
Let $(j,J)$ be such that $j$ is a complex structure on $\Sigma^{bc}_\rho$
which is standard on its ends, and $J \in \CJ(E^{bc}_\rho,\pi^{bc}_\rho,
Q^{bc}_\rho,j,J^{(0)}, J^{(2)}, J^{(5)})$.
The cochain map $b \#_\rho c$ coincides with
$c\circ b$ for  any $\rho \geq \rho_0$ with $\rho_0$ sufficiently large.
\end{lem}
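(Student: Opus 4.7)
The plan is to prove this by the standard neck-stretching/gluing argument from Floer theory, which identifies the moduli space defining $\mathfrak{b}\#_\rho \mathfrak{c}$ with the fiber product of moduli spaces defining $\mathfrak{c}$ and $\mathfrak{b}$ over the intermediate intersection points of $(\tau_L(\CL_0),\CL_1)$. First I would set up the moduli space $\CM_{J}^{bc,\rho}(p,q;B)$ of $J$-holomorphic sections of $\pi^{bc}_\rho \colon E^{bc}_\rho \to \Sigma^{bc}_\rho$ with asymptotic conditions $p \in \CL \cap \CL_1$, $q \in \CL_0 \cap \CL$ at the positive ends and $r \in \CL_1 \cap \CL_0$ at the negative end, in a fixed total section class $B$, together with insertion points whose images lie on the bounding chains $b_0, b_1$. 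Exactly the same insertion data appears on the side of the composition $\mathfrak{c}\circ \mathfrak{b}$, distributed between the $\mathfrak{b}$- and $\mathfrak{c}$-factors, summed over intermediate admissible intersection points $p'\in \tau_L(\CL_0)\cap \CL_1$ and all decompositions $B = B_1 \# B_2$.

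Next I would establish the $\rho\to\infty$ compactness half. Given a sequence $s_\rho\in \CM_J^{bc,\rho}$, the energy estimate of Section~\ref{sec:exact} bounds $\int s_\rho^*\Omega$ uniformly in terms of $\|\kappa_{Q^{bc}}\|_{(1,\infty)}$ and the section class, so Gromov--Floer compactness applies. Theorem~\ref{thm:maximum} ensures that for $J \in \CJ^{reg,tr}$ no bubble component can approach $E^{crit}$, and a subsequence converges in the neck-stretching sense to a broken configuration consisting of a section $s^c$ on $E^c$ and a section $s^b$ on $E^b$ sharing a common asymptotic intersection point $p'\in \tau_L(\CL_0) \cap \CL_1$, together with possibly bubble trees and boundary insertion chains distributed on either side. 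Transversality for $J^{(1)},J^{(2)},J^{(3)},J^{(5)}$ rules out configurations with positive-dimensional bubbling and forces the intermediate output class to be a rigid element of $CF(\tau_L(\CL_0),\CL_1)$.

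Then I would do the reverse gluing half: given a transversely-cut-out pair $(s^c,s^b)$ matched at $p'$, contributing to $\mathfrak{c}^{b_1,b_0} \circ \mathfrak{m}_2^{\vec b}$, the standard implicit function theorem / pregluing--Newton scheme (as in Section 2 of \cite{seidel:triangle}, or Chapter 7 of \cite{fooo:book}) produces, for each sufficiently large $\rho$, a unique nearby element of $\CM_J^{bc,\rho}$ in the corresponding section class $B_1 \# B_2$. Since by Theorem~\ref{thm:maximum} all contributing configurations stay away from $E^{crit}$, the analysis is identical to the smooth Hamiltonian fibration gluing carried out in \cite{entov,mcduff-sal04}. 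Combining the two halves yields, for $\rho \geq \rho_0$, a bijective oriented identification
\[
\CM_J^{bc,\rho}(p,q;r;B)\ \cong\ \coprod_{p'}\coprod_{B=B_1\#B_2} \CM^c(p',r;B_1)\times \CM^{pants}(p,q;p';B_2)
\]
compatible with the bounding-cochain insertions $b_0,b_1$ on both sides, so the signed/weighted counts defining $\mathfrak{b}\#_\rho \mathfrak{c}$ agree with those defining $\mathfrak{c}\circ \mathfrak{b}$.

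The main obstacle is controlling the limiting configurations in the neck: one must rule out sphere and disk bubbles hitting $E^{crit}$ and ensure that no energy escapes into the neck as a nonconstant Floer trajectory with wrong asymptotics. The former is handled precisely by Theorem~\ref{thm:maximum} together with Corollary~\ref{nobubbleatx0}, and the latter by the standard exponential decay estimate on the neck combined with the $\Gamma$-equivalence bookkeeping of Proposition~\ref{prop:coherence} that forces the intermediate asymptote $p'$ and the class decomposition $B=B_1\#B_2$ to match on both sides.
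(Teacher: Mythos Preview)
Your proposal is correct and follows essentially the same approach as the paper: both identify the moduli space $\CM(E^{bc}_\rho,\CQ^{bc}_\rho;\vec p^{bc}_\rho;B^{bc}_\rho)$ defining $\frak b\#_\rho \frak c$ with the fiber product $\CM(E^b,\CQ^b;\vec p^b;B^b)\,{}_{ev_+}\#_{ev_-}\,\CM(E^c,\CQ^c;\vec p^c;B^c)$ defining $\frak c\circ\frak b$ via the standard gluing theorem in Floer theory. The paper's proof is a one-line invocation of this gluing theorem (citing \cite{fooo:book}), whereas you spell out its two halves (neck-stretching compactness and pregluing/Newton gluing) and explicitly note that Theorem~\ref{thm:maximum} is what keeps the argument away from $E^{crit}$; this added detail is correct but not a genuinely different route.
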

\begin{proof}
We note that the composition $\frak c \circ \frak b$ is defined by
counting the elements of the fiber product
$$
\CM(E^b,\CQ^b;\vec p^b;B^b) {}_{ev_+}\#_{ev_-} \CM(E^c,\CQ^c;\vec p^c;B^c)
$$
while the map $\frak b \#_\rho \frak c$ by counting those in the moduli space
$$
\CM(E^{bc}_\rho,\CQ^{bc}_\rho;\vec p^{bc}_\rho;B^{bc}_\rho)
$$
where $B^{bc}_\rho = B^b \#_\rho B^c$ is the obvious glued
homotopy class. By a gluing theorem in the
Floer complex (see e.g., \cite{fooo:book}), the two moduli spaces are diffeomorphic to
each other and hence the proof.
\end{proof}

Finally we can construct a homotopy from $\frak b\#_\rho \frak c$ to the zero
map verbatim following Seidel's argument from \cite{seidel:triangle}. (See Figure
9 and Figure 12 of \cite{seidel:triangle} in particular.) We omit
details of the construction.

%

\section{Thick-thin decompositions}
\label{sec:thickthin}

In this section, we will study the thick-thin decomposition of
contributions in the cochain maps $\frak b,\, \frak c$ and the homotopy $\frak h$.

For this purpose, we first note the Dehn twist $\tau_L^{-1}: M \to M$
acts by $(L_i,\gamma_i) \to (\tau_L^{-1}(L_i), \tau_L^{-1}(\gamma_i))$ and
induces a one-one correspondence
$$
\tau_L(L_0) \cap L \to L_0 \cap L; \quad x \mapsto \tau_L^{-1}(x)
$$
Since $\tau_L|_L = id|_L$. This lifts to a diffeomorphism
$$
\widetilde\Omega(\tau_L(\CL_0), \CL)\to
\widetilde\Omega(\CL_0, \CL);
\quad [p,w] \mapsto [\tau_L^{-1}(p),\tau_L^{-1}(w)].
$$
This latter diffeomorphism induces a filtration preserving isomorphism
$$
(\tau_L^{-1})_*: CF(\tau_L(\CL_0),\CL) \to CF(\CL_0,\CL)
$$
i.e., satisfies
$$
\CA_{\tau_L(\CL_0),\CL}([p,w]) = \CA_{\CL_0,\CL}([\tau_L^{-1}(p),\tau_L^{-1}(w))]).
$$
By perturbing $L_0$ and $L_1$ if necessary and choosing $\e > 0$
sufficiently small, we may assume:

\begin{enumerate}
\item
$L\cap L_0$, $L\cap L_1$ and $L_0 \cap L_1$ are transverse intersections,
and $L \cap L_0 \cap L_1 = \emptyset$.
\item
Each $\iota^{-1}(L_k) \subset T(r)$ is a union of fibres; one can
write this as
$$
\iota^{-1}(L_k) = \bigcup_{y \in \iota^{-1}(L \cap L_k)} T(r)_y.
$$
\item $R$ satisfies $0 \geq 2\pi R(0)> -\e$, and is such that
$\tau_L$ is $\delta$-wobbly.
\end{enumerate}

The following lemma is a part of Lemma 3.2 \cite{seidel:triangle}.
For readers' convenience, we provide its proof.

\begin{lem}\label{lem:mappq} Suppose
$L_0 \cap L \cap L_1 = \emptyset$.
Then we can choose $\supp \tau_L$ so close to $L$
that $L_0 \cap L_1 \subset M \setminus \operatorname{im} \tau_L$ and
$\tau_L(L_0)$, $L_1$ intersect transversally, and there
are injective maps
\bea
&{}& p: (\tau_L(L_0) \cap L) \times (L \cap L_1) \to \tau_L(L_0) \cap L_1, \\
&{}& q: L_0 \cap L_1 \to \tau_L(L_0) \cap L_1
\eea
such that $\tau_L(L_0) \cap L_1$ is the disjoint union of their images.
\end{lem}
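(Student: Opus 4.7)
The plan is to analyze $\tau_L(L_0)\cap L_1$ by splitting points according to whether they lie in the support $U_\lambda=\iota(T(\lambda))$ of $\tau_L$ or outside, and to obtain $q$ from the outside and $p$ from the inside. First I would fix $\lambda>0$ small enough that $U_\lambda$ contains no point of $L_0\cap L_1$; this is available because $L_0\cap L_1$ is finite (by transversality of $L_0$ and $L_1$) and disjoint from $L$ (by the hypothesis $L_0\cap L\cap L_1=\emptyset$). After a small generic perturbation of $L_0,L_1$, already sanctioned by the preparatory remarks in the setup, I may also assume that for every pair $(y,y')\in (L\cap L_0)\times(L\cap L_1)$ the points $y$ and $y'$ are distinct and non-antipodal on $S^n$. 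Since $\tau_L$ is the identity off $U_\lambda$, the inclusion $L_0\cap L_1\hookrightarrow\tau_L(L_0)\cap L_1$ supplies the injective map $q$, with image contained in $M\setminus U_\lambda$.

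For the interior contribution, hypothesis (2) in the setup says that on $T(\lambda)$ both $\iota^{-1}(L_0)$ and $\iota^{-1}(L_1)$ are disjoint unions of cotangent fibres, so
\[
\tau_L(L_0)\cap L_1\cap U_\lambda \;=\; \iota\Bigl(\bigsqcup_{(y,y')} \tau_\lambda(T(\lambda)_y)\cap T(\lambda)_{y'}\Bigr),
\]
the disjoint union being over $(y,y')\in (L\cap L_0)\times (L\cap L_1)$. The core of the argument is then to show that each $\tau_\lambda(T(\lambda)_y)\cap T(\lambda)_{y'}$ consists of a single transverse point. Reading off \eqref{eq:taulamda}, a cotangent vector $(u,y)\in T_y^*S^n$ is carried into the fibre over $y'$ precisely when the unit-speed geodesic on $S^n$ starting at $y$ in direction $u/\|u\|$ reaches $y'$ at time $t=2\pi R_\lambda'(\|u\|)$. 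Given the pair $(y,y')$, non-antipodality determines $t\in (0,\pi)$ and the direction $u/\|u\|$ uniquely; the wobbly hypothesis \eqref{eq:Rdelta}, with $2\pi R_\lambda'$ strictly monotone from $\pi$ at $\|u\|=0$ down to $0$ at $\|u\|=\lambda$, then determines $\|u\|$ uniquely, and the nonvanishing of $R_\lambda''$ in this range delivers the transversality. This defines $p(y,y')$ and shows injectivity of $p$.

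The two images are automatically disjoint because $\operatorname{im} p\subset U_\lambda$ while $\operatorname{im} q\subset M\setminus U_\lambda$, and $\tau_L(L_0)\cap L_1=\operatorname{im} p\sqcup \operatorname{im} q$ follows from the partition just made. The delicate point, which I expect to be the main obstacle in a fully careful treatment, is the simultaneous calibration of $\lambda$ and the profile $R$: $\lambda$ must be small enough to evict $L_0\cap L_1$ from $U_\lambda$, while the range of $2\pi R_\lambda'$ must cover all angular distances $d(y,y')\in(0,\pi)$ appearing among the finitely many pairs $(y,y')$. Since $2\pi R_\lambda'(0)=\pi$ and these angles form a finite subset of $(0,\pi)$, a single admissible $R$ followed by shrinking $\lambda$ produces a compatible choice.
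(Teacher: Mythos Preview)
Your argument follows essentially the same route as the paper's proof (which in turn is taken from Seidel's Lemma~3.2): split $\tau_L(L_0)\cap L_1$ into the part outside $U$ (giving $q$ as the inclusion) and the part inside $U$ (giving $p$ via the fibrewise decomposition $\iota^{-1}(\tau_L(L_0)\cap L_1)=\bigcup_{y_0,y_1}\tau(T(\lambda)_{y_0})\cap T(\lambda)_{y_1}$). You in fact supply more detail than the paper does, since you spell out the geodesic argument for why each $\tau_\lambda(T(\lambda)_y)\cap T(\lambda)_{y'}$ is a single transverse point, whereas the paper simply asserts this as ``clear from their definitions.''

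One small divergence worth flagging: you perturb $L_0,L_1$ to exclude antipodal pairs $(y,y')$, which is a clean way to make the geodesic argument go through uniformly. The paper instead allows antipodal pairs and treats them as a degenerate case in the subsequent Proposition (part~(4) there), where $p(\widetilde x_0,x_1)=\widetilde x_0=x_1$ lies on the zero section. Either handling is fine for the present lemma; your choice just shifts where the casework happens.
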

\begin{proof}
The conditions (1), (2) right before Lemma \ref{lem:mappq} imply
$L_0 \cap L_1 \cap U = \emptyset$. Since $\tau_L$ is the identity outside
$U$, one has $L_0 \cap L_1 = (\tau_L(L_0) \cap L_1) \setminus U$, so that
$q$ can indeed be defined to be the inclusion.
There is a bijective correspondence between pairs $(\widetilde x_0,x_1)
\in (\tau_L(L_0)\cap L) \times (L \cap L_1)$ and $(y_0,y_1)
\in \iota^{-1}(L_0\cap L) \times \iota^{-1}(L \cap L_1)$, given by
setting $y_0 = \iota^{-1}(\tau_L^{-1}(\widetilde x_0)), \, y_1 = \iota^{-1}(x_1)$.
As a consequence of the condition (3) above,
\be\label{eq:iota-1}
\iota^{-1}(\tau_L(L_0) \cap L_1) = \bigcup_{y_0,y_1}\tau(T(\lambda)_{y_0})
\cap T(\lambda)_{y_1}.
\ee
It is clear from their definitions that $p, \, q$ are injective.
A point of $\tau_L(L_0) \cap L_1$ falls into $\operatorname{im}(q)$ or
$\operatorname{im}(p)$ depending on whether it lies inside or outside
$\operatorname{im}(\iota)$, hence the two images are disjoint and
cover $\tau_L(L_0) \cap L_1$. The transversality follows from the
definition of $\tau_L$ for $\operatorname{im}(p)$ and from that of
$L_0 \cap L_1$ for $\operatorname{im}(q)$.
\end{proof}

We consider the triple
$$
\CL = (L,\gamma), \, \CL_0 = (L_0, \tau_L^{-1}\circ \gamma), \, \CL_1= (L_1,\gamma_1).
$$
We note $\tau_L(\CL_0) = (\tau_L(L_0), \gamma_0) = (\tau_L(L_0), \gamma)$.
To make our discussion nontrivial, we may assume
\be\label{eq:nonempty}
\CL\cap \CL_0 \neq \emptyset, \quad \CL \cap \CL_1 \neq \emptyset.
\ee
We fix an element $x_1 \in \CL \cap \CL_1$ and $\widetilde x_0 \in \tau_L(\CL_0) \cap \CL$
and $\widetilde z_0 = p(\widetilde x_0,x_1)$ where $p$ is the injective map given in
Lemma \ref{lem:mappq}.

\begin{lem}\label{lem:smalltri} Suppose $x_1 \in \CL \cap \CL_1$ and $\widetilde x_0 \in \tau_L(\CL_0) \cap \CL$
and $\widetilde z_0 = p(\widetilde x_0,x_1)$.
Then we have $\widetilde z_0 \in \CL_1 \cap \tau_L(\CL_0)$.
\end{lem}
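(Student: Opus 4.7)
By definition, the assertion $\widetilde z_0 \in \tau_L(\CL_0) \cap \CL_1$ consists of two parts: $\widetilde z_0 \in \tau_L(L_0) \cap L_1$, which is the defining property of the map $p$ from Lemma \ref{lem:mappq}, and the statement that the constant path $\widehat{\widetilde z_0}$ lies in the path component of $\Omega(\tau_L(L_0), L_1)$ containing the base path $\overline\gamma \# \gamma_1$. My plan is to verify the path-component condition in two steps: first, use a small auxiliary disc in the Darboux chart $\iota(T(\lambda))$ to relate $\widehat{\widetilde z_0}$ to a path $a \subset L$ between $\widetilde x_0$ and $x_1$; then use the two hypothesized homotopies $\widehat{\widetilde x_0} \sim \overline\gamma \# \gamma$ and $\widehat{x_1} \sim \overline\gamma \# \gamma_1$ to compare $a$ with $\overline\gamma \# \gamma_1$.

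For the first step I would construct a smooth disc $T : [0,1]^2 \to M$ with $T(s,0) = a(s)$ a path on $L$ from $\widetilde x_0$ to $x_1$, $T(s,1) \equiv \widetilde z_0$, $T(0,t) \in \tau_L(L_0)$, and $T(1,t) \in L_1$. The identity \eqref{eq:iota-1} in the proof of Lemma \ref{lem:mappq}, together with the $\delta$-wobbly choice of $\tau_L$ with support arbitrarily close to $L$, forces all three vertices $\widetilde x_0, x_1, \widetilde z_0$ to lie in a single contractible region of $\iota(T(\lambda))$ where $\tau_L(L_0), L, L_1$ sit in a standard position and obviously bound a small triangular disc; the required $T$ is then immediate. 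Viewed as a one-parameter family of paths from $\tau_L(L_0)$ to $L_1$, the disc $T$ is a homotopy in $\Omega(\tau_L(L_0), L_1)$ from $a$ to $\widehat{\widetilde z_0}$, yielding $[\widehat{\widetilde z_0}] = [a]$ in $\pi_0(\Omega(\tau_L(L_0), L_1))$.

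For the second step, the path $a$ equals, up to reparametrization, the concatenation $\widehat{\widetilde x_0} \# a \# \widehat{x_1}$, with the middle piece viewed as a connecting path inside $L$. The hypotheses on $\widetilde x_0$ and $x_1$ furnish homotopies $\widehat{\widetilde x_0} \sim \overline\gamma \# \gamma$ in $\Omega(\tau_L(L_0), L)$ and $\widehat{x_1} \sim \overline\gamma \# \gamma_1$ in $\Omega(L, L_1)$; running them in parallel while keeping the middle piece inside $L$ produces a homotopy in $\Omega(\tau_L(L_0), L_1)$ from $a$ to $(\overline\gamma \# \gamma) \# \sigma \# (\overline\gamma \# \gamma_1)$, where $\sigma$ is a residual loop on $L$ based at $\gamma(1)$ traced out by the endpoints of $a$. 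Since $L \cong S^n$ is simply connected for $n \geq 2$, the loop $\sigma$ contracts inside $L$, and the standard cancellation $\gamma \# \overline\gamma \simeq \widehat y$ then reduces $(\overline\gamma \# \gamma) \# (\overline\gamma \# \gamma_1)$ to $\overline\gamma \# \gamma_1$. Combining with the first step gives $[\widehat{\widetilde z_0}] = [\overline\gamma \# \gamma_1]$, which is exactly the missing condition. The main technical subtlety lies in filling in the residual interior homotopy on $L$ while respecting the Lagrangian boundary conditions throughout, and this is precisely where the simple connectedness of the Lagrangian sphere $L$ (valid for $n \geq 2$, the setting of Seidel's construction) is essential.
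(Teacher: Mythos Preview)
Your proposal is correct and follows essentially the same approach as the paper's own proof. Both arguments hinge on the existence of the thin triangle in the Darboux neighborhood $U=\operatorname{im}\iota$ (what the paper calls the class $B_{can}$) and on gluing it with the two bounding strips $w_0,w_1$ supplied by the hypotheses $\widetilde x_0\in\tau_L(\CL_0)\cap\CL$ and $x_1\in\CL\cap\CL_1$, thereby producing a bounding strip $w$ for $\widetilde z_0$ relative to the base path $\overline\gamma*\gamma_1$. The paper phrases this as the single line ``$[w]=B_{can}\#[w_1]\#[w_0]$'' and appeals implicitly to the admissibility framework of Definition~\ref{classB}; you have unpacked the same construction at the level of path homotopies. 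One point worth noting: your observation that a residual loop $\sigma\subset L$ must be killed, and that this uses $\pi_1(L)=0$ (valid since $L\cong S^n$, $n\ge 2$), makes explicit a step the paper passes over in silence---the gluing $u\#w_0\#w_1$ indeed leaves an $L$-loop on the boundary that has to be filled before the result can be reparametrized as a bounding strip, and simple connectedness of $L$ is exactly what guarantees this.
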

\begin{proof} We first note that there is a canonical
homotopy class $B_{can} = B(\widetilde x_0,x_1,\widetilde z)$
spanned by a `thin' triangle contained in $U =
\operatorname{im}\iota$. Choose a path $w_1$ from $\overline\gamma *
\gamma_1$ and $w_0$ from $\overline \gamma_0 * \gamma$. Then it
follows that we can choose a path $w$ from $\overline \gamma_1 *
\gamma_0$ defined by $ u \# w_1 \# w_0 $ where $u$ is the above thin
triangle, i.e., any $w$ such that
$$
[w] = B_{can} \# [w_1] \# [w_0].
$$
This finishes the proof.
\end{proof}

Now we state the following lemma which is a variation of
Lemma 3.2 \cite{seidel:triangle} in our context.

\begin{prop}\label{mappq} Let $L_0, \, L_1$ and $L$ be as in
Lemma \ref{lem:mappq} and consider the maps $p, \, q$ defined
therein. Then we can choose $\operatorname{im} \tau_L$ so close to $L$
so that $\tau_L(L_0) \cap L_1$ satisfy the following properties
in addition:
\begin{enumerate}
\item $q$ is the inclusion $q(x) = x$.
Moreover, for any $z \in \tau_L(\CL_0) \cap \CL_1$ and $z \in \CL_0 \cap \CL_1$ with
$z \neq q(x)$, one has
$$
\CA_{\CL_0\CL_1}([x,w]) - \CA_{\CL_0\CL_1}([z,w']) \not \in [0;3 \e)
$$
whenever the corresponding Floer moduli space $\CM(x,z;[\overline w \# w'])$
is non-empty.
\item
Set $\widetilde z = p(\widetilde x_0,x_1)$. Then there is a canonical
homotopy class $B_{can} = B(\widetilde x_0,x_1,\widetilde z)$ spanned by
a `thin' triangle contained in $U = \operatorname{im}\iota$. And we have
\be\label{eq:OmegaBcan}
|\Omega(B_{can})| < \e.
\ee
\item
For any $z \in \tau_L(\CL_0)\cap \CL_1$ and $(\widetilde x_0,x_1) \in
(\tau_L(\CL_0) \cap \CL) \times (\CL \cap \CL_1)$ with $z \neq p(\widetilde x_0,x_1)$,
or for $z = \widetilde x = p(\widetilde x_0,x_1)$ with $B \neq B_{can}$,
we have
\be\label{eq:energy}
\Omega(B) \geq C = C(\CE;J)
\ee
independent of $\e > 0$
whenever $\CM_J(\widetilde x_0,x_1,z;B) \neq \emptyset$ for some $J$.
\item
Suppose that there are $x_k \in \CL \cap \CL_k$, $k = 0, \, 1$, whose preimages
$y_k = \iota^{-1}(x_k)$ are antipodes on $S^n$. Since $\tau|_{S^n}$ is the
antipodal map, $\widetilde x_0 = \tau_L(x_0)$ is equal to $x_1$
(hence $x_1 \in \tau_L(\CL_0) \cap \CL \cap \CL_1$, and these are all such
triple intersection points). In that case $p(\widetilde x_0,x_1) = \widetilde x_0
= x_1$.
\end{enumerate}
\end{prop}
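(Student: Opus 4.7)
The plan is to reduce every claim to explicit computations inside the Darboux--Weinstein chart $\iota: T(1) \to U \subset M$ of the Lagrangian sphere $L$, using the fact that outside $U$ both the geometry and the data $(L_0, L_1, L)$ are independent of the Dehn-twist parameter $\lambda$ and the auxiliary $\e>0$. The cross-section of the argument is to first fix $\e > 0$ small so that the hypotheses preceding Lemma \ref{lem:mappq} hold, then take $\lambda = \lambda(\e)$ small enough that $\operatorname{supp}\tau_L \subset \iota(T(\lambda)) \subset U$ is contained in an $\e$-neighborhood of $L$ in the ambient metric $g_J$.

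For statement (1), since $\tau_L$ is the identity outside $U$, any $z \in \CL_0 \cap \CL_1 \subset M \setminus U$ automatically lies in $\tau_L(\CL_0) \cap \CL_1$, so $q$ is literally the inclusion, and the canonical bounding strips match. For the action estimate, I would split $\tau_L(\CL_0)\cap \CL_1$ into $\operatorname{im}(q)$ and $\operatorname{im}(p)$ via \eqref{eq:iota-1}. Any Floer trajectory $B$ connecting $x \in \operatorname{im}(q)$ to a distinct $z \in \tau_L(\CL_0) \cap \CL_1$ either has image disjoint from $U$ (in which case it is a genuine Floer trajectory for the unperturbed pair $(L_0,L_1)$, whose action spectrum is a fixed discrete subset of $\R$ independent of $\e$), or it must traverse $U$ and hence pick up a definite amount of energy from the geometry of $(L, L_0, L_1)$ relative to $U$. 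In both cases the action difference $\CA_{\CL_0\CL_1}([x,w]) - \CA_{\CL_0\CL_1}([z,w'])$ is a locally constant function of $\e$, and by choosing $\e$ smaller than its minimum positive value we push it outside $[0, 3\e)$.

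For statement (2), the canonical triangle $B_{can}$ is built inside $\iota^{-1}(U) = T(\lambda)$ using formula \eqref{eq:taulamda}. Let $(y_0, y_1)$ correspond to $(\widetilde x_0, x_1)$ under \eqref{eq:iota-1}. Then $\widetilde z = p(\widetilde x_0, x_1)$ is a distinguished point of $\tau_\lambda(T(\lambda)_{y_0}) \cap T(\lambda)_{y_1}$, and the three points $\widetilde x_0, x_1, \widetilde z$ bound a thin geodesic triangle whose $\omega_T$-area can be computed by Stokes applied to $\omega_T = -d\theta_T$: the boundary integral reduces, after pulling back by $\tau_\lambda$, to the quantity $2\pi R_\lambda(0)$ times a fiber-length contribution, which by the scaling $R_\lambda(0) = \lambda R(0)$ and the assumption $0 \geq 2\pi R(0) > -\e$ gives $|\Omega(B_{can})| < \e$ for all sufficiently small $\lambda$. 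Statement (4) is then an immediate specialization of \eqref{eq:taulamda}: when $y_0, y_1 \in S^n$ are antipodes, $\tau_\lambda$ restricts to the antipodal involution $A$ on the zero section so that $\widetilde x_0 = \tau_L(x_0) = \iota(A(y_0)) = \iota(y_1) = x_1$, and the triangle degenerates to the point $\widetilde x_0 = x_1$.

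The main obstacle is statement (3): establishing an energy lower bound that is genuinely independent of $\e$. Here I would apply the anchored filtration machinery of Section \ref{sec:anchored} together with the Gromov--Floer compactness from Section \ref{subsec:GF-moduli} to a sequence of hypothetical triangles in classes $B \neq B_{can}$ with $\Omega(B) \to 0$ along $\e_n \to 0$. Any limit triangle must either (a) pass outside $U$ and hence absorb a definite amount of $\omega$-area from the ambient geometry of $(M; L, L_0, L_1)$, which is $\e$-independent; (b) remain inside $U$ but realize a non-trivial relative homotopy class in $T^*S^n$ rel $(T(\lambda)_{y_0}, T(\lambda)_{y_1}, L)$ whose area is bounded below by the symplectic injectivity radius of $L$, again intrinsic to $L$ and not to $\e$; or (c) develop a sphere or disc bubble, whose $\omega$-energy is quantized away from zero by the semipositivity of the Calabi-Yau Lagrangian collection (cf.~subsection \ref{subsec:bubble}). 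In each case the limit class has $\Omega > 0$ strictly, contradicting $\Omega(B_n) \to 0$. A standard compactness and transversality argument, using a generic $J \in \CJ^{reg}$, then promotes this to a uniform constant $C = C(\CE; J) > 0$ valid for all sufficiently small $\e$, as required.
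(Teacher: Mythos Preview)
Your treatment of (1), (2), and (4) is in line with the paper's, and in fact more detailed for (1), which the paper does not spell out. The gap is in your argument for (3), specifically case (b).

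You propose that a limiting triangle which remains inside $U$ but lies in a ``non-trivial relative homotopy class'' has area bounded below by the ``symplectic injectivity radius of $L$''. This does not work: inside $U \cong T(\lambda) \subset T^*S^n$ the symplectic form $\omega_T = -d\theta_T$ is \emph{exact}, so the area of any triangle with fixed corners on $o_T$, $\tau(F_{y_0})$, $F_{y_1}$ is determined entirely by the boundary, not by the homotopy class. There is no area quantization in an exact symplectic manifold, and no ``injectivity radius'' will manufacture one. In particular, as $\lambda \to 0$ every such triangle has area tending to zero, so your proposed lower bound in case (b) simply does not exist.

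The paper turns this apparent obstacle into the key step: because $\omega_T$ is exact, one can write $\omega(B)$ for any triangle with image in $U$ as a combination of explicit action functionals
\[
\CA_{o_T F_1}(\widehat p_0) - \CA_{\tau(F_0)o_T}(\widehat p) - \CA_{\tau(F_0)F_1}(\widehat p_1),
\]
each involving only $\theta_T$ and the generating function $K_\tau = 2\pi(\mu R'(\mu) - R(\mu))$ of the model Dehn twist. This value depends only on the three corners, so $\omega(B) = \omega(B_{can})$ automatically, and since $\mu(B) = 0 = \mu(B_{can})$ one concludes $B = B_{can}$ in $\Pi(E,Q)$. In other words, case (b) is empty: any $u$ in a class $B \neq B_{can}$ must exit $U$. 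The compactness argument is then run only against case (a), with the geometry outside $U$ fixed independently of $\lambda$ and $\e$, which is what makes the resulting constant $C(\CE;J)$ genuinely $\e$-independent. Your trichotomy misses this reduction and leaves you trying to bound from below something that is in fact equal to $\Omega(B_{can})$.
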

\begin{proof} The proof is a slight modification of that of Lemma 3.2
\cite{seidel:triangle}. Since we need to strip all the exact Lagrangian
setting away from Seidel's proof thereof and incorporate contributions
coming from different choices of homotopy classes $B$,
we give a complete proof of the proposition.

Now let $s(z) = (z, u(z))$ be the section of $E \to \dot \Sigma$
in class $B_{can}$ satisfying the Lagrangian boundary condition.
By definition of $B_{can}$ we can choose $u$ so that
its image is contained in $\operatorname{im}\iota$. Then
(\ref{eq:OmegaBcan}) follows from the identity
$$
\Omega(B_{can}) = \pi^*\omega(B_{can}) = \omega_0(\iota^{-1}\circ u)
$$
which can be made as small as we want by choosing $r$ small
in the definition of the Dehn twist $\tau_L$.

We now turn to (3). First we recall that since $E$ is trivial (and so of
zero curvature), we have
$$
\Omega(B) = \frac{1}{2}\int_{\dot \Sigma} \|(Du)^v\|_{J}^2
$$
and hence whenever $\CM_J(\widetilde x_0,x_1,z;B) \neq \emptyset$ for some $J$
we have $\Omega(B) \geq 0$. Define
\beastar
C(\CE;J) & = & \inf_{u}\{ \omega(u) \mid u\not \equiv const., \, u \in
\CM_J(\widetilde x_0,x_1,z;B), \\
&{}& \hskip0.8in B \neq B_{can} \, \mbox{ in }\,
\Pi(E,Q)\}.
\eeastar

\begin{prop} Let $B$ be an admissible class and $u \in \CM_J(\widetilde x_0,x_1,z;B)$
with $\mu(B) = 0$ such that
$$
\operatorname{im}u \subset U.
$$
Then we have $B = B_{can}$ in $\Pi(E,Q)$. In particular, we have
\be\label{eq:CLJ}
C(\CE;J) > 0.
\ee
\end{prop}
\begin{proof} Since we know $\mu(B_{can}) = 0$, it is enough to prove
$\omega(B) = \omega(B_{can})$ by definition of $\Pi(E,Q)$. For this purpose, we compare the
action for the paths whose image is contained in the Darboux
neighborhood $U = \iota(V)$ with $\iota: (V,\omega_0) \hookrightarrow (M,\omega)$
and whose end points lie either on $L$ or on the fibers $F$ of the cotangent
bundle $T^*L \cap V$ or in the Dehn twists $\tau_L(F)$.
We recall that the model Dehn twist $\tau$ is a Hamiltonian diffeomorphism
that satisfies
$$
\tau^*\theta_T - \theta_T = dK_\tau
$$
for $K_\tau = 2\pi(\mu R'(\mu) - R(\mu))$.

On the cotangent bundle $T^*T$, the action functionals $\CA_{o_TF_1}, \, \CA_{\tau(F_0)o_T}$
and $\CA_{\tau(F_0)F_1}$ are defined by
\beastar
\CA_{o_TF_1}(z) & = & \int z^*\theta_T \\
\CA_{\tau(F_0)o_T}(z) & = & \int z^*\theta_T + K_\tau \circ \tau^{-1}(z(0)) \\
\CA_{\tau(F_0)F_1}(z) & = & \int z^*\theta_T + K_\tau \circ \tau^{-1}(z(0))
\eeastar
for a path $z: [0,1] \to T^*L$. Here we use the fact that
$$
\theta_T|_{F} \equiv 0\equiv \theta_T|_{o_T}\quad
\theta_T|_{\tau(F)} = d(K_\tau \circ \tau^{-1}|_{\tau(F)}).
$$
(See (2.28) \cite{oh:jdg} or (1.1) \cite{seidel:triangle}.) Since it is
easy to realize such $B = [u]$ as the gluing $-[w_0]\#[w]\#[w_1]$ so that
all $w_i$, $w$ have their images contained in $U$, we can write
$$
\omega(B)
= \CA_{o_TF_1}(\widehat p_0) - \CA_{\tau(F_0)o_T}(\widehat p) -
\CA_{\tau_L(F_0)F_1}(\widehat p_1) = \omega(B_{can}).
$$
Here `hat' denotes the constant path, e.g., $\widehat p$ is the
constant path $\widehat p (t) \equiv p$. This proves $B = B_{can}$ in
$\Pi(E,Q)$.
\par
For the proof of (\ref{eq:CLJ}), it follows from the first part of
the proof that for any $u \in \CM_J(\widetilde x_0,x_1,z;B)$ with
$B \neq B_{can}$ the image of $u$ must go out of $U$. Now a simple
application of Gromov-type compactness theorem implies
(\ref{eq:CLJ}). This finishes the proof.
\end{proof}

This finishes the proof of (3).
The statements in (4) are obvious from definition.
\end{proof}

Now we consider a Darboux neighborhood $U = \operatorname{im}\iota$ of $L$ such that
$\supp \tau_L \subset U$, and Lemma 2.1 and Proposition 2.2 hold.
We denote by $\CM_J(\tau_L(L_0), L, L_1)$ the moduli space of
$J$-holomorphic sections of the fibration
$(E^b,\pi^b,Q^b)$ where $J$ is chosen as before. Then the arguments in
subsection 3.2 \cite{seidel:triangle} give rise to the following decomposition
\be\label{eq:decomp-b}
\frak b = \beta + (\frak b - \beta)
\ee
that satisfies that $\beta$ is of order $[0;\e)$, while $(\frak b-\beta)$ has
order $[3\e;\infty)$. Furthermore $b$ is precisely the class induced by
the canonical `small' map continued from the constant map $p$.

Similar consideration as subsection 3.3 \cite{seidel:triangle} also gives rise
to the decomposition
\be\label{eq:decomp-c}
\frak c = \gamma + (\frak c -\gamma)
\ee
such that $\gamma$ has order 0 while $(\frak c -\gamma)$ has order $[3\e;\infty)$.
We refer to \cite{seidel:triangle} for the detailed proofs of the decomposition results.

Finally we can construct a homotopy from $\frak b\#_\rho \frak c$ to the zero
map following Seidel's argument from \cite{seidel:triangle}. For this purpose we
use Proposition \ref{prop:vanishing} which is a slight generalization
of Proposition 2.2 \cite{seidel:triangle} in the
general context of Lagrangian spheres in general symplectic
manifolds. We omit the details of this construction referring readers to
\cite{seidel:triangle} for the details.

%
%
%
%

\section{Construction of long exact sequence}
\label{sec:sequence}

In this section, we will combine all the results obtained in the previous
section to construct the required long exact sequence.
We first recall two basic lemmas that Seidel used in his construction of
long exact sequence for the \emph{exact} case.

\begin{lem}\label{deltadD} Let $D$ be an $\R$-graded vector space with a differential
$d_D$ of order $[0;\infty)$. Suppose that $D$ has gap $[\e;2\e)$ for some
$\e > 0$. One can write $d_D = \delta + (d_D - \delta)$ with $\delta$ of
order $[0;\e)$, satisfying $\delta^2 = 0$, and $(d_D - \delta)$ of order
$[2\e;\infty)$. Suppose in addition, $H(D,\delta) = 0$; then $H(D,d_D) = 0$.
\end{lem}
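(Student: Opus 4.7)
The plan is to run a successive-approximation scheme inverting $d_D$ one filtration level at a time, using the gap hypothesis to decouple the leading-order $\delta$-equation from the higher-order tail $(d_D - \delta)$. Given any $d_D$-cocycle $x$, I would write it in its leading-order form $x = x_0 + x_{> \lambda_0}$ where $\lambda_0$ is the filtration level of $x$, and argue first that $\delta x_0 = 0$. Indeed, the contributions to $d_D x$ split into a piece $\delta x_0$ sitting in $[\lambda_0;\lambda_0+\e)$ and a piece from $(d_D-\delta)(x_0+x_{>\lambda_0})$ plus $\delta x_{>\lambda_0}$ sitting in $[\lambda_0+2\e;\infty)$; the gap assumption $[\e;2\e)$ on filtration values of $D$ keeps these two ranges disjoint at every filtration bracket, so the identity $d_D x = 0$ forces $\delta x_0 = 0$ by itself.

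Using the hypothesis $H(D,\delta)=0$, I would then pick $y_0$ with $\delta y_0 = x_0$ and replace $x$ by $x^{(1)} = x - d_D y_0$. Since $\delta$ has order $[0;\e)$, the term $\delta y_0 = x_0$ exactly cancels the leading piece of $x$, while $(d_D-\delta) y_0$ only injects new content at filtration $\geq \lambda_0 + 2\e$. Thus $x^{(1)}$ is again a $d_D$-cocycle but now of order at least $\lambda_0 + 2\e$ (using the gap once more). Iterating this correction step produces corrections $y_0, y_1, y_2, \ldots$ with $\mathrm{ord}(y_n)$ strictly increasing, and partial sums $Y_N = \sum_{n \leq N} y_n$ with $\mathrm{ord}(x - d_D Y_N) \to \infty$.

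I would then pass to the limit $y = \sum_{n \geq 0} y_n$, which converges in the non-Archimedean topology defined by the $\R$-filtration on $D$; this completeness is of the same type as the Novikov completeness already built into all the Floer complexes of Section~\ref{sec:fooo}, so no new analytic input is needed. Continuity of $d_D$ in this topology gives $d_D y = x$, and since $x$ was an arbitrary cocycle we conclude $H(D,d_D)=0$.

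The main obstacle is really just the very first step, namely verifying that the leading-order piece of $d_D x = 0$ genuinely factors through the gap to produce $\delta x_0 = 0$ rather than a coupled equation mixing $\delta$ and $d_D - \delta$. This is where one must be careful about bookkeeping of the filtration brackets that $\delta$ versus $d_D-\delta$ can hit, and organize $x$ by its filtration decomposition so no interference arises in the bracket $[\lambda_0;\lambda_0+\e)$; once this is in place, the iteration and convergence parts are routine.
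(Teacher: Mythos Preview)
Your iterative correction scheme is a valid alternative to the spectral sequence argument that the paper (following Seidel) invokes; the two are essentially equivalent, your version being the spectral sequence degeneration unwound by hand. Two points deserve sharpening. First, the decomposition $x = x_0 + x_{>\lambda_0}$ as written is too coarse: for the decoupling to work you must take $x_0$ to be the sum of \emph{all} components of $x$ with level in the window $[\lambda_0,\lambda_0+\e)$, not just the single piece at level $\lambda_0$; otherwise $\delta$ applied to a component at level $\lambda_0 + \e/2$ can land back in $[\lambda_0,\lambda_0+\e)$ and interfere. With this correction the gap hypothesis does separate $\delta x_0$ from the rest as you want. Second, your appeal to Novikov completeness is unnecessary here: this lemma is Seidel's \emph{exact} case, where $D$ is a finite-dimensional $\R$-graded vector space with finite support, so the filtration is bounded above and your iteration terminates after finitely many steps. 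The completeness issue only arises for the paper's generalization, Lemma~\ref{H-vanishing}, where the d.g.c.f.z.\ machinery of subsection~\ref{subsec:dgcfz} is genuinely needed; you have conflated the two lemmas.
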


Seidel then applied this lemma to the direct sum
$$
D = C' \oplus C \oplus C''
$$
with the differentials given by
$$
d_D = \left(\begin{matrix} d_{C'} & 0 & 0 \\
b & d_C & 0 \\
h & c & d_{C''}
\end{matrix}\right),\quad
\delta = \left(\begin{matrix} 0 & 0 & 0 \\
\beta & 0& 0 \\
0 & \gamma & 0 \end{matrix}\right)
$$
where
\beastar
C'& = & CF(\CL,\CL_1) \otimes CF(\tau_L(\CL_0),\CL), \\
C & = & CF(\tau_L(L_0),L_1), \quad C'' = CF(\CL_1,\CL_0).
\eeastar
and the entries of the matrices are given as stated in the lemma below.

\begin{lem}[Lemma 2.32 \cite{seidel:triangle}]\label{seidellemma} Take three $\R$-graded
vector spaces $C', \, C, \, C''$, each of them with a differential of
order $(0;\infty)$. Suppose that we have the differential maps
$b: C' \to C$, $c: C\to C''$ and a homotopy $h: C' \to C''$ between
$c \circ b$ and the zero map, such that the following conditions are
satisfied for some $\e > 0$:
\begin{enumerate}
\item $C', \, C''$ have gap $(0,3\e)$ and $C$ has gap
$(0,2\e)$.
\item For all $r \in \supp(C')$ and $s \in \supp(C'')$, $|r-s| \geq 4 \e$.
\item One can write $$
b = \beta + (b -\beta), \quad c = \gamma + (c - \gamma)
$$
with $\beta$ of order $[0;\e)$ and $(b-\beta)$ of order $[2\e;\infty)$
and with the same properties for $\gamma$ and $(c-\gamma)$. The lower order parts
(which do not need be differential maps) fit into a short exact
sequence of modules
\be\label{eq:lowerexact}
0 \to C' \stackrel{\beta} \to C \stackrel{\gamma} \to C'' \to 0.
\ee
\item $h$ is of order $[0;\infty)$.
\end{enumerate}
Then the maps on cohomology induced by $b, \, c$ fit into a long exact
sequence
$$
\cdots \to H(C'; d_{C'}) \stackrel{b_*}\to H(C;d_C) \stackrel{c_*} \to H(C'';d_{C''})
\stackrel{\delta} \to H(C'; d_{C'}) \to \cdots.
$$
\end{lem}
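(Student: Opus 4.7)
The plan is to reduce the statement to the algebraic input of Lemma~\ref{deltadD} applied to the total complex $D=C'\oplus C\oplus C''$ with differential $d_D$ displayed above, and then to extract the long exact sequence from the iterated mapping-cone structure of $D$.

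First I would verify the bookkeeping: $d_D^2=0$ unpacks into precisely the hypotheses that $b,c$ are chain maps and that $h$ is a chain homotopy from $c\circ b$ to the zero map, while $\delta^2=0$ reduces to $\gamma\circ\beta=0$, which is exactness of \eqref{eq:lowerexact}. Next I would compute $H(D,\delta)=0$ by direct inspection: a $\delta$-cycle is a triple $(a_1,a_2,a_3)$ with $\beta a_1=0$ and $\gamma a_2=0$, so $a_1=0$ (injectivity of $\beta$), $a_2=\beta a_1'$ (exactness at $C$), and one picks $a_2'\in C$ with $\gamma a_2'=a_3$ (surjectivity of $\gamma$); then $\delta(a_1',a_2',0)=(0,a_2,a_3)$ exhibits the cycle as a boundary.

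The remaining input to Lemma~\ref{deltadD} is an order estimate on $d_D-\delta$. The diagonal pieces $d_{C'},d_C,d_{C''}$ carry order $(0,\infty)$ but, because of the gaps in (1), automatically sit inside $[2\e,\infty)$; the corrections $b-\beta$ and $c-\gamma$ lie in $[2\e,\infty)$ by (3); and the homotopy $h:C'\to C''$, a priori only of order $[0,\infty)$, is forced into order $[4\e,\infty)$ by the separation hypothesis~(2) on $\supp(C')$ and $\supp(C'')$. Hence $d_D=\delta+(d_D-\delta)$ with $\delta$ of order $[0,\e)$ and $d_D-\delta$ of order $[2\e,\infty)$, so Lemma~\ref{deltadD} applies and gives $H(D,d_D)=0$.

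Finally I would extract the long exact sequence by standard homological algebra. Writing $\operatorname{Cone}(c):=C\oplus C''$ with differential $\bigl(\begin{smallmatrix}d_C & 0\\ c & d_{C''}\end{smallmatrix}\bigr)$, the complex $D$ is precisely $\operatorname{Cone}(\varphi)$ for the chain map $\varphi=(b,h):C'\to\operatorname{Cone}(c)$, and the vanishing $H(D,d_D)=0$ is equivalent to $\varphi_*:H(C')\to H(\operatorname{Cone}(c))$ being an isomorphism. Splicing $\varphi_*^{-1}$ into the tautological long exact sequence
$$
\cdots\to H^n(C)\stackrel{c_*}\to H^n(C'')\to H^{n+1}(\operatorname{Cone}(c))\to H^{n+1}(C)\to\cdots
$$
attached to $0\to C''\to\operatorname{Cone}(c)\to C[1]\to 0$ produces the asserted triangle with connecting map $\delta:H^n(C'')\to H^{n+1}(C')$. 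The main obstacle is the order bookkeeping of the previous step; in particular hypothesis~(2) on the $4\e$-separation of $\supp(C')$ from $\supp(C'')$ is indispensable for bringing $h$ inside $[2\e,\infty)$, and without it Lemma~\ref{deltadD} could not be invoked.
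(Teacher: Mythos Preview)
Your proposal is correct and follows exactly the route the paper indicates: the paper explicitly sets up $D=C'\oplus C\oplus C''$ with the same $d_D$ and $\delta$, cites Lemma~\ref{deltadD}, and refers to Seidel for the details you have filled in. One small remark on the bookkeeping: Lemma~\ref{deltadD} as literally stated asks for a gap $[\e,2\e)$ on $D$, which you do not (and cannot, from the hypotheses) verify; what you actually use is only the conclusion ``$H(D,\delta)=0\Rightarrow H(D,d_D)=0$'' under the decomposition $d_D=\delta+(d_D-\delta)$ with the stated orders, and that is all the underlying spectral-sequence argument needs---so there is no gap in your reasoning, just a slight mismatch with the lemma's packaging.
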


The proofs of both lemmata rely on an argument involving spectral sequences.
For the exact case, all the complexes involved are finite dimensional
vector spaces with \emph{bounded} filtration and \emph{gap}
and so existence of spectral sequence such a complex is easy.

On the other hand, for the case of our current interest, the Floer
complex as a $\Q$-vector space is infinite dimensional with
\emph{unbounded} filtration and \emph{without gap} on the vector
space itself in general. Existence of spectral sequence in this case
is much more non-trivial which has been studied by
Fukaya-Oh-Ohta-Ono in \cite{fooo00,fooo:book}. This is what we summarize
in section \ref{subsec:dgcfz}.

In the mean time, we would like to remark that the proof of exactness of
\eqref{eq:lowerexact} is exactly the same as that of \cite{seidel:triangle}
based on Lemmata \ref{lem:mappq}, \ref{lem:smalltri} and on some uniqueness
result on small pseudoholomorphic triangle. (See section 3.2 \cite{seidel:triangle}
for the details.)

\subsection{$CF(\CL,\CL')$ versus gapped d.g.c.f.z. $C(\CL,\CL';\Lambda_{0,nov})$}
\label{subsec:CFLL'}

We first recall basic results on the structure of the Floer cochain
group $CF(\CL,\CL')$ as a module of the Novikov ring $\Lambda_(\CL,\CL')$.
We note that in the current study of Calabi-Yau Lagrangian branes we can use the
Novikov ring
$$
\Lambda_{0,nov}^{(0)}
$$
as the coefficient ring where $\Lambda_{0,nov}^{(0)}$ is the degree zero part of
$\Lambda_{0,nov}$ which is a field. Having this in mind, we
first recall the basic construction on the spectral sequence of the
the $\Lambda_{0,nov}^{(0)}$-module $C(L,L';\Lambda_{0,nov})$ from Chapter 6
\cite{fooo:book} (or Appendix \cite{fooo00}) restricting to the
\emph{finitely generated case}.

Under the given condition on $L_0,\, L_1$ and the given
embedding $f: S^n \to L \subset M$, $CF(\tau_L(\CL_0),\CL), \, CF(\CL,\CL_1)$ and
$CF(\CL_1,\CL_0)$ are all finitely generated over $\Lambda_{nov}$:
Here we note that there is a natural injective homomorphism
$$
I_{\CL,\CL}: \Lambda(\CL,\CL') \to \Lambda_{0,nov}
$$
and so we naturally extend their coefficient rings to $\Lambda_{0,nov}$.

The thick-thin decompositions of the maps
$\frak b$ and $\frak c$ given in section \ref{sec:thickthin} implies that
both maps are \emph{gapped} in the sense of Definitions \ref{gap-delta} and \ref{gap-varphi}.
However these vector spaces, as they are, do not quite manifest the structure of
$d.g.c.f.z$ yet. Because of this, we follow the procedure
given in section 12.4 \cite{fooo:book} turning these into a $d.g.c.f.z.$.
For readers' convenience, we collect the definition of $d.g.c.g.z.$ and
construction of spectral sequence given in \cite{fooo:book} in Appendix.

Let $(\CL_0,\CL_1)$ be a general relatively spin pair of anchored
Lagrangian submanifolds of $M$. We first construct a $\Lambda_{0,nov}$-module
$C(\CL_1,\CL_0;\Lambda_{0,nov})$ which will have a filtered $A_{\infty}$-bimodule structure over
$(C(L_0;\Lambda_{0,nov}),\mathfrak m^{(0)})$ and
$(C(L_1;\Lambda_{0,nov}),\mathfrak m^{(1)})$ where
the latter are the filtered $A_{\infty}$ algebras defined in \cite{fooo:book}.
We consider the intersection $\CL_1\cap \CL_0$ and
the $\R$-filtered set
$$
\widehat I(\CL_0,\CL_1): = \{T^{\lambda}e^{\mu} [p,w] \mid
p \in \CL_0\cap \CL_1, \, [p,w] \in \widetilde\Omega(\CL_0,\CL_1), \,
\lambda \in \R, \, \mu \in \Z \}.
$$
This is a $\R \times \Z$ principal bundle over $\CL_0 \cap \CL_1$.

We define an equivalence relation $\sim$ on $\widehat I(L_0,L_1)$
as follows. We say
$$
T^{\lambda}e^{\mu}[p , w]
\sim T^{\lambda^{\prime}}e^{\mu^{\prime}}[p^{\prime} , w^{\prime}]
$$
for $T^{\lambda}e^{\mu}[p , w],\,
T^{\lambda^{\prime}}e^{\mu^{\prime}}[p^{\prime} , w^{\prime}]
\in \widehat I(\CL_0,\CL_1)$,
if and only if the following conditions are satisfied:
\beastar
p & = & p^{\prime} \\
\lambda + \int w^*\omega  & = & \lambda^{\prime} + \int (w^{\prime})^*\omega
\\
2\mu + \mu ([p,w];\lambda_{01}) & = &
2\mu^{\prime} + \mu ([p^\prime,w^{\prime};\lambda_{01}]).
\eeastar
Here $\mu ([p,w];\lambda_{01})$ is the Maslov-Morse index.
It is easy to see that this relation is compatible with the conditions
of the $\Gamma$-equivalence given and so $\sim$ defines an equivalence relation on
$\widehat I(\CL_0,\CL_1)$.
Furthermore we define the {\it action level} on
$E: \widehat I(\CL_0,\CL_1) \to \R$ by
$$
E(T^{\lambda}e^{\mu}[p,w]) = \lambda + \int w^*\omega
$$
and the associated filtration on the set by setting
$$
T^{\lambda}e^{\mu}[p,w]\in F^{\lambda'}(\widehat I(\CL_0,\CL_1))
$$
if
$$
\lambda + \int w^*\omega \ge\lambda'.
$$
We now define
$$
I(\CL_0,\CL_1)  = \widehat I(\CL_0,\CL_1)/\sim
$$
and somewhat ambiguously denote an element thereof still by
$T^\lambda e^{\mu/2} [p,w]$ as long as no danger of confusion arises.
The above mentioned filtration on $\widehat I(\CL_0,\CL_1)$
obviously induces on on the quotient $I(\CL_0,\CL_1)$.
We now define
$$
I_{\geq 0}(\CL_0,\CL_1): = \left\{ T^\lambda e^{\mu/2} [p,w] \in
I(\CL_0,\CL_1)  \, \Big|\, \lambda + \int w^*\omega \ge 0 \right\}.
$$
Consider the formal sum
$$
\alpha = \sum_{\lambda, \mu, [p,w]} a_{\lambda,\mu,[p,w]}
T^\lambda e^{\mu/2}[p,w]
$$
for $\lambda \in \R, \, \mu \in \Z$ and $[p,w] \in \operatorname{Crit}\CA$ and
define $\supp \alpha$ to be
$$
\supp \alpha = \{T^\lambda e^{\mu/2} [p,w] \in I(\CL_0,\CL_1)
\mid a_{\lambda,\mu,[p,w]} \neq 0\}.
$$
\begin{defn}
We define by $C(\CL_1,\CL_0; \Lambda_{0,nov})$
the $\Lambda_{0,nov}$-module
$$
C(\CL_1,\CL_0; \Lambda_{0,nov})
:= \{\alpha \mid E(\alpha) \geq 0, \, \#(\supp \alpha \cap
E^{-1}((-\infty,\lambda])) < \infty \, \mbox{ for all $\lambda \in \R$} \}
$$
\end{defn}
Obviously $C(\CL_1,\CL_0; \Lambda_{0,nov})$ has a structure of
$\Lambda_{0,nov}$-module. In addition, we have

\begin{prop}
$C(\CL_1,\CL_0; \Lambda _{0,nov})$ is a $d.g.c.f.z.$.
\end{prop}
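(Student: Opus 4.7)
The plan is to verify the defining axioms of a $d.g.c.f.z.$ (as recalled in the Appendix, following Chapter 6 of \cite{fooo:book}) for $C(\CL_1,\CL_0;\Lambda_{0,\text{nov}})$. These axioms require (i) a $\Z$-grading compatible with the grading of $\Lambda_{0,\text{nov}}$ through the formal variable $e$; (ii) a complete, exhaustive, decreasing $\R$-filtration $F^\lambda$ compatible with the valuation $\frak v$ on $\Lambda_{0,\text{nov}}$; (iii) a differential $\delta$ of degree $+1$ which preserves the filtration; and (iv) the gappedness condition controlling the set of energy levels on which $\delta$ has nontrivial matrix coefficients.

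First, I would set up the grading and filtration. The grading is $\deg(T^{\lambda}e^{\mu/2}[p,w]) = 2\mu + \mu([p,w];\lambda_{01})$, and this is well defined on $I(\CL_0,\CL_1)$ precisely because the third item in the equivalence relation $\sim$ enforces invariance of this integer. The $\R$-filtration is $F^{\lambda'}C = \{\alpha \mid E(\alpha) \geq \lambda'\}$, which is similarly well defined on $I(\CL_0,\CL_1)$ since the second item in $\sim$ preserves the action level $E$. Completeness and Hausdorffness of $F^\lambda$ are immediate from the finiteness condition $\#(\supp\alpha \cap E^{-1}((-\infty,\lambda])) < \infty$ in the definition of $C(\CL_1,\CL_0;\Lambda_{0,\text{nov}})$, which is exactly the non-Archimedean completeness requirement. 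Compatibility with the $\Lambda_{0,\text{nov}}$-module structure is straightforward from the construction of the equivalence relation, which was engineered so that the evident action $T^{\lambda'}e^{\mu'}\cdot[T^\lambda e^{\mu/2}[p,w]] = [T^{\lambda+\lambda'}e^{(\mu+2\mu')/2}[p,w]]$ descends to the quotient.

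Next, I would take as the differential $\delta := \delta_{b_1,b_0} = \widehat{\frak n}(e^{b_1},\cdot,e^{b_0})$ from the deformed $A_\infty$-bimodule structure of subsection \ref{subsec:morphisms}; then $\delta^2 = 0$ is consequence of the bimodule relations together with the Maurer-Cartan equation for $b_0,b_1$, and $\deg\delta = +1$ comes from the standard index formula for Floer trajectories. Filtration preservation extends Lemma \ref{filtpres} to $\delta_{b_1,b_0}$ using $\frak v(b_i)>0$: each extra insertion of $b_0$ or $b_1$ strictly increases the $T$-weight, so the deformation terms lie in $F^{>0}$. I would then quotient down to the $T^\lambda e^{\mu/2}[p,w]$-basis and read off the matrix coefficients as the sum over homotopy classes $B$ of $\pm n(p,q;B)T^{\omega(B)}e^{\mu(B)/2}$ (weighted by the decorations $b_0, b_1$).

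The step requiring the most care is the gappedness condition (iv). Here I would appeal to Gromov-Floer compactness for the moduli spaces $\CM(p,q;B)$, together with the compactness theorem for pseudo-holomorphic sections of Lefschetz Hamiltonian fibrations (Theorem \ref{thm:maximum}) that underlies the well-definedness of $\frak n$ in the presence of critical points: these give that, for each fixed energy bound, only finitely many classes $B\in\Pi(\CL_0,\CL_1)$ yield a nonempty moduli space, and hence the subset $\{(\omega(B),\mu(B))\}\subset \R_{\geq 0}\times \Z$ of classes with $n(p,q;B)\neq 0$ is discrete and contained in a finitely generated monoid $G\subset \R_{\geq 0}\times \Z$. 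This is exactly the gappedness datum required by Definitions \ref{gap-delta} and \ref{gap-varphi}. Once these four axioms are verified, the proposition follows, and the spectral sequence machinery of \cite{fooo:book} becomes available, to be applied in the next section.
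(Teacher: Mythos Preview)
Your approach is essentially the same as the paper's: verify the axioms of Definition~\ref{dgcfz} directly. Two points of difference are worth noting.

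First, the gappedness condition you list as item~(iv) is \emph{not} part of the definition of a d.g.c.f.z.\ (Definition~\ref{dgcfz}); it is the separate Definition~\ref{gap-delta}, invoked later when the spectral sequence is set up. The paper does not verify gappedness as part of this proposition. Relatedly, your appeal to Theorem~\ref{thm:maximum} is misplaced: that theorem concerns pseudo-holomorphic sections of Lefschetz Hamiltonian fibrations, whereas the differential on $C(\CL_1,\CL_0;\Lambda_{0,\mathrm{nov}})$ is built from ordinary Floer trajectories in $M$ between the pair $(L_0,L_1)$, for which standard Gromov compactness already gives the needed finiteness.

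Second, the paper's argument is organized around a single structural observation you leave implicit: one writes down explicit energy-zero generators $\langle p\rangle = T^{-\CA_{\CL_0\CL_1}([p,w])}e^{-\mu([p,w];\lambda_{01})/2}[p,w]$ (with the obvious parity adjustment) and checks directly that $C(\CL_1,\CL_0;\Lambda_{0,\mathrm{nov}})$ is the completion of the free $\Lambda_{0,\mathrm{nov}}$-module on the finite set $\{\langle p\rangle : p\in \CL_0\cap\CL_1\}$. This immediately gives the c.f.z.\ structure in each degree, which is the heart of the definition; your filtration and completeness discussion establishes the same thing, but less transparently. The differential axioms you handle in~(iii) are exactly as the paper (implicitly) uses them, drawing on Lemma~\ref{filtpres} and the bimodule construction of subsection~\ref{subsec:morphisms}.
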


Giving the grading of an element $T^{\lambda}e^{\mu} [p,w]$
$2\mu +\mu ([p,w];\lambda_{01})$,
it becomes a filtered graded free $\Lambda _{0,nov}$ module.
Following section 5.1.3 \cite{fooo:book}, we write
\beastar
\langle p \rangle = \begin{cases}
T^{-\CA_{\CL_0\CL_1}([p,w])}e^{-\mu([p,w];\lambda_{01})} [p,w] \quad & \mbox{if $\mu([p,w];\lambda_{01})$ is even,}\\
T^{-\CA_{\CL_0\CL_1}([p,w])}e^{-(\mu([p,w];\lambda_{01}) -1)/2} [p,w] \quad & \mbox{if $\mu([p,w];\lambda_{01})$ is odd.}
\end{cases}
\eeastar
Thus we have $E(\langle p \rangle) = 0$ and $\mbox{deg}(\langle p \rangle)$ is either
0 or 1 depending on the parity of $\mu([p,w];\lambda_{01})$.

It is easy to see that $C(\CL_1,\CL_0; \Lambda _{0,nov})$
is isomorphic to the completion (with respect to
the filtration on $\Lambda_{0,nov}$) of the free $\Lambda_{0,nov}$
module generated by $\langle p \rangle$ for the intersection points $p \in \CL_0\cap \CL_1$.
Namely we have a canonical isomorphism
$$
C(\CL_1,\CL_0; \Lambda _{0,nov}) \cong
\underset{p\in \CL_0\cap \CL_1 }
{\widehat \bigoplus} \Lambda _{0,nov} \langle p \rangle
$$
as a ($\Z_2$-graded) $\Lambda _{0,nov}$ module.

Recall the definition  of the Floer cochain
module. By definition, we have an inclusion
$$
CF^*(\CL_1,\CL_0) \to
C^*(\CL_1,\CL_0;\Lambda_{nov})
$$
defined by
\be\label{eq:pwto}
[p,w] \mapsto e^{(\mu([p,w]) -\mu(\langle p
\rangle))/2} T^{\AA_{\ell_0}([p,w])} \langle p \rangle.
\ee
It is compatible with the obvious inclusion
$I_{\CL_0,\CL_1}: \Lambda(\CL_0,\CL_1) \to \Lambda_{nov}$

We take the coefficient $R = \Q$ and recall that Floer cohomology
$$
HF(\CL^{(1)},\CL^{(0)}) = \operatorname{Ker} \delta^{\vec b}/\operatorname{Im} \delta^{\vec b}, \quad
\vec b = (b_1,b_0)
$$
is defined as a $\Lambda(\CL^{(0)},\CL^{(1)})$-module.
\par
We remark that
$$
CF(\CL_1,\CL_0) \cong \Lambda(\CL_1,\CL_0)^{\#(\CL_0 \cap \CL_1)},
$$
where $\# \CL_0 \cap \CL_1$ is finite by the transversality
hypothesis. Therefore we have the isomorphism
$$
C(\CL_1,\CL_0;\Lambda_{nov}) \cong
CF(\CL_1,\CL_0;\ell_{01})
\otimes_{\Lambda(\CL_1,\CL_0)} \Lambda_{nov}.
$$
On the other hand, the Novikov ring
$\Lambda(\CL_1,\CL^{(0)})$ is a field if the ground ring is
$\Q$. Therefore, this leads to the isomorphism
\be\label{eq:HF}
HF((\CL_1,b_1),(\CL_0,b_0);\Lambda_{nov}) \cong
HF(\CL_1,\CL_0)
\otimes_{\Lambda(\CL_1,\CL_0)} \Lambda_{nov}.
\ee
\par
Finally, we explain how we combine the above
discussed anchored versions into a single non-anchored version of
Floer cohomology following section 5.1.3 \cite{fooo:book}.

We first note that the filtered $\Lambda_{0,nov}$-module structure of
$C(\CL_1,\CL_0;\Lambda_{0,nov})$ depends only on the homotopy
class $\ell_{01}$. So we form the completed direct sum
$$
C(L_1,L_0;\Lambda_{0,nov}) = \widehat \bigoplus_{[\ell_0] \in \pi_0(\Omega(L_0,L_1))}
C(L_1,L_0;\ell_0;\Lambda_{0,nov}).
$$
We note that we have the natural inclusion map
$$
C(\CL_1,\CL_0) \to C(L_1,L_0;\overline\gamma_0*\gamma_1;\Lambda_{0,nov})
\subset C(L_1,L_0;\Lambda_{0,nov})
$$
defined as \eqref{eq:pwto}. We define the corresponding Floer cohomology by
$$
HF(L_1,L_0;\Lambda_{0,nov}) : = \operatorname{Ker} \delta^{\vec b}/\operatorname{Im} \delta^{\vec b}.
$$
Then we have
$$
HF(L_1,L_0;\Lambda_{0,nov}) \cong \bigoplus_{[\ell_0] \in \pi_0(\Omega(L_0,L_1))}
HF(\CL_1,\CL_0) \otimes_{\Lambda(L_1,L_0;\ell_0)} \Lambda_{nov}.
$$
(See Proposition 5.1.17 \cite{fooo:book}.)

\subsection{Wrapping it up} \label{subsec:wrapping}

Now combining all the discussions in the previous subsections,
we are ready to prove the main theorem which we now re-state here.

\begin{thm}\label{CYexactsequence}
Let $(M,\omega)$ be a compact (symplectically) Calabi-Yau.
Let $L$ be a Lagrangian sphere in $M$ together with a preferred
diffeomorphism $f: S^2 \to L$. Denote by $\tau_L =\tau_{(L,[f])}$ be the
Dehn twist associated to $(L,[f])$.

Consider any Calabi-Yau Lagrangian branes $L_0, \, L_1$.
Then for any pair $(b_0,b_1)$ of the MC-solutions $b_0 \in \CM(L_0;\Lambda(L_0))$,
$b_1 \in \CM(L_1;\Lambda(L_1))$, there is a long
exact sequence of $\Z$-graded Floer cohomologies
\bea\label{eq:exacttriangle-b}
& \longrightarrow & HF((\tau_L(\CL_0),(\tau_L)_*(b_0)),(\CL_1,b_1))
\longrightarrow HF((\CL_0,b_0),(\CL_1,b_1)) \nonumber \\
&\longrightarrow & HF((\CL,0),(\CL_1,b_1)) \otimes
HF((\CL_0,b_0),(\CL_1,b_1)) \longrightarrow \eea as a
$\Lambda_{nov}$-module where the Floer cohomologies involved are the
deformed Floer cohomology constructed in \cite{fooo00,fooo:book}.
We also have the non-anchored version of the exact sequence.
\end{thm}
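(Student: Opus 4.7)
The plan is to apply the algebraic machinery of Lemma~\ref{seidellemma} to the triple
\[
C' = CF(\CL,\CL_1) \otimes CF(\tau_L(\CL_0),\CL), \quad
C = CF(\tau_L(\CL_0),\CL_1), \quad
C'' = CF(\CL_1,\CL_0),
\]
each equipped with the Floer differential deformed by the appropriate bounding cochain datum: $\vec b = (0,(\tau_L)_*b_0,b_1)$ on $C'$, $((\tau_L)_*b_0,b_1)$ on $C$, and $(b_0,b_1)$ on $C''$. The cochain maps $b = \frak m_2^{\vec b}$, $c = \frak c^{b_1,b_0}$, and the homotopy $h$ constructed in section~\ref{sec:seidel's} are to be used as the inputs, with the thick--thin decompositions $b=\beta+(b-\beta)$ and $c=\gamma+(c-\gamma)$ of section~\ref{sec:thickthin} providing the order assignments. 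The first step is thus to verify, using Proposition~\ref{mappq} and the $\delta$-wobbly choice of Dehn twist, that hypotheses (1)--(4) of Lemma~\ref{seidellemma} hold for a suitable choice of $\e>0$: the existence of a gap of width $3\e$ on $C'$ and $C''$, of width $2\e$ on $C$, the spacing condition $|r-s|\ge 4\e$ between their supports, and the short exact sequence
\[
0 \longrightarrow C' \xrightarrow{\beta} C \xrightarrow{\gamma} C'' \longrightarrow 0
\]
at the level of leading-order parts.

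Exactness of this short exact sequence is the combinatorial analog of Seidel's argument: the injective maps $p$ and $q$ of Lemma~\ref{lem:mappq} identify $\tau_L(\CL_0)\cap\CL_1$ with the disjoint union $\bigl((\tau_L(\CL_0)\cap\CL)\times(\CL\cap\CL_1)\bigr) \sqcup (\CL_0\cap\CL_1)$, and the compatibility of anchors established in Lemma~\ref{lem:smalltri} shows this identification respects the $\Gamma$-equivalence classes appearing in $I(\tau_L(\CL_0),\CL_1)$. The map $\beta$ is given by the count of canonical small triangles in $B_{can}$, whose contribution is of order $[0;\e)$ by \eqref{eq:OmegaBcan}; the map $\gamma$ is the continuation map identifying elements in $\operatorname{im}(q)$ with themselves. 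The energy lower bound \eqref{eq:energy} of Proposition~\ref{mappq} (which crucially uses Theorem~\ref{thm:maximum} to rule out bubbles at critical points) ensures that all remaining contributions to $b$ and $c$ are of order at least $[3\e;\infty)$.

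Once the hypotheses are verified, the conclusion of Lemma~\ref{seidellemma} must be obtained via a spectral sequence argument. The key technical issue is that, unlike in the exact case of \cite{seidel:triangle}, our complexes are infinite-dimensional $\Lambda_{0,nov}$-modules with unbounded $\R$-filtration. The plan is to replace each $CF$ by the associated d.g.c.f.z.\ $C(\cdot,\cdot;\Lambda_{0,nov})$ constructed in section~\ref{subsec:CFLL'}, where the filtration is gapped in the sense of Definitions~\ref{gap-delta}--\ref{gap-varphi} thanks to Gromov compactness applied to all the relevant moduli spaces. The spectral sequence of a gapped d.g.c.f.z., developed in Chapter~6 of \cite{fooo:book}, then plays the role that finite-dimensional spectral sequences play in \cite{seidel:triangle}, and the pages $E_r$ are controlled by Lemma~\ref{deltadD} applied inductively: the lowest-order piece $\delta$ is acyclic on $D=C'\oplus C\oplus C''$ (this is exactness of \eqref{eq:lowerexact}), which lifts to acyclicity of $d_D$ on the d.g.c.f.z.\ level and hence forces the desired long exact sequence on cohomology. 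Finally, tensoring with $\Lambda_{nov}$ (which is a field under our grading conventions, by the discussion following \eqref{eq:HF}) converts the anchored statement into the non-anchored one via the direct-sum decomposition over $\pi_0(\Omega(L_0,L_1))$.

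The main obstacle, and the step where care is required, is the spectral-sequence step: propagating the thick--thin dichotomy from the vector space level to the filtered $d.g.c.f.z.$ level in the presence of nontrivial bounding cochains $b_0,\,b_1$. The deformation $\frak m_1^b$ may generate new terms of arbitrarily small energy through the $A_\infty$ operations $\frak m_k$ paired with $b_i^{\otimes k}$, and it must be verified that these corrections can be incorporated into the $(b-\beta)$ and $(c-\gamma)$ pieces without destroying the order $[3\e;\infty)$ estimate. This requires exploiting the positivity of valuation $\frak v(b_i)>0$ together with a bookkeeping of energies of the $A_\infty$-contributions, parallel to the analysis in Chapter~5 of \cite{fooo:book}, and amounts to choosing $\e$ small relative to $\frak v(b_0),\,\frak v(b_1)$ and the gap constant $C(\CE;J)$ of Proposition~\ref{mappq}(3).
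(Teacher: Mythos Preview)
Your proposal is correct and follows essentially the same route as the paper: apply the algebraic Lemma~\ref{seidellemma} (in its d.g.c.f.z.\ incarnation, Lemma~\ref{H-vanishing}) to the triple $C'\oplus C\oplus C''$ built from the $\Lambda_{0,nov}$-modules $C(\cdot,\cdot;\Lambda_{0,nov})$, feed in the thick--thin decompositions of $\frak b$, $\frak c$, $\frak h$ from section~\ref{sec:thickthin} and Proposition~\ref{mappq}, invoke the gapped spectral sequence machinery of \cite{fooo:book} in place of the finite-dimensional argument, and then tensor up to $\Lambda_{nov}$. Your final paragraph on incorporating the bounding cochains is in fact more explicit than the paper's own treatment, which simply asserts that ``the same reasoning as for the case $b_i=0$'' carries through.
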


The same exact sequence still holds for any orientable relatively spin
pair $(L_0,L_1)$ if they are just unobstructed whose Maslov classes do
not necessarily vanish.

To highlight the main points of the construction, let us first assume
that $b_0 = b_1 = 0$ are MC-solutions. In this case, the Floer cohomology
is the standard one which uses the Floer boundary map $\delta$.
We first state the following lemma which is a consequence of Corollary
\ref{vanishing} and a variation of Lemma \ref{seidellemma}

\begin{lem}\label{H-vanishing} Let $D$ be an $[0;\infty)$-graded vector
space with a differential $d_D$ of order $[0;\infty)$, which is not
necessarily finite dimensional but forms a $d.g.c.f.z.$ in the sense of
Definition \ref{dgcfz}.
Suppose that one can then write $d_D = \delta + (d_D - \delta)$ with $\delta$ of
order $[0;\e)$, satisfying $\delta^2 = 0$, and $(d_D - \delta)$ of order
$[2\e;\infty)$. Suppose in addition, $H(D,\delta) = 0$; then $H(D,d_D) = 0$.
\end{lem}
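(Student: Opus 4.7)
The plan is to prove this by a successive-approximation argument which is, in essence, the standard spectral sequence argument for the filtration on $D$ but carried out by hand so as to exploit the completeness built into the $d.g.c.f.z.$ structure. The completeness is precisely what substitutes for the finite dimensionality / bounded filtration enjoyed by the exact-case version (Lemma \ref{deltadD}).

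First I would set up the filtration. The space $D$ comes with its $\R_{\geq 0}$-filtration $F^\lambda D$, and the order hypotheses mean that $\delta$ preserves the filtration exactly while $d_D - \delta$ shifts the filtration up by at least $2\e$, i.e.\ $(d_D-\delta)(F^\lambda D)\subset F^{\lambda + 2\e}D$. Consequently, in the associated graded for the sub-filtration $F^{k\e}D$ ($k\in\Z_{\geq 0}$), the induced differential is exactly $\delta$. The hypothesis $H(D,\delta)=0$ is therefore the vanishing of the $E_1$-page of the associated spectral sequence, and the whole content of the lemma is that this vanishing propagates to the limit despite the unboundedness of the filtration.

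Next I would carry out the concrete inductive lift. Let $x\in D$ satisfy $d_D x = 0$; I must produce $y\in D$ with $d_D y = x$. Applying $\delta$ to the equation $\delta x = -(d_D-\delta)x$, the right-hand side has order $\geq 2\e$, so $\delta x\in F^{2\e}D$. Using $H(D,\delta)=0$ and the completeness of $D$, choose $y_0$ with $\delta y_0 = x$; then $x_1 := x - d_D y_0 = -(d_D-\delta)y_0$ lies in $F^{2\e}D$ and is again $d_D$-closed. Iterate: having $x_k \in F^{2k\e}D$ with $d_D x_k = 0$, find $y_k\in F^{2k\e}D$ with $\delta y_k = x_k$ (possible because $H(D,\delta)=0$ is inherited on the filtered pieces via the gapped / graded structure built into the $d.g.c.f.z.$), and set $x_{k+1} := x_k - d_D y_k = -(d_D-\delta)y_k\in F^{2(k+1)\e}D$. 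The sum $y := \sum_{k\geq 0} y_k$ converges in $D$ because the $y_k$ lie in ever higher filtration levels and $D$ is complete with respect to its filtration; moreover $d_D y = \sum_k d_D y_k = \sum_k (x_k - x_{k+1}) = x$ by telescoping, again using filtration-continuity of $d_D$.

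The main obstacle is the last step: legitimately lifting $x_k$ through $\delta$ while staying in $F^{2k\e}D$. This requires that $H(D,\delta)=0$ holds not just for $D$ as a whole but in a filtration-respecting way, which is precisely what the $d.g.c.f.z.$ hypothesis (together with the order specifications on $\delta$ and the gap assumption that separates contributions) guarantees: the filtered complex $(D,\delta)$ splits, up to the filtration, into blocks of width $\e$ on each of which acyclicity can be applied. Once this lifting is in place, convergence of $y=\sum y_k$ and the verification $d_D y=x$ are formal consequences of completeness and filtration-continuity; the argument shows every $d_D$-cocycle is a $d_D$-coboundary, which is the claim. This is the same bookkeeping that underlies the FOOO spectral sequence in Chapter 6 of \cite{fooo:book}, adapted to the three-term setup of Lemma \ref{seidellemma} in the way already used by Seidel in \cite{seidel:triangle} for the exact case.
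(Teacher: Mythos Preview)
Your overall strategy (unroll the spectral sequence as a successive-approximation argument, using completeness of the $d.g.c.f.z.$ in place of bounded filtration) is sound and is indeed equivalent to what the paper does via Corollary \ref{vanishing} and the FOOO spectral-sequence machinery. However, there is a concrete error in the inductive step which, as written, makes the argument fail.

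You write ``choose $y_0$ with $\delta y_0 = x$'' and more generally ``find $y_k \in F^{2k\e}D$ with $\delta y_k = x_k$''. But $x_k$ is only $d_D$-closed, not $\delta$-closed: from $d_D x_k = 0$ you only get $\delta x_k = -(d_D-\delta)x_k \in F^{2(k+1)\e}D$, not $\delta x_k = 0$. So $H(D,\delta)=0$ does not let you solve $\delta y_k = x_k$ on the nose. What you actually need at each stage is the weaker lift $\delta y_k \equiv x_k \pmod{F^{2(k+1)\e}D}$; then $x_{k+1} := x_k - d_D y_k = (x_k - \delta y_k) - (d_D-\delta)y_k$ does lie in $F^{2(k+1)\e}D$ and the induction continues. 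Producing this weaker lift requires acyclicity of the associated graded pieces $(F^{2k\e}D / F^{2(k+1)\e}D, \overline\delta)$, which is not the literal hypothesis $H(D,\delta)=0$ but rather the condition $H(\overline D,\overline\delta)=0$ on the reduction --- exactly the hypothesis of Corollary \ref{vanishing}. (Equivalently, one can argue via a filtration-preserving $\delta$-contraction $h$ with $\delta h + h\delta = \mathrm{id}$, setting $y_k = h x_k$; but the existence of such an $h$ in the $d.g.c.f.z.$ setting is itself a nontrivial fact drawn from the FOOO structure theory, cf.\ the proposition you allude to as Proposition 6.3.9 of \cite{fooo:book}.)

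So the ``main obstacle'' you flag is real, and your one-sentence resolution of it (``possible because $H(D,\delta)=0$ is inherited on the filtered pieces'') is precisely the point that needs a genuine argument. The paper handles this by invoking the FOOO spectral-sequence package directly (vanishing of the $E_2$-page plus the convergence theorem for finite $d.g.c.f.z.$), rather than doing the iteration by hand. Once you patch your induction to lift only modulo higher filtration (or supply the filtered contraction), your approach and the paper's are the same argument in two notations.
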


Now we apply this lemma to the direct sum
$$
D = C' \oplus C \oplus C''
$$
with the differentials given by
$$
d_D = \left(\begin{matrix} d_{C'} & 0 & 0 \\
b & d_C & 0 \\
h & c & d_{C''}
\end{matrix}\right),\quad
\delta = \left(\begin{matrix} 0 & 0 & 0 \\
\beta & 0& 0 \\
0 & \gamma & 0 \end{matrix}\right)
$$
where this time we consider the $\Lambda_{0,nov}$-modules
\beastar
C'& = & C(\CL,\CL_1) \otimes C(\tau_L(\CL_0),\CL), \\
C & = & C(\tau_L(\CL_0),\CL_1), \quad C'' = C(\CL_0,\CL_1).
\eeastar
and the entries of the matrices are given as stated as before.

Now Lemma \ref{mappq} and the thick-thin decomposition results
in section \ref{sec:thickthin} give rise to a long exact sequence
\bea\label{eq:Lambda0nov}
& \longrightarrow & HF(\tau_L(\CL_0),\CL_1;\Lambda_{0,nov})
\longrightarrow HF(\CL_0,\CL_1;\Lambda_{0,nov}) \nonumber \\
&\longrightarrow & HF(\CL,\CL_1;\Lambda_{0,nov})
\otimes HF(\CL_0,\CL_1;\Lambda_{0,nov})
\longrightarrow
\eea
for $\Lambda_{0,nov}^{(0)}$-modules. Since we have
$$
HF(\CL,\CL';\Lambda_{nov}) \cong
HF(\CL,\CL';\Lambda_{0,nov}) \otimes_{\Lambda_{0,nov}} \Lambda_{nov}
$$
from (\ref{eq:HF}) and $\Lambda_{nov}$ is a field, tensoring
(\ref{eq:Lambda0nov}) with $\Lambda_{nov}$ produces the exact
sequence (\ref{eq:exacttriangle-b}) for the case $b_0 = b_1 = 0$.

Now the same reasoning as for the
case $b_i = 0$ induces the long exact sequence (\ref{eq:exacttriangle-b}).
This finishes the proof of Theorem \ref{CYexactsequence}.

\section{Appendix}
\label{sec:appendix}

\subsection{Index formula for $E_{x_0} \setminus \{x_0\}$}
\label{subsec:index}

In this section, we prove the index formula (\ref{eq:index}). There
is an index formula stated various literature in terms of the
`capping surfaces' stated as in \cite{EGH}, \cite{bourgeois}, which
however does not fit our need. For this reason, we give a complete
proof of (\ref{eq:index}).

In fact we will consider the following general set-up. Consider a
symplectic manifold $W$ with a contact type boundary of the type
$$
\del W \cong S^1(T^*N)
$$
with \emph{negative} end for an oriented compact manifold $N$. We
attach the cylinder $\R_+ \times \del W$ and also denote by $W$ the
completed manifold. We denote by $(r(x),\Theta(x))$ for a point $x
\in \R^+ \times S^1(T^*N)$. Composing this with the diffeomorphism
$$
(s,\Theta) \mapsto (e^s, \Theta);\quad \R \times \del W \to \R_+
\times \del W
$$
we put a translational invariant almost complex structure $J$ on the
end.

Next let $\gamma$ be a Reeb orbit of $S^1(T^*N)$ with period $T$. We
note that the symplectic vector bundle $\gamma^*T(T^*N)$ carries a
splitting
$$
\gamma^*T(T^*N) = \C \oplus \gamma^*\xi_N
$$
where $\xi_N$ is the contact distribution of $S^1(T^*N)$.
Furthermore we fix a Riemannian metric $g$ on $N$ and consider the
canonical almost complex structure $J_g$ on $T^*N$. The projection
of $\gamma$ to $N$ is nothing but a geodesic on $N$ with respect to
$g$. Denote by $c= c_\gamma$ the associated geodesic on $N$. Since
we assume $N$ is oriented, we can take a trivialization
$\gamma^*T(S^1(T^*N))$ which is tangent to the vertical
fibers of $T(T^*N)$. Using this we can define the Conley-Zehnder
index of $\gamma$ when $\gamma$ is nondegenerate, which we denote by $\mu_{CZ}(\gamma)$.
For the Bott-Morse case, one uses the generalized Conley-Zehnder index
defined by Robbin and Salamon \cite{robbin-sal}.

Next this choice of trivialization of $\gamma^*(T(S^1(T^*N))) =
\gamma^*T(\del W))$ also allows one to define a \emph{relative Chern
number} of a map $u: \dot \Sigma \to W$ with the asymptotic
condition
\be\label{eq:asymp}
\lim_{\tau \to \infty}\Theta\circ u(\tau,t) = \gamma(t),
\quad \lim_{\tau \to \infty} s \circ u(\tau,t) = -\infty.
\ee
Denote by $\overline u: (\widehat{\dot \Sigma}, \del \widehat{\dot \Sigma})
 \to (W,\gamma)$ the obvious compactified map.

Then $\overline u^*(TW)$ is a symplectic vector bundle
with a trivialization $\phi_\gamma: \gamma^*(T(\del W)) \to S^1 \times \C^{n-1}$
constructed above.

This gives rise to the main definition

\begin{defn} We define the relative Chern number, denoted by $c_1(u;\gamma)$,
by
$$
c_1(u;\gamma) = c_1(u^*TW;\phi_\gamma).
$$
\end{defn}

Once we have made the notions of relative Chern number and Conley-Zehnder
index precise, the following index formula can be derived from
the formula in Corollary 5.4, \cite{bourgeois}

\begin{thm} The expected dimension of
$\CM(W,J;\gamma;A)$ is given by
$$
-\mu_{CZ}(\gamma) + (n-3) + 2c_1(u;\gamma), \quad [u] = A
$$
for a non-degenerate geodesic.

For the Morse-Bott case in which $\CR_{sim}$ forms a smooth manifold, the expected
dimension of the moduli space
$\CM(W,J;A;1)$ consisting of $J$-holomorphic $u$'s with
asymptotics
$$
\lim_{\tau \to \inf} \Theta\circ u(e^{2\pi(\tau + it)}) = \gamma(t)
$$
where $\gamma$ is a simple Reeb orbit, is given by
$$
-\mu_{CZ}(\gamma) + \frac{\dim \CR_{sim}}{2} + (n-3) + 2c_1(u;\gamma)
$$
where $\mu_{CZ}$ is the generalized Conley-Zehnder index of $\gamma$.
\end{thm}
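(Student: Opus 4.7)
My plan is to prove this as a Fredholm index computation for the linearized Cauchy-Riemann operator $D_u\bar\partial_J$ acting on sections of $u^*TW$ over $\dot\Sigma$, following the general framework developed by Schwarz, Hofer-Wysocki-Zehnder, and Bourgeois. First, I would introduce the appropriate functional-analytic setup: choose a small exponential weight $\delta>0$ smaller than any nonzero eigenvalue (modulus) of the asymptotic operator $A_\gamma$ associated with $\gamma$, and work on weighted Sobolev spaces $W^{1,p,\delta}(\dot\Sigma, u^*TW)$ whose elements decay like $e^{-\delta|\tau|}$ near the puncture. Using the trivialization $\phi_\gamma$ of $\gamma^*T(\partial W) = \gamma^*T(S^1(T^*N))$ (obtained from the splitting $\gamma^*T(T^*N) = \mathbb C \oplus \gamma^*\xi_N$ together with orientability of $N$), I would extend to a symplectic trivialization of $u^*TW$ over the compactified domain $\widehat{\dot\Sigma}$, up to the ambiguity measured by $c_1(u;\phi_\gamma)$.

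Second, I would write the linearized operator in the cylindrical coordinates $(\tau,t)$ near the puncture as $D = \partial_\tau + J_0\partial_t + S(\tau,t)$, where the zeroth-order part converges as $\tau\to-\infty$ to the asymptotic operator $A_\gamma = J_0\partial_t + S_\gamma(t)$. In the \emph{nondegenerate} case, standard asymptotic analysis and the relative Riemann-Roch theorem of Schwarz yield
\[
\operatorname{ind} D = n\chi(\widehat{\dot\Sigma}) + 2c_1(u;\phi_\gamma) - \mu_{CZ}(\gamma).
\]
For $\dot\Sigma\cong\mathbb C$ one has $\chi(\widehat{\dot\Sigma})=1$; after quotienting by the three-dimensional automorphism group of the punctured sphere one subtracts $3$, giving the stated $(n-3)+2c_1(u;\phi_\gamma)-\mu_{CZ}(\gamma)$. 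The case $\dot\Sigma\cong \mathbb C\setminus D^1$ with Lagrangian boundary condition proceeds in the same way using the half-index formula (cf.\ \cite{katz-liu}) and is compatible because $Q$ has vanishing fiberwise Maslov class.

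Third, for the Morse-Bott case, the asymptotic operator $A_\gamma$ acquires a kernel of dimension $\dim\CR_{sim}$ coming from variation of $\gamma$ within its Morse-Bott family, so the naive $\delta$-weighted operator is not Fredholm. I would follow Bourgeois's strategy (Corollary 5.4 of his paper): split off the Morse-Bott kernel by hand, keeping $\delta$ below the smallest \emph{nonzero} spectral gap, and add $\dim\CR_{sim}$ to the cokernel contribution (or equivalently, include the corresponding asymptotic evaluation maps). Alternatively one may perturb the contact form to break Morse-Bott degeneracy into nondegenerate orbits and use the Robbin-Salamon \cite{robbin-sal} definition of $\mu_{CZ}$ as a half-integer, which continuously interpolates through the degeneration. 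Either route produces the correction $+\tfrac{1}{2}\dim\CR_{sim}$, yielding the stated formula.

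The main obstacle I anticipate is the careful bookkeeping in the Morse-Bott step: one has to justify that the Morse-Bott kernel contributes \emph{exactly} $\tfrac{1}{2}\dim\CR_{sim}$ (and not the full $\dim\CR_{sim}$) to the virtual dimension, which requires showing that half of the kernel is absorbed by the asymptotic evaluation while the other half survives as tangent directions to the moduli space. In our setting this is the $n-1$ dimensional family of simple closed geodesics on $S^n$; their evenness and the symplectic structure on the asymptotic operator ensure the symmetric splitting. Once this is in place, combining with the Chern class calculation on $u^*TW$ and the trivialization choice from $\phi_\gamma$ completes the proof.
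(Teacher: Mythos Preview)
Your proposal is correct and in fact supplies more than the paper does: the paper gives no proof of this theorem at all, stating only that ``the following index formula can be derived from the formula in Corollary 5.4, \cite{bourgeois}.'' Your outline is precisely a reconstruction of Bourgeois's derivation (weighted Sobolev setup, Riemann--Roch on the punctured domain, Morse--Bott correction via the $\tfrac{1}{2}\dim\CR_{sim}$ term), so you are aligned with what the paper invokes.

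One minor point of bookkeeping: the biholomorphism group of the once-punctured sphere $\C\cong\C P^1\setminus\{\infty\}$ is the affine group $\{z\mapsto az+b:a\in\C^*,\,b\in\C\}$, which is $4$-real-dimensional, not $3$. The $(n-3)$ in the formula is more cleanly obtained by using Bourgeois's packaging $\operatorname{vdim}=(n-3)\chi(\dot\Sigma)-\mu_{CZ}(\gamma)+2c_1(u;\phi_\gamma)$ directly (which already absorbs domain moduli and automorphisms), with $\chi(\dot\Sigma)=1$ for genus zero with one puncture. Your two-step route (raw Fredholm index $n\chi+2c_1-\mu_{CZ}$, then quotient) recovers the same number only once you also fix an asymptotic marker at the puncture, cutting the automorphism group down to the $3$-dimensional subgroup $\{z\mapsto az+b:a\in\R_{>0}\}$. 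Either way the answer is the same; just make explicit which convention you are using so the $-3$ is honestly accounted for.
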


\subsection{d.g.c.f.z. and spectral sequence} \label{subsec:dgcfz}

We first start from the following situation. Let $V =
(\Lambda_{0,nov}^{(0)})^{\oplus I}$ be a free
$\Lambda_{0,nov}^{(0)}$ module with $\#(I)$ finite. We define a
filtration on $V$ in the obvious way which will induce a topology on
$V$. Let $\widehat V$ be the completion of $V$. We call such
$\widehat V$ a {\it completed free filtered $\Lambda_{0,nov}^{(0)}$
module generated by energy zero elements}, or in short {\it c.f.z}.
If $V$ is finitely generated (as a $\Lambda_{0,nov}^{(0)}$ module)
in addition,  we say that it is a {\it finite} c.f.z. We define a
function, which we call the \emph{(action) level},
$$
E: \widehat V  \setminus \{0\} \to \R_{\ge 0}
$$
such that
$$
\mathfrak v \in F^{E(\mathfrak v)}V, \qquad \mathfrak v \notin
F^{\lambda}V \quad\text{if $\lambda > E(\mathfrak v)$.}
$$
Let $\overline V = V/\Lambda_{0,nov}^{+,(0)}V \cong R^I$. {\it We
always take an embedding $($splitting$)$
$$\overline V \subset V$$
as the energy 0 part of $V$ so that its composition with the
projection $V \to \overline V$ is the identity map.}
\par
Let $\mathfrak v \in V$.  We put
$$
\mathfrak v = \sum T^{\lambda_i}v_{i},
$$
where $v_i \in \overline V$, $\lambda_i < \lambda_{i+1}$,
$\lim_{i\to \infty}\lambda_i = \infty$ and $v_i \ne 0$. We call
$T^{\lambda_i}v_i$ the {\it components} of $\mathfrak v$, $
T^{\lambda_1}v_{1}$ the {\it leading component} and $v_{1}$ the {\it
leading coefficient} of $\mathfrak v$. We denote the leading
coefficient $v_1$ of $\mathfrak v$ by $\sigma(\mathfrak v)$.  We
also define the leading component and the leading coefficient of an
element of $\Lambda_{0,nov}^{(0)}$ in the same way.

Now we consider the case of graded $\Lambda_{0,nov}$ modules.

\begin{defn}[Definition 6.3.8 \cite{fooo:book}]\label{dgcfz}
Let $\widehat C$ be a graded $\Lambda_{0,nov}$ module. We assume
that $\widehat C^k$ is a c.f.z. for each $k$. A {\it differential
graded c.f.z.} (abbreviated as d.g.c.f.z) is a pair $(\widehat
C,\delta)$ with a degree $1$ operator $\delta: \widehat C \to
\widehat C$ such that
$$\delta\circ \delta = 0, \qquad
\delta(F^{\lambda} \widehat C) \subseteq F^{\lambda} \widehat C.
$$
We call the pair a {\it finite} d.g.c.f.z. if each $\widehat C^{k}$
is a finite c.f.z.
\end{defn}
The following proposition is essential for the proof of some
convergence properties of the spectral sequence

\begin{prop}[Proposition 6.3.9 \cite{fooo:book}]
Let $W$ be a finitely generated $\Lambda_{0,nov}^{(0)}$ submodule of
$\widehat C^k$.  Then there exists a constant $c$ depending only on
$W$ but independent of $\lambda$ such that
$$
\delta(W)\cap F^{\lambda}\widehat C^{k+1} \subset \delta(W \cap
F^{\lambda-c}\widehat C^k).
$$
\end{prop}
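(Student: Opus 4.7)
The statement is an Artin--Rees-type equivalence of filtrations for the differential module $(\widehat C, \delta)$ over the non-Noetherian Novikov ring $\Lambda := \Lambda^{(0)}_{0,nov}$. The plan is to reformulate the desired inclusion as a bounded-shift equivalence between two natural filtrations on the image $\delta(W) \subset \widehat C^{k+1}$: the subspace filtration $F^\lambda_{\mathrm{sub}} := \delta(W) \cap F^\lambda \widehat C^{k+1}$ induced from the ambient embedding, and the pushforward filtration $F^\lambda_{\mathrm{quot}} := \delta(F^\lambda W)$. The inclusion $F^\lambda_{\mathrm{quot}} \subset F^\lambda_{\mathrm{sub}}$ is automatic from $\delta$ preserving the filtration, so the task reduces to the reverse inclusion $F^\lambda_{\mathrm{sub}} \subset F^{\lambda-c}_{\mathrm{quot}}$ uniformly in $\lambda$. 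Elementwise, given $w \in W$ with $\delta(w) \in F^\lambda \widehat C^{k+1}$, one must produce $k_0 \in K := W \cap \ker\delta$ with $w - k_0 \in F^{\lambda-c} \widehat C^k$.

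I would fix once and for all finite generators $w_1, \ldots, w_N$ of $W$ over $\Lambda$ and set $c_0 := \max_i E(w_i)$, $c_1 := \max_i E(\delta(w_i))$. Writing $w = \sum a_i w_i$ and $\delta(w) = \sum a_i \delta(w_i)$, the element $k_0$ will take the form $\sum b_i w_i$ for a syzygy $(b_i)$ of $(\delta(w_i))$ to be assembled iteratively. The key ingredient is the residue-of-leading-coefficient map $\bar\delta \colon \overline W \to \overline{\widehat C^{k+1}}$ obtained by reducing modulo the maximal ideal $\Lambda^{+,(0)}_{0,nov}$; finite generation of $W$ forces $\overline W$, and hence $\ker\bar\delta$, to be finite-dimensional $\Q$-vector spaces. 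If $w \in W$ has leading level $\mu := E(w) < \lambda$ and $\delta(w) \in F^\lambda$, then $\sigma(w) \in \ker \bar\delta$, since otherwise $\bar\delta(\sigma(w)) = \sigma(\delta(w))$ would be a nonzero leading coefficient of $\delta(w)$ at level $\mu < \lambda$, contradicting the assumption on $\delta(w)$.

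With this constraint in hand, the main loop would lift $\sigma(w)$ to a genuine element $\kappa \in K$ sharing the same leading coefficient at level $\mu$, subtract $\kappa$ from $w$ to strictly raise its leading level, and iterate; completeness of $\widehat C^k$ and closedness of $K$ in the non-Archimedean topology turn the telescoping sum $\sum \kappa^{(n)}$ into a convergent series producing $k_0 \in K$ with $w - k_0 \in F^{\lambda-c}$. The principal obstacle is the lifting step itself: an a priori element of $\ker\bar\delta$ lifts only to some $\tilde\kappa \in W$ with $\delta(\tilde\kappa) \in F^{\varepsilon} \widehat C^{k+1}$ for some $\varepsilon > 0$ depending on the lift, not to an honest element of $K$. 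One resolves this by a secondary inductive loop that invokes the statement itself at strictly smaller remaining energy gaps (or, equivalently, by a well-ordering argument on the finite collection of energy levels attainable from the chosen generators), crucially exploiting the finite-dimensionality of $\ker\bar\delta$ to ensure that the required lifts can be chosen with a \emph{uniformly} bounded filtration correction $\varepsilon$. This finite-dimensionality is the effective substitute for the Noetherian hypothesis in the classical Artin--Rees lemma and delivers the required $c$ depending only on $c_0$, $c_1$ and $\varepsilon$, independently of $\lambda$.
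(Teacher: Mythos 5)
This proposition is not proved in the present paper; it is Proposition 6.3.9 of \cite{fooo:book}, quoted as a black box for the spectral-sequence machinery, so there is no in-paper argument to compare against. I therefore assess your proposal on its own terms.

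Your opening reduction is correct: it suffices, given $w\in W$ with $\delta(w)\in F^\lambda\widehat C^{k+1}$, to produce $k_0\in K:=W\cap\ker\delta$ with $w-k_0\in F^{\lambda-c}\widehat C^k$. But the iterative lifting step that drives everything afterwards does not work, and the repair you sketch cannot save it. The difficulty is not merely that a class in $\ker\bar\delta$ lifts only to some $\tilde\kappa\in W$ whose $\delta$-image has positive but uncontrolled energy; the class may admit \emph{no} lift to $K$ whatsoever, because $K$ can be strictly smaller than $\ker\bar\delta$ suggests. Take energy-zero basis vectors $e_1,e_2$ of $\widehat C^k$ and $x,y$ of $\widehat C^{k+1}$, set $\delta(e_1)=x$, $\delta(e_2)=T^{1/2}x+T^2y$, and $W=\Lambda^{(0)}_{0,nov}e_1\oplus\Lambda^{(0)}_{0,nov}e_2$. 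Then $\delta(ae_1+be_2)=(a+T^{1/2}b)\,x+T^2b\,y$ vanishes only when $a=b=0$, so $K=0$; yet $\bar\delta(\bar e_2)=0$. Your main loop stalls at its first step because there is no nonzero $\kappa\in K$ to subtract, and no secondary induction, well-ordering, or appeal to $\dim_\Q\ker\bar\delta<\infty$ can manufacture elements of a trivial module. Two ancillary problems compound this: the set of energy levels $E(W)$ attainable from finitely many generators over $\Lambda^{(0)}_{0,nov}$ is a continuum, not a finite set (the valuation monoid is all of $\R_{\ge 0}$), so a well-ordering argument has nothing to bite on; and $\sigma(w)\in\overline{\widehat C^k}$ is not an element of $\overline W=W/\Lambda^{+,(0)}_{0,nov}W$ (the canonical map $\overline W\to\overline{\widehat C^k}$ is generally non-injective), so the condition ``$\sigma(w)\in\ker\bar\delta$'' compares objects living in different spaces. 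The Proposition does hold in the example above --- $\delta(ae_1+be_2)\in F^\lambda$ forces $E(b)\ge\lambda-2$ and $E(a)\ge\lambda-3/2$, so $c=2$ works --- but that bound comes from a Gaussian-elimination/triangularization of $\delta|_W$ over the valuation ring $\Lambda^{(0)}_{0,nov}$ (finitely generated torsion-free modules over it are free, and the presentation can be reduced at the cost of a filtration shift read off from finitely many leading-term cancellations among $\delta(w_1),\dots,\delta(w_N)$), not from producing elements of $K$. Any correct proof has to be organized around the generators of $\delta(W)$ rather than around $K$.
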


Now let $(\widehat C,\delta)$ be a d.g.c.f.z. and $\widehat C^k$ a
completion of $C^k$. We assume that $C^k$ is {\it free over
$\Lambda_{0,nov}^{(0)}$}. We put
$$
\overline C = C/\Lambda_{0,nov}^{+ (0)}C \cong \widehat
C/\Lambda_{0,nov}^{+ (0)}\widehat C,
$$
and let $\overline\delta$ be the induced derivation on $\overline
C$. We again embed $\overline C \subseteq C \subseteq \widehat C$ as
the energy 0 part. In general $\overline C$ is {\it not} a
differential graded subalgebra of $\widehat C$. Let $\{\mathfrak
e_i\}$ be a basis of $C$ (over $\Lambda^{(0)}_{0,nov}$) and
$\overline{\mathfrak e}_i$ be the corresponding basis of $\overline
C$ over $R = \Lambda_{0,nov}^{(0)}/\Lambda_{0,nov}^{+ (0)}$. We put
$$
\overline\delta(\overline{\mathfrak e}_i) = \sum
\delta_{0,ij}\overline{\mathfrak e}_j,
$$
and define $\delta_0: \widehat C \to \widehat C$ by $\delta_0 =
\overline \delta \otimes 1$ i.e., by
$$
\delta_0 \mathfrak e_i= \sum \delta_{0,ij}\mathfrak e_j .
$$
\begin{defn}\label{gap-delta}
We say that $(\widehat C,\delta)$ satisfies the {\it gapped
condition} if $(\delta - \delta_0)$ has order $[\lambda'',\infty)$,
i.e., if there exists $\lambda'' > 0$ such that for any $\lambda$ we
have
$$
\delta \mathfrak v - \delta_0 \mathfrak v \in F^{\lambda +
\lambda''} \widehat C
$$
for all $\mathfrak v \in F^\lambda \widehat C$.
\end{defn}

Under the gapped condition, we take a constant $\lambda_0$ with $0 <
\lambda_0< \lambda''$ and define a filtration  on $\widehat C$ by
$$
F^n\widehat C = F^{n\lambda_0}\widehat C.
$$
\cite{fooo:book} then uses this filtration to define a spectral
sequence.

\begin{lem}[Lemma 6.3.20 \cite{fooo:book}] Denote
$$
\Lambda^{(0)}(\lambda_0) =
\Lambda^{(0)}_{0,nov}/F^{\lambda_0}\Lambda^{(0)}_{0,nov}.
$$
Then there exists a $\Lambda^{(0)}(\lambda_0)$ module homomorphism
$$
\delta^{p,q}_r: E_r^{p,q}(\widehat C) \to E_r^{p+1,q+r-1}(\widehat
C)
$$
such that
\begin{enumerate}
\item
$\delta^{p+1,q+r-1}_r\circ \delta^{p,q}_r = 0$.
\item
$
\operatorname{Ker}(\delta^{p,q}_r)/\operatorname{Im}(\delta^{p-1,q-r+1}_r)
\cong E_{r+1}^{p,q}(\widehat C). $
\item
$e^{\pm 1} \circ \delta^{p,q}_r = \delta^{p\pm 2,q}_r \circ e^{\pm
1}$.
\end{enumerate}
\end{lem}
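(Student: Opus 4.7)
The plan is to apply the standard spectral sequence machinery for a filtered cochain complex to $(\widehat C, \delta)$ equipped with the filtration $F^n \widehat C := F^{n\lambda_0}\widehat C$. The only thing that requires care beyond the classical Cartan--Eilenberg construction is that $\widehat C$ is a completion of an infinitely generated free $\Lambda^{(0)}_{0,nov}$-module, so neither the filtration is finite nor are the quotients finite-dimensional; this is where the gapped condition of Definition~\ref{gap-delta} and Proposition 6.3.9 must be brought to bear to control limits and ensure the pages $E_r^{p,q}$ are genuine $\Lambda^{(0)}(\lambda_0)$-modules.

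Concretely, first I would define, for each $p,r$, the approximate cocycles and coboundaries
\begin{align*}
Z_r^p(\widehat C) &= \{\mathfrak v \in F^p\widehat C \mid \delta \mathfrak v \in F^{p+r}\widehat C\},\\
B_r^p(\widehat C) &= F^p \widehat C \,\cap\, \delta\bigl(F^{p-r+1}\widehat C\bigr),
\end{align*}
and set
\[
E_r^{p,q}(\widehat C) = \bigl(Z_r^p \cap \widehat C^{\,p+q}\bigr)\Big/\bigl(Z_{r-1}^{p+1} + B_{r-1}^p\bigr) \cap \widehat C^{\,p+q},
\]
where the second grading $q$ records the complementary cohomological degree inherited from $\widehat C = \bigoplus_k \widehat C^k$. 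Since $\delta$ is $\Lambda_{0,nov}$-linear and preserves the filtration, $\delta(Z_r^p) \subset Z_r^{p+r} \cap \delta(F^p\widehat C)$, so $\delta$ descends to the claimed map $\delta_r^{p,q}$; the target bi-index $(p+1, q+r-1)$ is forced by the bookkeeping between the total degree shift of $\delta$ and the filtration jump of $r$.

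Next I would verify the three asserted properties in turn. Property~(1) is immediate from $\delta \circ \delta = 0$: the composition $\delta_r \circ \delta_r$ lifts to $\delta^2 = 0$ modulo the relations defining the target page, since $\delta(Z_{r-1}^{p+1}) + \delta^2(\text{anything})$ lies trivially in the image of the defining quotient. Property~(2), the identification $E_{r+1}^{p,q} \cong \ker\delta_r^{p,q}/\operatorname{im}\delta_r^{p-1,q-r+1}$, is a standard diagram chase: $\ker \delta_r^{p,q}$ is represented by cocycles in $Z_{r+1}^p$ modulo $Z_r^{p+1} + B_r^p$ (this step uses $\delta^2=0$ to pass from $Z_r^p$-level cocycles modulo coboundaries up to $Z_{r+1}^p$). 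Property~(3) is essentially automatic: the action of $e^{\pm 1}$ on $\widehat C$ as a $\Lambda_{0,nov}$-module preserves the energy filtration and commutes with $\delta$ (by $\Lambda_{0,nov}$-linearity), while it shifts the cohomological grading by $\pm 2$ (by the convention that $\deg e = 2$), giving the displayed commutation relation once one tracks bi-indices.

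The main obstacle, and the only place where this argument departs from the textbook proof, is verifying that the quotients $E_r^{p,q}$ and the maps $\delta_r^{p,q}$ are well-defined on the completed module and not sensitive to Cauchy sequences that would only exist in $\widehat C$ and not in $C$. This is precisely the content of Proposition~6.3.9: it says that if a class in $\delta(W) \cap F^\lambda \widehat C$ is witnessed by some element in a finitely generated $\Lambda^{(0)}_{0,nov}$-submodule $W$, then it already comes from $\delta(W \cap F^{\lambda-c}\widehat C)$ for a constant $c$ depending only on $W$. I would apply this, together with the gapped condition $\delta - \delta_0 \in O(F^{\lambda''})$, to show that $Z_r^p$ and $B_r^p$ are closed in the filtration topology and that the filtration quotients are genuine modules over $\Lambda^{(0)}(\lambda_0) = \Lambda^{(0)}_{0,nov}/F^{\lambda_0}\Lambda^{(0)}_{0,nov}$. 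Once this bookkeeping is in place, the remainder of the argument is formal.
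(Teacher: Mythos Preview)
The paper does not prove this lemma: it is quoted directly from \cite{fooo:book} as Lemma~6.3.20, and the surrounding text explicitly says ``Of course, the construction of $E_r^{p,q}(\widehat C)$ is quite standard.'' Your proposal is precisely that standard Cartan--Eilenberg construction, so at the level of strategy there is nothing to compare.

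Two small corrections are worth flagging. First, your indexing is transposed: in the paper's convention (visible from Theorem~6.3.28, where $E_\infty^{p,q} \cong F^qH^p/F^{q+1}H^p$), the superscript $p$ is the cohomological degree and $q$ is the filtration level, not the other way around. With your definitions $Z_r^p = \{\mathfrak v \in F^p\widehat C \mid \delta\mathfrak v \in F^{p+r}\widehat C\}$ and $E_r^{p,q} = (Z_r^p \cap \widehat C^{p+q})/\cdots$, the induced differential lands in $E_r^{p+r,q-r+1}$, not $E_r^{p+1,q+r-1}$; you need to swap the roles of $p$ and $q$ to match the stated lemma. Second, you overstate the role of Proposition~6.3.9 and the gapped condition: the definitions of $Z_r^p$, $B_r^p$, $E_r^{p,q}$, and $\delta_r$ are purely formal in any filtered complex and require no control on Cauchy sequences or completions. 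Proposition~6.3.9 enters only later, in the proof of the convergence Theorem~6.3.28, not in the construction of the pages themselves. The gapped condition is used to identify $E_2$ (Lemma~6.3.24), again not for the present lemma.
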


Of course, the construction of $E_r^{p,q}(\widehat C)$ is quite
standard. One difference from the standard case is that the
filtration used here is not bounded.  Namely we do {\it not} have
$F^n\widehat C = 0$ for large $n$. Hence the convergence property of
our spectral sequence is far from being trivial in general. However
it is stable from below in that $F^0\widehat C = \widehat C $. As a
consequence we have:

\begin{lem}[Lemma 6.3.22 \cite{fooo:book}]\label{injection}
There exists an injection
$$
E_{r+1}^{p,q}(\widehat C ) \to E_{r}^{p,q}(\widehat C )
$$
if $q-r+2\le 0$.
\end{lem}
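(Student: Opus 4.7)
The plan is to use the identification from the preceding lemma,
\[
E_{r+1}^{p,q} \cong \operatorname{Ker}(\delta_r^{p,q})/\operatorname{Im}(\delta_r^{p-1, q-r+1}),
\]
and to argue that under the hypothesis $q - r + 2 \leq 0$ the image in the denominator is already zero as a submodule of $E_r^{p,q}$. Once this is established, $E_{r+1}^{p,q} = \operatorname{Ker}(\delta_r^{p,q})$ sits tautologically inside $E_r^{p,q}$, yielding the desired injection. In fact, it is enough to verify the stronger vanishing $E_r^{p-1,\,q-r+1} = 0$ at the source of the incoming differential, which immediately forces the image to vanish.

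To prove this vanishing, I would invoke the \emph{stability from below} of the filtration that is stated right before the lemma: since $F^0 \widehat C = \widehat C$, we also have $F^n \widehat C = \widehat C$ for every $n \leq 0$, and therefore the associated graded pieces $F^n \widehat C / F^{n+1} \widehat C$ vanish identically for all $n \leq -1$. In the bigrading convention being used here -- where property (3) of the preceding lemma locates the cohomological $\Z$-grading (together with the degree-two Novikov variable $e$) in the first index $p$, so that the second index $q$ records the filtration step -- this translates into the statement $E_0^{p', q'} = 0$ whenever $q' \leq -1$. Because every page $E_{r+1}$ is a subquotient of the previous page $E_r$, this vanishing propagates by an immediate induction on $r$: $E_r^{p', q'} = 0$ for all $r \geq 0$ and all $q' \leq -1$. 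The hypothesis $q - r + 2 \leq 0$ is precisely $q - r + 1 \leq -1$, so specializing to $(p', q') = (p-1, q-r+1)$ gives $E_r^{p-1,\,q-r+1} = 0$, completing the argument.

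The main point requiring care is to confirm that the second index $q$ genuinely tracks the filtration level, so that stability from below does produce the required vanishing in the right range. This has to be read off from the explicit cycles-and-boundaries construction of $E_r^{p,q}$ in Section 6.3 of \cite{fooo:book}; once that bookkeeping is pinned down, the remainder of the argument is purely formal, using only the subquotient structure of the spectral sequence and the triviality $F^n \widehat C / F^{n+1} \widehat C = 0$ for $n \leq -1$. No further analytic input, and in particular no use of the gappedness assumption beyond what was already needed to construct the $\delta_r$, is required for this lemma.
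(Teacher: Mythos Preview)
Your argument is correct and is exactly the standard one: the incoming differential $\delta_r^{p-1,q-r+1}$ lands in $E_r^{p,q}$, and under the hypothesis $q-r+1\le -1$ its source vanishes because the filtration is stable from below ($F^n\widehat C=\widehat C$ for $n\le 0$, hence $gr_n=0$ for $n\le -1$, hence $E_r^{p',q'}=0$ for all $r$ and all $q'\le -1$). Your reading of the bigrading is confirmed by Theorem~6.3.28 quoted just after the lemma, where $E_\infty^{p,q}\cong F^qH^p/F^{q+1}H^p$, so $p$ is the cohomological degree and $q$ the filtration level.

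Note that the present paper does not supply its own proof of this lemma; it is quoted verbatim from \cite{fooo:book} as background for the spectral-sequence machinery, so there is nothing to compare against here beyond checking that your argument matches the setup and conventions recorded in the surrounding text, which it does.
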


An immediate consequence of Lemma \ref{injection} is the following
convergence result.

\begin{prop} The projective limit
$$
E_{\infty}^{p,q}(\widehat C):= \lim_{\longleftarrow}
E_{r}^{p,q}(\widehat C)
$$
exists.
\end{prop}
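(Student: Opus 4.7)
The plan is to produce the projective limit by exhibiting $\{E_r^{p,q}(\widehat C)\}_{r\ge 1}$ as an inverse system of $\Lambda^{(0)}(\lambda_0)$-modules whose transition maps are canonical, and then to invoke the existence of inverse limits in the category of modules. The nontrivial content has already been packaged into the preceding injection lemma, so the argument should be essentially formal.

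First, I would make the transition maps precise. By the lemma preceding the Proposition, $E_{r+1}^{p,q}(\widehat C)$ is canonically the subquotient $\ker \delta_r^{p,q}/\operatorname{im}\delta_r^{p-1,q-r+1}$ of $E_r^{p,q}(\widehat C)$. Every class on the $(r+1)$-page admits a representative in $\ker \delta_r^{p,q}\subset E_r^{p,q}$, and passing to its image in $E_r^{p,q}$ defines a natural comparison map $\pi_r\colon E_{r+1}^{p,q}\to E_r^{p,q}$. These $\pi_r$ assemble $\{E_r^{p,q}(\widehat C)\}_r$ into an inverse system of $\Lambda^{(0)}(\lambda_0)$-modules.

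Second, I would apply Lemma \ref{injection}, which guarantees that $\pi_r$ is injective as soon as $q-r+2\le 0$, i.e., once $r\ge q+2$. Consequently, for each fixed bidegree $(p,q)$, the tail of the inverse system is a descending chain of submodules
$$
\cdots \hookrightarrow E_{r+1}^{p,q}(\widehat C)\hookrightarrow E_r^{p,q}(\widehat C)\hookrightarrow\cdots\hookrightarrow E_{q+2}^{p,q}(\widehat C).
$$
The finitely many earlier terms $E_r^{p,q}$ with $r<q+2$ cause no difficulty, since inverse limits over arbitrary directed systems exist in the category of $\Lambda^{(0)}(\lambda_0)$-modules. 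One then defines
$$
E_\infty^{p,q}(\widehat C):=\lim_{\longleftarrow r} E_r^{p,q}(\widehat C),
$$
which on the stable range is canonically identified with the intersection $\bigcap_{r\ge q+2}\operatorname{im}\!\bigl(E_r^{p,q}\hookrightarrow E_{q+2}^{p,q}\bigr)$ inside the fixed module $E_{q+2}^{p,q}(\widehat C)$.

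The only potential obstacle is the tail injectivity itself, and this has already been discharged by Lemma \ref{injection}, whose proof relies crucially on the filtration being bounded below ($F^0\widehat C=\widehat C$) together with the gapped condition on $\delta$. Beyond that, nothing more is needed: the present Proposition is a formal consequence, recording that the descending chain admits a well-defined limit module which will serve as the abutment of the spectral sequence. I would close by remarking that, in contrast with the classical bounded-filtration case where $E_\infty^{p,q}$ coincides with $E_r^{p,q}$ for $r$ sufficiently large, here stabilization occurs only as an inverse limit, a phenomenon characteristic of the $\Lambda_{0,\text{nov}}$-completed setting adopted throughout.
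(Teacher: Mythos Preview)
Your overall strategy matches the paper's: the proposition is recorded there as ``an immediate consequence of Lemma~\ref{injection},'' and your second and third paragraphs spell out exactly that consequence correctly.

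There is, however, a genuine slip in your first paragraph. The ``natural comparison map'' $\pi_r\colon E_{r+1}^{p,q}\to E_r^{p,q}$ you describe is not well-defined for arbitrary $r$. Since $E_{r+1}^{p,q}=\ker\delta_r^{p,q}/\operatorname{im}\delta_r^{p-1,q-r+1}$, two lifts of a class to $\ker\delta_r^{p,q}$ differ by an element of $\operatorname{im}\delta_r^{p-1,q-r+1}$, and these lifts have \emph{different} images in $E_r^{p,q}$ unless that image is zero. So $\{E_r^{p,q}\}_r$ is not an inverse system over all $r$; it only becomes one once $r\ge q+2$, precisely because Lemma~\ref{injection} says the incoming differential vanishes there. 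Your later remark that ``the finitely many earlier terms cause no difficulty'' is therefore beside the point: the difficulty is not that limits might fail to exist, but that the transition maps themselves do not exist for small $r$.

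The fix is simply to drop the first paragraph and define the projective limit directly over the cofinal tail $r\ge q+2$, as you in effect do afterward. With that correction the proof is complete and coincides with the paper's.
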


Furthermore from the construction, we have the description of the
$E_2$-term of the associated spectral sequence

\begin{lem}[Lemma 6.3.24 \cite{fooo:book}]
We have an isomorphism
$$
E_2^{*,*}(\widehat C) \cong H(\overline C;\overline\delta)
\otimes_{R} gr_*(F\Lambda_{0,nov}).
$$
as $gr_*(F\Lambda_{0,nov})$ modules.
\end{lem}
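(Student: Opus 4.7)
I would compute the $E_0$ and $E_1$ pages of the spectral sequence explicitly and then verify $d_1\equiv 0$, from which the claim follows. Since $\widehat C^k$ is a c.f.z., fixing a splitting $\overline C\hookrightarrow\widehat C$ of the energy-zero part gives a filtered isomorphism $\widehat C\cong\overline C\,\widehat{\otimes}_R\,\Lambda_{0,\mathrm{nov}}^{(0)}$ of $\Lambda_{0,\mathrm{nov}}^{(0)}$-modules, with the tensor-product filtration on the right. Passing to the associated graded of the rescaled filtration $F^p\widehat C=F^{p\lambda_0}\widehat C$ yields
$$
E_0^{p,q} \;=\; F^p\widehat C^{p+q}/F^{p+1}\widehat C^{p+q} \;\cong\; \overline C^{p+q}\otimes_R \mathrm{gr}^p(F\Lambda_{0,\mathrm{nov}}^{(0)}).
$$
The gapped condition $(\delta-\delta_0)(F^{\lambda}\widehat C)\subseteq F^{\lambda+\lambda''}\widehat C$ together with $\lambda''>\lambda_0$ forces $(\delta-\delta_0)(F^p\widehat C)\subseteq F^{p+1}\widehat C$ in the rescaled indexing, so the induced $d_0$ on $E_0$ is precisely $\delta_0=\bar\delta\otimes 1$; taking cohomology factor by factor produces
$$
E_1^{p,q} \;\cong\; H^{p+q}(\overline C,\bar\delta)\otimes_R \mathrm{gr}^p(F\Lambda_{0,\mathrm{nov}}^{(0)}).
$$

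For the remaining step I would exploit the freedom in the choice of $\lambda_0$ and refine it to satisfy $0<\lambda_0\le\lambda''/2$; then the gapped condition upgrades to $(\delta-\delta_0)(F^p\widehat C)\subseteq F^{p+2}\widehat C$. A class in $E_1^{p,q}$ is represented by $x=\bar x+y$ where $\bar x\in F^p\widehat C$ has the form $\bar x=\sum T^{\lambda_i}\bar e_i$ with $\lambda_i\in[p\lambda_0,(p+1)\lambda_0)$ and each $\bar e_i$ a $\bar\delta$-cocycle in $\overline C$, while $y\in F^{p+1}\widehat C$. Using that $\delta_0\bar x=0$ on the nose, $(\delta-\delta_0)\bar x\in F^{p+2}\widehat C$, and $(\delta-\delta_0)y\in F^{p+3}\widehat C$, one finds
$$
\delta x \;\equiv\; \delta_0 y \;\pmod{F^{p+2}\widehat C},
$$
and the right-hand side reduces modulo $F^{p+2}$ to $d_0\bar y$, where $\bar y$ is the image of $y$ in $F^{p+1}\widehat C/F^{p+2}\widehat C$. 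Hence $d_1[x]=0$, so $E_2^{p,q}=E_1^{p,q}$; every step above is $\Lambda_{0,\mathrm{nov}}^{(0)}$-equivariant, yielding the isomorphism of $\mathrm{gr}_*(F\Lambda_{0,\mathrm{nov}})$-modules claimed by the lemma.

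The point requiring the most care, rather than a genuine obstacle, is the choice of representative $x=\bar x+y$: the shape of $\bar x$ is dictated precisely by the $d_0$-cocycle condition, and the decisive bound $(\delta-\delta_0)x\in F^{p+2}\widehat C$ that trivializes the snake recipe for $d_1$ uses the refined inequality $\lambda_0\le\lambda''/2$ (compatible with, but stronger than, the definition's $\lambda_0<\lambda''$). Everything else is standard spectral-sequence bookkeeping, the only nonstandard feature being that the filtration on $\widehat C$ is bounded below ($F^0\widehat C=\widehat C$) but not above; this, however, plays no role at the finite page $E_2$.
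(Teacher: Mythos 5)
Your proof works in the standard indexing where $E_0$ is the associated graded of the filtered complex and $E_1 = H(E_0,d_0)$. The paper (following FOOO, Lemma 6.3.20) uses a convention shifted by one: there $\delta_r$ has bidegree $(1,r-1)$ with $p$ the cohomological degree and $q$ the filtration level, so $\delta_1$ preserves the filtration. In particular their $E_1$ is already the associated graded, $\delta_1$ is the leading-order differential, and $E_2 = H(E_1,\delta_1)$. Thus the paper's $E_2$ is exactly your $E_1$, and the first half of your argument --- the isomorphism $\widehat C \cong \overline C\,\widehat\otimes_R\,\Lambda^{(0)}_{0,\rm nov}$, the observation that the gapped condition $\lambda_0 < \lambda''$ forces $(\delta - \delta_0)(F^p) \subset F^{p+1}$ so the induced differential on the associated graded is $\overline\delta\otimes 1$, and the resulting identification $H(E_0^{\rm stu},d_0) \cong H(\overline C,\overline\delta)\otimes_R\operatorname{gr}_*(F\Lambda_{0,\rm nov})$ --- is already a complete proof of the lemma. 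This is precisely the paper's own (telegraphic) argument unpacked.

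The second half, where you refine $\lambda_0 \le \lambda''/2$ and conclude $d_1 \equiv 0$, is therefore unnecessary, and it is also not sound as a claim about the given spectral sequence: replacing $\lambda_0$ by a smaller value changes the rescaled filtration $F^p = F^{p\lambda_0}$, hence changes every page $E_r^{p,q}$ and every group $\operatorname{gr}^p(F\Lambda_{0,\rm nov})$. Showing $d_1 = 0$ for the finer filtration says nothing about the spectral sequence of the coarser one. In fact your extra claim (that your $E_2$, i.e.\ the paper's $E_3$, already equals $H(\overline C,\overline\delta)\otimes\operatorname{gr}$) can fail when $\lambda_0 > \lambda''/2$: take $\overline C$ two-dimensional over $R$ in degrees $0,1$ with $\overline\delta = 0$ and $\delta(e_0) = T^{\lambda''}e_1$. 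For $\lambda_0 \in (\lambda''/2,\lambda'')$ one has $\lambda'' \in [\lambda_0, 2\lambda_0)$, so $[\delta e_0] = [T^{\lambda''}e_1]$ survives to a nonzero class in your $E_1^{1,*}$ and $d_1[e_0] \ne 0$. Fortunately the lemma as stated only requires the $E_2$-page (in the paper's convention), so the first half of your argument is all that is needed; just be aware of the indexing shift when reading FOOO.
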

\begin{proof}
By definition we have
$$
E_1^{*,*}(\widehat C) \cong \overline C \otimes_{R} gr_*(
F\Lambda_{0,nov}).
$$
It follows from the gapped condition that $\delta_1 = \overline
\delta$. Hence it finishes the proof.
\end{proof}

\begin{defn}
We define $F^qH(\widehat{C},\delta)$ to be the image of $H(F^q
\widehat{C},\delta)$ in $H(\widehat{C},\delta)$.
\end{defn}

To relate the limit $E_\infty^{p,q}$ of the spectral sequence and
$F^qH(\widehat{C},\delta)$, we need some finiteness assumption which
we now describe. Let $(C,\delta)$ and $(C', \delta')$ be d.g.c.f.z's
satisfying the gap condition. Let $\varphi: C \to C'$ be a map such
that $\varphi \delta = \delta' \varphi$ and let $\overline \varphi
:\overline C \to \overline C'$ be the map induced on $\overline C =
C/\Lambda_{0,nov}^{+ (0)}C$ and $\overline C' =
C'/\Lambda_{0,nov}^{+ (0)}C'$ respectively. The induced map
$\overline\varphi$ lifts to $\varphi_0 = \overline \varphi \otimes 1
: C \to C'$.

\begin{defn}[Definition 6.3.26 \cite{fooo:book}]\label{gap-varphi}
Under the situation above, we say that $\varphi: C \to C'$
satisfies {\it a gapped condition}, or is {\it a gapped cochain
map}, if there exists $\lambda''$ such that
$$
(\varphi - \varphi_0)(F^{\lambda}\widehat C) \subset F^{\lambda +
\lambda''}\widehat C.
$$
\end{defn}

Using these definitions, \cite{fooo:book} proves
\begin{thm}[Theorem 6.3.28 \cite{fooo:book}]
If $C$ is finite, then there exists $r_0$ such that:
$$
E_{r_0}^{p,q}(\widehat C) \cong E_{r_0+1}^{p,q}(\widehat C) \cong
\cdots \cong E_{\infty}^{p,q}(\widehat C) \cong F^qH^p(\widehat
C,\delta)/F^{q+1}H^p(\widehat C,\delta)
$$
as $\Lambda^{(0)}(\lambda_0) =
\Lambda^{(0)}_{0,nov}/F^{\lambda_0}\Lambda^{(0)}_{0,nov}$ modules.
\end{thm}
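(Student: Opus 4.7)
The plan is to establish two things for each bi-degree $(p,q)$: first, that the higher differentials $\delta_r^{p,q}$ eventually vanish so that the pages stabilize at some finite $r_0$; second, that this stable value is identified with $F^qH^p/F^{q+1}H^p$. The obstacle that makes this nontrivial—unlike the textbook situation of a filtration bounded from above—is that $F^n\widehat{C}\neq 0$ for all $n$, so one cannot appeal directly to standard convergence theorems. The substitute for boundedness is the finite-generation hypothesis together with the ``Artinian-type'' estimate of Proposition 6.3.9: for any finitely generated $\Lambda^{(0)}_{0,\text{nov}}$-submodule $W\subseteq\widehat{C}^k$, there is a constant $c=c(W)$ such that
\[
\delta(W)\cap F^{\lambda}\widehat{C}^{k+1}\subseteq \delta\bigl(W\cap F^{\lambda-c}\widehat{C}^k\bigr).
\]

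First I would apply Proposition 6.3.9 with $W=\widehat{C}^k$ itself (allowed since $C$ is finite), obtaining constants $c^{(k)}$; let $c=\max_k c^{(k)}$. This says that any $\delta$-boundary lying in $F^{\lambda}$ is already the image of a chain lying in $F^{\lambda-c}$. Translating into filtration indices $n=\lambda/\lambda_0$, one deduces that an element of $F^p\widehat{C}^{p+q}$ whose $\delta$-image lies in $F^{p+r}\widehat{C}^{p+q+1}$ cannot represent a nonzero class in $E_r^{p,q}$ once $r$ exceeds $\lceil c/\lambda_0\rceil+1$, because the boundary detection forces the original chain already to live deeper in the filtration. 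Hence $\delta_r^{p,q}=0$ and $\delta_r^{p-1,q-r+1}=0$ for all $r\geq r_0:=\lceil c/\lambda_0\rceil+2$, and so $E_r^{p,q}\cong E_{r_0}^{p,q}$ for all $r\geq r_0$. Since the projective system becomes eventually constant, $E_\infty^{p,q}$ exists and equals this stable value.

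Next I would identify $E_\infty^{p,q}$ with $F^qH^p/F^{q+1}H^p$. By construction $E_r^{p,q}$ is the quotient of $\delta$-cycles in $F^q\widehat{C}^{p+q}$ modulo elements that are either in $F^{q+1}\widehat{C}^{p+q}$ or hit by $\delta$ from $F^{q-r+1}\widehat{C}^{p+q-1}$. Passing to $r\geq r_0$ and using Proposition 6.3.9 once more, the set of boundaries in $F^q$ reached from any deeper filtration level already equals the set of boundaries reached from $F^{q-r_0+1}$, so further enlarging $r$ produces no new boundaries. Therefore the numerator computes $F^qH^p$ and the denominator computes $F^{q+1}H^p$ (once intersected with $F^qH^p$), giving the claimed isomorphism with $F^qH^p/F^{q+1}H^p$, compatibly with the $\Lambda^{(0)}(\lambda_0)$-action.

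The main obstacle is the first step: controlling the higher differentials on a filtration that is complete but not bounded above. Without the Artinian-style estimate of Proposition 6.3.9, nothing prevents $\delta_r^{p,q}$ from being nontrivial for arbitrarily large $r$, and the spectral sequence could fail to converge. The finiteness of $C$ as a $\Lambda_{0,\text{nov}}^{(0)}$-module is used essentially to legitimately invoke that proposition with $W=\widehat{C}^k$ and thereby obtain a uniform $c$ independent of $\lambda$; the rest of the argument is standard filtration-spectral-sequence bookkeeping.
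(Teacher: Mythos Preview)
The paper does not actually prove this statement: it is quoted verbatim as Theorem 6.3.28 from \cite{fooo:book} in the appendix, with no argument given. So there is no ``paper's own proof'' to compare against. That said, your approach is the expected one and is essentially the argument in \cite{fooo:book}: the whole point of stating Proposition 6.3.9 just before this theorem is that it supplies the uniform constant $c$ (applied to $W=\widehat C^k$, legitimate by finiteness) which forces the higher differentials to vanish and the boundaries to stabilize once $r\lambda_0>c$.

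Two remarks on your write-up. First, you have swapped the roles of $p$ and $q$ in the first half: in the paper's convention (visible from $\delta_r^{p,q}:E_r^{p,q}\to E_r^{p+1,q+r-1}$ and from $E_\infty^{p,q}\cong F^qH^p/F^{q+1}H^p$) the index $p$ is cohomological degree and $q$ is the filtration level, so the relevant element lives in $F^q\widehat C^{p}$, not $F^p\widehat C^{p+q}$. This is cosmetic but should be fixed. Second, your sentence ``cannot represent a nonzero class in $E_r^{p,q}$'' is not quite what you want to say; what you actually show is that any $x\in F^q$ with $\delta x\in F^{q+r}$ is, for $r>c/\lambda_0$, equivalent in $E_r^{p,q}$ to a genuine $\delta$-cycle (namely $x-y$ with $y\in F^{q+1}$ coming from Proposition 6.3.9), and hence $\delta_r[x]=0$. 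Likewise, for the incoming differentials, Proposition 6.3.9 says every $\delta$-boundary lying in $F^q$ already comes from $F^{q-c/\lambda_0}$, so $B_r^{p,q}$ stabilizes once $r>c/\lambda_0+1$. With those two corrections the argument goes through and yields both stabilization and the identification with the associated graded of $H(\widehat C,\delta)$.
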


We summarize the above discussion into the following vanishing
result which will be crucial in our spectral sequence arguments.

\begin{cor}\label{vanishing} Let $C$ be a finite d.g.c.f.z.
Then if $H(\overline C, \overline \delta) = 0$, then $H(C,\delta) =
0$.
\end{cor}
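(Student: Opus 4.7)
The plan is to run the spectral sequence $\{E_r^{p,q}(\widehat C)\}$ of the filtered complex $(\widehat C,\delta)$ with respect to the filtration $F^n \widehat C = F^{n\lambda_0}\widehat C$ constructed above, and extract vanishing of $H(\widehat C,\delta)$ from vanishing of the $E_2$-page.

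First I would identify the $E_2$-page via Lemma 6.3.24, which gives
$$
E_2^{*,*}(\widehat C) \;\cong\; H(\overline C,\overline\delta)\otimes_R \operatorname{gr}_*(F\Lambda_{0,nov}).
$$
Since by hypothesis $H(\overline C,\overline\delta) = 0$, the $E_2$-page is identically zero. Consequently $E_r^{p,q} = 0$ for every $r\ge 2$, and therefore
$$
E_\infty^{p,q}(\widehat C) = \varprojlim_r E_r^{p,q}(\widehat C) = 0.
$$

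Next I would invoke Theorem 6.3.28, whose hypothesis that $C$ be \emph{finite} is precisely our assumption. It supplies an integer $r_0$ together with an isomorphism
$$
E_\infty^{p,q}(\widehat C) \;\cong\; F^q H^p(\widehat C,\delta)/F^{q+1}H^p(\widehat C,\delta)
$$
as $\Lambda^{(0)}(\lambda_0)$-modules. Combining this with $E_\infty = 0$ shows that the associated graded of the induced filtration on $H^p(\widehat C,\delta)$ vanishes, i.e., $F^q H^p(\widehat C,\delta) = F^{q+1}H^p(\widehat C,\delta)$ for every $p$ and every $q\ge 0$.

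The final step, which I expect to be the only nontrivial point, is to pass from this equality of consecutive filtration stages to the actual vanishing $H^p(\widehat C,\delta) = 0$. Since $F^0 H^p = H^p$ (the filtration is exhaustive from below), the chain of equalities yields $H^p = F^q H^p$ for every $q$, so the task is to show that the filtration on $H^p(\widehat C,\delta)$ is separated, i.e., $\bigcap_{q\ge 0} F^q H^p = 0$. Here the finiteness hypothesis on $C$ enters in an essential way: applied to the finitely generated submodule $W\subset \widehat C^p$ one may use Proposition 6.3.9 to produce, for each cycle $\mathfrak v\in F^q\widehat C^p$ representing a class in $F^q H^p$, a bounded shift $\mathfrak v'\in F^{q-c}\widehat C^{p-1}$ controlling $\delta$; combined with the non-Archimedean completeness of $\widehat C^p$, this forces any class lying in every $F^q H^p$ to be the class of a cycle in $\bigcap_q F^q \widehat C^p = 0$. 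Hence $H^p(\widehat C,\delta) = 0$ for all $p$, completing the argument.
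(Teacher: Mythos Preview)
Your argument is correct and follows exactly the route the paper intends: the corollary is stated in the paper without proof, as a direct consequence of Lemma~6.3.24 (identifying $E_2$) together with Theorem~6.3.28 (convergence $E_\infty^{p,q}\cong F^qH^p/F^{q+1}H^p$ in the finite case), and you have written out precisely this implicit argument. Your final paragraph on separatedness of the induced filtration on $H^p$ is the one point the paper does not spell out; your use of Proposition~6.3.9 is the right tool, though the way to apply it is slightly different from what you wrote: take $W=\widehat C^{p-1}$, use $\delta(W)\cap F^{q\lambda_0}\widehat C^p\subset\delta(F^{q\lambda_0-c}\widehat C^{p-1})$ to successively correct the primitives $\mathfrak w_q$ (with $\mathfrak v-\delta\mathfrak w_q\in F^{q\lambda_0}$) into a Cauchy sequence, and then pass to the limit using completeness and continuity of $\delta$.
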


\subsection{Products}\label{subsecprod}

In this subsection, we recall the description of the deformed
products $\frak m_k^{\vec b}$ from \cite{fooo:anchor}. We refer to
\cite{fukaya:mirror2} and \cite{fooo:anchor} for the relevant proofs
of the statements we make without proofs here.

Let $\frak L = (L_0, L_1, \cdots, L_k)$ be a
chain of compact Lagrangian submanifolds in $(M,\omega)$
that intersect pairwise transversely without triple intersections.
\par
Let $\vec z = (z_{0k},z_{k(k-1)},\cdots,z_{10})$ be a set of distinct points on $\partial D^2
= \{ z\in \C \mid \vert z\vert = 1\}$. We assume that
they respect the counter-clockwise cyclic order of $\partial D^2$.
The group $PSL(2;\R)\cong \operatorname{Aut}(D^2)$ acts on the set
in an obvious way. We denote by $\mathcal M^{\text{main},\circ}_{k+1}$ be
the set of $PSL(2;\R)$-orbits of $(D^2,\vec z)$.
\par
In this subsection, we consider only the case $k \geq 2$
since the case $k=1$ is already discussed in the last subsection.
In this case there is no automorphism on the domain $(D^2, \vec z)$, i.e.,
$PSL(2;\R)$ acts freely on the set of such $(D^2, \vec z)$'s.
\par
Let
$p_{j(j-1)} \in L_j \cap L_{j-1}$
($j = 0,\cdots k$), be a set of intersection points.
\par
We consider the pair $(w;\vec z)$ where $w: D^2 \to M$ is a
pseudo-holomorphic map that satisfies the boundary condition
\begin{subequations}\label{54.15}
\begin{eqnarray}
&w(\overline{z_{j(j-1)}z_{(j+1)j}}) \subset L_j, \label{54.15.1} \\
&w(z_{(j+1)j}) = p_{(j+1)j}\in L_j \cap L_{j+1}. \label{54.15.2}
\end{eqnarray}
\end{subequations}
We denote by $\widetilde{\CM}^{\circ}(\frak L, \vec p)$
the set of such
$((D^2,\vec z),w)$.
\par
We identify two elements $((D^2,\vec z),w)$, $((D^2,\vec z'),w')$
if there exists $\psi \in PSL(2;\R)$ such that
$w \circ \psi = w'$ and $\psi(z'_{j(j-1)}) = z_{j(j-1)}$.
Let ${\CM}^{\circ}(\frak L, \vec p)$ be the set of equivalence classes.
We compactify it by including the configurations with disc or sphere bubbles
attached, and denote it by ${\CM}(\frak L, \vec p)$.
Its element is denoted by $((\Sigma,\vec z),w)$ where
$\Sigma$ is a genus zero bordered Riemann surface with one boundary
components, $\vec z$ are boundary marked points, and
$w: (\Sigma,\partial\Sigma) \to (M,L)$ is a bordered stable map.
\par
We can decompose $\CM(\frak L, \vec p)$ according to the homotopy
class $B \in \pi_2(\frak L,\vec p)$ of continuous maps satisfying
\eqref{54.15.1}, \eqref{54.15.2} into the union
$$
\CM(\frak L, \vec p) = \bigcup_{B \in \pi_2(\frak L;\vec p)}
\CM(\frak L, \vec p;B).
$$
\par
In the case we fix an anchor $\gamma_i$ to each of $L_i$ and put $\CE =
((L_0,\gamma_0),\cdots,(L_k,\gamma_k))$, we consider only
admissible classes $B$ and put
\par
$$
\CM(\CE, \vec p) = \bigcup_{B \in \pi_2^{ad}(\CE;\vec p)}
\CM(\CE, \vec p;B).
$$
\begin{thm}\label{58.21} Let $\frak L
= (L_0,\cdots,L_k)$ be a chain of
Lagrangian submanifolds and
$B \in \pi_2(\frak L;\vec p)$.
Then $\CM(\frak L, \vec p;B)$ has an oriented Kuranishi structure
(with boundary and corners). Its (virtual) dimension satisfies
\begin{equation}\label{dimensionformula}
\dim \CM(\frak L, \vec p;B) = \mu(\frak L,\vec p;B) + n + k-2,
\end{equation}
where $\mu(\frak L,\vec p;B)$ is the polygonal Maslov index of $B$.
\end{thm}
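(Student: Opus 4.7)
\medskip

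\noindent\emph{Proof proposal.} The strategy is to decompose the virtual dimension into the contribution from deformations of the domain and from the linearized Cauchy--Riemann operator, and then construct the oriented Kuranishi structure by following the standard framework of \cite{fooo:book}, \cite{fooo:anchor}, adapted to the polygonal boundary data $\vec p$.

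For the dimension formula, I would first note that for $k \geq 2$ the domain moduli space $\CM_{k+1}^{\text{main},\circ}$ of discs with $k+1$ cyclically ordered boundary marked points has real dimension $(k+1) - \dim PSL(2;\R) = k - 2$. Next, to any representative $((D^2,\vec z), w)$ in class $B$ one associates the linearized Cauchy--Riemann operator $D_w\delbar$ acting on sections of $w^*TM$ satisfying the totally real boundary conditions $w(\overline{z_{j(j-1)}z_{(j+1)j}})^*TL_j$ on each boundary arc, with the boundary condition jumping through $T_{p_{j(j-1)}}L_{j-1}$ and $T_{p_{j(j-1)}}L_j$ at the marked points. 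By the index theorem for bundle pairs with prescribed Maslov data (Riemann--Roch with boundary degenerations, formulated as the polygonal Maslov index in Subsection \ref{subsec:anchored}; compare \cite{katz-liu}), the Fredholm index of $D_w\delbar$ equals $n + \mu(\frak L, \vec p; B)$. Summing the two contributions yields the claimed virtual dimension $\mu(\frak L, \vec p; B) + n + k - 2$.

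For the Kuranishi structure, I would stratify $\CM(\frak L, \vec p; B)$ by the combinatorial type of stable maps $((\Sigma, \vec z), w)$: a principal disc carrying the polygonal boundary data, together with disc bubbles attached along its boundary arcs and sphere bubbles attached at interior points. Kuranishi neighborhoods on the principal stratum are produced by the standard slice construction combined with finite-dimensional obstruction bundles covering $\operatorname{coker} D_w \delbar$; neighborhoods of lower strata are constructed using the gluing analysis of \cite{fooo:book} for bordered stable maps, which produces the corner structure. Since we assumed each $L_i$ is compact and intersections are transverse without triple intersections, the usual Gromov compactness applies and the strata are described by the broken configurations in Section \ref{subsec:GFmoduli}.

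Orientations on each stratum are induced from the pinched-off index bundles of $D_w \delbar$, and here one invokes the relative spin structures on the $L_j$ entering the definition of the Calabi-Yau brane: as in Chapter 8 of \cite{fooo:book} (see also Section 3 of \cite{fooo:anchor}), a choice of relative spin structure trivializes the determinant line of $D_w \delbar$ up to a combinatorial sign, and coherence of these signs across the corner strata follows from the gluing analysis of the determinant line bundle. The main obstacle I expect is not the index calculation, which is essentially Riemann--Roch, but the verification that the signs glue coherently across the multiple codimension--one boundary strata of $\CM(\frak L, \vec p; B)$ arising when disc bubbles attach at different boundary arcs; this is precisely the point handled in \cite{fooo:book,fooo:anchor} via the relative spin hypothesis, so the proof amounts to quoting the relevant results there after verifying that the polygonal setup of \eqref{54.15.1}, \eqref{54.15.2} fits into that framework.
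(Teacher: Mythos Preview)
Your sketch is correct and follows the standard line of argument, but you should be aware that the paper does not actually prove this theorem: the subsection in which Theorem~\ref{58.21} appears opens with the disclaimer that it ``recall[s] the description of the deformed products $\frak m_k^{\vec b}$ from \cite{fooo:anchor}'' and ``refer[s] to \cite{fukaya:mirror2} and \cite{fooo:anchor} for the relevant proofs of the statements we make without proofs here.'' Theorem~\ref{58.21} is one of those statements; it is simply quoted from \cite{fooo:anchor} (and ultimately \cite{fooo:book}) and the paper moves on immediately to Lemma~\ref{dimanddeg}.

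That said, your outline is exactly the argument one finds in those references: the virtual dimension splits as $(k-2)$ from the domain moduli $\CM^{\text{main},\circ}_{k+1}$ plus the Fredholm index $n+\mu(\frak L,\vec p;B)$ of the linearized $\delbar$-operator with piecewise Lagrangian boundary conditions, and the oriented Kuranishi structure is produced by the obstruction-bundle and gluing machinery of \cite{fooo:book}, with coherent orientations coming from the relative spin hypothesis. So your proposal is not a different route; it is a faithful summary of the proof the paper is citing rather than giving.
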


We next take graded anchors $(\gamma_i,\lambda_i)$
to each $L_i$. We assume that $B$ is admissible and write
$
B = [w^-_{01}]\#[w^-_{12}] \# \cdots \# [w^-_{k0}]
$ as in Definition \ref{classB}.
We put $w^+_{(i+1)i}(s,t) = w^-_{i(i+1)}(1-s,t)$.
We also put $w^+_{k0}(s,t) = w^+_{0k}(s,1-t)$.
($[w^+_{k0}] \in \pi_1(\ell_{k0};p_{k0})$.)
We also put $\lambda_{k0}(t) = \lambda_{0k}(1-t)$.
\begin{lem}[Lemma 8.11 \cite{fooo:anchor}] \label{dimanddeg}
If $\dim \CM(\frak L, \vec p;B) = 0$, we have
\begin{equation}\label{misdeg1}
(\mu([p_{k0},w^+_{k0}];\lambda_{0k}) - 1)
=
1 + \sum_{i=1}^k (\mu([p_{i(i-1)},w^+_{i(i-1)}];\lambda_{(i-1)i}) - 1).
\end{equation}
\end{lem}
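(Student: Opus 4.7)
My plan is to combine the dimension formula of Theorem \ref{58.21} with the additivity of Maslov indices from Proposition \ref{prop:coherence}, and then to carefully translate between the $w^-$ and $w^+$ conventions used in the lemma. Since the hypothesis is $\dim \CM(\frak L, \vec p;B) = 0$, formula \eqref{dimensionformula} immediately gives
$$
\mu(\frak L, \vec p; B) = 2 - n - k.
$$
Proposition \ref{prop:coherence} decomposes the polygonal Maslov index as a cyclic sum over the $k+1$ vertices of the polygon,
$$
\mu(\frak L, \vec p; B) = \sum_{i=0}^{k} \mu([p_{i(i+1)}, w^-_{i(i+1)}]; \lambda_{i(i+1)}),
$$
where the index wraps cyclically so that $w^-_{k(k+1)}$ is interpreted as $w^-_{k0}$ and $\lambda_{k(k+1)} = \lambda_{k0}$.

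Next I would convert each summand into its $w^+$ form using the defining relations $w^+_{(i+1)i}(s,t) = w^-_{i(i+1)}(1-s,t)$ and $w^+_{k0}(s,t) = w^+_{0k}(s,1-t) = w^-_{k0}(1-s,1-t)$. The $s$-reversal is an orientation-reversing diffeomorphism of the parametrizing square $[0,1]^2$, so the Maslov index of the associated bundle pair gets negated; this should yield
$$
\mu([p_{(i-1)i}, w^-_{(i-1)i}]; \lambda_{(i-1)i}) = -\mu([p_{i(i-1)}, w^+_{i(i-1)}]; \lambda_{(i-1)i}), \quad i = 1,\ldots,k.
$$
For the cyclic-wrap vertex $p_{k0}$, the additional $t$-reversal swaps the two boundary Lagrangians $L_k \leftrightarrow L_0$ and reverses the grading $\lambda_{k0}\leftrightarrow \lambda_{0k}$ at the intersection point; by the standard antisymmetry of Maslov-Morse indices under swap of the Lagrangian ordering, the relation
$$
\mu([p_{k0}, w^-_{k0}]; \lambda_{k0}) = \mu([p_{k0}, w^+_{k0}]; \lambda_{0k}) - n
$$
should hold (the factor $n = \dim L$ being the source of the breaking of the cyclic symmetry in \eqref{misdeg1}).

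Substituting these translations into the Proposition \ref{prop:coherence} identity and combining with $\mu(\frak L, \vec p; B) = 2 - n - k$ would give
$$
2 - n - k = -\sum_{i=1}^k \mu([p_{i(i-1)}, w^+_{i(i-1)}]; \lambda_{(i-1)i}) + \mu([p_{k0}, w^+_{k0}]; \lambda_{0k}) - n,
$$
which rearranges to
$$
\mu([p_{k0}, w^+_{k0}]; \lambda_{0k}) - \sum_{i=1}^k \mu([p_{i(i-1)}, w^+_{i(i-1)}]; \lambda_{(i-1)i}) = 2 - k,
$$
and subtracting $1$ from both sides yields exactly \eqref{misdeg1}. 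The main technical obstacle will be verifying the precise transformation rules for the Maslov-Morse index under the $s$-reversal and under the combined $t$-reversal with swap of Lagrangian ordering and grading; these rules must be extracted from a careful bookkeeping of how the boundary Lagrangian subbundle $\lambda_w$ in the definition of $\mu(\mathcal V_w, \lambda_w)$ is rearranged under the reflection, together with the accompanying orientation change on $([0,1]^2,\partial [0,1]^2)$.
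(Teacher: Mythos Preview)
The paper does not supply its own proof of this lemma; it is quoted verbatim as Lemma~8.11 of \cite{fooo:anchor} and no argument is given here. So there is nothing in the present paper to compare your proposal against line by line.

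That said, your strategy is the natural one and matches what one finds in \cite{fooo:anchor}: combine the dimension formula \eqref{dimensionformula} with the additivity of the polygonal Maslov index from Proposition~\ref{prop:coherence}, then translate from the ``all outgoing'' convention of that proposition to the mixed input/output convention used in the lemma. Your final algebra is correct and lands on \eqref{misdeg1}.

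The one place where your write-up is loose is the pair of transformation rules you posit. The claim that ``$s$-reversal negates the Maslov index of the bundle pair'' is true at the level of topological Maslov numbers, but after $s$-reversal the data $w^+_{i(i-1)}$ is no longer in the standard form required by the definition of the Maslov--Morse index (base path at $s=0$, intersection point at $s=1$); one must reinterpret it as a bounding strip for the reversed pair, and at that point the corner path $\alpha^p$ enters nontrivially. Likewise the relation $\mu([p_{k0},w^-_{k0}];\lambda_{k0}) = \mu([p_{k0},w^+_{k0}];\lambda_{0k}) - n$ is the correct duality shift, but it is a consequence of a careful comparison of the two corner paths at $p_{k0}$ rather than a one-line orientation argument. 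You have correctly flagged exactly this bookkeeping as ``the main technical obstacle''; carrying it out is the substance of the proof in \cite{fooo:anchor}, and the rest of your outline is sound.
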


Using the case $\dim \CM(\frak L, \vec p;B) = 0$, we define the $k$-linear operator
$$
\frak m_k:
CF((L_k,\gamma_k),(L_{k-1},\gamma_{k-1}))
\otimes \ldots \otimes
CF((L_1,\gamma_1),(L_{0},\gamma_{0}))\to CF((L_k,\gamma_k),(L_0,\gamma_0))
$$
as follows:
\begin{equation}\label{catAinifwob}
\aligned
\frak m_{k}([p_{k(k-1)},w^+_{k(k-1)}],& [p_{(k-1)(k-2)},w^+_{(k-1)(k-2)}],\cdots, [p_{10},w^+_{10}])) \\
&= \sum \#(\CM_{k+1}(\frak L;\vec p;B)) \, [p_{k0},w^+_{k0}]).
\endaligned\end{equation}
Here the sum is over the basis $[p_{k0},w^+_{k0}]$ of
$CF((L_k,\gamma_k),(L_0,\gamma_0))$, where
$\vec p = (p_{0k},p_{k(k-1)},\cdots,p_{10})$,
$B$ is as in Definition \ref{classB}, and
$w^+_{(i+1)i}(s,t) = w^-_{i(i+1)}(1-s,t)$.
\par
The formula (\ref{misdeg1}) implies that $\frak m_k$ above has
degree one.
\par
In general the operator $\frak m_k$ above does {\it not} satisfy the
$A_{\infty}$ relation by the same reason as that of the case of
boundary operators (see Section \ref{chaincomplex}). We need to use
bounding cochains $b_i$ of $L_i$ to deform $\frak m_k$ in the same
way as the case of $A_\infty$-bimodules (Subsection
\ref{subsec:morphisms}), whose explanation is now in order.
\par
Let $m_0,\cdots,m_k \in \Z_{\ge 0}$ and $\CM_{m_0,\cdots,m_k}(\frak L,
\vec p;B)$ be the moduli space obtained from the set of $((D^2,\vec
z),(\vec z^{(0)},\cdots,\vec z^{(k)}),w))$ by taking the quotient by
$PSL(2,\R)$-action and then by taking the stable map
compactification as before. Here $ z^{(i)} =
(z^{(i)}_1,\cdots,z^{(i)}_{k_i})$ and $z^{(i)}_{j} \in
\overline{z_{(i+1)i}z_{i(i-1)}}$ such that
$z_{(i+1)i},z^{(i)}_1,\cdots,z^{(i)}_{k_i}, z_{i(i-1)}$ respects the counter
clockwise cyclic ordering.
$$
((D^2,\vec z),(\vec z^{(0)},\cdots,\vec z^{(k)}),w))
\mapsto (w(z^{(0)}_1),\cdots,w(z^{(k)}_{m_k}))
$$
induces an evaluation map:
$$
ev=(ev^{(0)},\cdots,ev^{(k)}): \CM_{m_0,\cdots,m_k}(\frak L, \vec p;B)
\to \prod_{i=0}^k L_i^{m_i}.
$$
Let $P^{(i)}_j$ be smooth singular chains of $L_i$ and put
$$
\vec P^{(i)} = (P^{(i)}_1,\cdots,P^{(i)}_{m_i}),
\qquad
\vec{\vec P} = (\vec P^{(0)},\cdots,\vec P^{(k)})
$$
We then take the fiber product to obtain:
$$
\CM_{m_0,\cdots,m_k}(\frak L, \vec p;\vec{\vec P};B) =
\CM_{m_0,\cdots,m_k}(\frak L, \vec p;B) \times_{ev} \vec{\vec P}.
$$
We use this to define
$$
\aligned &\frak m_{k;m_0,\cdots,m_k}: B_{m_k}(CF(L_k)) \otimes
CF((L_k,\gamma_k),(L_{k-1},\gamma_{k-1}))
\otimes \cdots\\
&\quad\otimes CF((L_{1},\gamma_{1}),(L_0,\gamma_{0}))
\otimes B_{m_0}(CF(L_0))
\to CF((L_{k},\gamma_{k}),(L_0,\gamma_{0}))
\endaligned
$$
by
$$
\aligned
\frak m_{k;m_0,\cdots,m_k}
(\vec P^{(k)},[p_{k(k-1)},w^+_{k(k-1)}],&\cdots,[p_{10},w^+_{10}],
\vec P^{(0)})
\\
& = \sum \#(\CM_{k+1}(\frak L;\vec p;\vec{\vec P};B)) \, [p_{k0},w_{k0}].
\endaligned$$
Finally for each given $b_i \in CF(L_i)[1]^0$ ($b_i \equiv 0 \mod \Lambda_+$),
$\vec b =(b_0,\cdots,b_k)$, and $x_i \in CF((L_i,\gamma_i),(L_{i-1},\gamma_{i-1}))$, we put
\begin{equation}\label{mkcorrected}
\frak m_k^{\vec b}(x_k,\cdots,x_1) = \sum_{m_0,\cdots,m_k} \frak
m_{k;m_0,\cdots,m_k} (b_k^{m_k},x_k,b_{k-1}^{m_{k-1}},\cdots,x_1,b_0^{m_0}).
\end{equation}
\begin{thm}
If $b_i$ satisfies the Maurer-Cartan equation $(\ref{eq:MC})$ then
$\frak m_k^{\vec b}$ in $(\ref{mkcorrected})$ satisfies the
$A_{\infty}$ relation
\begin{equation}\label{Ainftyrel}
\sum_{k_1,k_2,i}
(-1)^* \frak m_{k_1}^{\vec b}(x_k,\cdots,\frak m_{k_2}^{\vec b}(x_{k-i-1},\cdots,x_{k-i-k_2}),\cdots,x_1) = 0
\end{equation}
where we take sum over $k_1+k_2=k+1$, $i=-1,\cdots,k-k_2$.
(We write $\frak m_k$ in place of $\frak m^{\vec b}_k$ in $(\ref{Ainftyrel})$.)
The sign $*$ is
$
* = i + \deg x_k +\cdots + \deg x_{k-i}.
$
\end{thm}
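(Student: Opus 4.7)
The strategy is to deduce the deformed $A_\infty$-relation from a master identity for the undeformed multilinear operators $\frak m_{k;m_0,\ldots,m_k}$, combined with the Maurer-Cartan equation for the $b_i$. First I would package the deformed operation as
\[
\frak m_k^{\vec b}(x_k,\ldots,x_1)\;=\;\widehat{\frak m}\bigl(e^{b_k},x_k,e^{b_{k-1}},x_{k-1},\ldots,x_1,e^{b_0}\bigr),
\]
where $e^{b_i}=1+b_i+b_i\otimes b_i+\cdots$ converges in the non-Archimedean topology because $\frak v(b_i)>0$. This identification reduces the problem to understanding how the full family $\{\frak m_{k;m_0,\ldots,m_k}\}$ interacts under composition.

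The geometric input is an analysis of the codimension-one boundary of the one-dimensional strata of the compactified moduli space $\CM_{m_0,\ldots,m_k}(\frak L,\vec p;\vec{\vec P};B)$ (with Kuranishi structure and a compatible system of multisections, as in Chapter~7 of \cite{fooo:book}). The relevant boundary components are of two types: (i)~node-splittings of the underlying pre-stable polygon, which break the configuration into two sub-polygons meeting at a new corner at some $q\in L_i\cap L_j$, producing terms $\frak m_{k_1;\vec m'}(\ldots,\frak m_{k_2;\vec m''}(\ldots),\ldots)$; and (ii)~boundary disc-bubbles attached at a side $\overline{z_{(i+1)i}z_{i(i-1)}}$ lying on a single $L_i$, which absorb some of the chain inputs $P^{(i)}_\bullet$ and produce a factor $\frak m_{\ell}(P^{(i)}_{j+1},\ldots,P^{(i)}_{j+\ell})$ inserted back into a lower $\frak m$. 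Sphere bubbling is codimension $\geq 2$ under the fiberwise Calabi-Yau/semi-positivity hypothesis and can be discarded; similarly, disc bubbles of negative or zero Maslov index carrying input corners are excluded by transversality. The resulting identity, after summing over $(m_0,\ldots,m_k)$, reads schematically
\[
\sum \pm\,\frak m_{k_1;\vec m'}\!\bigl(\ldots,\frak m_{k_2;\vec m''}(\ldots),\ldots\bigr)\;+\;\sum_i\sum \pm\,\frak m_{k;\vec m}\!\bigl(\ldots,\frak m_\ell(P^{(i)}_{\bullet},\ldots,P^{(i)}_{\bullet}),\ldots\bigr)\;=\;0.
\]

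Setting every $P^{(i)}_j=b_i$ and collecting terms, the first sum reorganizes (by combining the corresponding $b_i$-blocks on both sides of the inserted node) into $\sum_{k_1+k_2=k+1}\pm\,\frak m_{k_1}^{\vec b}(\ldots,\frak m_{k_2}^{\vec b}(\ldots),\ldots)$, while the second sum becomes
\[
\sum_i \sum_{\text{positions}}\pm\,\frak m^{\vec b}\!\Bigl(\ldots,\sum_{\ell\geq 0}\frak m_\ell(b_i,\ldots,b_i),\ldots\Bigr)\;=\;0
\]
by the Maurer-Cartan equation $\sum_\ell \frak m_\ell(b_i,\ldots,b_i)=0$ assumed of each $b_i$. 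This yields the desired relation \eqref{Ainftyrel}, with the stated sign $*=i+\deg x_k+\cdots+\deg x_{k-i}$ coming from the Koszul rule applied to the shifted degrees.

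\textbf{Main obstacle.} The principal technical issue is a rigorous identification of the codimension-one boundary of $\CM_{m_0,\ldots,m_k}(\frak L,\vec p;\vec{\vec P};B)$ together with compatible Kuranishi perturbations that make the boundary operator respect the splitting into node-breaking and disc-bubbling strata; this includes the delicate bookkeeping of signs (via the relative orientations built in \cite{fooo:book,fooo:anchor}) and excluding extra strata coming from bubbling at corners. The Maurer-Cartan step itself is purely algebraic; the whole weight of the argument is in setting up the moduli-theoretic master identity of Step~2 so that the cancellation is honest. Since this machinery is already established in \cite{fooo:book} and extended to the anchored setting in \cite{fooo:anchor}, the proof here amounts to importing those arguments into the current framework essentially verbatim.
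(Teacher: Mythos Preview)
Your proposal is correct and follows the standard approach. Note, however, that the paper itself does not give a proof of this theorem: at the start of subsection~\ref{subsecprod} the author explicitly writes ``We refer to \cite{fukaya:mirror2} and \cite{fooo:anchor} for the relevant proofs of the statements we make without proofs here,'' and the theorem is simply stated. Your sketch (codimension-one boundary analysis of $\CM_{m_0,\ldots,m_k}(\frak L,\vec p;\vec{\vec P};B)$, separation into polygon-splitting and disc-bubble strata, then cancellation of the latter via the Maurer-Cartan equation) is precisely the argument carried out in those references, so in effect you have supplied what the paper only cites.
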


We summarize the above discussion as follows:
\begin{thm}\label{anchoredAinfty}
We can associate an filtered $A_{\infty}$ category to a
symplectic manifold $(M,\omega)$ such that:
\begin{enumerate}
\item Its object is $(\CL,b,sp)$ where
$\CL=(L,\gamma,\lambda)$ is a graded anchored Lagrangian submanifold, $[b]
\in \CM(CF(L))$
is a bounding cochain and $sp$ is a spin structure of $L$.
\par
\item The set of morphisms is $CF((L_1,\gamma_1),(L_0,\gamma_0))$.
\par
\item $\frak m^{\vec b}_k$ are the operations defined in $(\ref{mkcorrected})$.
\end{enumerate}
\end{thm}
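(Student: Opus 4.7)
\smallskip

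\noindent\emph{Proof proposal.} The plan is to check the three structural requirements of a filtered $A_\infty$-category separately: well-definedness of $\frak m_k^{\vec b}$ (including degree and filtration), the $A_\infty$-relation \eqref{Ainftyrel}, and compatibility of the composition with the unit and with $\Lambda(\CL_i,\CL_j)$-module structures. The body of the argument is an analysis of the Kuranishi boundary of $\CM_{m_0,\cdots,m_k}(\frak L, \vec p;\vec{\vec P};B)$ combined with the Maurer--Cartan equation for each $b_i$.

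First I would verify the elementary properties of $\frak m_k^{\vec b}$. The degree statement is immediate from the dimension formula \eqref{dimensionformula} together with Lemma \ref{dimanddeg}: the $0$-dimensional stratum of $\CM(\frak L,\vec p;B)$ cut by the $\vec{\vec P}^{(i)}$ imposes exactly the degree shift needed so that $\frak m_k^{\vec b}$ has degree $+1$ after the usual suspension. Convergence in the non-Archimedean topology, and preservation of the energy filtration, follow from Proposition \ref{prop:coherence} combined with Gromov compactness: for each energy bound $E_0$ only finitely many admissible classes $B$ with $\omega(B)\le E_0$ can carry a non-empty moduli space, so the infinite sum in \eqref{mkcorrected} defines a convergent element of $CF(\CL_k,\CL_0)$.

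The main step is the $A_\infty$-relation \eqref{Ainftyrel}. For a class $B$ and a configuration $\vec{\vec P}$ with $\dim \CM_{m_0,\cdots,m_k}(\frak L,\vec p;\vec{\vec P};B) = 1$, the codimension-one Kuranishi boundary decomposes, after restricting to the cut-down fiber product, into two kinds of strata:
\begin{enumerate}
\item[(a)] \emph{Polygon splittings}: a $(k{+}1)$-gon degenerates into a $(k_1{+}1)$-gon glued to a $(k_2{+}1)$-gon along a boundary intersection point $p_{ij}$, with the interior marked points and boundary insertions $P^{(i)}_j$ distributed between the two factors in the natural way.
\item[(b)] \emph{Disc bubbling on $L_i$}: a holomorphic disc with boundary on some $L_i$ bubbles off at an interior boundary marked point lying on $\overline{z_{(i+1)i}z_{i(i-1)}}$, carrying some subset of the $P^{(i)}_j$ and (in the sum over $m_i$) some block of $b_i$'s.
\end{enumerate}
Summing over all admissible $B$ and all $(m_0,\ldots,m_k)$ and using the coherent orientation and gluing of Kuranishi structures from \cite{fooo:book}, Chapter 7, the contribution of type (a) is exactly the left-hand side of \eqref{Ainftyrel} with $\frak m^{\vec b}_{k_2}$ inserted into $\frak m^{\vec b}_{k_1}$, because the $b_i$'s concentrated on each side of the splitting reassemble to the $b_i$-deformed operations. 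The contribution of type (b) on a given $L_i$ factors as $\frak m^{\vec b}_{k;m_0,\cdots,m_k'}$ composed with the operator $\sum_\ell \frak m_\ell(b_i,\ldots,b_i)$, acting on the string of $b_i$'s adjacent to the bubbling point. By the Maurer--Cartan equation \eqref{eq:MC} satisfied by $b_i$, this composite vanishes identically, hence the entire type (b) sum drops out and only type (a) remains. This is exactly \eqref{Ainftyrel}.

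The hard part will be the coherent choice of Kuranishi perturbations. One must choose multisections on $\CM(\frak L,\vec p;B)$, and on the interior disc/sphere moduli spaces used in the fiber product with $\vec{\vec P}$, which are simultaneously compatible with every boundary identification arising from (a) and (b) above, for every $(k;m_0,\ldots,m_k)$ and every $B$. This is precisely the setup achieved in \cite{fooo:book}, Sections 3.4--3.7 and \cite{fooo:anchor}, Section 8; the point is that admissibility of $B$ (Definition \ref{classB}) and Proposition \ref{prop:coherence} guarantee that the filtration is respected under gluing, so the inductive perturbation scheme of \cite{fooo:book} goes through verbatim. Once these compatible perturbations are fixed, the remaining items (units, $\Lambda(\CL_i,\CL_j)$-linearity, independence up to filtered $A_\infty$ homotopy of the choice of almost complex structure and abstract perturbation) follow from the corresponding statements in \cite{fooo:book} and \cite{fooo:anchor}, since the presence of anchors only refines the chain-level bookkeeping without altering the analytic content.
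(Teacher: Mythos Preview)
Your proposal is correct and follows the standard Fukaya--Oh--Ohta--Ono approach. Note, however, that the paper itself does not give a proof of this theorem: the appendix opens by stating ``We refer to \cite{fukaya:mirror2} and \cite{fooo:anchor} for the relevant proofs of the statements we make without proofs here,'' and Theorem~\ref{anchoredAinfty} is simply stated as a summary of results from those references. Your sketch---degree and filtration from the dimension formula and Gromov compactness, the $A_\infty$-relation from the codimension-one boundary decomposition into polygon splittings (type (a)) and disc bubbles killed by the Maurer--Cartan equation (type (b)), and the inductive construction of compatible Kuranishi perturbations---is exactly the argument carried out in \cite{fooo:book} and \cite{fooo:anchor}, so you have correctly reconstructed what the paper is citing rather than proving.
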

\begin{rem}
Here we spelled out
the choice of orientations $o_p$ of $\text{Index}\, \overline{\partial}_{\lambda_p}$
is included. This choice in fact does not affect the module structure
$CF((L_1,\gamma_1),(L_0,\gamma_0))$ up to isomorphism:
if we take an alternative choice $o'_p$ at $p$, then all the
signs appearing in the operations $\frak m_k$ that involves $[p,w]$ for some $w$
will be reversed. Therefore $[p,w] \mapsto -[p,w]$ gives the required isomorphism.
\end{rem}

The operations $\frak m_k$ are compatible with the filtration.
Namely we have
\begin{prop}\label{filprod}
If $x_i \in F^{\lambda_i}CF((L_i,\gamma_i),(L_{i-1},\gamma_{i-1}))$,
then
$$
\frak m_k^{\vec b}(x_k,\cdots,x_1)
\in F^{\lambda}CF((L_k,\gamma_k),(L_0,\gamma_0))
$$
where
$
\lambda = \sum_{i=1}^{k} \lambda_i.
$
\end{prop}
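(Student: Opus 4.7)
The plan is to derive Proposition \ref{filprod} directly from three ingredients already in hand: the coherence identity of Proposition \ref{prop:coherence}, the positivity of symplectic area on $J$-holomorphic maps, and the strictly positive valuation of the bounding cochains $b_i$ required in Definition \ref{boundchain}.

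First I would treat the undeformed case $\vec b = (0,\dots,0)$, so $\frak m_k^{\vec b} = \frak m_k$. Unwinding (\ref{catAinifwob}), every nonzero contribution to $\frak m_k(x_k,\dots,x_1)$ comes from a $J$-holomorphic polygon $((D^2,\vec z),w)$ in some class $B\in \pi_2^{ad}(\CE;\vec p)$, with inputs $x_i = [p_{i(i-1)},w^+_{i(i-1)}]$ and output $[p_{k0},w^+_{k0}]$. The output bounding strip $w^+_{k0}$ is, by construction, the unique class (provided by Lemma preceding Proposition \ref{prop:coherence}) making $B = [w^-_{01}]\#\cdots\#[w^-_{k0}]$ admissible. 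Applied with the $w^+$/$w^-$ convention of (\ref{catAinifwob}), Proposition \ref{prop:coherence} then specializes to the additive relation
\begin{equation}\label{eq:key}
\CA([p_{k0},w^+_{k0}]) \;=\; \sum_{i=1}^{k} \CA([p_{i(i-1)},w^+_{i(i-1)}]) \;+\; \omega(B).
\end{equation}
Since $w$ is $J$-holomorphic, $\omega(B)\ge 0$, so \eqref{eq:key} yields $\CA([p_{k0},w^+_{k0}]) \ge \sum_{i=1}^k \lambda_i = \lambda$, proving the claim for $\frak m_k$.

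Next I would incorporate the bounding cochains. By (\ref{mkcorrected}) the term $\frak m_k^{\vec b}(x_k,\dots,x_1)$ decomposes as a sum, over tuples $(m_0,\dots,m_k)$, of contributions from decorated moduli spaces $\CM_{m_0,\dots,m_k}(\frak L,\vec p;\vec{\vec P};B)$, where the chains $\vec P^{(i)} = (P^{(i)}_1,\dots,P^{(i)}_{m_i})$ are generators arising in the expansion of $b_i^{\otimes m_i}$. Two facts combine: first, the decorated coherence identity extends \eqref{eq:key} to
$$
\CA([p_{k0},w^+_{k0}]) \;=\; \sum_{i=1}^{k} \CA([p_{i(i-1)},w^+_{i(i-1)}]) \;+\; \omega(B_0) \;+\; \sum_{i,j}\omega(\beta^{(i)}_j),
$$
where the $\beta^{(i)}_j$ are the disc-bubble classes carrying the $P^{(i)}_j$'s; all area terms are $\ge 0$ by positivity of $\omega$ on $J$-holomorphic components. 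Second, by Definition \ref{boundchain} each $b_i$ has $\frak v(b_i) > 0$, so upon writing $b_i = \sum_\alpha T^{\mu^{(i)}_\alpha} c^{(i)}_\alpha$ with $\mu^{(i)}_\alpha > 0$, each monomial appearing in $b_i^{\otimes m_i}$ is multiplied by $T^{\sum_j \mu^{(i)}_{\alpha_j}}$ with nonnegative exponent, giving an additional nonnegative shift to the total action. Adding all contributions shows each basis term appearing in $\frak m_k^{\vec b}(x_k,\dots,x_1)$ has action at least $\sum_{i=1}^k \lambda_i$, which is what is to be proved.

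The only nontrivial point is justifying the precise form of \eqref{eq:key} in the $w^+$ convention of (\ref{catAinifwob}); this reduces to matching orientation/direction conventions between Proposition \ref{prop:coherence} (where all punctures are outgoing) and the $A_\infty$ convention (one outgoing and $k$ incoming), a bookkeeping exercise already carried out in \cite{fooo:book,fooo:anchor}. No new analytic input is required beyond what has already been set up in Sections \ref{sec:anchored} and \ref{subsecprod}.
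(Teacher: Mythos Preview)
Your argument is correct in its essentials. Note that the paper itself does not supply a proof of Proposition~\ref{filprod}; the statement appears in the Appendix as part of the summary of material imported from \cite{fooo:anchor} and \cite{fukaya:mirror2}, so there is nothing to compare against beyond those references. Your derivation via Proposition~\ref{prop:coherence} and positivity of $\omega(B)$ on $J$-holomorphic polygons is exactly the standard route, and your handling of the sign conventions between the all-outgoing form of Proposition~\ref{prop:coherence} and the one-output/$k$-input convention of~(\ref{catAinifwob}) is appropriately flagged as bookkeeping.

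One point of phrasing deserves tightening. In the deformed step you write the extended coherence identity with ``disc-bubble classes $\beta^{(i)}_j$ carrying the $P^{(i)}_j$'s'' and invoke positivity of $\omega$ on $J$-holomorphic components. This slightly misrepresents the mechanism: the decorated moduli spaces $\CM_{m_0,\dots,m_k}(\frak L,\vec p;\vec{\vec P};B)$ involve polygons with extra boundary marked points constrained by the chains $P^{(i)}_j$, not polygons with attached $J$-holomorphic disc bubbles. The nonnegative filtration shift contributed by the $b_i$-insertions comes entirely from the Novikov exponents in the expansion $b_i = \sum_g c_g[g]$ with $E(g) > 0$ (your ``Second'' sentence), acting via the embedding $\Lambda(L_i) \hookrightarrow \Lambda(\CL_0,\CL_k)$. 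So the two sources of positivity are (i) $\omega(B) \ge 0$ for the $J$-holomorphic polygon itself, and (ii) $\frak v(b_i) > 0$ for the bounding-cochain coefficients; there is no separate $J$-holomorphic disc-bubble term. With that clarification your proof stands.
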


\end{document}